	\newtheorem{ithe}{Theorem}            
	\newtheorem{theo}{Theorem}[section]	  
	\newtheorem{lemm}[theo]{Lemma}
	\newtheorem{coro}[theo]{Corollary}
	\newtheorem{prop}[theo]{Proposition}
	\newtheorem{prde}[theo]{Proposition/Definition}
\numberwithin{equation}{section}
\renewcommand*{\backref}[1]{}
\renewcommand*{\backrefalt}[4]{\tiny 
    \ifcase #1 (Not cited.)%
    \or        (Cited on page~#2.)%
    \else      (Cited on pages~#2.)%
    \fi}
   \def\DD{{\mathbb D}}
 \def\NN{{\mathbb N}}  
 \def\RR{{\mathbb R}}  
 \def\ZZ{{\mathbb Z}}
\def\La{\Lambda}
\def\la{\lambda}
\def\La{\Lambda}
\newcommand{\cA}{\mathcal{A}}\newcommand{\cB}{\mathcal{B}}
\newcommand{\cG}{\mathcal{G}}
\newcommand{\cK}{\mathcal{K}}
\newcommand{\cO}{\mathcal{O}}\newcommand{\cP}{\mathcal{P}}
\newcommand{\cR}{\mathcal{R}}
\newcommand{\cS}{\mathcal{S}}\newcommand{\cT}{\mathcal{T}}
\newcommand{\cU}{\mathcal{U}}\newcommand{\cV}{\mathcal{V}}
\newcommand{\cW}{\mathcal{W}}\newcommand{\cX}{\mathcal{X}}
\newcommand{\cZ}{\mathcal{Z}}
\def\Diff{\operatorname{Diff}}
\def\Int{\operatorname{Int}}
\def\dim{\operatorname{dim}}
\def\orb{\operatorname{orb}}
\def\supp{\operatorname{supp}}
\newcommand{\R}{\mathbb{R}}
\newcommand{\C}{\mathbb{C}}
\newcommand{\eps}{\varepsilon}
\renewcommand{\setminus}{\smallsetminus}
\renewcommand{\emptyset}{\varnothing}
\renewcommand{\angle}{\measuredangle}
\newcommand{\SL}{\mathrm{SL}}
\newcommand{\GL}{\mathrm{GL}}
\newcommand{\Id}{\mathrm{Id}}
\DeclareMathOperator{\trace}{tr}
\DeclareMathOperator{\vol}{vol}
\newcommand{\wed}{{\mathord{\wedge}}}
\newcommand{\m}{\mathfrak{m}}
\newcommand{\restr}{\mathop{\upharpoonright}} 
\newcommand{\gtr}{\ge} 
\newcommand{\les}{\le} 
\newcommand{\bsigma}{\boldsymbol{\sigma}}  
\newcommand{\rad}{\mathbf{r}}    
\newcommand{\sing}{\mathbf{s}}   
\DeclareMathOperator{\jac}{jac}
\newcommand{\PT}{2\mathord{\wedge}\mathbb{N}} 
\newcommand{\dplus}{\mathbin{\vcenter{\hbox{$\mathord{\oplus}$}\nointerlineskip\hbox{\hspace{.25ex}{$\scriptscriptstyle\mathord{<}$}}}}}  
\newcommand{\DDiff}[1]{\Diff^1_{\#}(\DD^{#1})}
\newcommand{\arxiv}[1]{\href{http://arxiv.org/abs/#1}{\tt arXiv:{#1}}}
\newcommand{\mr}[1]{\href{http://www.ams.org/mathscinet-getitem?mr=#1}{\tt MR{#1}}}
\title[Lyapunov spectra of periodic orbits]{Perturbation of the Lyapunov spectra of periodic orbits}
\author{J.~Bochi and C.~Bonatti}
\date{April 28, 2010; first revision: June, 2010; this revison: May, 2011.}
\begin{document}

\maketitle

\begin{abstract}
We describe all Lyapunov spectra that
can be obtained by perturbing the derivatives along periodic orbits of a diffeomorphism.
The description is expressed in terms of the finest dominated splitting and Lyapunov exponents that appear in
the limit of a sequence of periodic orbits, and involves the majorization partial order.
Among the applications, we give a simple criterion for the occurrence of universal dynamics.
\end{abstract}


\section{Introduction}

\subsection{Perturbing the Derivatives along Periodic Orbits}

An important consequence of perturbation results as Pugh's closing lemma, Hayashi's connecting lemma and its generalizations is that the global dynamics of a $C^1$-generic diffeomorphism is very well approached by its periodic orbits. Understanding the periodic orbits and their behavior under perturbations is therefore a way for describing the global dynamics of a generic diffeomorphism. 

Let $f: M \to M$ be a diffeomorphism of a compact manifold of dimension $d$.
The main ingredient for describing the local dynamics in a neighborhood of a periodic point $p$ is the derivative $Df^{\pi(p)}$ at the period $\pi(p)$, and more specifically the Lyapunov exponents
$\lambda_1(p)\leq\cdots\leq \lambda_d(p)$, which are 
obtained by applying the function $\frac{1}{\pi(p)}\log |\cdot|$ to the eigenvalues.
However the derivative at the period is not sufficient for the understanding of how the orbit of $p$ reacts under small perturbations. For that purpose, one needs to know the derivatives $Df$ along the whole orbit. 
Let us give a simple example: 

Consider a set of periodic saddles $p_n$ in a compact invariant set $\Lambda$ of a surface diffeomorphism $f$.
If $\Lambda$ is a hyperbolic set then under perturbations of $f$ of $C^1$-size $\varepsilon$ the Lyapunov exponents of the periodic orbits will vary at most by a quantity proportional to $\varepsilon$. 
Consider now the case where $\Lambda$ is not hyperbolic.
This can happen even if the derivatives at the period ``look'' uniformly hyperbolic: 
for example, there may exist arbitrarily large segments of orbit on which the derivative of $f$ is almost an isometry.
Then, as Ma\~n\'e noticed in \cite{Mane ECL},
arbitrarily small perturbations of $f$ allow us to 
mix two Lyapunov exponents 
and create a new periodic orbit 
which is a sink or source.

More generally, Ma\~n\'e proved that, in any dimension, if the stable/unstable splitting over a set of periodic orbits is not dominated, then arbitrarily small perturbations of $f$ may create a non-hyperbolic periodic orbit, and then change its index. This was an important step in his proof of the $C^1$-stability conjecture (structural stability implies the Axiom A plus strong transversality condition). Ma\~n\'e's simple argument leads to a natural question:

\begin{ques}\label{q.perturbation} 
Let $f$ be a diffeomorphism.
Consider a sequence of periodic orbits $\gamma_n$ converging, in the Hausdorff topology, to a compact set $\Lambda$. 
What are the derivatives of periodic orbits one may obtain by small perturbations of the derivative of $f$ along the $\gamma_n$'s? 
\end{ques}

Recall that, by Franks Lemma \cite{Franks}, every perturbation of the derivatives
along a periodic orbit $\gamma_n$ can be realized by a $C^1$-perturbation of $f$
that keeps $\gamma_n$ invariant. So an answer to the question gives information
about the $C^1$-nearby diffeomorphisms.

It is clear that the existence of a dominated splitting on $\Lambda$ imposes obstructions.
Thus the answer of the question clearly depends on the finest dominated splitting.

Partial answers of Question~\ref{q.perturbation} have already been obtained: 
It was shown in \cite{BDP} that if the set 
of periodic orbits homoclinically related to a saddle $p$ has no dominated splitting at all, then arbitrarily small perturbations of $f$ may turn  the derivative at the period of one of these orbits to be an homothety. 
Removing the hypothesis that the orbits are all homoclinically related, \cite{BGV} obtains a slightly weaker result: a perturbation gives a periodic orbit having all the Lyapunov exponents equal. 
Other results along this direction are given in~\cite{LL}.
 
These results lead to the feeling that one can obtain any barycentric combinations of the Lyapunov exponents in a subbundle without dominated splitting. The present paper gives a precise meaning to this intuition, turning it into a theorem.  
 
\subsubsection{The Lyapunov Graph}
If $p$ is a periodic point with  Lyapunov exponents $\lambda_1\leq \cdots \leq \lambda_d$,
we associate to $p$ the \emph{Lyapunov graph} 
$\bsigma(p)=(\bsigma_0,\bsigma_1,\dots,\bsigma_d$) where $\bsigma_0=0$ and 
$\bsigma_i = \sum_{j=1}^i\lambda_i$ for $i>0$.  
(See Figure~\ref{f.graphs}.)
The fact that the $\lambda_i$ are increasing in $i$ is equivalent to the fact that the Lyapunov graph $\bsigma(p)\colon\{0,\dots,d\}\to \RR$ is \emph{convex}: 
$$
\bsigma_j \leq \frac{k-j}{k-i} \cdot \bsigma_i + \frac{j-i}{k-i} \cdot \bsigma_k 
\quad \text{for every $i<j<k$.}
$$  
We denote by $\cS_d \subset \{0\}\times \RR^d$ the set of convex graphs; its elements 
are seen as the graphs of convex maps $\sigma \colon\{0,\dots,d\} \to \RR$ with $\sigma_0=0$. 

\psfrag{1}[c][c]{{\tiny $\lambda_1$}}
\psfrag{2}[c][c]{{\tiny $\lambda_2$}}
\psfrag{3}[c][c]{{\tiny $\lambda_3$}}
\psfrag{4}[c][c]{{\tiny $\lambda_4$}}
\psfrag{5}[c][c]{{\tiny $\lambda_5$}}
\psfrag{a}[c][c]{{\tiny $\lambda_1'$}}
\psfrag{b}[c][c]{{\tiny $\lambda_2'$}}
\psfrag{c}[c][c]{{\tiny $\lambda_3'$}}
\psfrag{d}[c][c]{{\tiny $\lambda_4'$}}
\psfrag{e}[c][c]{{\tiny $\lambda_5'$}}
\psfrag{f}[l][l]{{\tiny $\lambda_1'' = \cdots = \lambda''_5$}}
\psfrag{z}[c][c]{{\tiny $0$}}
\begin{figure}[hbt]
\begin{center}
\includegraphics[scale=.3]{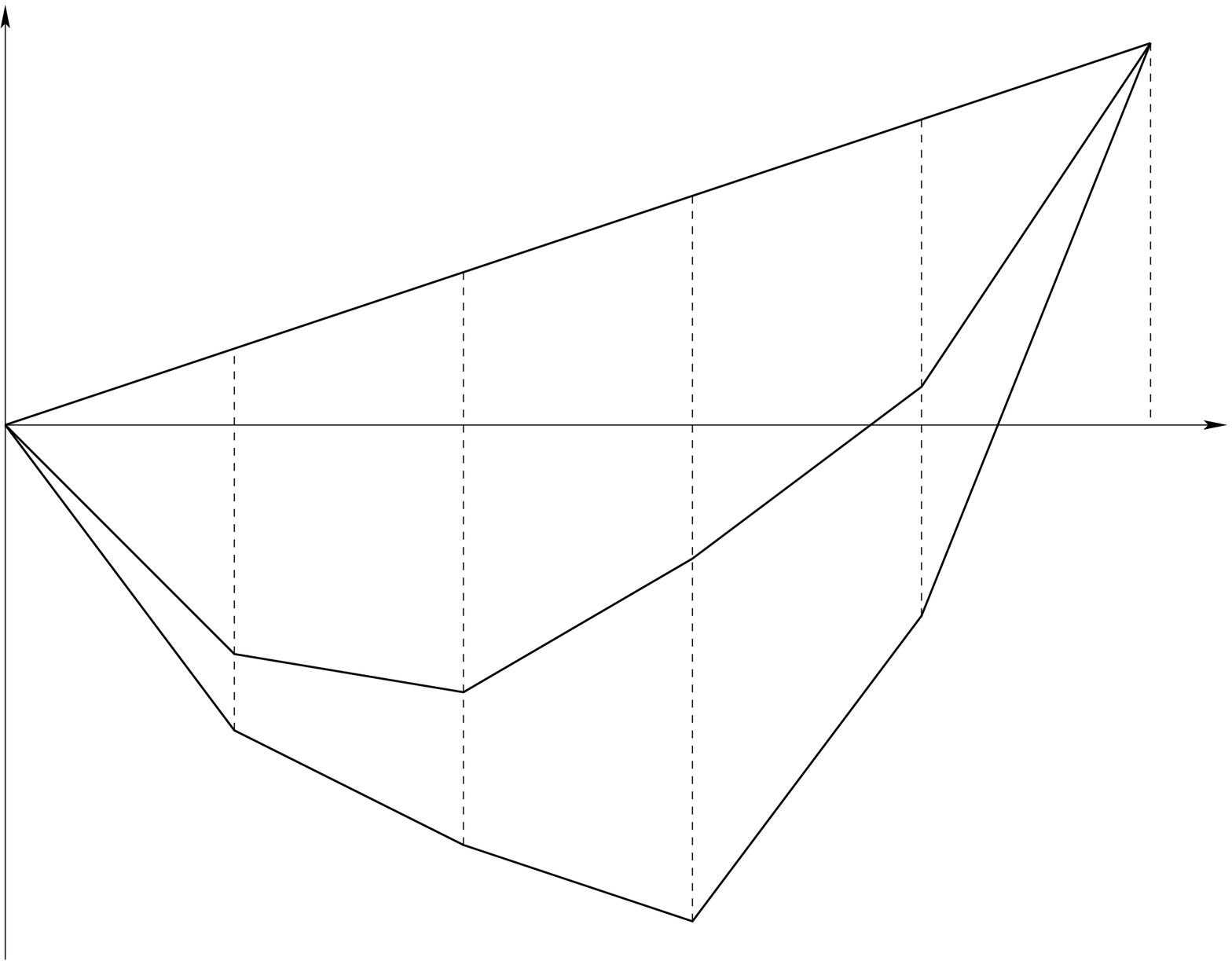}    \\
\vspace{13pt}
\includegraphics[scale=.25]{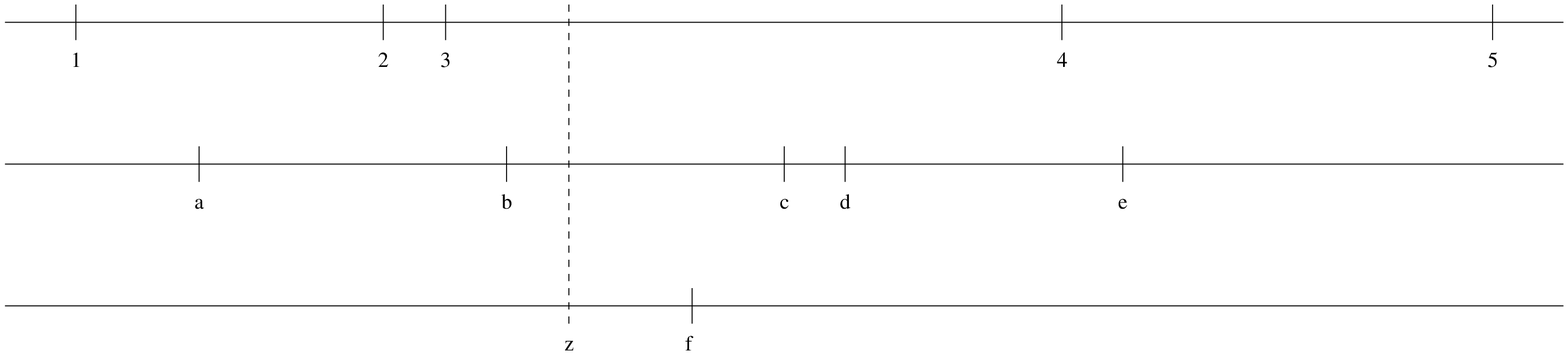} 
\end{center}
\caption{Three graphs $\sigma \les \sigma' \les \sigma''$ in $\cS_5$ with $\sigma_5 = \sigma'_5 = \sigma''_5$,
and the corresponding Lyapunov spectra.} \label{f.graphs}
\end{figure}

\subsubsection{Mixing Lyapunov Exponents, or Raising the Lyapunov Graph}
Our first result is this:

\begin{ithe}\label{t.periodic} 
Let $f$ be a diffeomorphism of a $d$-dimensional compact manifold, and 
$\gamma_n = \orb(p_n)$ be a sequence of periodic orbits whose periods tend to infinity. Assume that the sequence $\gamma_n$ converges in the Hausdorff topology to a compact set $\La$ that has no dominated splitting. 
Then given $\varepsilon>0$ there is $N$ such that
\begin{itemize}
\item for every $n\geq N$ and 
\item for any convex graph $\sigma\in \cS_d$ with
\begin{itemize}
\item $\sigma_d = \bsigma_d(\gamma_n)$  and 
\item $\sigma_i \geq \bsigma_i(\gamma_n)$ for every $i\in\{1,\dots, d-1\}$,
\end{itemize}
\end{itemize} 
there exists a $\varepsilon$-$C^1$-perturbation $g$ of $f$ with support in an arbitrary neighborhood of $\gamma_n$, preserving the orbit $\gamma_n$, and such that $\bsigma(\gamma_n,g)=\sigma$.
\end{ithe}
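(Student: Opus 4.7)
The plan is to reduce, via Franks' lemma, to producing a small perturbation of the linear cocycle along $\gamma_n$, and then to realise that perturbation as a composition of finitely many elementary two-dimensional moves, each implemented by a single rotation at a well-chosen orbit point. I would split the argument into four stages.

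\textbf{Stage 1 (cocycle reformulation).} By Franks' lemma it is enough to find an $\varepsilon$-small perturbation $\widetilde{Df}$ of the cocycle $Df$ along $\gamma_n$ (in the operator-norm sense at each orbit point) whose Lyapunov graph equals the target $\sigma$. Franks' lemma then delivers the corresponding diffeomorphism $g$ with support in an arbitrary neighborhood of $\gamma_n$ preserving $\gamma_n$.

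\textbf{Stage 2 (majorization yields finitely many T-transforms).} The three conditions on $\sigma$ (convex, same endpoint, lying above $\bsigma(\gamma_n)$) are equivalent to saying that the sorted exponents of $\gamma_n$ \emph{majorize} those of $\sigma$. By the Hardy-Littlewood-P\'olya theorem any such majorization is a finite composition of at most $d-1$ elementary \emph{T-transforms}: each such move picks an index $i$ and replaces a pair $(\lambda_i,\lambda_{i+1})$ by $(\lambda_i+t,\lambda_{i+1}-t)$ with $t\in(0,(\lambda_{i+1}-\lambda_i)/2]$, leaving the remaining exponents unchanged. Splitting the budget $\varepsilon$ among these (at most $d-1$) moves reduces the theorem to the claim that one T-transform can be achieved by an arbitrarily small perturbation of the cocycle, provided $n$ is large enough.

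\textbf{Stage 3 (one T-transform by a single small rotation).} For the two adjacent exponents $\lambda_i<\lambda_{i+1}$, the spectral decomposition of the return map $Df^{\pi(p_n)}$ yields an invariant $2$-plane subbundle $P_i\subset TM|_{\gamma_n}$ carrying these exponents and a pair of one-dimensional eigendirections $E_i,E_{i+1}\subset P_i$. At a single point $x_0\in\gamma_n$ I would insert a linear rotation of $P_i(x_0)$ that tilts $E_{i+1}(x_0)$ toward $E_i(x_0)$ by a projective angle just sufficient to realise the prescribed $t$. A direct $2\times 2$ computation shows that the norm of the required extrinsic matrix perturbation at $x_0$ is bounded by a constant (depending only on $t$ and $\lambda_{i+1}-\lambda_i$) times $\sin\angle(E_i(x_0),E_{i+1}(x_0))$: the smaller the angle between the eigendirections at $x_0$, the smaller the perturbation.

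\textbf{Stage 4 (non-domination supplies small angles, uniformly).} What remains is to exhibit, for $n$ large enough and uniformly in $\sigma$, a point of $\gamma_n$ at which the relevant angle is as small as we wish. For each of the $d-1$ possible splitting indices and for any $m\in\NN$, failure of $m$-domination on $\La$ together with Hausdorff convergence $\gamma_n\to\La$ yields, for $n\geq N(m)$, an orbit segment of $\gamma_n$ on which the candidate splitting fails to be $m$-dominated; an angle estimate in the style of Ma\~n\'e (cf.~\cite{BDP,BGV}) then produces an $x_0\in\gamma_n$ at which $\angle(E_i(x_0),E_{i+1}(x_0))$ is as small as needed. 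The \textbf{main difficulty} is precisely this \emph{uniform} angle-production: a single $N(\varepsilon)$ must work for every admissible target $\sigma$ and at every step of the induction on T-transforms, even though after each move the cocycle has been perturbed and the relevant invariant subbundles have changed. Uniformity is obtained because the admissible $\sigma$ form a compact set, the number of T-transforms per graph is bounded by $d-1$, and the rotations are small enough that non-domination of $\La$ is robustly inherited by the successively perturbed cocycles.
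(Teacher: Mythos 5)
There is a genuine gap, and it sits exactly at the junction of your Stages 3 and 4. You assume that non-domination of $\La$ (hence of $\gamma_n$ for large $n$, at every index and with uniform $\ell$) lets a Ma\~n\'e-type argument make the angle between the two specific eigendirections $E_i(x_0)$ and $E_{i+1}(x_0)$ small. But the quantity that non-domination at index $i$ controls is the relation between $F=E_1\oplus\cdots\oplus E_i$ and $G=E_{i+1}\oplus\cdots\oplus E_d$, not the behaviour of the cocycle restricted to the invariant $2$-plane $P_i=E_i\oplus E_{i+1}$: a subbundle of a cocycle with no dominated splitting can itself be strongly dominated, so the splitting $E_i\dplus E_{i+1}$ inside $P_i$ may be $\ell$-dominated with huge $\ell$, and then no small perturbation produces a small angle between these two directions or mixes precisely these two exponents. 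Making $\angle(F,G)$ small at a point (which is what Ma\~n\'e's argument gives, and even that requires one of the bundles to be one-dimensional or further work) and rotating there will in general mix some exponent of $F$ with some exponent of $G$ that you do not control, destroying the ``keep all other $\bsigma_j$ fixed'' requirement. Overcoming exactly this obstruction is the heart of the matter: one must show that, after a preliminary perturbation that does not change the spectrum, the non-domination reappears either in a restricted cocycle or in a quotient cocycle, and then induct on the dimension using the extension procedures for restricted/quotient cocycles; this requires the angle/domination lemmas relating $F$, $G$, $H$ and $F/H$, $G/H$, and is not supplied by compactness or by ``robust inheritance of non-domination'' as in your Stage 4.

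A second, smaller but real, error is in Stage 2: with \emph{adjacent-pair} transfers (which is what your Stage 3 realizes), a majorization cannot in general be achieved by finitely many moves, let alone $d-1$ of them. For instance, from spectrum $(0,0,3)$ one can only converge to, never exactly reach, $(1,1,1)$ by adjacent Robin Hood operations; the Hardy--Littlewood--P\'olya bound of $d-1$ T-transforms allows transfers between arbitrary pairs. So your induction needs instead a quantitative statement: a uniformly bounded number $N(d,c,\delta)$ of adjacent moves reaching the target up to $\delta$, followed by one final exact adjustment of the spectrum; that final adjustment (and indeed the whole eigendirection picture of Stage 3) also presupposes that all eigenvalues of $Df^{\pi(p_n)}$ are real, which must first be arranged by a spectrum-preserving perturbation since the return maps may a priori have complex eigenvalues. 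With the number of moves bounded a priori, the $\varepsilon$-budget and the ``no strong domination is created along the way'' bookkeeping can be set up by a nested choice of constants, which is the correct replacement for the compactness argument you sketch.
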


\begin{rema}\label{r.majorization}
The partial order on Lyapunov spectra that appears in the statement of the theorem
is called 
\emph{majorization}.\footnote{One says that the 
Lyapunov spectrum $\lambda_1\leq \cdots \leq \lambda_d$
\emph{majorizes} the spectrum $\lambda_1'\leq \cdots \leq \lambda_d'$
if the associated Lyapunov graphs $\sigma$, $\sigma'\in \cS_d$ satisfy
$\sigma_d = \sigma_d'$  and 
$\sigma_i \leq \sigma_i'$ for every $i\in\{1,\dots, d-1\}$.
(The disagreement here between $\sigma \leq \sigma'$ and the word ``majorize''
is due to the fact that we ordered the spectrum in increasing order,
while in the literature on majorization the decreasing order is preferred.)}
This terminology was introduced by Hardy, Littlewood and P\'{o}lya \cite{HLP}.
Majorization is a widely studied subject and has application in many different contexts:
see \cite{MOA}.
\end{rema}


We present now a more complete version of Theorem~\ref{t.periodic} including the case where $\La$ admits a dominating splitting:

\begin{ithe}\label{t.periodic2} 
Let $f$ be a diffeomorphism of a $d$-dimensional compact manifold, 
and $\gamma_n=\orb(p_n)$ be a sequence of periodic orbits whose periods tend to infinity, 
and that converges in the Hausdorff topology to a compact set $\La$. 
Let 
$$
E_1\dplus E_2 \dplus \cdots \dplus E_m
$$ 
be the finest dominated splitting over $\La$, and  denote 
$$
i_j = \dim (E_1\oplus\cdots\oplus E_j)
\quad \text{for $j\in\{1,\dots,m\}$.}
$$ 
Then given $\varepsilon>0$ there is $N$ such that:
 \begin{itemize}
 \item for every $n\geq N$ and 
 \item for any convex graph $\sigma\in \cS_d$ with 
 \begin{itemize}
 \item $\sigma_{i_j} = \bsigma_{i_j}(\gamma_n)$ for every $j\in\{1,\dots, m\}$ and
 \item $\sigma_i \geq \bsigma_i(\gamma_n)$ for every $i\in\{1,\dots, d\}$,
\end{itemize}
\end{itemize}
there exists a $\varepsilon$-$C^1$-perturbation $g$ of $f$ with support in an arbitrary neighborhood of $\gamma_n$, preserving the orbit $\gamma_n$ and such that $\bsigma(\gamma_n,g)=\sigma$.
\end{ithe}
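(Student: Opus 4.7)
The plan is to reduce Theorem~\ref{t.periodic2} to Theorem~\ref{t.periodic} via the finest dominated splitting on $\Lambda$, and then to realize the majorization condition inside each bundle by composing many small perturbations, each of which implements an elementary Hardy--Littlewood--P\'olya $T$-transform on the Lyapunov spectrum.

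First I would observe that, because dominated splittings are open and upper-semicontinuous, for all $n$ large enough the orbit $\gamma_n$ carries a dominated splitting $E_1 \dplus \cdots \dplus E_m$ extending the one on $\Lambda$, and this splitting is preserved by any $C^1$-perturbation of $f$ that is sufficiently small and supported in a small neighborhood of $\gamma_n$. Thus every admissible perturbation acts block by block. The constraint $\sigma_{i_j} = \bsigma_{i_j}(\gamma_n)$ amounts to preserving $\det(Df^{\pi(p_n)}|_{E_j})$ for each $j$, i.e.\ fixing the sum of Lyapunov exponents inside each bundle; the remaining inequalities $\sigma_i \geq \bsigma_i(\gamma_n)$ decouple into a fixed-endpoint majorization condition inside each block. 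By maximality of the splitting, each restriction $Df|_{E_j}$ has no dominated sub-splitting over $\Lambda$, so the theorem reduces to the single-bundle case, which is precisely Theorem~\ref{t.periodic}.

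For that single-bundle case I would invoke the argument underlying the main perturbation lemmas of \cite{BDP} and \cite{BGV}. Because $\Lambda$ has no dominated splitting, for $n$ large one can, by a $C^1$-small perturbation, arrange that $Df|_{\gamma_n}$ consists of many long near-conformal segments separated by ``critical'' points at which a further small linear perturbation has a large effect upon propagation along the orbit. At such a critical point, inserting by Franks' lemma a small rotation in an appropriate coordinate plane $(i,j)$ implements an elementary $T$-transform on the Lyapunov spectrum, replacing the pair $(\lambda_i,\lambda_j)$ with a convex combination $(t\lambda_j+(1-t)\lambda_i,\,(1-t)\lambda_j+t\lambda_i)$ for a freely chosen $t\in[0,1]$, and leaving all other exponents unchanged. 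By the classical Hardy--Littlewood--P\'olya theorem (see \cite{HLP}, \cite{MOA}), any spectrum majorized by $(\lambda_1,\ldots,\lambda_d)$ is reached from it by finitely many $T$-transforms; distributing the corresponding rotations among the available near-conformal segments produces the desired perturbation $g$ with $\bsigma(\gamma_n,g)=\sigma$.

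The main obstacle I expect is quantitative. One must arrange that the near-conformal segments are sufficiently long and numerous for each individual rotation to be $C^1$-small (so that the cumulative size of the perturbation remains below $\varepsilon$), and one must carefully track how each local perturbation propagates through the subsequent segments in order to realize the target graph $\sigma$ \emph{exactly} rather than only approximately. The interplay between the number of blocks, the sizes of individual rotations, and the exact bookkeeping of singular values --- rather than any single step in isolation --- is the heart of the argument.
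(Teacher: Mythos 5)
Your reduction to the blocks of the finest dominated splitting matches the paper's strategy (it is carried out there at the cocycle level, in Theorems~\ref{t.subbundle} and \ref{t.finest}, with the extension procedures of \S\ref{ss.procedures} keeping the quotient data fixed), but the single-bundle step contains the real gap. The assertion that, because $\Lambda$ has no dominated splitting, one can insert a small rotation at a ``critical'' point in a coordinate plane $(i,j)$ and thereby replace $(\lambda_i,\lambda_j)$ by an arbitrary convex combination \emph{while leaving all other exponents unchanged} is precisely the hard content of the paper (Proposition~\ref{p.two exp}), and it does not follow from Ma\~n\'e's small-angle argument or from \cite{BDP}, \cite{BGV}. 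The obstruction, which the paper points out explicitly, is that a subbundle or quotient of a non-dominated bundle can itself be strongly dominated: even though the index-$i$ splitting along $\gamma_n$ is only weakly dominated, the restriction to the low-dimensional invariant subbundle where your rotation would have to act may be strongly dominated, in which case a small rotation there has no appreciable effect on the exponents, while a rotation acting in a non-invariant plane will in general move the other exponents and the Jacobian as well. For this reason the paper works only with \emph{neighboring} exponents and proves, by induction on the dimension via Lemma~\ref{l.main}, that after a preliminary perturbation (with unchanged eigenvalues) the non-domination reappears on a subbundle or on a quotient bundle; nothing plays this role in your sketch, and the ``many long near-conformal segments'' you posit are not produced by any of the results you cite.

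Second, even granting the elementary operation, the iteration is not a routine application of Hardy--Littlewood--P\'olya. Each elementary move changes the cocycle and may create an $\ell$-dominated splitting that blocks the next move, so the sizes of the successive perturbations must be fixed \emph{before} one knows how many moves are needed. The paper resolves this with an a priori bound $N=N(d,c,\delta)$ on the number of elementary operations needed to come $\delta$-close to $\sigma$ (Lemma~\ref{l.zigzag}), a nested choice of scales $\eps_j$, $\ell_j$ using the stability of non-domination (Lemma~\ref{l.eta}), a preliminary perturbation making all eigenvalues real (Proposition~\ref{p.BGV}; complex eigenvalues cannot be separated by your rotations), and a final small adjustment (Lemma~\ref{l.triangular}) to realize $\sigma$ exactly. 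You correctly flag this quantitative bookkeeping as ``the heart of the argument,'' but you supply none of these ingredients, so the proposal stops exactly where the proof begins.
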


The case where the finest dominated splitting is \emph{trivial} (that is, $m=1$)
corresponds to Theorem~\ref{t.periodic}.

In this statement, the requirement that the graph $\sigma$ touches $\bsigma(\gamma_n)$
at the dimensions $i_j$ corresponding to the finest dominated splitting cannot be 
significantly weakened: we could at most move these points by a quantity proportional to $\varepsilon$. 
Indeed, 
for large $n$ the orbit $\gamma_n$ (with respect to $f$ or the perturbation $g$)
has a dominated splitting close to that on $\Lambda$.
So $\bsigma_{i_j}(\gamma_n)$ is the average of the logarithm of the determinant of the derivative restricted
to the sum of the first $i$ bundles of this splitting,
and cannot vary much.

On the other hand, we can improve the conclusions of Theorem~\ref{t.periodic2} along other directions
(see Section~\ref{s.up} for precise statements):

\begin{itemize}
\item $g$ is isotopic to $f$ by an isotopy $g_t$ and all the $g_t$ are close to~$f$.

\item The $\bsigma_i(\gamma_n,g_t)$ vary monotonically along the isotopy. 

\item If $\gamma_n$ is a hyperbolic periodic orbit,
then we may require that along the isotopy
$\gamma_n$ remains hyperbolic, provided of course that 
the index of $\gamma_n$ (ie, the dimension of the contracting bundle)
is the same as the index of $\sigma$.

\item Theorems~\ref{t.periodic} and \ref{t.periodic2} are expressed in terms of dominated splittings on the Hausdorff limit $\La$ of the periodic orbits $\gamma_n$ and not on the orbits themselves.
That is because every periodic orbit automatically has a dominated splitting separating different Lyapunov exponents.
However such splittings may be very weak. We give individual (hence stronger) versions of Theorems~\ref{t.periodic} and \ref{t.periodic2}, expressed in terms of the weakness of the dominated splitting and of the period of a periodic orbit: given $\varepsilon>0$ there is $N$ and $\ell$ such that 
for every periodic orbit $\gamma$ with period larger than $N$,
we can make any perturbation of the Lyapunov spectrum of $\gamma$
that is compatible with its finest $\ell$-dominated splitting.

\item 
In particular,
the conclusions of Theorems~\ref{t.periodic}, \ref{t.periodic2} still hold if $\gamma_n$ are periodic orbits of diffeomorphisms $f_n$ such that $\gamma_n$ converges to $\La$ in the Hausdorff topology, and $f_n$ converges to $f$ in the $C^1$-topology. 
\end{itemize}

\subsubsection{Separating Lyapunov Exponents, or Lowering the Lyapunov Graph} \label{sss.separating}

Theorem~\ref{t.periodic2} explains what are the Lyapunov graphs $\sigma$ \emph{above} $\bsigma(\gamma_n)$ 
(i.e.\ $\sigma_i\geq \bsigma_i(\gamma_n)$) 
that can be obtained by small $C^1$-perturbations of the derivative $Df$ along $\gamma_n$. 
To get a complete answer to Question~\ref{q.perturbation} 
we need to remove the 
hypothesis that $\sigma$ is above $\bsigma(\gamma_n)$. 
At first, let us remark that there is a natural lower bound for the possible perturbations of $\bsigma(\gamma_n)$
in terms of limit measures:

Consider a sequence of periodic orbits $\gamma_n$ such that the 
the invariant probability $\mu_n$ supported on $\gamma_n$
converges weakly to a (non necessarily ergodic) measure $\mu$.
Let $\la_1(\mu)\leq\cdots\leq\la_d(\mu)$ be the integrated Lyapunov exponents of $\mu$ and $\bsigma(\mu)=(\bsigma_0(\mu),\dots,\bsigma_d(\mu))$ the associated Lyapunov graph, 
where $\bsigma_i(\mu) = \sum_{j=1}^i \lambda_j(\mu)$.
Recall that the map $(f,\mu)\mapsto \bsigma(f, \mu)$ 
is lower semicontinuous 
(see \S~\ref{ss.semicontinuity}).   
As a consequence, for any $\delta>0$, there is $\varepsilon>0$ and $N\in \NN$ such that, for any $n\geq N$ and any $\varepsilon$-perturbation $g_n$ of the derivative along the orbits $\gamma_n$, one has:
$$
\bsigma_i(g_n,\gamma_n) \geq \bsigma_i(f,\mu)-\delta \quad \text{for any $i=1,\dots ,d$.}
$$
Hence the Lyapunov graph of the limit measure $\mu$ appears as being a lower bound of the Lyapunov graph of perturbations of the derivative along $\gamma_n$. 
The result below asserts that this bound can be attained:

\begin{ithe}\label{t.measure intro} 
Let $f$ be a diffeomorphism of a $d$-dimensional compact manifold, 
and let $f_n$ be a sequence of diffeomorphisms converging to $f$ in the $C^1$-topology.  
Let $\gamma_n=\orb(p_n,f_n)$ be a sequence of periodic orbits of  $f_n$ whose periods tend to infinity.
Suppose that the $f_n$-invariant probabilities $\mu_n$ associated to $\gamma_n$ 
converge in the weak-star topology to an $f$-invariant measure $\mu$. 
Then there is a sequence of diffeomorphisms $g_n$ such that:
\begin{itemize}
\item the $C^1$-distance between $g_n$ and $f_n$ tends to $0$;
\item the diffeomorphism $g_n$ preserves $\gamma_n$ and coincides with $f_n$ out of 
an arbitrarily small neighborhood of $\gamma_n$;
\item 
$\bsigma(g_n,\gamma_n) = \bsigma(f, \mu)$ for all $n$.
\end{itemize}
\end{ithe}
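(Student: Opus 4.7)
The plan is to combine a ``coarse lowering'' construction with the raising Theorem~\ref{t.periodic2}. By the lower semicontinuity of $\bsigma$ one already has $\bsigma_i(f_n,\gamma_n) \geq \bsigma_i(f,\mu) - o(1)$, so the target $\bsigma(f,\mu)$ is a genuine lower bound; the strategy is to produce a perturbation $g_n^{(0)}$ whose spectrum on $\gamma_n$ sits slightly \emph{below} $\bsigma(f,\mu)$ (with equality at $\bsigma_d$ and at the dimensions of the finest dominated splitting on $\La$), and then raise it to hit $\bsigma(f,\mu)$ exactly via Theorem~\ref{t.periodic2}.

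First I would reduce to a finite ergodic decomposition. Using weak-$*$ density of finite convex combinations of ergodic measures among the invariant ones, approximate $\mu \approx \sum_{k=1}^K \alpha_k \mu^{(k)}$ with $\mu^{(k)}$ ergodic, so that $\bsigma(f,\mu)$ is approximated by $\sum_k \alpha_k \bsigma(f,\mu^{(k)})$. For each $k$ pick an Oseledets-regular point $q^{(k)} \in \supp(\mu^{(k)})$ lying in a common (large) Pesin block on which the Oseledets splitting is uniformly continuous. By the weak-$*$ convergence $\mu_n \to \mu$ together with a Birkhoff-type partition argument, for large $n$ the orbit $\gamma_n$ can be split into $K$ consecutive segments $S_n^{(1)},\dots,S_n^{(K)}$ with $|S_n^{(k)}|/\pi(p_n) \to \alpha_k$, each of which $\delta_n$-shadows an $f$-orbit segment starting at some point $q_n^{(k)}$ close to $q^{(k)}$, with $\delta_n \to 0$.

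The derivative surgery then replaces the cocycle $Df_n$ on each segment $S_n^{(k)}$ by $Df$ along the shadowed $f$-orbit of $q_n^{(k)}$, inserting at each of the $K$ junctions a linear ``adapter'' $R_k$ carrying the Oseledets frame at the end of $S_n^{(k)}$ to the one at the start of $S_n^{(k+1)}$. Because $f_n \to f$ in $C^1$ and shadowed points are close, the replaced matrices differ uniformly from $Df_n$ by $o(1)$, and Franks' Lemma produces a diffeomorphism $g_n^{(0)}$ that is $C^1$-close to $f_n$, preserves $\gamma_n$, is supported in a small neighborhood of it, and has this new cocycle as its derivative on $\gamma_n$. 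By construction the one-period matrix is, up to the $R_k$'s, a composition of Oseledets-regular blocks with aligned splittings, so its Lyapunov spectrum tends to $\sum_k \alpha_k \lambda(\mu^{(k)}) \to \lambda(\mu)$. By tuning segment lengths (and, if needed, inserting one small explicit diagonal perturbation in one segment) one can arrange the resulting spectrum to lie weakly below $\bsigma(f,\mu)$ coordinatewise, with exact equality at $\bsigma_d$ and at the dimensions coming from the finest dominated splitting. Theorem~\ref{t.periodic2} (or the per-orbit version mentioned in the list after it) then applies and lifts the spectrum on $\gamma_n$ to exactly $\bsigma(f,\mu)$, yielding the desired $g_n$.

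The principal obstacle is the alignment of Oseledets subspaces at the $K$ junctions: for a hyperbolic invariant measure these splittings are only measurable in general, so a priori the adapters $R_k$ need not be small perturbations. My plan to handle this is to restrict all $q^{(k)}$'s, as well as all junction points, to a single large Pesin block (obtained by a Luzin-type argument applied to the Oseledets splitting) on which the splitting is uniformly continuous; then $\delta_n$-shadowing forces the Oseledets frames at the end of $S_n^{(k)}$ and the beginning of $S_n^{(k+1)}$ to be close, so each $R_k$ can be chosen $o(1)$-close to the identity. Simultaneously juggling the weak-$*$ convergence, the segment-by-segment shadowing, the Pesin-block landings of junctions, and the overall $C^1$-size of the perturbation is the combinatorial-measure-theoretic heart of the argument.
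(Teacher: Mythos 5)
Your overall scheme (push the spectrum slightly below $\bsigma(f,\mu)$, then raise it exactly with Theorem~\ref{t.periodic2}) is not absurd, but the lowering step rests on a claim that does not follow from the hypotheses and is in general false: that for large $n$ the orbit $\gamma_n$ can be cut into long consecutive segments, each of which $\delta_n$-shadows an actual $f$-orbit starting near a prescribed regular point $q^{(k)}$, with $\delta_n\to 0$. Weak-star convergence $\mu_n\to\mu$ only gives statistical, Birkhoff-type information (the fraction of time $\gamma_n$ spends in prescribed regions); it does not produce orbitwise shadowing over segments whose length is proportional to the period $\pi(p_n)\to\infty$, and since no hyperbolicity of $\mu$ or of the limit set is assumed, no shadowing lemma is available. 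Without that shadowing, your derivative surgery (replacing $Df_n$ along a segment by $Df$ along the orbit of $q_n^{(k)}$) is not a $C^1$-small perturbation: the base points drift apart after a bounded time and the substituted matrices then differ from $Df_n$ by a definite amount, so Franks' Lemma cannot realize the surgery. A secondary but also genuine problem is the adapters $R_k$: the Oseledets frames on the two sides of a junction belong to different ergodic components (and the endpoint of a long segment need not land in your chosen Pesin block), so there is no reason for them to be $o(1)$-close; moreover a bounded, non-small adapter cannot be realized by a small perturbation at a single point. Finally, the assertion that a composition of long ``regular blocks'' has exponents close to the weighted averages is exactly the cancellation problem that is the technical heart of the matter; frame alignment as you describe it does not settle it. (A minor further point: the statement assumes no Hausdorff convergence of $\gamma_n$, so invoking the finest dominated splitting over $\La$ needs an extra reduction.)

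For comparison, the paper's proof (via the cocycle Theorem~\ref{t.measure}) avoids shadowing entirely: it fixes a finite time scale $m$ with $\int m^{-1}\log\|\wed^i A^m\|\,d\mu$ close to $L_i(A,\mu)$ for all $i$, uses weak-star convergence only through these fixed continuous observables evaluated on $\mu_k$, and perturbs the given derivative cocycle along $\gamma_n$ itself by small rotations inserted at times $m,2m,\dots$ to avoid cancellations (Lemmas~\ref{l.2 jacs}, \ref{l.rotate} and \ref{l.convert}), so that the sums of the top exponents are pushed up to the block averages $Z_i(y)$; upper semicontinuity of $L_i$ shows that no $Z_i(y)$ can exceed $L_i(A,\mu)$ by much, hence most starting points $y$ are simultaneously good for every $i$, which gives a spectrum $\delta$-close to $\bsigma(A,\mu)$; exactness is then obtained with Proposition~\ref{p.BGV} and Lemma~\ref{l.triangular}, not with the raising theorem. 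If you want to repair your argument, the step to replace is the segment-wise transport of $Df$-data onto $\gamma_n$: you must work with the derivative of $f_n$ along $\gamma_n$ itself and extract information from $\mu$ only through integrals of continuous functions over bounded time windows.
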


\begin{rema}
Section~6 from \cite{ABC} contains some related results.
(See also \S~\ref{sss.intro generic} below.)
Theorem~\ref{t.measure intro} is finer than the results in \cite{ABC}, because:
\begin{itemize}
\item it allows $\mu$ to be non-ergodic;
\item the periodic orbits with the desired Lyapunov exponents
produced by Theorem~\ref{t.measure intro} 
are also periodic orbits of the unperturbed map $f$,
while on \cite{ABC} these periodic orbits arise from the Ergodic Closing Lemma.
\end{itemize} 
\end{rema}

\subsubsection{Answer to Question~\ref{q.perturbation}} \label{sss.answer}

Next we combine Theorem~\ref{t.periodic2} with Theorem~\ref{t.measure intro}, 
in order to obtain a complete description of what is possible to get as a Lyapunov graph 
by perturbing the derivative along periodic orbits.
Let us first introduce some notation.

Given  a compact invariant set $\Lambda$ be for a diffeomorphism $f$,
let $E_1\dplus E_2 \dplus \cdots \dplus E_m$ be the finest dominated splitting over $\La$, and denote 
$i_j = \dim(E_1\oplus\cdots\oplus E_j)$ for $j \in \{1,\dots,m\}$. 
Given an $f$-invariant probability measure $\mu$ whose support is contained in $\Lambda$,
let $\cG(\mu,\La)$ indicate the set of convex graphs $\sigma\in \cS_d$ with 
 \begin{itemize}
 \item $\sigma_{i_j} = \bsigma_{i_j}(f,\mu)$ for every $j\in\{1,\dots, m\}$, and
 \item $\sigma_i \geq \bsigma_i(f,\mu)$ for every $i\in\{1,\dots, d\}$.
\end{itemize}

\begin{ithe}\label{t.measure intro 2} 
Let $f$ be a diffeomorphism of a $d$-dimensional compact manifold, and $f_n$ a sequence of diffeomorphisms converging to $f$ in the $C^1$-topology. Let $\gamma_n =\orb(p_n,f_n)$ be a sequence of periodic orbits of diffeomorphisms $f_n$ whose periods tend to infinity. 
Suppose that the $f_n$-invariant probabilities $\mu_n$ associated to $\gamma_n$ 
converge in the weak-star topology to an $f$-invariant measure $\mu$, 
and that the sets $\gamma_n$ converge in the Hausdorff topology to an $f$-invariant compact set $\La$. 

Then $\cG(\mu,\La)$ is precisely the set of the limits of Lyapunov graphs 
$\bsigma(g_n,\gamma_n)$ where $\{g_n\}$ runs over the set of all sequences of diffeomorphisms 
$g_n$ preserving $\gamma_n$
whose $C^1$-distance to $f_n$ tends to $0$ as $n \to \infty$.
\end{ithe}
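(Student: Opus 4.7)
The plan is to prove the stated set equality by showing both inclusions separately: that any limit $\sigma=\lim_n \bsigma(g_n,\gamma_n)$ must lie in $\cG(\mu,\La)$, and conversely that every $\sigma\in\cG(\mu,\La)$ is realized.

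For the necessity direction, take a sequence $g_n\to f$ in $C^1$ with $\gamma_n$ still $g_n$-invariant and $\bsigma(g_n,\gamma_n)\to\sigma$. The inequalities $\sigma_i\geq \bsigma_i(f,\mu)$ follow from the lower semicontinuity of the sum of the smallest $i$ integrated Lyapunov exponents, already invoked in the discussion preceding Theorem~\ref{t.measure intro}, applied to the joint convergence $(g_n,\mu_n)\to (f,\mu)$. For the equalities at the indices $i=i_j$, we use that the finest dominated splitting $E_1\dplus\cdots\dplus E_m$ over $\La$ extends to a dominated $Df$-invariant splitting on a neighborhood $U$ of $\La$ which is robust under $C^1$-perturbations. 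Hence for $n$ large the orbit $\gamma_n$ carries a dominated splitting $E_1^{g_n}\oplus\cdots\oplus E_m^{g_n}$ whose summands $C^0$-converge to the $E_j$'s. The identity
\[
\bsigma_{i_j}(g_n,\gamma_n)=\int \log\bigl|\det Dg_n|_{E_1^{g_n}\oplus\cdots\oplus E_j^{g_n}}\bigr|\,d\mu_n
\]
exhibits $\bsigma_{i_j}(g_n,\gamma_n)$ as the integral of a continuous function depending continuously on $g_n$, so it converges to $\bsigma_{i_j}(f,\mu)$.

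For the realization direction, fix $\sigma\in\cG(\mu,\La)$ and chain the two previous theorems. Applying Theorem~\ref{t.measure intro} to the sequence $(f_n,\gamma_n)$ yields diffeomorphisms $g_n^{(1)}$ with $d_{C^1}(g_n^{(1)},f_n)\to 0$, preserving $\gamma_n$, and satisfying the \emph{exact} equality $\bsigma(g_n^{(1)},\gamma_n)=\bsigma(f,\mu)$. Since $g_n^{(1)}\to f$ in $C^1$ and $\gamma_n\to\La$ in the Hausdorff topology, we are in the setting of the version of Theorem~\ref{t.periodic2} for converging sequences of diffeomorphisms given in the last bullet after that theorem. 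The target $\sigma$ satisfies
\[
\sigma_{i_j}=\bsigma_{i_j}(f,\mu)=\bsigma_{i_j}(g_n^{(1)},\gamma_n),
\qquad
\sigma_i\geq\bsigma_i(g_n^{(1)},\gamma_n),
\]
so the hypotheses of Theorem~\ref{t.periodic2} are met. A standard diagonal argument, with tolerances $\varepsilon_k\downarrow 0$ and corresponding thresholds $N_k$, then produces $g_n$ with $d_{C^1}(g_n,g_n^{(1)})\to 0$ and $\bsigma(g_n,\gamma_n)=\sigma$ for $n$ large. The triangle inequality gives $d_{C^1}(g_n,f_n)\to 0$, finishing the argument.

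The main obstacle is the necessity direction, specifically the continuity (as opposed to mere lower semicontinuity) of the partial sums $\bsigma_{i_j}$ at the indices of the dominated splitting. This relies on two inputs: the robustness of dominated splittings under small $C^1$-perturbations and the uniform continuity of the corresponding log-Jacobian in a neighborhood of $\La$ under both kinds of convergence. The realization half is comparatively clean because Theorem~\ref{t.measure intro} delivers the equality $\bsigma(g_n^{(1)},\gamma_n)=\bsigma(f,\mu)$ on the nose, which is precisely what is needed to verify the hypothesis $\sigma_{i_j}=\bsigma_{i_j}(\gamma_n)$ of Theorem~\ref{t.periodic2} without any approximation loss.
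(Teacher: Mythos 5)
Your proposal is correct and takes essentially the same route as the paper: the easy inclusion is proved there by the same semicontinuity-plus-domination-rigidity argument, and the realization half is obtained by first equalizing the graph to $\bsigma(f,\mu)$ and then raising it to $\sigma$ subject to the finest dominated splitting on $\La$, which is exactly your two-step chaining of Theorem~\ref{t.measure intro} with the sequence version of Theorem~\ref{t.periodic2}. The only difference is that the paper performs this chaining at the cocycle level (Theorem~\ref{t.measure} followed by Theorem~\ref{t.finest}, assembled in Theorem~\ref{t.measure finest}), where the quantifier bookkeeping preventing the first perturbation from creating new $L$-dominated splittings (Lemma~\ref{l.eta}) is made explicit, a point your argument delegates to the bullet-point strengthening of Theorem~\ref{t.periodic2} stated in the introduction.
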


Notice that $\Lambda \supset \supp \mu$ in the statement above, and this inclusion can be strict.
(See Lemma~\ref{l.class measure}, for example.)

\subsection{Consequences}\label{ss.intro conseq}

We now explore some consequences of our perturbation theorems.

\subsubsection{Index Changes}

The first corollary describes explicitly 
what are the indices\footnote{Recall that the \emph{index} (or \emph{stable index})
of a hyperbolic periodic orbit is the dimension of its contracting bundle.} 
we can create by perturbing a set of periodic orbits. 

\begin{coro}\label{c.index} 
Let $f$ be a diffeomorphism of a $d$-dimensional compact manifold, and 
let $\gamma_n =\orb(p_n)$ be a sequence of periodic orbits whose periods tend to infinity. 
Suppose that the invariant probabilities $\mu_n$ associated to $\gamma_n$ 
converge in the weak-star topology to a measure $\mu$, 
and that the sets $\gamma_n$ converge in the Hausdorff topology to an $f$-invariant compact set $\La$. 
Let $E_1\dplus E_2 \dplus \cdots \dplus E_m$ be the finest dominated splitting over $\La$, and denote 
$i_j = \dim(E_1\oplus\cdots\oplus E_j)$ for $j \in \{1,\dots,m\}$, and $i_0=0$. 

Assume that $k \in \{0,\ldots,d\}$ satisfies 
\begin{equation}\label{e.cutoff}
\bsigma_k(\mu) \leq \min_{j \in \{0,\dots,m\}}  \bsigma_{i_j}(\mu) \, .
\end{equation}
Then for every sufficiently large $n$ there exists a perturbation $g$ of $f$ 
such that $\gamma_n$ is preserved by $g$ 
and is hyperbolic with index $k$.
\end{coro}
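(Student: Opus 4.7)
My plan is to deduce Corollary~\ref{c.index} from Theorem~\ref{t.measure intro 2} applied with $f_n = f$: that result says every $\sigma \in \cG(\mu,\La)$ is the limit $\sigma = \lim_n \bsigma(g_n,\gamma_n)$ for some sequence of $C^1$-perturbations $g_n$ of $f$ preserving $\gamma_n$. Since ``$\lambda_k(\sigma) < 0 < \lambda_{k+1}(\sigma)$'' is an open condition on convex graphs (where $\lambda_i = \sigma_i - \sigma_{i-1}$), it is enough to exhibit a single $\sigma \in \cG(\mu,\La)$ whose associated spectrum is strictly hyperbolic of index $k$; continuity then transfers the same sign pattern to $\bsigma(g_n,\gamma_n)$ for all large $n$, and the required perturbation is $g := g_n$.

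I would build such a $\sigma$ by a ``V-shape'' construction. For $0 < k < d$, fix a small $\eta > 0$ and define
$$
V(i) := \bsigma_k(\mu) + \eta\,|k - i|, \qquad \sigma := \max\bigl(\bsigma(\mu),\, V\bigr).
$$
Both $\bsigma(\mu)$ and $V$ are convex, so their pointwise maximum $\sigma$ is convex; and $\sigma \geq \bsigma(\mu)$ by construction. The hypothesis $\bsigma_k(\mu) \leq \bsigma_{i_j}(\mu)$ for every $j \in \{0,\dots,m\}$ allows us to take
$$
\eta \;<\; \min_{j \,:\, i_j \neq k}\frac{\bsigma_{i_j}(\mu) - \bsigma_k(\mu)}{|i_j - k|},
$$
which (when the right-hand side is positive) ensures $V(i_j) \leq \bsigma_{i_j}(\mu)$ at each splitting index, so that $\sigma_{i_j} = \bsigma_{i_j}(\mu)$ and $\sigma \in \cG(\mu,\La)$. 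Finally $\sigma_k = \bsigma_k(\mu)$ while $\sigma_{k \pm 1} \geq \bsigma_k(\mu) + \eta$, which forces $\lambda_k \leq -\eta < 0 < \eta \leq \lambda_{k+1}$, yielding strict hyperbolicity of index $k$. The edge cases $k = 0$ (source) and $k = d$ (sink) are handled identically, replacing $V$ with a monotone affine function of small positive (resp.\ negative) slope.

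The main obstacle I foresee is the degenerate case where the hypothesis holds with equality $\bsigma_{i_j}(\mu) = \bsigma_k(\mu)$ at some $i_j \neq k$: the displayed minimum vanishes, forcing $\eta = 0$, and $\cG(\mu,\La)$ contains no strictly hyperbolic graph with minimum at $k$. To cover this case I would not insist on a strictly hyperbolic limit: use Theorem~\ref{t.measure intro 2} to produce $g_n$ approximating the degenerate $\sigma$ (flat on the interval between $k$ and $i_j$), and then apply a supplementary $C^1$-small, Franks-type perturbation supported on $\gamma_n$ that tilts the spectrum slightly about $k$, decreasing $\lambda_k(\gamma_n, g_n)$ and increasing $\lambda_{k+1}(\gamma_n, g_n)$ by equal small amounts while leaving the values of $\bsigma$ at the splitting indices unchanged. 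This final tilt fits within the overall $C^1$-budget and delivers the strictly hyperbolic orbit of index $k$, completing the argument.
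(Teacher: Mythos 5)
Your non-degenerate argument is sound and is essentially the paper's: exhibit one graph in $\cG(\mu,\La)$ whose exponents satisfy $\lambda_k<0<\lambda_{k+1}$, realize it (here exactly, via the theorem; your ``limit plus openness of the index condition'' variant also works), and read off the index. Your $\sigma=\max(\bsigma(\mu),V)$ is only cosmetically different from the paper's piecewise-affine interpolation on the block $[i_{j-1},i_j]$ containing $k$.

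The gap is in your treatment of the equality case $\bsigma_{i_l}(\mu)=\bsigma_k(\mu)$ for some splitting index $i_l\neq k$. First, the ``tilt'' you describe (decrease $\lambda_k$, increase $\lambda_{k+1}$ by equal amounts, i.e.\ lower only $\sigma_k$) does not produce a hyperbolic orbit of index $k$ when the flat stretch of the degenerate graph has length greater than one: if $\sigma$ is flat on $[k,i_l]$ with $i_l>k+1$, then after your tilt the exponents $\lambda_{k+2},\dots,\lambda_{i_l}$ are still zero, so you must adjust \emph{all} vanishing exponents, not just two. Second, and more seriously, your requirement of ``leaving the values of $\bsigma$ at the splitting indices unchanged'' is incompatible with the goal whenever $k$ is itself a splitting index tied at the minimum with another one. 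The basic example is $k=0$ with trivial finest splitting and $\bsigma_d(\mu)=0$ (the classical Ma\~n\'e situation of creating a source from a limit measure with zero exponent sum): index $0$ forces all exponents positive, hence $\bsigma_d>0=\bsigma_d(\mu)$, so the value at the splitting index $i_m=d$ \emph{must} move; the same obstruction occurs whenever $k\in\{i_{j-1},i_j\}$ and $\bsigma_{i_{j-1}}(\mu)=\bsigma_{i_j}(\mu)=\min_l\bsigma_{i_l}(\mu)$, since index $k$ then requires $\bsigma_{i_j}>\bsigma_{i_{j-1}}$ at the perturbed orbit. The cure is simply to drop that constraint in the last step: the finest dominated splitting pins the values $\bsigma_{i_j}$ only up to an error proportional to the perturbation size, so after realizing the degenerate graph one performs an arbitrarily small spectral perturbation giving all vanishing exponents the prescribed signs -- this is exactly what the paper does via Lemma~\ref{l.triangular} together with Proposition~\ref{p.BGV}, which do not preserve $\bsigma_d$ or the other $\bsigma_{i_j}$.
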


An example is shown in Figure~\ref{f.interval}.
Notice that the indices $k$ that satisfy relation~\eqref{e.cutoff} form an interval in $\ZZ$,
which is contained in an interval of the form $[i_{j-1}, i_j]$.

\begin{figure}[hbt]
\begin{center}
\psfrag{1}[c][c]{{\footnotesize $i_{j_0-1}$}}
\psfrag{2}[c][c]{{\footnotesize $i_{j_0}$}}
\psfrag{M}[l][l]{{\footnotesize $\bsigma_{i_{j_0}} = \min_j \bsigma_{i_{j}}$}}
\includegraphics[scale=.5]{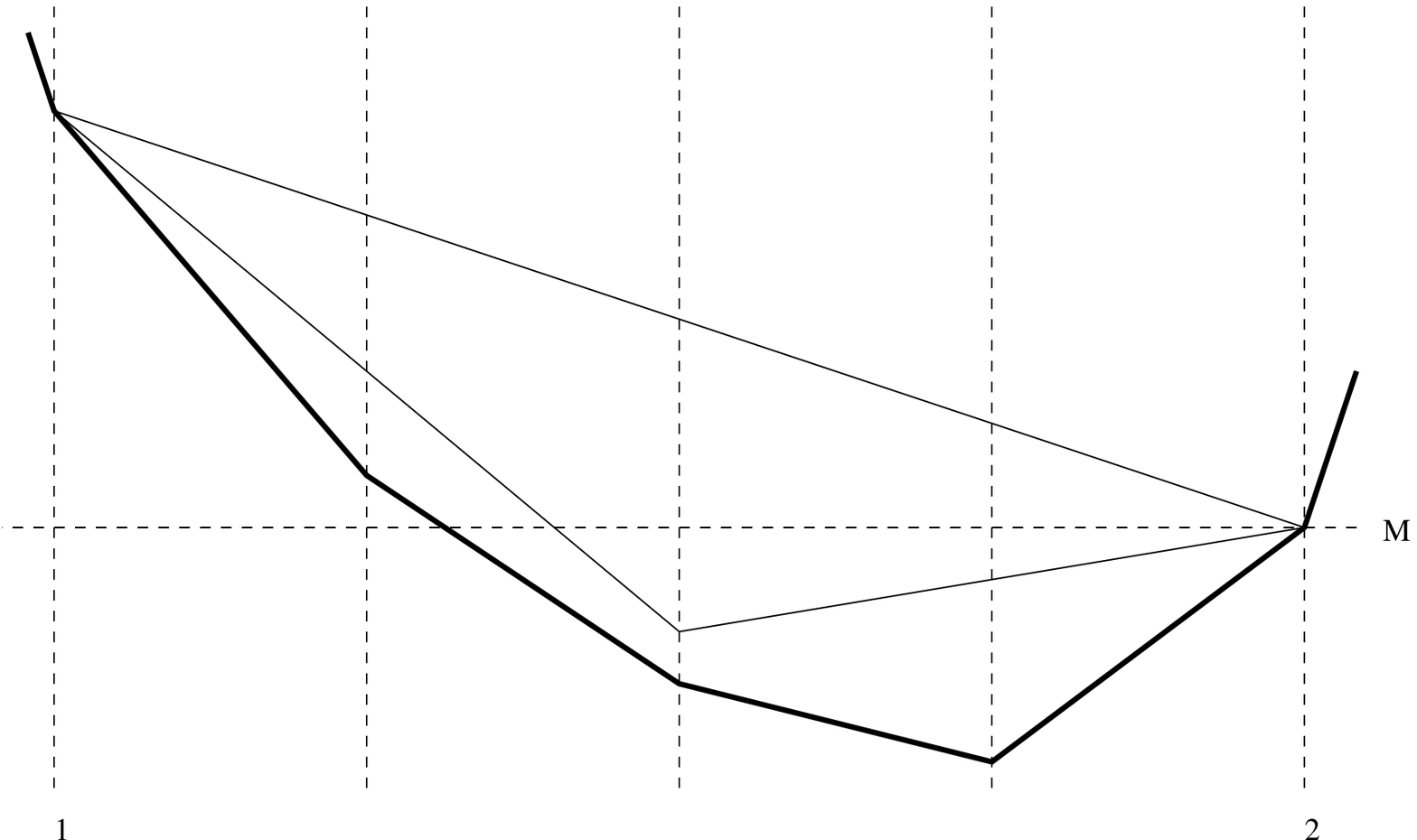}
\end{center}
\caption{\small An example in the situation of Corollary~\ref{c.index}: 
the lower graph is $\bsigma(\mu)$.
In this case, the minimum of $\bsigma_{i_{j}}(\mu)$ is attained at a unique $j = j_0$.
The numbers $k$ that satisfy condition \eqref{e.cutoff} are
$i_{j_0}-2$, $i_{j_0}-1$ and $i_{j_0}$; any of these is the index of a periodic point of a perturbation of $f$.
Some possibilities for the corresponding Lyapunov graphs are pictured.}
\label{f.interval}
\end{figure}

It was shown in \cite{ABCDW} that for any homoclinic class $H$
of a $C^1$-generic diffeomorphism, 
the indices of the periodic points in $H$ form an interval in $\ZZ$. This result is a motivation for the so called \emph{index completeness problems}:
\begin{ques} Fix an homoclinic class $H$ of a $C^1$ generic diffeomorphism $f$.
\begin{itemize}
	\item \emph{Inner completeness problem:} Does the interval $I_\text{in}$ of the indices of periodic orbits contained in $H$ coincide with the set $I_\text{erg}$ of indices of ergodic measures supported in $H$?
	\item \emph{Outer completeness problem:} Consider the set $I_\text{out}$ of indices of sequences of periodic orbits accumulating in $H$ (which contains $I_\text{erg}$, by \cite[Theorem~3.8]{ABC}). Does it form an interval? Does it coincide with the interval $I_\text{in}$?
\end{itemize}
\end{ques}

As a step towards the solution of the first problem above, we pose the following question:

\begin{ques}
In the situation of Corollary~\ref{c.index},
suppose additionally that all the periodic orbits $\gamma_n$ belong to the same homoclinic class $H$.
Let $k$ satisfy condition \eqref{e.cutoff}. 
Can we find a perturbation of $f$
such that the continuation of the class $H$
contains a periodic orbit of index~$k$?
\end{ques}

We expect the answer to be positive; Gourmelon's result from \cite{Gou} (a Franks' lemma that controls the position of the invariant manifolds of a periodic point) should be useful here. 

\subsubsection{Lyapunov Spectra of Periodic Orbits for Generic Diffeomorphisms}\label{sss.intro generic}

A general principle is that generic diffeomorphisms already display any robust property that we can get by small perturbations. 
Theorem~\ref{t.measure intro 2} explains what Lyapunov exponents are possible to get by perturbing the derivative of a diffeomorphism along a sequence of periodic orbits. 
Applying the general principle to Theorem~\ref{t.measure intro 2},
we obtain Corollary~\ref{c.generic} below.
It states that generically 
the closure of the set of Lyapunov graphs associated to a sequence of periodic orbits is 
exactly the set of graphs that are greater than the Lyapunov graph of the limit measure and that respect the constraints due to the dominated splittings on the limit support. 

\medskip

In order to be more precise, let us introduce some notation.
If $X$ is a compact metric space, let $\cP(X)$ be the set of Borel probability measures on $K$,
endowed with the weak-star topology.
Also, let $\cK(X)$ be the set of compact subsets of $X$ endowed with the Hausdorff distance.
Both $\cP(X)$ and $\cK(X)$ are compact sets.

If $f$ is a diffeomorphism of a compact manifold, and $\gamma \in \cK(M)$ is a periodic orbit,
let $\mu_\gamma$ indicate the only $f$-invariant probability supported on $\gamma$.
Let $\cX(f)$ be the closure in $\cP(M) \times \cK(M)$ of the set of pairs $(\mu_\gamma,\gamma)$ 
where $\gamma$ runs over the set of hyperbolic periodic orbits of $f$.
Since hyperbolic periodic orbits persist under perturbations,
the map $f \mapsto \cX(f)$ is lower semicontinuous\footnote{Recall that 
if $Y$ is a compact metric space and $Z$ is a topological space then a map 
${\Phi: Z \to \cK(Y)}$ is called \emph{lower} (or \emph{inner}) \emph{semicontinuous}
if for every $z \in Z$ and every open $V \subset Y$ with $V \cap \Phi(z) \neq \emptyset$ 
there is a neighborhood $U$ of $z$ in $Z$ such that $\Phi(z') \cap V \neq \emptyset$ for all $z' \in U$.
Also, $\Phi$ is called  \emph{upper} (or \emph{outer}) \emph{semicontinuous}
if for every $z \in Z$ and every open $V \subset Y$ with $V \supset \Phi(z)$ 
there is a neighborhood $U$ of $z$ in $Z$ such that $\Phi(z') \subset V$ for all $z' \in U$.}.

According to Ma\~n\'e ergodic closing lemma~\cite{Mane ECL} (see also \cite[Theorem~4.2]{ABC}), 
for $C^1$-generic diffeomorphisms $f$ the set $\cX(f)$ contains the pair $(\mu,\supp \mu)$ for every ergodic measure~$\mu$.
For another way of finding elements of $\cX(f)$, see Lemma~\ref{l.class measure}.

Recall that if $\mu \in \cP(M)$ is an $f$-invariant measure and $\Lambda \in \cK(M)$ is an $f$-invariant set 
containing $\supp \mu$ then $\cG(\mu,\La)$ indicates the set of all Lyapunov graphs that are compatible with 
$\bsigma(\mu)$ and with the finest dominated splitting on $\La$.

\medskip

Now we can state the following consequence of Theorem~\ref{t.measure intro 2}, 
which simultaneously improves Theorem~3.8 and Corollary~3.9 from \cite{ABC}:

\begin{coro}\label{c.generic} 
For $C^1$-generic diffeomorphisms $f$, for every $(\mu,\La)\in\cX(f)$, for every $\sigma\in \cG(\mu,\La)$ 
there is a sequence $\gamma_n$ of periodic orbits 
converging to $\La$ for the Hausdorff topology,
with $\mu_{\gamma_n}$ converging to $\mu$ in the weak-star topology, 
and $\bsigma(f,\gamma_n)$ converging to $\sigma$.
\end{coro}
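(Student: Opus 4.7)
The plan is to combine Theorem~\ref{t.measure intro 2} with a standard Baire-category argument applied to a suitable lower semicontinuous set-valued map. I would introduce
\[
\cY : \Diff^1(M) \to \cK\bigl(\cP(M) \times \cK(M) \times \cS_d\bigr),
\qquad
\cY(f) = \overline{\{(\mu_\gamma,\gamma,\bsigma(f,\gamma)) : \gamma \text{ a hyperbolic periodic orbit of } f\}}.
\]
Since hyperbolic periodic orbits admit continuations under $C^1$-perturbations and their Lyapunov spectra depend continuously on the diffeomorphism (eigenvalues depending continuously on matrix entries), the map $\cY$ is lower semicontinuous. The classical result on compact-valued lower semicontinuous maps on a Baire space then provides a residual subset $\cR \subset \Diff^1(M)$ on which $\cY$ is continuous.

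Fix $f \in \cR$, a pair $(\mu,\La)\in\cX(f)$ realised by hyperbolic periodic orbits $\gamma_n$ of $f$ with $(\mu_{\gamma_n},\gamma_n)\to(\mu,\La)$, and $\sigma\in\cG(\mu,\La)$. The case where the periods of the $\gamma_n$ stay bounded is trivial: then $\La$ is itself a single hyperbolic periodic orbit, its finest dominated splitting is the generalised eigenspace decomposition, and a short convexity argument shows that any $\sigma\in\cS_d$ lying above $\bsigma(\mu)$ and agreeing with it at the dimensions $i_j$ must coincide with $\bsigma(\mu)$, so the constant sequence $\gamma_n$ does the job. In the remaining case the periods tend to infinity, and Theorem~\ref{t.measure intro 2} applied with $f_n=f$ produces diffeomorphisms $g_n \to f$ in the $C^1$-topology, each preserving $\gamma_n$, with $\bsigma(g_n,\gamma_n)\to\sigma$. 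An arbitrarily small further perturbation of each $g_n$ makes $\gamma_n$ hyperbolic without disturbing the convergence of the associated triples, so that the triples $(\mu_{\gamma_n},\gamma_n,\bsigma(g_n,\gamma_n))$ belong to $\cY(g_n)$ and converge to $(\mu,\La,\sigma)$.

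Continuity of $\cY$ at $f$, and in particular its upper semicontinuous direction, then gives $(\mu,\La,\sigma)\in\cY(f)$. Unfolding the definition of $\cY(f)$ produces hyperbolic periodic orbits $\gamma'_n$ of $f$ itself whose triples $(\mu_{\gamma'_n},\gamma'_n,\bsigma(f,\gamma'_n))$ approximate $(\mu,\La,\sigma)$ arbitrarily well, which is exactly the claimed conclusion. The substantive analytic work of this corollary is absorbed into Theorem~\ref{t.measure intro 2}; the main obstacle that remains is the formal one of verifying lower semicontinuity of $\cY$ and correctly deploying the passage from perturbations $g_n \to f$ back to periodic orbits of $f$ itself via the continuity-point property, together with the mildly subtle step of keeping the $\gamma_n$ hyperbolic for the perturbed dynamics (handled either by the hyperbolicity-preserving refinements of Theorem~\ref{t.periodic2} or by a routine additional small perturbation).
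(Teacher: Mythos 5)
Your proposal is correct and follows essentially the same route as the paper: define the closure $\cZ(f)$ of triples $(\mu_\gamma,\gamma,\bsigma(f,\gamma))$ over hyperbolic periodic orbits, observe it is lower semicontinuous in $f$ by persistence of hyperbolic orbits, restrict to the residual set of continuity points, and push the triples produced by Theorem~\ref{t.measure intro 2} (after a small extra perturbation ensuring hyperbolicity of $\gamma_n$ under $g_n$) back to $f$ via upper semicontinuity at a continuity point. Your explicit treatment of the bounded-period case, where $\cG(\mu,\La)$ collapses to $\{\bsigma(\mu)\}$, is a harmless extra care that the paper leaves implicit.
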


\subsubsection{Universal Dynamical Systems}   \label{sss.intro universal}

Among other applications of Theorem~\ref{t.measure intro 2},
we will obtain (in Theorem~\ref{t.criterion} below) a criterion  for a diffeomorphism 
to be approximated by wild diffeomorphisms. 
Let us begin with the relevant definitions.

Let $f$ be a diffeomorphism of a compact manifold $M$.
The \emph{chain recurrent set} $R(f)$ has a natural partition into \emph{chain recurrence classes}: 
two points $x$, $y$ are equivalent if for any $\varepsilon>0$
there is an $\varepsilon$-pseudo orbit starting at $x$, passing by $y$, and coming back to $x$. 

We say that the diffeomorphism $f$ is \emph{tame}
if each chain recurrence class $C$ is robustly isolated: 
for every $g$ in a $C^1$-neighborhood of $f$, there is a unique chain recurrent class contained in a small neighborhood of $C$. In an equivalent way, a diffeomorphism $f$ is tame if the number of chain recurrence classes is finite and constant in a $C^1$-neighborhood of $f$. 

The set of tame diffeomorphisms is indicated by $\cT(M)$,
and the set of \emph{wild diffeomorphisms} is defined as 
$$
\cW(M)=\Diff^1(M)\setminus\overline{\cT(M)} \, .
$$ 

A generic diffeomorphism is tame (resp.\ wild) if and only if 
it has finitely (resp.\ infinitely) many chain recurrence classes.\footnote{Proof: 
If a diffeomorphism $f$ satisfies the generic properties of 
Remark~1.12 and Corollary~1.13 from~\cite{BC},
and it has finitely many chain recurrence classes,
then this number is locally constant.} 

It is shown in \cite{BD ENS} that $\cW(M)$ is nonempty for every compact manifold $M$ with $\dim M\geq 3$.  

\medskip

A stronger notion of wildness called \emph{universal dynamics} 
was introduced in the paper \cite{BD IHES}.
Let us define this notion.

Let $\DD^k$ be the closed $k$-dimensional disk.
Let $\DDiff{k}$ be the set of diffeomorphisms from $\DD^k$ to a subset of $\Int \DD^k$
that are diffeotopic to the identity map.

For any $k\in \{1,\dots,d\}$, we say that a diffeomorphism $f$ 
is \emph{$k$-universal}
(or has \emph{$k$-universal dynamics})
if there is a collection $\{D_n\}$ 
of embedded $k$-dimensional closed discs, 
with embeddings $\phi_n \colon \DD^k \to D_n$,
such that the following properties hold:
\begin{itemize}
\item For each $n$ there is $\pi_n$ 
such that $D_n$, $f(D_n)$, \ldots, $f^{\pi_n-1}(D_n)$ are pairwise disjoint
and $f^{\pi_n}(D_n)$ is contained in the (relative) interior of~$D_n$.
\item The orbits of the discs are pairwise disjoint. 
\item The discs are normally hyperbolic.
\item Let $F_n$ indicate the restriction of $f^{\pi_n}$ to $D_n$.
Then the maps $\phi_n^{-1} \circ F_n \circ \phi_n\colon \DD^k \to \Int \DD^k$ 
form a dense family in the set $\DDiff{k}$.
\end{itemize}
The $d$-universal diffeomorphisms are simply called \emph{universal}.

Some easy observations about this definition follow:

\begin{itemize}

\item
If $f$ is $k$-universal, then so is $f^{-1}$. 
(Take a suitable family of smaller discs.)

\item
$k+1$-universal dynamics implies $k$-universal dynamics. 

\item
The property of $1$-universal dynamics is very weak: 
it is generically satisfied in $\Diff^1(M)\setminus \overline{\mathrm{Hyp}(M)}$, 
where $\mathrm{Hyp}(M)$ is the set of Axiom~A diffeomorphisms without cycles. 

\item
If $\cR$ is a locally residual set (i.e., a set that is residual on an open set)
of $d$-universal diffeomorphisms 
then every $f \in \cR$ is wild.

\end{itemize}

It was shown in \cite{BD IHES} that (nonempty) locally residual sets of diffeomorphisms
with $d$-universal (and hence wild) dynamics indeed exist. 
These diffeomorphisms generically
have any robust or locally generic dynamical property 
that appears in $\DDiff{d}$
(e.g.\ existence of sink, source, aperiodic maximal transitive Cantor sets, etc.),
and this property is displayed in infinitely many periodic discs.

\medskip

However, $k$-universal dynamics does not imply wildness, for $k\leq d-2$: 
It is not too hard to modify Shub example 
of a non-hyperbolic robustly transitive diffeomorphism
(see e.g.\ \cite[\S~7.1.1]{BDV})
in order to find open sets of robustly transitive diffeomorphisms that are generically $d-2$-universal.

For this reason, we will introduce a stronger notion.
We will say that $f$ is \emph{freely $k$-universal} 
if one may choose the discs $D_n$ in the definition of $k$-universal dynamics 
so that they are pairwise separated by a filtration: 
for any $n\neq m$ there is an attracting region $U$ of $f$
(that is, $f(\bar U)\subset \Int U$) containing $D_n$ and disjoint from $D_m$, or vice versa.

Some easy observations are:  

\begin{itemize}

\item If $f$ has free $k$-universal dynamics, then so does $f^{-1}$. 
(Replace $U$ by {$M \setminus \bar{U}$}.)

\item $k$-universal dynamics is always freely $k$-universal if $k=d$ or $d-1$.
For $k=d$ this is immediate. 
If $k=d-1$ then
by definition each disc $D_n$ is either normally contracting or normally expanding,
and so there is a small neighborhood $V$ of $D_n$ (which can be chosen disjoint from the orbit of any other disc $D_m$) such that either $V$ or $M \setminus V$ is an attracting region.
\end{itemize}

Let us say that $f$ has \emph{normally contracting} (resp.\ \emph{normally expanding}) $k$-universal dynamics
if all discs in the definition of $k$-universality can be taken normally contracting 
(resp.\ normally expanding). \footnote{More generally, it could be interesting, for a global study of a wild dynamics, to distinguish other types of universal dynamics according to the kind of normal hyperbolicity.}
In any case, $f$ is freely $k$-universal. 

\medskip
 
A direct consequence of results from \cite{BD IHES} (see \S~\ref{ss.BD} below for details)
is the following criterion for free $k$-universality: 
\begin{theo}\label{t.BD}
Let $f$ be a diffeomorphism having a periodic point $p$ such that $Df^{\pi(p)}(p)$ satisfies:
\begin{itemize}
\item There is an invariant subspace $E \subset T_p M$ restricted to which $Df^{\pi(p)}(p)$ is the identity map;
\item $\dim E = k \geq 3$;
\item The other $d-k$ eigenvalues all have modulus bigger than $1$. 
\end{itemize}
Then there are arbitrarily small $C^1$ perturbations of $f$, 
supported in arbitrarily small neighborhoods of $p$,
that belong to a locally generic set formed by normally expanding $k$-universal diffeomorphisms.
\end{theo}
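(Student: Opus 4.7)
The plan is to perform two successive $C^1$-small perturbations of $f$ near $p$, bringing us to a model to which the universal dynamics construction of \cite{BD IHES} applies directly, and then observe that the resulting property is $G_\delta$. First, replacing $f$ by $f^{\pi(p)}$ (so that $p$ becomes a fixed point), I would apply Franks' lemma to produce a $C^1$-small perturbation, supported in an arbitrarily small neighborhood $U$ of $p$, that makes $f$ equal to its linearization in a coordinate box $\DD^k\times \DD^{d-k} \subset U$. In these coordinates, adapted to the splitting $T_p M = E \oplus E^u$ with $E^u$ the expanding subspace, the map reads $(x,y)\mapsto (x,Ty)$ where $T$ has all eigenvalues of modulus $>1$. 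The central disc $D = \DD^k\times\{0\}$ is thus pointwise fixed and normally expanding, and these properties persist under further $C^1$-small perturbations supported in $U$.

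Next, I would fix a countable dense sequence $\{h_n\}_{n\geq 1}$ in $\DDiff{k}$ and, by a further $C^1$-small perturbation, produce a diffeomorphism $g$ such that for each $n$ there is a $k$-dimensional normally expanding disc $D_n$ invariant under some iterate $g^{\pi_n}$, with the orbits of the different $D_n$ pairwise disjoint, and such that the restricted return map $\phi_n^{-1}\circ g^{\pi_n}|_{D_n}\circ \phi_n$ coincides with $h_n$ for an embedding $\phi_n\colon \DD^k\to D_n$. The construction proceeds inside a shrinking sequence of pairwise disjoint sub-boxes $B_n\subset U$ clustering on $p$: since on $D$ the dynamics is trivial, any isotopy from the identity to $h_n$, rescaled to a small sub-disc $D_n\subset B_n$ and composed with a long ``slow'' trajectory in the expanding direction that yields a large return time $\pi_n$, realizes $h_n$ through a $C^1$-small perturbation. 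The hypothesis $k\geq 3$ is what provides the topological room, inside $\DD^k$, for such disjoint insertions to coexist and for the isotopies from the identity to each $h_n$ to be realized; this is precisely the setup carried out in \cite{BD IHES}.

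Finally, the property of being (freely, normally expanding) $k$-universal is $G_\delta$ on a $C^1$-open neighborhood of $g$: fixing a countable basis of open sets of $\DDiff{k}$, it can be written as an intersection of open conditions asserting the existence of a normally expanding periodic disc whose normalized return map lies in the given basic open set. Hence $k$-universality holds on a residual subset of a $C^1$-neighborhood of $g$, which is the announced locally generic set. The main obstacle is the realization in the second step: one must arrange, within a single $C^1$-small perturbation, infinitely many pairwise disjoint periodic discs carrying a dense family of return maps, balancing the shrinking of the sub-boxes against the lengths of the periods so that the contribution of each block to the total $C^1$-norm is summable, all while keeping each disc normally expanding. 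This construction is carried out in \cite{BD IHES}; the role of Theorem~\ref{t.BD} here is to verify that the algebraic hypothesis on $Df^{\pi(p)}(p)$ (identity on an invariant $k$-plane with $k\geq 3$, plus expansion on the complement) is exactly what is needed to initialize their construction.
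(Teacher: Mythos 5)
Your first two steps (linearizing near $p$ via Franks' lemma to get a pointwise-fixed normally expanding $k$-disc, then inserting periodic sub-discs realizing prescribed return maps) are consistent with what is available — the single-disc realization is essentially Proposition~3.1 of \cite{BD IHES}, quoted in this paper as Lemma~\ref{l.anything}. The genuine gap is in your final, Baire-category step. Writing normally expanding $k$-universality as a countable intersection of open conditions ``there exists a normally expanding periodic disc whose normalized return map lies in the basic open set $\cO_n$'' only shows that your specific $g$ lies in a $G_\delta$ set; it does not make that set \emph{locally generic}. For residuality in a $C^1$-open neighborhood you must show each of these open conditions is \emph{dense} in that neighborhood, i.e.\ that an arbitrary $g'$ near $g$ can be re-perturbed to recreate such a disc. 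Your mechanism for creating discs is the identity behaviour of $Df^{\pi(p)}$ on $E$, which is destroyed by perturbation and hence cannot furnish this density. This is precisely why the paper's proof does not redo the construction: it perturbs so that the dynamics on the normally expanding invariant $k$-disc (close to the identity, $k\ge 3$) exhibits a homoclinic class that is \emph{robustly} without dominated splitting and contains homoclinically related saddles with jacobian greater than and less than $1$, and then invokes the main theorem of \cite{BD IHES}, whose robust hypotheses are exactly what produce the locally generic set. (Compare the proof of Lemma~\ref{l.criterion}, where density of the analogous open sets $\cV(n,\cO)$ is obtained from a robust determinant condition on a persistent hyperbolic periodic point.)

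A secondary defect in the same step: even granting density, the naive intersection over $n$ yields, for a generic $g'$, one disc per basic open set $\cO_n$ with no control on mutual position, whereas the definition of $k$-universality requires the orbits of the discs to be pairwise disjoint. The paper flags exactly this problem in the proof of Lemma~\ref{l.criterion} and repairs it with the auxiliary open sets $\hat{\cO}_n$ (maps of $\DD^k$ already containing periodic sub-discs realizing $\cO_1,\dots,\cO_n$) together with a recursive choice of scales forcing disjointness. So your outline would need both the robust-criterion input of \cite{BD IHES} (or an argument replacing it) for density, and a disjointness device of this kind, before the ``locally generic'' conclusion is justified.
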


Obviously, there is a similar criterion for normally contracting universal dynamics.

\begin{rema}
The criterion given by the theorem is certainly wrong if $k=1$. 
If $k=2$, it is unknown and is related with Smale conjecture on the denseness of Axiom~A diffeomorphisms on surfaces. 
However, if $\cU$ is a $C^1$-open set such that diffeomorphisms $f$ in a dense subset of $\cU$ have a periodic point satisfying the hypotheses of the theorem, with $k=2$, then generic diffeomorphisms in $\cU$ have free $2$-universal dynamics.
\end{rema}

Theorem~\ref{t.BD} gives a hint that the control of Lyapunov exponents
can be useful to get free $k$-universality (at least if $k \ge 3$):
one needs $k$ vanishing exponents, and all the others having the same sign. 
In fact, using Theorem~\ref{t.measure intro 2},
we can show the following simple criterion for a $C^1$-generic diffeomorphism to generate 
free $k$-universal dynamics, for any $k<d$:

\begin{ithe}\label{t.criterion} 
Let $f$ be a $C^1$-generic diffeomorphism 
having a periodic point $p$ of index $k \in \{ 1, \ldots, d-1 \}$.
Let $E_1\dplus\cdots \dplus E_m$ be the finest dominated splitting 
on the homoclinic
class\footnote{The \emph{homoclinic class} of a hyperbolic periodic point $p$ 
is the closure of the transverse intersections of stable and unstable manifolds of points along the \emph{orbit} 
of $p$.} 
$H(p)$.
Suppose that $\big| \det Df^{\pi(p)} \restr E_1(p) \big| > 1$.
Then generic diffeomorphisms in a neighborhood of $f$ have the normally expanding $k$-universal dynamics.
\end{ithe}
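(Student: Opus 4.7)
The plan is to reduce to Theorem~\ref{t.BD}: I aim to produce, by arbitrarily small $C^1$-perturbations of $f$, a periodic point whose derivative at the period acts as the identity on some $k$-dimensional invariant subspace and has its remaining $d-k$ eigenvalues of modulus greater than~$1$. Once such a point is obtained, Theorem~\ref{t.BD} provides a locally residual set of normally expanding $k$-universal diffeomorphisms in a $C^1$-neighborhood of~$f$. I treat the case $k \geq 3$; the case $k=1$ is immediate since $1$-universal dynamics is $C^1$-generic outside $\overline{\mathrm{Hyp}(M)}$, and $k=2$ follows from the Remark after Theorem~\ref{t.BD}, because the construction below yields the required periodic point on a $C^1$-dense subset of a neighborhood of~$f$.

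First I record two consequences of $f$ being $C^1$-generic. On the one hand, standard closing and shadowing arguments produce hyperbolic periodic orbits that shadow the orbit of~$p$ for an overwhelming fraction of their period while occasionally excursing so as to fill $H(p)$ in the Hausdorff sense; their averaged measures converge to~$\mu_p$, showing $(\mu_p, H(p)) \in \cX(f)$. On the other hand, the hypothesis $|\det Df^{\pi(p)}\restr E_1(p)|>1$ reads $\bsigma_{i_1}(p)>0$, while $p$ having stable index~$k$ forces $\bsigma_k(p)<0$; combining these gives $i_1 > k$, so $E_1(p)$ strictly contains the stable subspace of~$p$.

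Next I construct a target convex graph $\sigma \in \cS_d$ whose Lyapunov spectrum consists of $k$ zero exponents followed by $d-k$ strictly positive ones, and verify $\sigma \in \cG(\mu_p, H(p))$. Define $\sigma$ by the breakpoints $(0,0)$, $(k,0)$, and $(i_j, \bsigma_{i_j}(p))$ for $j = 1, \ldots, m$, joined by line segments. Its successive slopes are $0$ on $[0,k]$, then $c_1 = \bsigma_{i_1}(p)/(i_1-k)$ on $[k, i_1]$, then $c_j = (\bsigma_{i_j}(p) - \bsigma_{i_{j-1}}(p))/(i_j - i_{j-1})$ on $[i_{j-1}, i_j]$ for $j \geq 2$. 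The inequality $c_1 > 0$ follows from $\bsigma_{i_1}(p) > 0$, while each $c_j < c_{j+1}$ follows from the strict domination at~$p$: $c_j \leq \lambda_{i_j}(p) < \lambda_{i_j+1}(p) \leq c_{j+1}$, where the bound $c_1 < \lambda_{i_1}(p)$ uses that the negative exponents $\lambda_1, \ldots, \lambda_k$ have been discarded from the numerator defining $c_1$. Hence $\sigma$ is convex. The pointwise bound $\sigma \geq \bsigma_{\mu_p}$ follows from convexity of $\bsigma_{\mu_p}$ combined with $0 = \sigma_k > \bsigma_k(\mu_p)$: on $[k, i_1]$ the segment of $\sigma$ dominates the chord of $\bsigma_{\mu_p}$ between $k$ and $i_1$, while on each $[i_{j-1}, i_j]$ for $j \geq 2$ the segment of $\sigma$ is precisely the chord of $\bsigma_{\mu_p}$.

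Applying Corollary~\ref{c.generic} to $(\mu_p, H(p))$ and $\sigma$ produces a sequence $\gamma_n$ of periodic orbits of~$f$ whose Lyapunov graphs converge to~$\sigma$. For each large $n$, Theorem~\ref{t.periodic2} applied to $\gamma_n$ (with target the convex graph obtained by adapting $\sigma$ to the slightly perturbed values $\bsigma_{i_j}(\gamma_n)$) yields a perturbation $g$ arbitrarily $C^1$-close to~$f$, preserving $\gamma_n$, and realizing that target as $\bsigma(g,\gamma_n)$. At this stage the derivative $Dg^{\pi(\gamma_n)}$ preserves a $k$-dimensional subspace $E \subset E_1$ on which it has spectral radius~$1$ and expands uniformly on the complement. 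The main and most delicate step, which I view as the principal obstacle, is to upgrade ``spectral radius~$1$ on $E$'' to ``identity map on $E$'' by a further small perturbation. For this I would invoke the cocycle flexibility underlying Theorem~\ref{t.measure intro 2}: since $E_1$ admits no dominated subsplitting of controlled strength along $\gamma_n$ (individual version), the derivative cocycle on $E_1$ can be perturbed so that its product at the period realizes any matrix with the prescribed total determinant, in particular one whose restriction to some $k$-dimensional invariant subspace is the identity and whose restriction to the complementary $(i_1-k)$-dimensional subspace is a uniform expansion. Theorem~\ref{t.BD} applied to the resulting diffeomorphism and periodic point then yields the normally expanding $k$-universal dynamics asserted.
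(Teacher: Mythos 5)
There are two genuine gaps. First, the reduction to Theorem~\ref{t.BD} cannot give the stated conclusion. Theorem~\ref{t.BD} only says that the perturbed diffeomorphism lies in the closure of a \emph{locally} generic set of normally expanding $k$-universal diffeomorphisms, i.e.\ a set residual in \emph{some} open set, which need not be (and in general is not) a neighborhood of $f$; your construction perturbs $f$ once and then stops. Theorem~\ref{t.criterion} asserts something strictly stronger: a residual subset of an entire $C^1$-neighborhood of $f$ consists of normally expanding $k$-universal diffeomorphisms. The paper itself points out this distinction (``Here the interest is not to provide an example of a locally residual set\dots this could be done without difficulty using the arguments of \cite{BD IHES}''). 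To obtain the full statement one must exploit that the hypothesis is robust along the continuation $p_g$ (after restricting to an open set where the dimensions of the finest dominated splitting on $C(p_g)$ are locally constant, Lemma~\ref{l.stably finest}), show that for \emph{every} $g$ near $f$ and every precision one can create a suitable normally expanding periodic disc, and then intersect countably many open dense sets $\cV(n,\cO)$, with an extra device (the sets $\hat\cO_n$ in Lemma~\ref{l.criterion}) to make the infinitely many discs pairwise disjoint. None of this appears in your proposal. Relatedly, for $k=1,2$ Theorem~\ref{t.BD} does not apply at all, and the substitutes you invoke yield only $1$-universal, resp.\ free $2$-universal, dynamics, not the normally expanding $k$-universal conclusion claimed.

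Second, the step you yourself flag as delicate is carried out with a false claim: the perturbation theorems of this paper control only the Lyapunov spectrum, not the actual matrix at the period. The remark following Theorem~\ref{t.main} (citing the footnote in \cite{BGV}) states explicitly that one cannot in general make $A_1^n(x)$ a homothety, so ``the derivative cocycle on $E_1$ can be perturbed so that its product at the period realizes any matrix with the prescribed total determinant'' is not available. The paper's route (Lemma~\ref{l.identity}) is different and is the missing idea: once a periodic orbit with exactly $k$ vanishing exponents is created (which your graph $\sigma$ and Theorem~\ref{t.measure intro 2} do give, much as in Lemma~\ref{l.vanishing}), one linearizes locally, perturbs the restriction to the central subspace so that it becomes diagonalizable over $\CC$ with all eigenvalues roots of unity, and then passes to a \emph{different} nearby periodic point, of larger period, whose derivative at its period is the identity on a $k$-dimensional subspace; only then does one invoke the disc-creation statement (Lemma~\ref{l.anything}, i.e.\ Proposition~3.1 of \cite{BD IHES}) rather than Theorem~\ref{t.BD}. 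Your preliminary computations (that $(\mu_p,H(p))\in\cX(f)$, via what the paper does in Lemma~\ref{l.class measure}, and that your graph $\sigma$ lies in $\cG(\mu_p,H(p))$) are essentially correct, but the two gaps above concern precisely the parts where the real work of the proof lies.
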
 


Here the interest is not to provide an example of a locally residual set with free $k$-universal dynamics: 
this could be done without difficulty using the arguments of \cite{BD IHES}. 
We can in fact strengthen Theorem~\ref{t.criterion} and obtain the following result,
which in particular characterizes the diffeomorphisms which are far from 
normally expanding $k$-universal dynamics:

\begin{ithe} \label{t.dichotomy}  
For any $k \in \{1, \ldots, d-1\}$,
if $f$ is a generic diffeomorphism  then $f$ has (at least) one of the following properties:
\begin{enumerate}
\item 
$f$ is normally expanding $k$-universal; or:
\item 
Let $p$ be any periodic saddle of index $k$ and 
let $E_1\dplus\cdots \dplus E_m$ be the finest dominated splitting on the homoclinic class $H(p)$. 
Then $f$ contracts uniformly at the period the volume 
in $E_1$, on the periodic orbits homoclinically related with $p$.
More precisely, there is $\alpha = \alpha(p) > 0$ such that, 
for any $q$ homoclinically related with $p$, 
$$
\frac{1}{\pi(q)} \log \big| \det Df^{\pi(q)} \restr E_1(q) \big| < -\alpha .
$$
\end{enumerate}
\end{ithe}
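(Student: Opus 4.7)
The plan is to argue by contradiction: suppose $f$ is $C^1$-generic, that case (1) fails (so $f$ is not normally expanding $k$-universal), and that case (2) also fails for some periodic saddle $p$ of index $k$. Writing $E_1\dplus\cdots\dplus E_m$ for the finest dominated splitting on $H(p)$, the failure of (2) yields a sequence $q_n$ of periodic orbits homoclinically related to $p$ such that $\bsigma_{i_1}(f,\mu_{q_n})\to\beta$ for some $\beta\geq 0$.

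First I would reduce to the case $k=i_1$. If $k>i_1$, the domination of $E_1\dplus(E_2\oplus\cdots\oplus E_m)$ with some uniform constant $\tau>0$ gives $\lambda_{i_1+1}(q)\geq\lambda_{i_1}(q)+\tau$ for every $q\sim p$ of sufficiently large period. Since $q$ has index $k>i_1$, one has $\lambda_{i_1+1}(q)<0$, whence $\lambda_j(q)\leq\lambda_{i_1}(q)\leq-\tau$ for $j\leq i_1$ and thus $\bsigma_{i_1}(f,\mu_q)\leq-i_1\tau$ uniformly, contradicting our assumption. The complementary case $k<i_1$ will be excluded by the standard generic property that in a homoclinic class of a generic diffeomorphism the indices of periodic orbits belong to $\{i_1,\ldots,i_{m-1}\}$. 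Hence $k=i_1$, i.e.\ $E_1$ is the stable bundle along $H(p)$.

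Next I pass to limits: up to extraction, $\mu_{q_n}\to\mu$ weakly and $q_n\to\La$ in the Hausdorff topology, so $(\mu,\La)\in\cX(f)$ with $\La\subseteq H(p)$. Continuity of $E_1$ on $H(p)$ makes $\log|\det Df\restr E_1|$ continuous, so $\bsigma_{i_1}(f,\mu_{q_n})\to\bsigma_{i_1}(f,\mu)=\beta$. The case $\beta>0$ is immediate: for large $n$, the orbit $q_n$ has index $k$ and satisfies $|\det Df^{\pi(q_n)}\restr E_1(q_n)|>1$; Theorem~\ref{t.criterion} applied to $f$ with $q_n$ in the role of $p$ makes the normally expanding $k$-universal property locally residual in a neighborhood of $f$, and genericity of $f$ then places $f$ itself in that residual set, contradicting the failure of~(1).

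The substantive case is $\beta=0$. I would use Corollary~\ref{c.generic} to realize a suitable Lyapunov graph $\sigma\in\cG(\mu,\La)$ by periodic orbits of $f$. Specifically, set $\sigma_j:=0$ for $0\leq j\leq k$ and, on $[k,d]$, let $\sigma$ be the piecewise linear interpolation of the points $(i_\ell,\bsigma_{i_\ell}(f,\mu))$ for $\ell=1,\ldots,m$. One checks that $\sigma\in\cG(\mu,\La)$: equality at the corners $i_\ell$ holds by construction (in particular $\sigma_{i_1}=0=\beta$); on $[0,k]$ we have $\sigma\geq\bsigma(f,\mu)$ by convexity of $\bsigma(f,\mu)$ through the two zero endpoints $\bsigma_0(\mu)=\bsigma_k(\mu)=0$; on $[k,d]$ the graph $\sigma$ is the linear interpolation of the convex function $\bsigma(f,\mu)$ at prescribed corners, so $\sigma\geq\bsigma(f,\mu)$ there too; and the slope of $\sigma$ jumps at $k$ from zero to a strictly positive value, controlled from below in terms of the dominance constant (since $\bsigma_{i_2}(f,\mu)>0$). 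Corollary~\ref{c.generic} then provides hyperbolic periodic orbits $\gamma_n'$ of $f$ with $\bsigma(f,\gamma_n')\to\sigma$ and periods tending to infinity; their first $k$ Lyapunov exponents tend to $0$ while the remaining $d-k$ stay positive and bounded away from $0$. A Franks-type perturbation, distributing an $O(1/\pi(\gamma_n'))$ correction per step along the orbit, produces a $C^1$-small perturbation $g$ of $f$ preserving $\gamma_n'$ and such that $Dg^{\pi(\gamma_n')}$ at a point of $\gamma_n'$ is the identity on a $k$-dimensional invariant subspace and has the other $d-k$ eigenvalues of modulus greater than $1$. Applying Theorem~\ref{t.BD} to $g$ then places $g$, and hence $f$, in the closure of the locally residual set of normally expanding $k$-universal diffeomorphisms, the desired contradiction.

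The hard part will be this last Franks-type step: converting ``first $k$ Lyapunov exponents close to $0$'' into an \emph{exact} identity block of the derivative at the period with a $C^1$-small perturbation. This hinges on the period $\pi(\gamma_n')$ being large, so that the $O(1)$ multiplicative correction needed to reshape the derivative can be spread along the orbit as $O(1/\pi(\gamma_n'))$ modifications per step, while simultaneously ensuring that the other $d-k$ eigenvalues remain strictly outside the unit circle.
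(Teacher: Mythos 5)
Your overall strategy has a structural flaw that the paper's proof exists precisely to repair: you fix a generic $f$, assume both alternatives fail, and then invoke residual sets manufactured \emph{from $f$ itself} (the locally residual set produced by Theorem~\ref{t.criterion} applied at $q_n$, or by Theorem~\ref{t.BD} applied to a perturbation $g$ of $f$) and declare that ``genericity of $f$ places $f$ in that residual set.'' A generic diffeomorphism belongs to one residual set fixed in advance; it need not belong to residual subsets of neighborhoods constructed a posteriori, and in particular ``$f$ lies in the closure of a locally generic set of normally expanding $k$-universal diffeomorphisms'' is not in contradiction with ``$f$ is not normally expanding $k$-universal.'' The actual content of the paper's proof of Theorem~\ref{t.dichotomy} is the a~priori Baire bookkeeping that your sketch omits: Lemma~\ref{l.stupid} converts the failure of uniform volume contraction into an open/dense alternative, Lemma~\ref{l.dichotomy} proves the dichotomy residually on each member of a countable cover indexed by the period and the continuation of the saddle (using Lemma~\ref{l.stably finest} to freeze the splitting data and Lemma~\ref{l.criterion} with hypothesis~\eqref{e.residual}), and Lemma~\ref{l.cover} glues these into one residual subset of $\Diff^1(M)$. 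A related omission: the paper uses Lemma~\ref{l.class measure} to replace the $q_n$ by orbits converging in the Hausdorff topology to the whole class $H(p)$; with your $\La\subseteq H(p)$ the set $\cG(\mu,\La)$ is defined by the finest dominated splitting over $\La$, which may be strictly finer than the one over $H(p)$ and impose equalities $\sigma_i=\bsigma_i(\mu)$ incompatible with your flat graph, so Corollary~\ref{c.generic} need not apply to your $\sigma$.

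Even granting a reduction to a perturbative statement, three steps fail. First, Theorem~\ref{t.BD} requires $\dim E=k\ge 3$, so your endgame does not cover $k=1,2$, whereas the theorem is asserted for all $k\in\{1,\dots,d-1\}$; the paper instead runs Lemma~\ref{l.identity} and Lemma~\ref{l.anything} inside the Baire argument of Lemma~\ref{l.criterion}, which works for every $k$ and, unlike a one-shot approximation, produces the countable family of discs with pairwise disjoint orbits and dense return maps in $\DDiff{k}$ that the definition of $k$-universality demands. Second, your reduction to $k=i_1$ discards the case $k<i_1$ by appealing to a generic property that does not exist: the indices of periodic orbits in a homoclinic class are not confined to the indices of its finest dominated splitting (index variation inside a non-dominated bundle is the theme of the paper, cf.\ Corollary~\ref{c.index}), and $k<\dim E_1$ is exactly the regime of Theorem~\ref{t.criterion} that the argument must handle. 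Third, the step you yourself flag is a genuine gap: making the first $k$ exponents (even exactly) zero does not yield an identity block, since rotations and Jordan blocks survive; the paper's Lemma~\ref{l.identity} must perturb the eigenvalues to roots of unity and pass to a \emph{different} nearby periodic orbit of larger period to obtain an exact identity on a $k$-plane, and no amount of spreading an $O(1/\pi)$ correction along the original orbit replaces that.
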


Coming back to $k$-universal dynamics, we also obtain a criterion for it 
similar to Theorem~\ref{t.criterion}: see Theorem~\ref{t.non-free} in Section~\ref{s.universal}.

\medskip

Let us mention that \cite{BLY} 
provides, in  dimension $3$, an example of a $C^1$-open set $\cO$ of diffeomorphisms $f$ having  a robust quasi-attractor $\La_f$, satisfying the hypotheses of Theorem~\ref{t.criterion}: $\La_f$ admits a dominated splitting $E^{cs}\dplus E^u$ with $\dim E^{cs}=2$, and a periodic point $p_f\in\La_f$ of index $1$ with $\big| \det Df^{\pi(p_f)} \restr E^{cs}(p_f) \big| > 1$.  Hence, for generic $f$ in $\cO$ the local dynamics in a neighborhood of the quasi-attractor $\La_f$ is freely (normally expanding) $2$-universal.

\subsubsection{Other Consequences?}

We expect our results to be useful for other applications.
Bearing this in mind, we proved results that are actually stronger than 
those stated in this introduction.
For example, a strengthened version of Theorem~\ref{t.periodic2}
gives a whole \emph{path} of perturbations along which we have fine control of the Lyapunov graph
(see Section~\ref{s.up}).
This information can be useful if one wants to apply 
the Gourmelon--Franks Lemma \cite{Gou}, for instance.

\subsection{Other Comments and Organization of the Paper}

Actually most of our results are expressed in terms of \emph{linear cocycles}.
In fact, since this paper concerns \emph{periodic} orbits, we are mainly interested in 
cocycles over \emph{cyclic} dynamical systems.
The results for diffeomorphisms explained above follow by Franks Lemma.

Thus some of our results fit into the perturbation theory of matrix eigenvalues.
However, the literature in this area usually considers 
a single matrix or operator, while here we consider a finite product of them.
Of course, the key concept of domination is uninteresting for a single matrix.

\medskip

The paper is organized as follows. 
Section~\ref{s.prelim} introduces cocycles, 
and also contains other definitions, notations and basic facts to be used throughout the paper.
In Section~\ref{s.prop} we establish a central proposition that permits to mix two Lyapunov exponents while
keeping the others fixed.
In Section~\ref{s.up} 
we obtain cocycle versions of Theorems~\ref{t.periodic} and \ref{t.periodic2} 
that also incorporate the improvements mentioned above. 
In Section~\ref{s.measure} 
we obtain stronger versions of Theorems~\ref{t.measure intro} and \ref{t.measure intro 2}. 
The short Section~\ref{s.corol} contains the proofs of Corollaries~\ref{c.index} and \ref{c.generic}.
In Section~\ref{s.universal} we give the applications to universal dynamics.

\section{Definitions and Notations}  \label{s.prelim} 

\subsection{Linear Cocycles} 

A \emph{linear cocycle} is a vector bundle automorphism.
Let us be more precise and fix some notations.
Let $X$ be a compact metric space, 
and let $E$ be a vector bundle over $X$ of dimension $d$,
endowed with a euclidian metric $\|\mathord{\cdot}\|$.
The fiber over a point $x \in X$ is denoted by $E_x$ or $E(x)$.
Then a linear cocycle $A$ 
on $E$ is completely determined by a homeomorphism $T:X \to X$ and a 
continuous map that associates to each $x \in X$ an 
invertible linear map $A(x) : E(x) \to E(Tx)$.
We then say that \emph{$A$ is a cocycle on $E$ over $T$},
or more precisely that \emph{$(X,T,E,A)$ is a cocycle}.

The $n$-iterate of a cocycle is the cocycle over $T^n$ whose 
fiber maps are $A^n(x) = A(T^{n-1} x) \cdots A(x)$ if $n>0$,
$A^n(x) = A(T^{-1}x) \cdots A(T^n x)$ if $n<0$.

If $K>1$, we say that a cocycle as above is \emph{bounded by $K$}
if $K^{-1} \le \m(A(x)) \le \|A(x)\| \le K$ for every $x\in X$.
Here $\m(B)$ indicates the minimum expansion factor of the linear map $B$, that is
$\m(B) = \inf_{\|v\|=1}\|B v\|$, or $\m(B) = \|B^{-1}\|^{-1}$ when $B$ is invertible.

A cocycle $\tilde A$ is called an \emph{$\eps$-perturbation} of a cocycle $A$ if 
$\|\tilde A - A\| < \eps$.

A \emph{path of cocycles} is a family of cocycles $(X,T,E,A_t)$,
where $t$ runs on an interval $[t_0,t_1] \subset \RR$, such that $A_t(x)$ depends continuously on $(t,x)$.
We say that a path of cocycles $A_t$, $t\in [t_0,t_1]$ is \emph{$\eps$-short}
if each $A_t$ is an $\eps$-perturbation of $A_{t_0}$.

\begin{rema}\label{r.epsilon}
\begin{itemize}
\item If $A$ is bounded by $K$ and $\tilde A$ is an $\eps$-perturbation of $A$ then
$\tilde{A}^{-1}$ is a $K^2\eps$-perturbation of $A^{-1}$.
\item For any $K>1$, there is $\eps>0$ such that any $\eps$-perturbation of a cocycle bounded by $K$
is bounded by $2K$.   
\end{itemize} 
\end{rema}

\subsection{Restricted and Quotient Cocycles}\label{ss.procedures}

We say that a subbundle $F$ of $E$ (whose fibers by definition have constant dimension)
is \emph{invariant} if $A(x) \cdot F(x) = F(Tx)$ for each $x \in X$.
In that case, we define two new cocycles:
\begin{itemize}
\item 
The \emph{restricted cocycle} $A \restr F$ on the bundle $F$; 

\item 
the \emph{quotient cocycle} $A/F$ on the quotient bundle $E/F$ 
(where the norm of an element of $E(x)/F(x)$ is defined as the norm of its unique representative 
that is orthogonal to $F(x)$). 
\end{itemize}
Notice that if $A$ is bounded by $K$ then $A \restr F$ and $A/F$ are also bounded by $K$.

Let us recall Lemma~4.1 from \cite{BDP}, which gives some procedures for extension of cocycles
that will be used several times. 
Let $A$ be a cocycle on a bundle $E$ with an invariant subbundle $F$.
With respect to the splitting $E = F \oplus F^\perp$, we can write
$$ 
A = \begin{pmatrix} A\restr F & D \\ 0 & A/F \end{pmatrix}.
$$
Given any cocycle $B$ on $F$ we can define
a cocycle $\hat B$ on $E$ that preserves $F$ and satisfies $\hat B \restr F = B$ and $\hat B/F = A/F$, namely
$$ 
\hat B = \begin{pmatrix} B & D \\ 0 & A/F \end{pmatrix}.
$$
Moreover, $\hat B$ depends continuously on $A$ and $B$, and is bounded by $K$ if so are $A$ and $B$.
Similarly, given any cocycle $C$ on $E/F$ we can define a cocycle $\bar C$ on $E$ that preserves $F$,
and satisfies $\bar C \restr F = A$ and $\bar C/F = C$, namely
$$ 
\bar C = \begin{pmatrix} A \restr F & D \\ 0 & C \end{pmatrix}.
$$
Moreover, $\bar C$ depends continuously on $A$ and $C$, and is bounded by $K$ if so are $A$ and $C$.

\subsection{Domination}

Assume given a coycle $(X,T,E,A)$
and two invariant non-zero subbundles $F$, $G$ of constant dimensions.
Take $\ell$ in the set $\PT = \{ 2^0, 2^1, 2^2,  \ldots\}$.
We say that \emph{$F$ is $\ell$-dominated by $G$}
if
$$
\frac{\| A^\ell \restr F(x) \|}{\m(A^\ell \restr G(x))} < \frac{1}{2} \quad \text{for every $x\in X$.}
$$
This is denoted by $F <_\ell G$.
If in addition $E = F \oplus G$ then we say that
\emph{$E = F \dplus G$ is an $\ell$-dominated splitting}. 
The symbol $<$ under $\oplus$ is necessary because the order matters.
Notice that with respect to the inverse cocycle we have the reverse domination, that is, $G <_\ell F$.

A splitting is \emph{dominated} if it is $\ell$-dominated for some~$\ell \in \PT$.
The \emph{index}\footnote{Not to be confused with the (stable) index of a hyperbolic periodic orbit.}
of the dominated splitting is the number $\dim F$.

\begin{rema}
Most references do not require that the domination parameter $\ell$
must be a power of $2$. 
As it is trivial to see, this gives the same concept of dominated splitting.
An advantage of our powers-of-$2$ convention is that
$\ell$-dominated splittings are $L$-dominated for $L >\ell$.
\end{rema}

Given a cocycle and $\ell \in \PT$, an (ordered) invariant splitting $E = F_1 \dplus \cdots \dplus F_m$
into an arbitrary number of subbundles is called \emph{$\ell$-dominated} if 
$F_1 \oplus \cdots \oplus F_i$ is $\ell$-dominated by $F_{i+1} \oplus \cdots \oplus F_m$,
for each $i=1$, $2$, \ldots, $m-1$.
The \emph{indices} of the splitting are the numbers
$i_j = \dim F_1 \oplus \cdots \oplus F_j$ for $1 \le j \le m-1$.

\begin{prde}
Given any cocycle and any $\ell\in\PT$, there is an unique \emph{finest $\ell$-dominated splitting}
$F_1 \dplus \cdots \dplus F_m$, that is, an $\ell$-dominated splitting such that
if  $G_1 \dplus \cdots \dplus G_k$ is an $\ell$-dominated splitting then 
each $G_j$ is the sum of some of the $F_i$ (and in particular, $k \le m$).
(Here we must allow the possibility of a \emph{trivial} splitting, that is, $m=1$.)
\end{prde}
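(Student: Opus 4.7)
My plan is to build the finest splitting out of the individual ``slow'' bundles attached to each admissible index, exploiting uniqueness of each two-bundle dominated splitting and a monotonicity (nesting) property.

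\medskip

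\textbf{Step 1: Uniqueness of two-bundle dominated splittings.}
First I would recall (or prove from scratch via the standard cone-field argument) that if $E = V \dplus W$ is an $\ell$-dominated splitting, then $V$ and $W$ are uniquely determined by $\dim V$ and the cocycle. The point is that $V(x)$ can be characterized intrinsically: it is the set of $v \in E(x)$ such that $\|A^n(x) v\|$ grows strictly slower (in the dominated sense) than the fastest directions. So the index $\dim V$ already pins down $V$.

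\medskip

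\textbf{Step 2: Nesting of the slow bundles.}
Let $\mathcal I \subset \{1,\dots,d-1\}$ be the set of indices $i$ for which an $\ell$-dominated splitting $E = V_i \dplus W_i$ with $\dim V_i = i$ exists; by Step~1 the pair $(V_i, W_i)$ is then unique. The key lemma I have to prove is:
$$
i, i' \in \mathcal I, \ i < i' \ \Longrightarrow \ V_i \subset V_{i'}.
$$
I would argue this by contradiction: pick $v \in V_i \setminus V_{i'}$, decompose $v = v' + w'$ with $v' \in V_{i'}$, $w' \in W_{i'}$, $w' \neq 0$. Domination in the splitting $V_{i'} \dplus W_{i'}$ forces $\|A^n w'\|$ to dominate $\|A^n v'\|$, so $\|A^n v\|$ grows at the ``fast'' rate of $W_{i'}$. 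But $v \in V_i$, and domination in $V_i \dplus W_i$ forces $\|A^n v\|$ to grow at the ``slow'' rate of $V_i$. Comparing these two rates (both measured via $A^\ell$ as $n \to \infty$) yields a contradiction. This is the main technical obstacle; the argument requires carefully iterating both domination inequalities and comparing exponential growth rates.

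\medskip

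\textbf{Step 3: Assembling the splitting.}
If $\mathcal I = \emptyset$, the finest splitting is the trivial one and we are done. Otherwise write $\mathcal I = \{i_1 < i_2 < \cdots < i_{m-1}\}$ and set $i_0 = 0$, $i_m = d$, $V_0 = 0$, $V_d = E$, $W_0 = E$. Using Step~2 and the modular law, $V_{i_j} \cap W_{i_{j-1}}$ is an invariant subbundle of dimension $i_j - i_{j-1}$, and
$$
V_{i_j} = V_{i_{j-1}} \oplus (V_{i_j} \cap W_{i_{j-1}}).
$$
Define $F_j := V_{i_j} \cap W_{i_{j-1}}$ for $1 \le j \le m$. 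Then $E = F_1 \dplus F_2 \dplus \cdots \dplus F_m$, and for every $1 \le j \le m-1$ the partial sum $F_1 \oplus \cdots \oplus F_j$ equals $V_{i_j}$, which by construction is $\ell$-dominated by $W_{i_j} = F_{j+1} \oplus \cdots \oplus F_m$. Hence the splitting is $\ell$-dominated.

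\medskip

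\textbf{Step 4: Finest property and uniqueness.}
Any $\ell$-dominated splitting $E = G_1 \dplus \cdots \dplus G_k$ has indices $j_1 < \cdots < j_{k-1}$, and each $j_\ell$ lies in $\mathcal I$ (with $V_{j_\ell} = G_1 \oplus \cdots \oplus G_\ell$ by Step~1). So $\{j_1,\dots,j_{k-1}\} \subset \{i_1,\dots,i_{m-1}\}$, and each $G_\ell$ equals the sum of those $F_s$ whose partial-sum dimension sits between two consecutive $j$'s; in particular every $G_\ell$ is a sum of consecutive $F_s$'s. This proves that $F_1 \dplus \cdots \dplus F_m$ refines every $\ell$-dominated splitting, and uniqueness follows because the construction only depended on $\mathcal I$ and on the canonical bundles $V_i$.
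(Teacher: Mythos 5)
Your overall route is the same as the paper's: the paper proves the statement by invoking the nesting property of two-bundle $\ell$-dominated splittings with comparable indices (cited from \cite[p.~291]{BDV}) and then refining by intersections ($H=F''\cap G'$, with $G'=F'\oplus H$ and $F''=H\oplus G''$ by dimension counting); your Steps 3--4 are a fleshed-out version of that assembly. But two steps do not close as written. In Step 3, the domination of the assembled splitting rests on the identity $F_{j+1}\oplus\cdots\oplus F_m=W_{i_j}$, and this needs the dual nesting $W_{i'}\subset W_i$ for $i<i'$ (for instance $F_{j+2}=V_{i_{j+2}}\cap W_{i_{j+1}}$ lies in $W_{i_j}$ only if $W_{i_{j+1}}\subset W_{i_j}$); knowing that $F_{j+1}\oplus\cdots\oplus F_m$ is some complement of $V_{i_j}$ of the right dimension is not enough, since complements are not unique. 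Step 2 gives only the nesting of the slow bundles. The fix is cheap --- apply Step 2 to the inverse cocycle, for which the dominations reverse and the roles of the $V$'s and $W$'s are exchanged; the fact the paper cites gives both inclusions $F'\subset G'$ and $F''\supset G''$ at once --- but it must be stated, and the same identification is used again implicitly in Step 4 when you write each $G_\ell$ as a sum of consecutive $F_s$'s.

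The contradiction sketched in Step 2 is also not yet a contradiction. From $v=v'+w'$ with $w'\in W_{i'}(x)$ nonzero you get, using a uniform lower bound on $\angle(V_{i'},W_{i'})$ (which itself deserves a word: continuity of the bundles and compactness of $X$), that $\|A^n(x)v\|\ge c\,\m(A^n\restr W_{i'}(x))\,\|w'\|$, while $v\in V_i(x)$ gives $\|A^n(x)v\|\le C\,2^{-\lfloor n/\ell\rfloor}\m(A^n\restr W_i(x))\,\|v\|$. Together these only say $\m(A^n\restr W_{i'}(x))\le C'\,2^{-\lfloor n/\ell\rfloor}\m(A^n\restr W_i(x))$, which is not absurd by itself: domination compares the two bundles of one splitting at the same point, not $W_i$ with $W_{i'}$, and $\dim W_{i'}<\dim W_i$. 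You need a comparison in the reverse direction; for example, $V_{i'}\cap W_i$ is nonzero by dimension counting (as $i<i'$), and a nonzero $u$ in it satisfies $\m(A^n\restr W_i(x))\|u\|\le\|A^n(x)u\|\le C\,2^{-\lfloor n/\ell\rfloor}\m(A^n\restr W_{i'}(x))\|u\|$; multiplying the two resulting inequalities gives the contradiction for large $n$. Alternatively, you can simply cite \cite[p.~291]{BDV} for Steps 1--2, exactly as the paper does, and concentrate on the assembly.
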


\begin{proof}
If $E = F' \dplus F''$ and $E = G' \dplus G''$ 
are $\ell$-dominated splittings with $\dim F' \le \dim G'$ and $\dim F'' \ge \dim G''$ then
$F' \subset G'$ and $F'' \supset G''$
(see \cite[p.~291]{BDV}).
If these inequalities are strict, we define $H = F'' \cap G'$.
Then, by dimension counting,
$G' = F' \oplus H$ and $F'' = H \oplus G''$ and 
in particular $F' \dplus H \dplus G''$ is an $\ell$-dominated splitting into $3$ bundles.
Existence and uniqueness of the finest $\ell$-dominated splitting follows easily from these remarks.
\end{proof}

\begin{rema}
Beware that the bundles of the finest $\ell$-dominated splitting 
can admit nontrivial $\ell$-dominated splitting themselves.
On the other hand, for the more standard notion of \emph{finest dominated splitting} (with no fixed $\ell$),
the bundles are indecomposable.
\end{rema}

\subsection{Lyapunov Exponents}

Given a cocycle as above, Oseledets theorem 
assures the existence of a full probability set $R \subset X$
(that is, a set that has full measure with respect to any $T$-invariant Borel probability measure),
called the \emph{set of regular points},
such that for each $x \in R$ we have well-defined \emph{Lyapunov exponents} 
$$
\lambda_1 (A, x) \le \cdots \le \lambda_d(A, x)
$$
(repeated according to multiplicity).
Define
$$
\bsigma_i(A,x) = \sum_{j=1}^i \lambda_j(A,x).
$$
For each $x \in X$, the vector 
$$
\bsigma(A,x) = \big( 0, \bsigma_1(A,x) , \ldots, \bsigma_d(A,x) \big) \in \R^{d+1}
$$
is called the \emph{Lyapunov graph} of $A$ at $x$; 
the reason for the name is that we think of it as the graph of a map
$\{0,1,\ldots, d\}\to \R$.
Lyapunov graphs are always \emph{convex},
that is, they belong to the set
$$
\cS_d = \big\{ (\sigma_0, \ldots, \sigma_d) \in \R^{d+1} ; \; 
\sigma_0=0, \   \sigma_i - \sigma_{i-1} \le \sigma_{i+1}-\sigma_i  \text{ for $0<i<d$} \big\}.
$$

If $\tilde \sigma$, $\sigma \in \cS_d$, then we write
$\tilde \sigma \gtr \sigma$ to indicate that
$$
\tilde \sigma_i \ge \sigma_i \quad \text{for $i=1,2,\ldots,d$.} 
$$

We say that a continuous path of graphs 
$\sigma(t) \in \cS_d$, $t\in [t_0,t_1]$ 
is \emph{non-decreasing} if $t>t'$ implies $\sigma(t) \gtr \sigma(t')$.

\subsection{Cocycles Over Cyclic Dynamical Systems}

We will be specially concerned with cocycles  $(X,T,E,A)$
where the dynamical system $T:X\to X$ is \emph{cyclic},
that is, the set $X$ is finite, say with cardinality $n$, and $T$ is a cyclic permutation.
In that case, we will say that $T$ (or $A$) has \emph{period $n$},
and that the cocycle is \emph{cyclic}.

The \emph{eigenvalues} of the cocycle are the 
the eigenvalues of $A^n(x)$, where $x$ is any point of~$X$.
The Lyapunov exponents are the logarithms of the moduli
of the eigenvalues (repeated according to multiplicity)
divided by $n$.
The Lyapunov graph does not depend on the point $x$ and is written as $\bsigma(A)$.

\subsection{Difference Operator Notation and Convexity}\label{ss.difference operator}

(This material will be used in Sections~\ref{s.up} and \ref{s.measure}.)

Given a finite sequence of real numbers $y = (y_0, \ldots, y_k)$ we define another sequence
$\Delta y = (\Delta y_0, \ldots, \Delta y_{k-1})$ by 
\begin{equation}\label{e.difference operator}
\Delta y_i = y_{i+1} - y_i \, .
\end{equation}
A more precise notation would be $(\Delta y)_i$,
but we will follow custom and drop the parentheses.

Recursively we define another sequence $\Delta^2 y = \Delta (\Delta y)$,
that is $\Delta^2 y_i = y_{i+2} - 2y_{i+1} + y_i$,
for $i=0$, \ldots, $k-2$.
If the numbers $\Delta^2 y_i$ are always non-negative
then the graph of $y$ is convex.
The next lemma says that if in addition these numbers are always small then
the graph of $y$ is close to affine:

\begin{lemm}\label{l.nonlinearity}
If a sequence  $y_0, \ldots, y_k$ satisfies
$0 \le \Delta^2 y_i \le \gamma$ for $0 \le i \le k-2$
then
$$
0 \le \frac{k-i}{k} y_{0} + \frac{i}{k} y_{k}  -  y_i \le \frac{k^2}{4} \gamma
 \quad \text {for $0 \le i \le k$.}
$$
\end{lemm}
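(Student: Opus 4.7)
The plan is to compare the given sequence $(y_i)$ to two reference sequences whose second differences we understand exactly: the affine interpolant between $y_0$ and $y_k$, and an explicit quadratic comparison.

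First I would pass to the defect sequence $z_i := y_i - L_i$, where $L_i = \tfrac{k-i}{k} y_0 + \tfrac{i}{k} y_k$. Since $L$ is affine we have $\Delta^2 L_i = 0$, hence $\Delta^2 z_i = \Delta^2 y_i \in [0,\gamma]$, and by construction $z_0 = z_k = 0$. The quantity appearing in the lemma is exactly $-z_i = L_i - y_i$, so the two claimed inequalities become $0 \le -z_i$ and $-z_i \le \tfrac{\gamma k^2}{4}$.

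The lower inequality is the statement that a convex sequence vanishing at its endpoints is non-positive: from $\Delta^2 z_i \ge 0$ a one-line induction on $i$ (unpacking the definition of $\Delta^2$) gives $z_i \le \tfrac{k-i}{k} z_0 + \tfrac{i}{k} z_k = 0$, as desired.

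For the upper inequality I would introduce the explicit quadratic $\phi_i := \tfrac{\gamma}{2} i (k-i)$, chosen so that $\phi_0 = \phi_k = 0$ and $\Delta^2 \phi_i \equiv -\gamma$. Then $w_i := z_i + \phi_i$ satisfies $w_0 = w_k = 0$ and $\Delta^2 w_i \le 0$, so applying the same chord principle to the convex sequence $(-w_i)$ gives $w_i \ge 0$; therefore $-z_i \le \phi_i \le \max_{0 \le i \le k} \tfrac{\gamma}{2} i(k-i) = \tfrac{\gamma k^2}{8}$, which is in fact stronger than the claimed bound.

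There is no substantive obstacle: the only design choice is the comparison quadratic $\phi$, and once it is in hand both inequalities follow from the same elementary convexity principle applied to $z$ and to $w = z + \phi$ respectively.
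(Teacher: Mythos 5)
Your proof is correct, and it takes a genuinely different route from the paper's. The paper works directly with the supporting lines at the two endpoints: convexity gives $y_i \ge \max\bigl( y_0 + i\Delta y_0,\ y_k - (k-i)\Delta y_{k-1} \bigr)$, and then the telescoping identity $y_k = y_0 + k\Delta y_0 + \sum_{i}(k-1-i)\Delta^2 y_i$ is used to bound the extreme slopes, yielding the pointwise estimate $L_i - y_i \le \tfrac{k\gamma}{2}\min(i,k-i)$ and hence the stated $\tfrac{k^2}{4}\gamma$. You instead subtract the affine interpolant and compare with the explicit quadratic barrier $\phi_i = \tfrac{\gamma}{2}i(k-i)$, reducing both inequalities to the single chord principle for convex sequences vanishing at the endpoints (applied once to $z$ and once to $-w$ with $w=z+\phi$). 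Your argument is cleaner and gives the pointwise bound $L_i - y_i \le \tfrac{\gamma}{2}i(k-i)$, hence the uniform constant $\tfrac{k^2}{8}\gamma$, which is in fact sharp (take $\Delta^2 y_i \equiv \gamma$) and strictly improves the paper's $\tfrac{k^2}{4}\gamma$; the improvement is immaterial for the applications in Lemma 4.7 and Lemma 5.11, where only the order $k^2\gamma$ matters, but it is a genuine refinement. The one step you state glibly — that a sequence with nonnegative second differences lies below the chord through its endpoints — is standard (monotonicity of the first differences, or the convexity inequality already quoted in the paper's introduction for Lyapunov graphs), so there is no gap.
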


\begin{proof}
Since $\Delta^2 y_i \ge 0$, the graph of the sequence $y$
is convex and so the first asserted inequality holds.
Also by convexity, 
\begin{equation}\label{e.triangle}
y_i \ge \max \big( y_0  + i\Delta y_{0} , y_{k} - (k-i)\Delta y_{k-1} \big)
\end{equation}
for any $i \in (0,k)$.
Now,
\begin{multline*}
y_{k} 
=   y_{0} + \sum_{j=0}^{k-1} \Delta y_j
=   y_{0} + \sum_{j=0}^{k-1} \left( \Delta y_0 + \sum_{i=0}^{j-1} \Delta^2 y_i \right) \\
= y_0 + k \Delta y_0 + \sum_{i=0}^{k-2} (k-1-i)\Delta^2 y_i
\le y_{0} + k \Delta y_0 + \frac{k^2}{2} \gamma.
\end{multline*}
That is,  $\Delta y_{0} \ge (y_{k}-y_{0})/k - k\gamma/2$.
Symmetrically, $\Delta y_{k} \le (y_{k}-y_{0})/k + k \gamma/2$.
Using these estimates in \eqref{e.triangle} we get that for any $i \in (0, k)$, 
$$
y_i \ge \frac{k-i}{k} y_{0} + \frac{i}{k} y_{k}
- \frac{k\gamma}{2} \min (i, k-i) ,
$$
which immediately implies the lemma.
\end{proof}

\subsection{Singular Values and Exterior Powers}\label{ss.singular}

(This material will be used in Section~\ref{s.measure} only.)

Let $E$ and $E'$ be euclidean spaces (that is,
real vector spaces endowed with inner products) of the same dimension $d$.
Let $M: E \to E'$ be a linear map. 
We denote by $\jac M$  
the modulus of the determinant of the matrix of $M$ with respect to 
an arbitrary pair of orthonormal bases.
Let us indicate by
$\sing_1(M) \ge \cdots \ge \sing_d(M)$
the singular values of $M$ (that is, the eigenvalues of $\sqrt{M^* M}$, or equivalently
the semi-axes of the ellipsoid  $M (\mathbb{S}^{d-1})$)
repeated according to multiplicity.
(Notice that the singular values are non-increasingly ordered, 
opposite to our convention for the Lyapunov exponents.)
Thus $\|M\| = \sing_1(M)$, $\m(M)=\sing_d(M)$, 
and $\jac M = \prod_{i=1}^d \sing_i(M)$.

If $E' = E$ then we indicate by $\rad(M)$ the spectral radius of $M$. 

\medskip

We will need a few facts about exterior powers; see
e.g.~\cite{LArnold} for details. 
If $E$ is a vector space of dimension $d$,
let $\wed^i E$ indicate its $i$-th exterior power; 
this is a vector space of dimension
$\binom{d}{i}$ whose elements are called $i$-vectors.
Moreover, an inner product on $E$ induces an inner product on $\wed^i E$ 
with the following properties:
\begin{itemize}
\item The norm of a decomposable $i$-vector $v_1 \wedge \cdots \wedge v_i$
equals the $i$-volume of the parallelepiped with edges $v_1$, \dots, $v_i$.
\item If $\{e_1, \ldots, e_d\}$ is an orthonormal basis for $E$
then $\{e_{j_1} \wedge \cdots \wedge e_{j_i} \; ; j_1< \ldots < j_i\}$
is an orthonormal basis for $\wed^i E$.
\end{itemize}
Any linear map $M: E \to E'$ 
induces a linear map $\wed^i M: \wed^i E \to \wed^i E'$ such that 
the image of a decomposable $i$-vector $v_1 \wedge \cdots \wedge v_i$
is $M v_1 \wedge \cdots \wedge M v_i$.
Moreover, the singular values of $\wed^i M$ are obtained by taking
all possible products of $i$ singular values of $M$;
in particular,
\begin{align*}
\|\wed^i M\|      &= \sing_1 (M) \sing_2(M) \cdots \sing_i(M) , \\
\sing_2(\wed^i M) &= \sing_1 (M) \sing_2(M) \cdots \sing_{i-1}(M) \sing_{i+1}(M) , \\
\m(\wed^i M)      &= \sing_{d-i+1} (M) \sing_{d-i+2} (M)  \cdots \sing_d(M) .
\end{align*}
Analogously, if $E'=E$ then the eigenvalues of 
$\wed^i M$ are obtained by taking
all possible products of $i$ eigenvalues of $M$.

\subsection{Semicontinuity of the Lyapunov Spectrum} \label{ss.semicontinuity}

(This material will be used in Section~\ref{s.measure} only.)

Let $(X,T,E,A)$ be a cocycle of dimension $d$. 
Suppose $\mu$ is a (non necessarily ergodic) 
$T$-invariant probability measure.
We denote
$$
\bsigma_i(A,\mu) = \int \bsigma_i (A,x)\, d\mu(x) \quad \text{and} \quad
\bsigma(A,\mu) = \big(\bsigma_0(A,\mu) , \ldots, \bsigma_d(A,\mu)\big) .
$$
Then $\bsigma(A,\mu)$ is a convex graph, that is, an element of $\cS_d$.

It is sometimes more convenient to deal with the integrated sum of the $i$ \emph{biggest} Lyapunov exponents:
$$
L_i(A,\mu) = \int \big( \lambda_d(A,x)+\lambda_{d-1}(A,x) + \cdots + \lambda_{d-i+1}(A,x) \big) \, d\mu(x).
$$
That is, $L_i(A,\mu) = \bsigma_d(A,\mu) - \bsigma_{d-i}(A,\mu)$.
These numbers are also expressed by
$$
L_i(A,\mu) = 
\lim_{m \to \infty} \frac{1}{m} \log \|\wed^i A^m\| \, d\mu = 
\inf_{m} \frac{1}{m} \log \|\wed^i A^m\| \, d\mu  \, .
$$
As an immediate consequence of this formula,
the numbers $L_i(A,\mu)$ are upper-semicontinuous
with respect to $A$ and $\mu$
(where in the space of measures we use the weak-star topology).
Of course, if $i=d$ then the function is continuous, because it is given 
by $\int \log \jac A \, d\mu$.

Thus $\bsigma_i$ is lower-semicontinuous and $\bsigma_d$ is continuous.
In other words, if $A$ is the limit of a sequence of cocycles $B_k$,
and $\mu$ is the weak-star limit of a sequence of invariant probabilities $\mu_k$
then every accumulation point $\sigma$ of the sequence $\bsigma(B_k,\mu_k)$
satisfies $\sigma \gtr \bsigma(A,\mu)$ and $\sigma_d = \bsigma_d(A,\mu)$.

\section{Mixing Only Two Exponents}\label{s.prop}

The proofs of our results on raising Lyapunov graphs
rely on the central Proposition~\ref{p.two exp} below, 
which says how to perturb two ``neighbor'' Lyapunov exponents,
while keeping the others fixed.
This section is devoted to prove it.

\begin{prop}\label{p.two exp}
For any $d \ge 2$, $K>1$, $\eps>0$, there exists $\ell\in \PT$ 
such that the following holds:
Let $(X,T,E,A)$ be a $d$-dimensional cyclic 
cocycle bounded by $K$ 
and of period at least $\ell$.
Assume that $A$ has only real eigenvalues
and has no $\ell$-dominated splitting of index~$i$.

Then there exists 
an $\eps$-short path of cocycles $A_t$, $t\in [0,1]$ starting at $A$,
all of them with only real eigenvalues,
such that the path of graphs $\bsigma(A_t)$ is non-decreasing,
$\bsigma_j(A_t) = \bsigma_j(A)$ for all $j\neq i$,
and $\bsigma_i(A_1) = \big( \bsigma_{i-1}(A) + \bsigma_{i+1}(A) \big)/2$.
\end{prop}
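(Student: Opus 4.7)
The plan is to reduce Proposition~\ref{p.two exp} to a two-dimensional mixing argument, prove a path version of the classical Ma\~n\'e trick in dimension two, and then lift back via the extension procedures of \S\ref{ss.procedures}.

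If $\lambda_i(A)=\lambda_{i+1}(A)$, the constant path $A_t\equiv A$ already satisfies the conclusion, so I may assume $\lambda_i(A)<\lambda_{i+1}(A)$. Since $A$ is cyclic with only real eigenvalues, the generalized eigenspace decomposition of the return map $A^n(x_0)$ is transported by iteration to an invariant splitting $E=F^-\oplus F^+$, with $\dim F^-=i$ and exponents $\le\lambda_i$ on $F^-$, $\ge\lambda_{i+1}$ on $F^+$. After a preliminary $(\eps/2)$-perturbation, which preserves the open hypothesis of no $\ell$-dominated splitting of index $i$, I may further assume that $\lambda_{i-1}<\lambda_i$ and $\lambda_{i+1}<\lambda_{i+2}$ (whenever the relevant indices exist), so that the splitting refines to an invariant decomposition $E=F_{\mathrm{lo}}\oplus W\oplus F_{\mathrm{hi}}$ with $\dim W=2$ and the exponents on $W$ equal to exactly $\lambda_i$ and $\lambda_{i+1}$.

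The heart of the proof is the following two-dimensional statement, which I would establish separately: for every $K>1$ and $\eps>0$, if $\ell\in\PT$ is taken large enough, every $2$-dimensional cyclic cocycle $B$ bounded by $K$, of period $\ge\ell$, with distinct real eigenvalues and no $\ell$-dominated splitting, is the starting point of an $\eps$-short continuous path $B_t$, $t\in[0,1]$, of $2$-dimensional cocycles all with real eigenvalues and the same determinant as $B$, such that $\bsigma(B_t)$ is non-decreasing in $t$ and the two eigenvalues of $B_1$ coincide. The argument is the Ma\~n\'e recipe: the failure of $\ell$-domination yields a time $x_k$ along the orbit at which the two invariant eigendirections of $B^\ell(x_k)$ are almost colinear, so that composing $B(x_k)$ with a small rotation $R_\theta$ drastically changes the trace of the return map while fixing its determinant. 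The discriminant of the characteristic polynomial of $B_\theta^n$ is continuous in $\theta$, strictly positive at $\theta=0$; taking $\ell$ large one arranges some angle $\theta_{\max}\le\eps/2$ at which this discriminant vanishes, and reparametrizing the one-parameter family by the (non-increasing) discriminant gives the desired non-decreasing path ending at a cocycle with coincident real eigenvalues.

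Once $B_t$ is constructed, the lift to $E$ is immediate from \S\ref{ss.procedures}. Inside $E/F_{\mathrm{lo}}$ the image of $W$ is an invariant subbundle for $A/F_{\mathrm{lo}}$; applying the $\widehat{(\cdot)}$-construction one replaces the restriction to $W$ by $B_t$ while keeping the quotient on $E/(F_{\mathrm{lo}}\oplus W)$ unchanged, producing a cocycle $\tilde C_t$ on $E/F_{\mathrm{lo}}$; then the $\overline{(\cdot)}$-construction with $F=F_{\mathrm{lo}}$ lifts $\tilde C_t$ to a cocycle $A_t$ on $E$ with $A_t\restr F_{\mathrm{lo}} = A\restr F_{\mathrm{lo}}$. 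Both constructions preserve boundedness and depend continuously on their input, so $A_t$ is an $\eps$-short path. Because $\lambda_{i-1}<\lambda_i\le\mu_1(t)\le\mu_2(t)\le\lambda_{i+1}<\lambda_{i+2}$, the Lyapunov spectrum of $A_t$ is just $\lambda_1,\ldots,\lambda_{i-1},\mu_1(t),\mu_2(t),\lambda_{i+2},\ldots,\lambda_d$, which immediately gives $\bsigma_j(A_t)=\bsigma_j(A)$ for $j\ne i$, a non-decreasing $\bsigma_i(A_t)$, and $\bsigma_i(A_1)=\bigl(\bsigma_{i-1}(A)+\bsigma_{i+1}(A)\bigr)/2$.

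The main obstacle is the two-dimensional step. One must pick $\theta_{\max}$ exactly at the critical angle so that real eigenvalues persist all along the path, ensure that the smaller eigenvalue $\mu_1(\theta)$ moves monotonically (which may require a reparametrization by the discriminant rather than by $\theta$ itself), and, most delicately, quantify how large $\ell$ must be so that the collapsing rotation has size $\le\eps/2$. This quantitative bound is exactly where the no-$\ell$-domination hypothesis is used in its sharp form, translating failure of domination at parameter $\ell=\ell(\eps,K)$ into a near-colinearity of the two eigendirections strong enough that an $\eps$-small rotation suffices to close the eigenvalue gap.
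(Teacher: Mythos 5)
There is a genuine gap, and it is exactly the one the paper warns against. Your reduction assumes that, after isolating the invariant two-dimensional bundle $W$ carrying the exponents $\lambda_i,\lambda_{i+1}$, the induced two-dimensional cocycle on $W$ (or on its image in $E/F_{\mathrm{lo}}$) has no $\ell$-dominated splitting, so that the two-dimensional Ma\~n\'e-type lemma applies to it. But the hypothesis of Proposition~\ref{p.two exp} is only that $A$ has no $\ell$-dominated splitting \emph{of index $i$ on all of $E$}, i.e.\ for the splitting $F^-\oplus F^+$ with $\dim F^-=i$; this does not pass to the subbundle $W$. The finite-time quantity $\|A^\ell\restr F^-(x)\|/\m(A^\ell\restr F^+(x))$ being large at some point may be caused entirely by directions outside $W$ (for instance the most expanded vector of $A^\ell\restr F^-$ may lie close to the $\lambda_1$-eigendirection and the least expanded vector of $A^\ell\restr F^+$ close to the $\lambda_d$-eigendirection), while the splitting of $W$ into its two eigendirections is strongly $\ell$-dominated. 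In that situation no $\eps$-small perturbation supported on $W$ can close the gap between $\lambda_i$ and $\lambda_{i+1}$, because the domination on $W$ persists under small perturbations and forces the two exponents of the restricted cocycle to stay apart; so your two-dimensional lemma simply cannot be invoked, and its quantitative conclusion (``near-colinearity of the two eigendirections'' of $W$ from failure of index-$i$ domination) is false.

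This is why the paper does not argue this way: it proves Proposition~\ref{p.two exp} by induction on $d$, and the whole technical content (Lemma~\ref{l.main}, supported by Lemmas~\ref{l.mane}, \ref{l.BDP} and \ref{l.BV}) is to show that non-domination, which need not be inherited by subbundles, can be \emph{transferred} by an eigenvalue-preserving $\eps$-perturbation either to a proper invariant subbundle or to a quotient bundle, to which the induction hypothesis then applies via the extension procedures of \S\ref{ss.procedures}. Your two-dimensional step and the final bookkeeping of the graphs $\bsigma(A_t)$ are in the spirit of the paper's base case ($d=2$, where the hypothesis genuinely concerns the two eigendirections), but without a mechanism to move the non-dominance onto the two-dimensional piece you work with, the reduction collapses. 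A secondary, fixable point: your preliminary perturbation separating $\lambda_{i-1}$ from $\lambda_i$ and $\lambda_{i+1}$ from $\lambda_{i+2}$ must be done so as to keep $\bsigma_j$ fixed for all $j\neq i$ (only $\lambda_i$ may be raised and $\lambda_{i+1}$ lowered by equal amounts), and this needs an argument in the presence of Jordan blocks; but this is minor compared with the transfer-of-non-domination issue.
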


The assertions about $\bsigma(A_t)$ can be reread as follows:
\begin{itemize}
\item the functions $\lambda_1(A_t)$, \ldots, $\lambda_{i-1}(A_t)$, 
$\lambda_i(A_t) + \lambda_{i+1}(A_t)$, $\lambda_{i+1}(A_t)$, \ldots, $\lambda_d(A_t)$
are constant;
\item $t>t'$ implies $\lambda_i(A_{t'}) \le \lambda_i(A_t) \le \lambda_{i+1}(A_t) \le \lambda_{i+1}(A_{t'})$;
\item $\lambda_i(A_1) = \lambda_{i+1}(A_1)$.
\end{itemize}

\begin{rema}\label{r.robinhood}
Such a path of graphs corresponds to what is known in majorization theory as an
``elementary Robin Hood operation'' or ``elementary $T$-transform'' (see \cite{MOA}, p.~82).
In the interpretation where $\lambda_j$ is the wealth of the individual $j$ 
(see \cite{MOA}, p.~5--8),
such operation consists of transferring part of the wealth of the individual $i+1$
to its neighbor $i$ 
in such a way that $i+1$ stays at least as rich as $i$.
(In the case of Proposition~\ref{p.two exp}, the two individuals become equally rich.)
A general ``Robin Hood operation'' 
is a transfer of wealth between two individuals that
are not necessarily neighbors.
\end{rema}

Here is a extremely brief indication of the proof:
The case $d=2$ is easy: A preliminary perturbation makes the angle between the two bundles small over some point,
and then the exponents are mixed by composing with rotations at this point.
The general case would be a trivial consequence of the $2$-dimensional case
if it were true that subbundles of a bundle without (strong) domination have no (strong) domination as well.
(It is false!)
To deal with the general case, we show that if a cocycle has no domination
then after a perturbation this non-dominance appears on a subbundle or on a quotient bundle.
This permits us to prove Proposition~\ref{p.two exp} by induction on the dimension~$d$.

\medskip

Before going into the proof of Proposition~\ref{p.two exp} itself,
we need some auxiliary results.

\subsection{Converting Non-dominance into Small Angles}

Assume that a cocycle $A$ has an invariant splitting $F \oplus H$ 
such that the eigenvalues of the restricted cocycle $A \restr F$ 
are all different from those of the restricted cocycle $A\restr H$.
Suppose that $A_t$ is a path of cocycles, all of them with the same eigenvalues.
Then $A_t$ has an invariant splitting $F_t \oplus H_t$ 
that depends continuously on $t$
and coincides with $F \oplus H$ for $t=0$.
The bundles $F_t$ and $H_t$ are called the \emph{continuations} of $F$ and $H$.

The following lemma is based on an argument by Ma\~{n}\'{e}:

\begin{lemm}\label{l.mane} 
For any $d \ge 2$, $K>1$, $\eps>0$ 
and $\alpha>0$, there exists $\ell\in \PT$ 
such that the following holds:
Let $(X,T,E,A)$ be a $d$-dimensional cyclic cocycle bounded by $K$ 
and of period $n \ge \ell$.
Assume that $E= F \oplus H$ is an invariant splitting 
such that
\begin{equation}\label{e.separation}
\frac{\|A^n \restr F(x)\|}{\m( A^n \restr H(x) )} < 1 \quad \text{for any $x\in X$.}
\end{equation}
(In particular, the Lyapunov exponents along $F$ are smaller that the Lyapunov exponents along $H$.)
Assume also that $F$ is not $\ell$-dominated by $H$,
and that $F$ or $H$ is one-dimensional.

Then there exists
an $\eps$-short path of cocycles $A_t$, $t\in [0,1]$ starting at $A$,
all of them with the same eigenvalues,
such that if $F_t$ and $H_t$ denote the continuations of $F$ and $H$,
then 
$$
\angle (F_1(x_0), H_1(x_0)) < \alpha \quad \text{for some point $x_0 \in X$.}
$$
\end{lemm}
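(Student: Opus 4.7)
The plan is to exploit the failure of $\ell$-domination to amplify a small, explicit perturbation of $A$ into a substantial tilt of the invariant bundles, while automatically preserving the eigenvalues of the full-period holonomy via a block-triangularity trick. First, I reduce to $\dim H = 1$: if instead $\dim F = 1$, I apply the lemma to the inverse cocycle $A^{-1}$ (which swaps the roles of $F$ and $H$ in the non-domination hypothesis and in condition \eqref{e.separation}) and convert the resulting path back via Remark~\ref{r.epsilon}, at the cost of a factor $K^2$ in $\eps$. Henceforth $A \restr H$ is a scalar cocycle whose norm is exactly multiplicative along the orbit, and I write $\lambda_H$ for the $H$-eigenvalue of $A^n$.

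The key structural observation is that a perturbation of $A$ that adds only to the ``$H$-to-$F$'' off-diagonal block in the splitting $F \oplus H$, leaving the diagonal blocks $A\restr F$ and $A\restr H$ fixed, keeps $F$ invariant and makes the full-period holonomy block upper triangular with unchanged diagonal; its eigenvalues therefore automatically coincide with those of $A^n$. I then choose $z \in X$ and a unit vector $u \in F(z)$ with $\|A^\ell u\| \ge \tfrac{1}{2}\|A^\ell \restr H(z)\|$ (available by non-$\ell$-domination), and set $y = T^\ell z$ and $v = A^\ell u \in F(y)$. For $t \in [0,1]$, I define $A_t$ to coincide with $A$ outside $z$ and at $z$ to be $A(z) + C_t$, where $C_t : E(z) \to E(Tz)$ is the operator vanishing on $F(z)$ and sending $h \in H(z)$ to $t\tau \langle h, h_0 \rangle A(z) u$, with $h_0$ a unit vector of $H(z)$ and $\tau > 0$ a small parameter. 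For $\tau \le \eps/K$ this is an $\eps$-short path of eigenvalue-preserving cocycles; the continuation $F_t$ of $F$ equals $F$ itself, and the continuation $H_t$ of $H$ is a new line $\tilde H_t$ complementary to $F$.

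A direct computation in the block upper triangular form gives $\tilde H_t(y) = \{(f, h) \in F(y) \oplus H(y) : f = (\lambda_H I - A_F^n(y))^{-1} K_t h\}$, with $K_t h = t\tau \|A^{n-\ell} \restr H(y)\|\, \|h\|\, v$. Multiplicativity of $\|A\restr H\|$ gives $\|A^{n-\ell}\restr H(y)\| \cdot \|A^\ell \restr H(z)\| = |\lambda_H|$, and combining this with the non-domination inequality yields $\|K_t h\| \ge (t\tau/2)|\lambda_H|\,\|h\|$ in the direction $\hat v = v/\|v\|$. Consequently the angle satisfies $\sin \angle(F(y), \tilde H_t(y)) \le 2/\bigl[t\tau\, |\lambda_H|\, \|(\lambda_H I - A_F^n(y))^{-1} \hat v\|\bigr]$. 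Setting $\tau = \eps/K$, to obtain $\angle(F(y), \tilde H_1(y)) < \alpha$ it therefore suffices that the resolvent factor $|\lambda_H| \cdot \|(\lambda_H I - A_F^n(y))^{-1} \hat v\|$ be at least $2K/(\eps \sin\alpha)$.

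The main obstacle is this quantitative amplification: for $\ell$ sufficiently large as a function of $K, \eps, \alpha$, the failure of $\ell$-domination must force the resolvent factor above to exceed $2K/(\eps \sin\alpha)$. The argument is by contradiction: if this factor were bounded by some $M$, then via spectral projections onto the $\lambda_H$-eigenspace of $A^n$ one obtains an invariant decomposition with uniform spectral gap of order $1/M$, from which an iteration argument yields an $\ell$-dominated splitting of $A$ at a scale depending only on $M$ and $K$, contradicting the hypothesis once $\ell$ is chosen large enough. Combined with continuity of the angle in $t \in [0,1]$, this produces the $\eps$-short path with $x_0 = y$ required by the lemma.
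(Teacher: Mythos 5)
The decisive step of your argument --- the claim in your last paragraph that if the resolvent factor $|\lambda_H|\,\|(\lambda_H I - A_F^n(y))^{-1}\hat v\|$ were bounded by some $M$, then spectral projections would produce an $\ell$-dominated splitting --- is false, and this is exactly where the whole difficulty of the lemma sits. Domination is a condition on \emph{all intermediate times}, whereas your resolvent factor only sees the spectral data of the full-period map $A^n$, and these are essentially decoupled. Concretely, take $d=2$ with both bundles one-dimensional, let $A\restr H$ multiply by $2$ at every point, and let $A\restr F$ multiply by $1/2$ at every point except along a segment of length $\ell$ starting at $z$, where it multiplies by $2$. Then $\|A^\ell \restr F(z)\| \ge \tfrac12 \m(A^\ell \restr H(z))$, so $F$ is not $\ell$-dominated by $H$, and \eqref{e.separation} holds as soon as $n>2\ell$; yet $|\lambda_F/\lambda_H| = 4^{\ell-n}$ is arbitrarily small for $n\gg\ell$, so the factor $|\lambda_H|/|\lambda_H-\lambda_F|$ is close to $1$ no matter how large $\ell$ is chosen. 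In this situation your single shear at $z$ of size $\tau\le\eps/K$ (which leaves both diagonal blocks untouched) tilts the continuation of $H$ at $y$ by only $O(\eps)$, so for fixed $\alpha$ and small $\eps$ the conclusion $\angle(F_1,H_1)<\alpha$ is not reached, and no choice of $\ell$ repairs this --- the quantifier order of the lemma ($\eps$ and $\alpha$ first, then $\ell$) is violated.

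Structurally, the failure of $\ell$-domination only provides a ratio $\ge 1/2$ at time scale $\ell$, and a perturbation confined to the off-diagonal block at a single point cannot amplify this. The paper's proof supplies the missing amplification by genuinely perturbing the diagonal blocks: in the ``weak'' case it first makes an eigenvalue-preserving perturbation along the non-dominated segment, multiplying $A$ by $e^{\pm\gamma t}$ on $F$ and $H$ over the two halves of the segment, which boosts the ratio to $e^{\gamma\ell}/(2\Lambda)$ --- this is precisely where the largeness of $\ell$ is used --- and only then applies a shear at the beginning of the segment, together with a correcting map at the period so that the full-period matrix (hence its eigenvalues) is unchanged; the small angle is then read off at an intermediate point $T^k z$ by comparing the expansion of the sheared vector along $F$ with that along $H$, not through the resolvent of $A^n$. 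Several ingredients of your write-up are fine (the reduction to $\dim H=1$ via the inverse cocycle, the eigenvalue preservation for perturbations of the $H$-to-$F$ block, and the formula for $\tilde H_t(y)$), but the quantitative step linking non-$\ell$-domination to a large tilt is absent, so the proof as proposed does not go through.
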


\begin{proof}
Let $d \ge 2$, $K>1$, $\eps>0$ and $\alpha>0$ be given.
Let $\ell$ be large (how large it needs to be will become clear later).

Take the cocycle $(X,T,E,A)$ and the splitting $E = F \oplus H$ as in the statement of the lemma.
At least one of the bundles $F$ or $H$ is one-dimensional.
Let us suppose it is $H$; the other case will follow by considering the inverse cocycle.

We assume that 
\begin{equation}\label{e.alpha}
\angle(F(x), H(x)) \ge \alpha \quad \text{for every $x\in X$,}
\end{equation}
otherwise there is nothing to prove.

Take a large number $\Lambda > 1$ (how large it needs to be will become clear later).
We split the proof into two cases:

\smallskip \noindent \emph{First case:}
We suppose that a strong form of non-dominance is present:
there is $z \in X$ and $0 < k < n= \# X$ such that
\begin{equation}\label{e.first case}
\frac{\|A^k \restr F(z)\|}{\m( A^k \restr H(z) )} > \Lambda.
\end{equation}
Then let $f$ be the unit vector in $F(z)$ most expanded by $A^k(z)$,
and let $h$ be an unit vector in $H(z)$. 
For $\tau \in [0,1]$, we define a linear map $S_\tau : E(z) \to E(z)$ by
$$
S_\tau \cdot h = \tau f + h , \quad
S_\tau \restr F(z) \text{ is the identity.}
$$
It follows from \eqref{e.alpha} that
there exists $C_1>0$ depending only on $\alpha$ such that 
$\|S_\tau  - \Id \| \le C_1 \tau$.
On the other hand, by~\eqref{e.separation} the vector $f$ is less expanded than $h$ by the map $A^n(z): E(z) \to E(z)$.
Using \eqref{e.alpha} again, we see that 
if $\tau\in [0,1]$ then the angle that the vector $A^n(z) \cdot (\tau f + h)$
makes with $F(z)$ cannot be too small.
Therefore, defining another linear map $U_\tau: E(z) \to E(z)$ by
$$
U_\tau \cdot \big[ A^n(z) \cdot (\tau f + h) \big] = A^n(z) \cdot h , \qquad
U_\tau \restr F(z) \text{ is the identity,}
$$
we have $\|U_\tau  - \Id \| \le C_2 \tau$, for some $C_2>0$ that depends only on $\alpha$.

Take $0<\beta<1$  with $C_1 \beta K$, $C_2 \beta K < \eps/2$.
Define a family of cocycles $A_t$, $t\in [0,1]$ by
$$
A_t(z) = A(z) \circ S_{\beta t} , \quad
A_t(T^{-1} z) = U_{\beta t} \circ A(T^{-1} z), \qquad
A_t(x) = A(x) \text{ if } x \neq z, T^{-1} z.
$$
Then $A_0=A$ and $\|A_t - A\| \le \eps/2$ for every $t\in [0,1]$.
The cocycle $A_t$ has an invariant splitting $F_t \oplus H_t$ where
$F_t = F$ and $H_t (T^j z)$ is spanned by $A_t^j(z) \cdot h$.
Also, the linear maps $A_t^n(z)$ and $A^n(z)$ have the same eigenvalues.

If we choose $\Lambda$ large enough (depending on $\alpha$ and $\beta$)
then it follows from \eqref{e.first case} that
the angle between $A_1^k(z) \cdot h$
and $A^k(z) \cdot f$ is less than $\alpha$.
Thus $\angle (F_1(T^k z), H_1(T^k z)) < \alpha$, as desired.

\smallskip \noindent \emph{Second case:}
Assume that we are not in the previous case, that is,
$$
\frac{\|A^k \restr F(x)\|}{\m( A^k \restr H(x) )} \le \Lambda \quad \text{for every $x \in X$ and $0<k<n$.}
$$
By assumption, the splitting $F \oplus H$ is not $\ell$-dominated,
so there is $z\in X$ such that
$$
\frac{\|A^\ell \restr F(z)\|}{\m( A^\ell \restr H(z) )} \ge \frac{1}{2} \, .
$$
In particular,
$$
\frac{\|A^{\ell/2} \restr F(z)\|}{\m( A^{\ell/2} \restr H(z) )} \ge
\frac{\m( A^{\ell/2} \restr H(T^{\ell/2} z) )}{\|A^{\ell/2} \restr F( T^{\ell/2} z)\|} \cdot
\frac{\|A^\ell \restr F(z)\|}{\m( A^\ell \restr H(z) )} \ge
\frac{1}{2\Lambda} \, .
$$
Let $\gamma>0$ and define a family of cocycles $A_t$, $t\in [0,1]$ by:
\begin{itemize}
\item $A_t(x)$ equals $e^{\gamma t} A(x)$ over $F(x)$ and 
$e^{-\gamma t} A(x)$ over $H(x)$ if $x=T^i z$ with $0 \le i \le \ell/2$;
\item $A_t(x)$ equals $e^{-\gamma t} A(x)$ over $F(x)$ and 
$e^{\gamma t} A(x)$ over $H(x)$ if $x=T^i z$ with $\ell/2 \le i \le \ell$;
\item $A_t(x) = A(x)$ otherwise.
\end{itemize}
We choose $\gamma$ depending only on $\alpha$, $K$ and $\eps$ as large as possible
so that ${\|A_t - A\|} \le \eps/2$.
The cocycles $A_t$ have the same invariant subbundles and Lyapunov spectrum as $A$.
Also, 
$$
\frac{\|A_1^{\ell/2} \restr F(z)\|}{\m( A_1^{\ell/2} \restr H(z) )} =
e^{\gamma \ell} \frac{\|A^{\ell/2} \restr F(z)\|}{\m( A^{\ell/2} \restr H(z) )} \ge
\frac{e^{\gamma \ell}}{2\Lambda} \, .
$$
We assume that $\ell$ is large enough so that the right hand side is bigger than $\Lambda$.
Thus the cocycle $A_1$ (which still satisfies \eqref{e.separation})
falls in the first case of the proof, with $k=\ell/2$.
The desired path of cocycles from $A$ to a cocycle with a small angle
is thus obtained by concatenating the path just described with a path given by the first case.
\end{proof}

\subsection{The Effect of Rotations in Dimension 2}   

Recall that we indicate the spectral radius of a square matrix $B$ by $\rad(B)$.
Let $R_\theta$ indicate the rotation of angle $\theta$ in $\R^2$.

\begin{lemm}\label{l.dim 2}
Suppose $B \in \GL(2,\RR)$ has eigenvalues of different moduli.
Let $\alpha \in (0, \pi/2]$ be the angle between the eigenspaces.
Then there exists $\beta \in (0, \alpha)$ and $s \in \{+1, -1\}$
such that:
\begin{itemize}
\item the function $\theta \in [0,\beta] \mapsto \rad(R_{s\theta} B)$ is decreasing;
\item the matrix $R_{s\beta} B$ has two eigenvalues of the same moduli.
\end{itemize}
\end{lemm}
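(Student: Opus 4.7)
My plan is a direct two-dimensional calculation in a well-chosen orthonormal basis. Take coordinates so that the unit $\lambda_1$-eigenvector of $B$ equals $e_1$ and the unit $\lambda_2$-eigenvector equals $(\cos\alpha,\sin\alpha)$; then
\[
B = \begin{pmatrix} \lambda_1 & (\lambda_2-\lambda_1)\cot\alpha \\ 0 & \lambda_2 \end{pmatrix},
\]
$\det(R_{s\theta}B) = D := \lambda_1\lambda_2$ is independent of $s$ and $\theta$, and a short expansion gives
\[
\tau(\theta) := \trace(R_\theta B) = (\lambda_1+\lambda_2)\cos\theta + (\lambda_2-\lambda_1)\cot\alpha\,\sin\theta.
\]

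Because the eigenvalues of $R_{s\theta}B$ are the roots of $x^2-\tau(s\theta)x+D$, they have the same moduli precisely when either $\tau(s\theta)^2 \le 4D$ (equal real eigenvalues or a complex conjugate pair, requiring $D\ge 0$) or $\tau(s\theta) = 0$ (a pair $\pm\sqrt{-D}$, requiring $D<0$). Moreover, while the eigenvalues remain real,
\[
\rad(R_{s\theta}B) = \tfrac12\bigl(|\tau(s\theta)| + \sqrt{\tau(s\theta)^2-4D}\bigr)
\]
is a strictly increasing function of $|\tau(s\theta)|$. So the task reduces to driving $|\tau(s\theta)|$ monotonically from $|\lambda_1+\lambda_2|$ down to the appropriate critical value. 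Since
\[
\tfrac{d}{d\theta}\tau(s\theta)^2\Big|_{\theta=0} = -2s\,(\lambda_1^2-\lambda_2^2)\cot\alpha,
\]
and $|\lambda_1|>|\lambda_2|$, $\cot\alpha\ge 0$, the correct choice is $s=+1$ (when $\alpha=\pi/2$ the function $\tau$ is $s$-independent and either sign works). With this choice $|\tau(s\theta)|$ strictly decreases on some initial interval, and I define $\beta$ to be the first $\theta>0$ at which $|\tau(s\theta)|$ reaches the relevant threshold. At $\theta=\beta$ the matrix $R_{s\beta}B$ then has two eigenvalues of the same moduli, and the spectral radius has been decreasing on $[0,\beta]$.

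The last thing to check is $\beta<\alpha$. Using $\cot\alpha\sin\alpha=\cos\alpha$, substitution yields $\tau(\alpha)=2\lambda_2\cos\alpha$. When $D>0$, the inequality $|\lambda_2|\cos^2\alpha<|\lambda_1|$ (from $|\lambda_2|<|\lambda_1|$) gives $\tau(\alpha)^2<4D$, so $|\tau(s\theta)|$ already dropped below $2\sqrt D$ strictly before $\theta=\alpha$. When $D<0$, the estimate $|\lambda_1|>|\lambda_2|$ forces $\tau(0)=\lambda_1+\lambda_2$ and $\tau(\alpha)=2\lambda_2\cos\alpha$ to have opposite signs, so $\tau$ vanishes in $(0,\alpha)$ by the intermediate value theorem. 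The main obstacle is really just the careful sign bookkeeping and the split between $D>0$ and $D<0$; the rest is elementary $2\times 2$ linear algebra.
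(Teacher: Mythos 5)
Your argument is essentially the paper's: put the dominant eigenvector at $e_1$ so that $B$ is upper triangular in an orthonormal basis, note that composing with $R_{s\theta}$ preserves the determinant and only moves the trace, pick $s$ so that the trace decreases in modulus, and evaluate at $\theta=\alpha$ (your $\tau(\alpha)=2\lambda_2\cos\alpha$ is exactly the paper's $\trace R_{s\alpha}B=2\sigma\lambda^{-1}\cos\alpha$) to see that the equal-moduli threshold is met before $\alpha$. The parametrization differs only cosmetically (the paper normalizes $\det=\pm1$ and works with the signed trace; you keep $\lambda_1,\lambda_2$ and work with $|\tau|$).

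One step is under-justified as written: you compute $\frac{d}{d\theta}\tau(s\theta)^2$ only at $\theta=0$, conclude that $|\tau(s\theta)|$ ``strictly decreases on some initial interval'', and then assert that $\rad(R_{s\theta}B)$ is decreasing on all of $[0,\beta]$, where $\beta$ is the first threshold crossing. For a general function that inference is invalid: $|\tau|$ could dip, rise again while staying above the threshold, and only later descend to it, in which case the spectral radius would not be monotone on $[0,\beta]$. Here it is true, but you need to say why: $\tau(\theta)=P\cos\theta+Q\sin\theta=\sqrt{P^2+Q^2}\,\cos(\theta-\phi)$, so every local minimum of $|\tau|$ is a zero of $\tau$; hence once $|\tau|$ starts decreasing it keeps decreasing until the first zero of $\tau$, and both thresholds ($2\sqrt{D}$ when $D>0$, $0$ when $D<0$) are reached no later than that zero. (The paper sidesteps this by checking the sign of the derivative of the trace on the whole interval $[0,\pi/2]$, after normalizing so the trace starts positive.) A minor boundary remark: when $D<0$ and $\alpha=\pi/2$ your opposite-sign/IVT step degenerates ($\tau(\alpha)=0$) and the first equalization occurs exactly at $\theta=\alpha$, not strictly before; the paper's proof exhibits the identical boundary behavior (its ``negative if $\sigma=-1$'' also fails there), so this is a shared, harmless imprecision rather than a defect specific to your argument.
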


\begin{proof}
Begin noticing the following facts:
If $B \in \GL(2,\RR)$ has determinant $\pm 1$, then
$B$ has eigenvalues of different moduli if and only if
either $\det B = 1$ and $|\trace B| > 2$, or $\det B = -1$ and $\trace B \neq 0$.
Also, the spectral radius $\rad(B)$ increases with $\trace B$
on the set 
$\{\det B = 1 \text{ and } \trace B > 2\} \cup \{\det B = -1 \text{ and } \trace B > 0\}$. 

Now let $B$ be any matrix with eigenvalues of different moduli.
By multiplying it by a non-zero number, we can assume that 
it has determinant $\sigma = \pm 1$ and has $\lambda = \rad(B) > 1$ as an eigenvalue.
By conjugating with a rotation, we can further assume that
$$
B = \begin{pmatrix} \lambda & -c \\ 0 & \sigma \lambda^{-1} \end{pmatrix} .
$$
Let $s \in \{+1,-1\}$ be the sign of $c$ (if $c=0$ either choice works).
Then $\trace R_{s\theta} B = (\lambda + \sigma \lambda^{-1}) \cos \theta  - |c| \sin \theta$
is a decreasing function of $\theta$ on the interval $[0, \pi/2]$.
Let $\alpha \in (0,\pi/2]$ be the angle between the eigenvalues.
We have $\tan \alpha = |c|^{-1} (\lambda - \sigma \lambda^{-1})$.
So $\trace R_{s\alpha} B = 2 \sigma \lambda^{-1} \cos \alpha$,
which is less than $2$ if $\sigma = + 1$, and negative if $\sigma = -1$.
In either case, it follows that there exists a least $\beta \in (0,\alpha)$
such that $R_{s\beta} A$ has eigenvalues of same moduli.
\end{proof}

\subsection{Some Dominance and Angle Relations between Subbundles and Quotient Bundles}

Lemma~2.6 of \cite{BV} is an useful angle relation; it says that
for any splitting of euclidian space into three non-zero subspaces $U$, $V$, $W$, 
\begin{equation}\label{e.BV}
\sin \angle(W, U \oplus V) \ge \sin \angle(W, U) \cdot \sin \angle(U \oplus W, V) \, .
\end{equation}

The following is a generalization of Lemma~4.4 from \cite{BDP}; see Figure~\ref{f.BDP}:

\begin{lemm}\label{l.BDP}
For any $d$, $K$ and $\ell \in \PT$, there exists $L = L(d,K,\ell) \in \PT$ 
with the following properties: 
Let $(X,T,E,A)$ be a cyclic $d$-dimensional cocycle bounded by $K$,
with an invariant splitting $F \oplus F'$ into non-zero bundles.
Assume that $F'$ has an invariant subbundle $H$ such that
$F <_{\ell} H$ and $F/H <_{\ell} F'/H$. 
Then $F <_{L} F'$.
\end{lemm}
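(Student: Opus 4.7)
The plan is to set $L = k\ell$ for a power of two $k$ depending on $d$, $K$, $\ell$, and to bound the pointwise ratio $\|A^L \restr F(x)\|/\m(A^L \restr F'(x))$ below $1/2$ by combining the two dominations through a Jacobian identity across the invariant subbundle $H \subset F'$.

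First I would iterate the hypotheses to obtain, for every $x \in X$,
$$
\|A^L \restr F(x)\| \leq 2^{-k}\, \m(A^L \restr H(x)),
$$
and, writing $\tilde F = (F+H)/H$ for the image of $F$ in $E/H$,
$$
\|(A/H)^L \restr \tilde F(x)\|_{E/H} \leq 2^{-k}\, \m((A/H)^L \restr (F'/H)(x)).
$$
Next, since $H$ is $A$-invariant inside $F'$, writing $A^L \restr F'$ in an orthonormal basis of $F'$ adapted to the orthogonal decomposition $F' = H \oplus H^\perp$ makes it block upper triangular with diagonal blocks $A^L \restr H$ and an isometric conjugate of $(A/H)^L \restr F'/H$, so
$$
\jac(A^L \restr F') = \jac(A^L \restr H) \cdot \jac((A/H)^L \restr F'/H).
$$
Combined with the elementary inequality $\m(B) \geq \jac(B)/\|B\|^{r-1}$ for $r\times r$ matrices $B$ (here $r = \dim F'$) and with $\jac(C) \geq \m(C)^{\dim C}$, this yields a lower bound for $\m(A^L \restr F')$ in terms of $\m$ of the two diagonal blocks and $\|A^L \restr F'\|$.

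The main obstacle is obtaining a tight enough upper bound on $\|A^L \restr F'\|$ to close the estimate. The crude bound $\|A^L\restr F'\| \leq K^L$, together with the uniform bounds $\m(\cdot) \geq K^{-L}$ for the diagonal blocks, only produces a ratio of the form $2^{-k} K^{cL}$ with $c$ linear in $\dim F'$, which does not beat $1/2$ for general $K$. To overcome this one exploits the cyclic structure of $(X,T)$: after many iterations the growth of $A^L\restr F'$ is governed by the modulus of the largest eigenvalue of the period-$n$ return map $A^n\restr F'$, whose spectrum is the union of the eigenvalues of $A^n\restr H$ and of $A^n/H$. This yields a bound
$$
\|A^L\restr F'\| \leq C(d,K,\ell)\, L^{d-1} \max\bigl(\|A^L\restr H\|,\, \|(A/H)^L\restr F'/H\|\bigr)
$$
for $L$ sufficiently large, and plugging this into the preceding estimate gives a ratio bounded by $\mathrm{poly}(L)\cdot 2^{-k}$. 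Choosing $k$ large enough depending on $d$, $K$, $\ell$ makes this ratio strictly less than $1/2$, which is the desired $\ell$-domination with $L = k\ell$.
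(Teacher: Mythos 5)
Your reduction to a lower bound for $\m(A^L \restr F')$ via $\jac(A^L\restr F') = \jac(A^L\restr H)\cdot\jac((A/H)^L\restr F'/H)$ is fine as far as it goes, but the step you use to close the estimate is false: without any domination relation \emph{between} $H$ and $F'/H$, the inequality
$$
\|A^L\restr F'\| \le C(d,K,\ell)\, L^{d-1}\max\bigl(\|A^L\restr H\|,\ \|(A/H)^L\restr F'/H\|\bigr)
$$
does not hold. Writing $A\restr F'$ in triangular form with diagonal blocks $A\restr H$ and $A/H$, the off-diagonal block of the $L$-th iterate is a sum $\sum_{j} (A\restr H)^{L-1-j}\,C\,(A/H)^{j}$, and a single term of this sum can be exponentially larger than both $\|(A\restr H)^L\|$ and $\|((A/H))^L\restr F'/H\|$: for instance, over a window of length $L$ let $A/H$ expand by $2$ and $A\restr H$ contract by $2$ on the first half, and reverse the roles on the second half, with a unit coupling at the midpoint; both diagonal $L$-blocks have norm about $1$, while the shear is about $2^{L-1}$, so $\m(A^L\restr F')$ is about $2^{-(L-1)}$ and your final ratio bound $\mathrm{poly}(L)\cdot 2^{-k}$ with $L=k\ell$ cannot beat $1/2$. (Such behavior is compatible with the hypotheses: just take $A\restr F$ uniformly very contracting.) The appeal to the period-$n$ return map does not rescue this: $L$ must be a constant depending only on $d,K,\ell$ while the period $n$ is arbitrary, and in any case the spectral radius of $A^n\restr F'$ controls only asymptotic growth along multiples of $n$, with constants depending on the conditioning of that particular matrix, not only on $d,K,\ell$.

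The missing idea is that the shear term must be estimated \emph{at every intermediate time}, using both domination hypotheses simultaneously, rather than only at the endpoints of the window. This is what the paper's proof does (adapting Lemma~4.4 of \cite{BDP}): passing to the inverse cocycle $B=A^{-1}$, it first shows the angle between $F$ and $F'$ is bounded below, then writes $B$ in an adapted basis as block triangular with blocks $M$ (on $F$), $P$ (on $F'/H$), $R$ (on $H$) and coupling $Q$, so that the two dominations give $\max(\|P^k\|,\|R^k\|)\le C\lambda^k\,\m(M^k)$ for all $k$ and all points. The off-diagonal block of the $k$-th iterate, $Q_k=\sum_{j=0}^{k-1}R^{k-j-1}Q P^j$, is then bounded termwise by $C\lambda^{k-1}\m(M^{k-j-1})\m(M^j)\le C\lambda^{k-1}\m(M^k)$, giving $\|Q_k\|\le Ck\lambda^k\m(M^k)\le C\lambda^{k/2}\m(M^k)$; it is this comparison of each summand against $F$'s own growth over the same time decomposition that your endpoint-only argument loses, and it is exactly what yields $F<_L F'$ for some $L=L(d,K,\ell)$.
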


\psfrag{F}[l][l]{{\footnotesize $F$}}
\psfrag{G}[c][c]{{\footnotesize $F'$}}
\psfrag{H}[l][l]{{\footnotesize $H$}}
\psfrag{O}[l][l]{{\footnotesize $H^\perp$}}
\psfrag{X}[c][c]{{\footnotesize $\nicefrac{F'}{H}$}}
\psfrag{Y}[r][r]{{\footnotesize $\nicefrac{F}{H}$}}
\begin{figure}[hbt]
\begin{center}
\includegraphics[scale=.5]{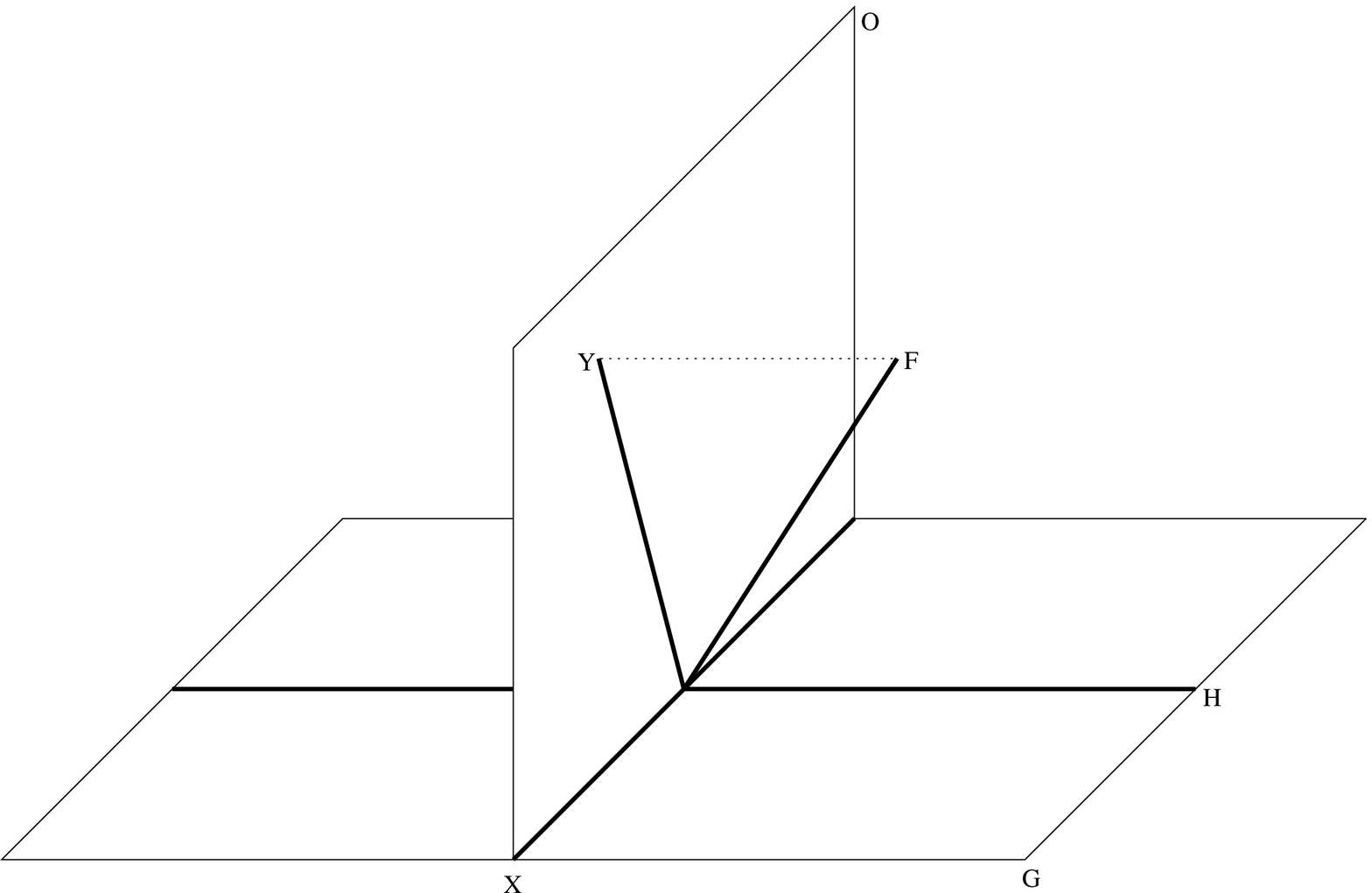} 
\end{center}
\caption{Lemma~\ref{l.BDP}} \label{f.BDP}
\end{figure}

\begin{proof}
It is simple to adapt the proof of Lemma~4.4 from \cite{BDP} to our
more general lemma, but let us spell out the details for the reader's convenience.

To simplify writing, in what follows $C$ indicates some positive number depending only on $d$, $K$ and $\ell$,
whose value may change from one line to another.

Take a cocycle $A$ preserving the subbundles $F$, $F' \supset H$ as in the statement of the lemma.
It is slightly more convenient to work with the inverse cocycle $B=A^{-1}$.
So the domination relations are reversed, that is,  $F >_{\ell} H$ and $F/H >_{\ell} F'/H$
with respect to $B$.

We claim that at each point
$$
\sin \angle(F,F') \ge \sin \angle(F,H) \cdot \sin \angle(\nicefrac{F}{H},  \nicefrac{F'}{H}) \, ;
$$
and in particular, since $\angle(F,H)$ and $\angle(F/H, F'/H)$ are not very small,
$\angle(F,F')$ is not very small either.
The proof consists on applying relation \eqref{e.BV} with 
$U=H$, $V=F' \cap H^\perp$ (which we naturally identify with $F'/H$), $W=F$:
$$
\sin \angle(F, \underbrace{\nicefrac{F'}{H} \oplus H}_{F'}) \ge \sin \angle (F, H) \cdot 
\sin \angle (F \oplus H, \nicefrac{F'}{H}) \, ,
$$
and noticing that $\angle (F \oplus H, \nicefrac{F'}{H}) = \angle(\nicefrac{F}{H} \oplus H,  \nicefrac{F'}{H})
 = \angle(\nicefrac{F}{H},  \nicefrac{F'}{H})$.

At each point $x$, take a basis $\{e_1(x), \ldots, e_d(x)\}$ of $E(x)$ with the following properties.
The first $\dim F$ vectors form an orthonormal basis of $F(x)$,
and the following $\dim F'$ vectors form an orthonormal basis of $F'(x)$,
so that the last $\dim H$ vectors belong to $H(x)$.
Since $\angle(F,F')$ is not very small, 
the changes of bases between $\{e_j(x)\}$
and any orthonormal basis of $E(x)$ are bounded by some constant $C$. 

With respect to the bases we chose, we can write
$$
B(x) = 
\begin{pmatrix} M(x) & 0 \\ 0 & N(x) \end{pmatrix}, 
\quad \text{where} \quad
N(x) = 
\begin{pmatrix} P(x) & 0 \\ Q(x) & R(x) \end{pmatrix}.
$$
As usual, we denote $M^k(x) = M(T^{k-1} x) \cdots M(x)$ etc.
By the dominance assumptions, there exists $0<\lambda<1$ depending only on $K$ and $\ell$
such that
\begin{equation}\label{e.NP}
\max \big( \|P^k(x)\|, \|R^k(x)\| \big) \le C \lambda^k \m(M^k(x)) \quad \text{for all $k>0$ and $x\in X$.}
\end{equation}
We have
$$
N^k(x) = 
\begin{pmatrix} P^k(x) & 0 \\ Q_k(x) & R^k(x) \end{pmatrix}, 
\text{ where }
Q_k(x) = \sum_{j=0}^{k-1} R^{k-j-1}(T^{j+1} x) Q(T^j x) P^j(x) \, .
$$
We estimate
\begin{alignat*}{2}
\|Q_k(x)\| &\le C \lambda^{k-1} \sum_{j=0}^{k-1} \m(M^{k-j-1}(T^{j+1} x)) \m(M^j(x)) &\quad &\text{(using \eqref{e.NP})} \\
           &\le C k \lambda^k \m(M^k(x)) &\quad &\text{(since $\m(XY)\ge \m(X)\m(Y)$)} \\
           &\le C \lambda^{k/2} \m(M^k(x)).
\end{alignat*}
The last inequality together with \eqref{e.NP} imply that
$$
\|N^k(x)\|  \le C \lambda^{k/2} \m(M^k(x)) \quad \text{for all $k>0$ and $x\in X$.}
$$
Coming back to the original norms, we have
$\|(B \restr F')^k(x)\|  \le C \lambda^{k/2} \m((B\restr F)^k(x))$.
This means that $F >_{L} F'$ with respect to $B$, or $F <_{L} F'$ with respect to $A$,
for some $L$ that depends only on $d$, $K$ and $\ell$.
\end{proof}

Another angle relation we will need is:
\begin{lemm}\label{l.BV}
There is $c_0>0$ such that
for any splitting of euclidian space into three non-zero subspaces $F$, $G$, $H$, we have
$$
\angle(H, F \oplus G) > c_0 \cdot \angle(H, F) \cdot  \angle(H, G) \cdot \angle(F/H, G/H) \, .
$$
\end{lemm}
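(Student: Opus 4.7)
The plan is to combine the angle inequality \eqref{e.BV} recalled just above with a direct projection computation, and then convert sines to angles using $\sin\theta\ge(2/\pi)\theta$ on $[0,\pi/2]$. Applying \eqref{e.BV} with $(U,V,W)=(F,G,H)$ gives
$$
\sin\angle(H,F\oplus G)\ge\sin\angle(H,F)\cdot\sin\angle(F\oplus H,G),
$$
so it suffices to establish the auxiliary inequality
$$
\sin\angle(G,F\oplus H)\ge\sin\angle(G,H)\cdot\sin\angle(G/H,F/H).
$$

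The key point is that $F\oplus H$ splits \emph{orthogonally} as $(F/H)\oplus H$, where $F/H$ is identified, as elsewhere in the paper, with the orthogonal projection of $F$ onto $H^\perp$; the two summands are orthogonal because $F/H\subset H^\perp$, and they clearly span $F\oplus H$ and have the right total dimension. Given any unit vector $g\in G$, decompose $g=g_H+g_\perp$ with $g_H\in H$ and $g_\perp\in H^\perp$. Then the orthogonal projection of $g$ onto $F\oplus H$ equals $g_H+P_{F/H}(g_\perp)$, so
$$
d(g,F\oplus H)=\|g_\perp-P_{F/H}(g_\perp)\|=d(g_\perp,F/H).
$$
Since $F,G,H$ are in direct sum in $E$, the projections $F/H$ and $G/H$ are in direct sum in $H^\perp$, and by construction $g_\perp\in G/H$ with $\|g_\perp\|=\sin\angle(g,H)\ge\sin\angle(G,H)$. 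Therefore
$$
d(g_\perp,F/H)\ge\|g_\perp\|\sin\angle(G/H,F/H)\ge\sin\angle(G,H)\cdot\sin\angle(G/H,F/H),
$$
and taking the infimum over unit vectors $g\in G$ yields the auxiliary inequality.

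Combining the two inequalities gives
$$
\sin\angle(H,F\oplus G)\ge\sin\angle(H,F)\cdot\sin\angle(H,G)\cdot\sin\angle(F/H,G/H),
$$
and passing from sines to angles via $\sin\theta\le\theta$ on the left and $\sin\theta\ge(2/\pi)\theta$ on the right (both valid on $[0,\pi/2]$, which contains every principal angle) yields the lemma, with $c_0=(2/\pi)^3/2$ serving to turn $\ge$ into the strict $>$ of the statement on positive products. I do not foresee any real obstacle; the only thing that needs stating carefully is the orthogonal decomposition $F\oplus H=(F/H)\oplus H$, which depends on the paper's convention that $F/H$ denotes the representative of $F$ in $H^\perp$.
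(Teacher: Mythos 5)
Your proof is correct. Structurally it follows the paper's proof halfway: both start by applying \eqref{e.BV} with $(U,V,W)=(F,G,H)$, reducing the lemma to the auxiliary inequality $\sin \angle(G, F \oplus H) \ge \sin \angle (G, H) \cdot \sin \angle(G/H, F/H)$. Where you differ is in how that auxiliary inequality is obtained: the paper simply applies \eqref{e.BV} a second time, with $U=H$, $V=(F\oplus H)\cap H^\perp$ (identified with $F/H$) and $W=G$, using that $F/H \oplus H = F\oplus H$ and $\angle(G\oplus H, F/H)=\angle(G/H,F/H)$; you instead prove it by hand, via the orthogonal decomposition $F\oplus H=(F/H)\oplus H$ and the computation $d(g,F\oplus H)=d(g_\perp,F/H)$ for $g=g_H+g_\perp$. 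Your computation is in effect a self-contained proof of the special case of \eqref{e.BV} in which the two spanning subspaces are orthogonal, so the two arguments are essentially equivalent; the paper's version is shorter because it reuses the quoted inequality, while yours is more elementary and also makes explicit the sine-to-angle conversion (via $\sin\theta\le\theta$ and $\sin\theta\ge(2/\pi)\theta$, with the product on the right strictly positive because $F$, $G$, $H$ and hence $F/H$, $G/H$ intersect trivially) that the paper leaves as ``which implies the lemma.''
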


\begin{proof}
Apply \eqref{e.BV} with $U=F$, $V=G$, $W=H$:
$$
\sin \angle(H, F \oplus G) \ge \sin \angle(H, F) \cdot \sin \angle(F \oplus H, G) \, .
$$
Now apply \eqref{e.BV} with $U=H$, $V = (F\oplus H) \cap H^\perp$ (which we naturally identify with $F/H$), $W=G$:
$$
\sin \angle(G, \underbrace{\nicefrac{F}{H} \oplus H}_{F \oplus H}) \ge \sin \angle (G, H) \cdot 
\sin \underbrace{\angle (G\oplus H, \nicefrac{F}{H})}_{\angle(\nicefrac{G}{H}, \nicefrac{F}{H})} \, .
$$
Therefore
$$
\sin \angle(H, F \oplus G) \ge 
\sin \angle(H, F) \cdot \sin \angle(H, G) \cdot \sin \angle(\nicefrac{F}{H}, \nicefrac{G}{H}) \, ,
$$
which implies the lemma.
\end{proof}

\subsection{Proof of Proposition {\rm\ref{p.two exp}}}

\begin{proof}
As already mentioned, the proof is by induction in the dimension $d$.

\medskip \noindent \emph{Starting the induction:}
Given $K$ and $\eps$, take a positive $\alpha \ll \eps/K$.
Apply Lemma~\ref{l.mane} with $d=2$ and $\eps/2$ in the place of $\eps$ 
to obtain $\ell$.
Now take a cyclic dynamical system of period at least $\ell$,
and a $2$-dimensional cocycle $A$ over it that is bounded by $K$,
has real eigenvalues, but no $\ell$-dominated splitting.
We can assume that the eigenvalues are different, otherwise there is nothing to prove.
So consider the two invariant one-dimensional bundles.
Using Lemma~\ref{l.mane}\footnote{Notice that the hypothesis \eqref{e.separation} in the lemma
is automatically satisfied here, because $d=2$.} 
we find a path of cocycles starting at $A$, 
always $\eps/2$-close to $A$, with the same spectrum as $A$, and such that
for the final cocycle $A'$, the two bundles form an angle less than $\alpha$
over at least one point $x_0 \in X$.
Now we consider another path of cocycles, starting at $A'$, ending at $A''$ 
and always $\eps/2$-close to $A'$,
such that the bundle maps are not altered except over the point $x_0$,
where the perturbations consist on composing with small rotations.
By Lemma~\ref{l.dim 2}, this can be done so that the upper Lyapunov exponent
strictly decreases along the path, and at the endpoint $A''$ the two exponents are equal.
The concatenation of the two paths described above is a path with all the required properties.
This shows that the proposition is true when $d=2$.

\medskip \noindent \emph{The induction step:}
Take $d \ge 3$, and assume the proposition is true for all dimensions $d'$ between $2$ and $d-1$.
Let $K>1$ and $\eps>0$ be given.
Reducing $\eps$ if necessary (recall Remark~\ref{r.epsilon}), we assume that 
any $\eps$-perturbation of a cocycle bounded by $K$ is bounded by $2K$,
and that we are allowed to $\eps$-perturb not only a given cocycle $A$ but also the inverse cocycle $A^{-1}$.
Let $\ell_0\in \PT$ be the maximum of $\ell(2K, \eps/2, d')$ where $2 \le d' \le d-1$.

Let $\ell \gg \ell_0$ (how large is necessary will become clear along the proof).

Take a cyclic $d$-dimensional cocycle $(X,T,E,A)$ bounded by $K$ and of period at least~$\ell$.
Also suppose that $A$ has only real eigenvalues, and has no $\ell$-dominated splitting of index $i$.
We will complete the proof assuming $i \le d-2$.
Then the remaining case $i=d-1$ will follow from the case $i=1$ applied to the inverse cocycle.
Write for simplicity $\lambda_j = \lambda_j(A)$ for the Lyapunov exponents.

We assume that $\lambda_i < \lambda_{i+1}$, because otherwise we simply take a constant path
of cocycles.
Let $F \oplus F'$ be the invariant splitting so that
the exponents along $F$ are $\lambda_1$, \ldots, $\lambda_i$,
and the exponents along $F'$ are $\lambda_{i+1}$, \ldots, $\lambda_d$.
By assumption, 
$$
F \not <_\ell F'.
$$
We consider separately two cases according to the multiplicity of the upper exponent~$\lambda_d$:

\medskip \noindent \emph{The case $\lambda_{d-1} = \lambda_d$:}
By assumption, the cocycle has only real eigenvalues.
Thus we can choose an one-dimensional invariant subbundle $H$ with an exponent $\lambda_d$.

In the case that $F/H \not<_{\ell_0} F'/H$ 
we apply the induction assumption to the cocycle $A/H$ (which satisfy the real eigenvalues hypothesis),
obtaining a path of cocycles mixing its $i$-th and $(i+1)$-th exponents,
without changing the others.
Then, using the second procedure explained in \S~\ref{ss.procedures},
we extend these cocycles to the whole bundle $E$.
The direction $H$ is kept invariant and so the upper exponent $\lambda_d$ does not change.

Thus assume that $F/H <_{\ell_0} F'/H$.
Recall that $F \not<_\ell F'$ and $\ell \gg \ell_0$;
thus by Lemma~\ref{l.BDP} we have $F \not<_{\ell_0} H$.
Let $G$ be the invariant subbundle associated to the exponents bigger than
$\lambda_i$ and smaller than $\lambda_d$
(so $G$ is zero if $\lambda_{i+1}=\lambda_d$).
Then $F \not<_{\ell_0} G \oplus H$. 
We apply the induction assumption to the cocycle $A$ restricted to 
the smaller bundle $F \oplus G \oplus H$,
obtaining a path of cocycles mixing its $i$-th and $(i+1)$-th exponents,
without changing the others.
Then, using the first procedure from \S~\ref{ss.procedures},
we extend these cocycles to the whole bundle $E$.

\medskip \noindent \emph{The case $\lambda_{d-1} < \lambda_d$:}
We split invariantly $F' = G \oplus H$, where
the Lyapunov exponents along $G$ are
$\lambda_{i+1}$, \ldots, $\lambda_{d-1}$,
and $H$ is one-dimensional with Lyapunov exponent $\lambda_d$.
Then we are in position to apply the following lemma:

\begin{lemm}\label{l.main}
Given $d$, $K$, $\ell_0$ and $\eps$, there exists $\ell$
with the following properties:
Let $(X,T,E,A)$ be a cyclic $d$-dimensional cocycle bounded by $K$ 
and of period at least~$\ell$.
Assume that $A$ has an invariant splitting $E = F \oplus G \oplus H$ such that 
$\dim H = 1$, 
the exponents on $F$ are smaller  than the exponents on $G$,
which in turn are smaller than the exponent on $H$.
Also assume that
$$
F \not<_{\ell} G \oplus H \, .
$$
Then there exists
an $\eps$-short path of cocycles $A_t$, $t\in [0,1]$ starting at $A$,
all of them with the same eigenvalues,
such that, denoting by 
$F_t$, $G_t$, $H_t$ the continuations of the bundles $F$, $G$, $H$,
we have 
\begin{equation}\label{e.alternatives}
F_1 \not <_{\ell_0} G_1 \quad \text{or} \quad F_1 / H_1  \not <_{\ell_0} G_1 / H_1.
\end{equation}
\end{lemm}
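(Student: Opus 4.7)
The plan is to reduce to Lemmas~\ref{l.BDP} and~\ref{l.mane}, both of which are already in hand. I would set $\ell = L(d,K,\ell_0)$ with $L$ as in Lemma~\ref{l.BDP}, and apply the contrapositive of that lemma to the splitting $E = F \oplus (G\oplus H)$, using $H$ as the invariant subbundle of $G\oplus H$: from $F \not<_\ell G \oplus H$ we obtain that either (a) $F \not<_{\ell_0} H$ inside $E$, or (b) $F/H \not<_{\ell_0} (G\oplus H)/H = G/H$ inside the quotient $E/H$. In case (b) the second alternative of~\eqref{e.alternatives} already holds, and the constant path $A_t \equiv A$ finishes the job; so the real work lies entirely in case (a).

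In case (a) I would apply Lemma~\ref{l.mane} to the sub-cocycle $A\restr(F\oplus H)$, whose splitting has the one-dimensional bundle $H$. (Hypothesis~\eqref{e.separation} of that lemma is not automatic when $H$ is not the full complement, but since the Lyapunov exponents on $F$ are uniformly smaller than the exponent on $H$, it can be arranged by taking $\ell$ large and, if needed, performing a small isospectral preparation that concentrates the growth of $A^n\restr F$ near its spectral radius.) This gives an $\eps/2$-short isospectral path of cocycles on $F\oplus H$ whose endpoint satisfies $\angle(F_1(x_0), H_1(x_0)) < \alpha$ at some point $x_0$, for any $\alpha>0$ chosen in advance. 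Using the extension procedure of Section~\ref{ss.procedures} applied to the invariant subbundle $F\oplus H$, I would lift the path to an $\eps$-short path of cocycles on $E$ that preserves $F\oplus H$ and leaves the quotient $A/(F\oplus H)$ unchanged; the continuations $F_t$, $G_t$, $H_t$ then all exist because the exponents of $A\restr G$ are separated from those of $A\restr(F\oplus H)$.

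To close the argument I would show that the final cocycle $A_1$ satisfies the second alternative $F_1/H_1 \not<_{\ell_0} G_1/H_1$ in $E/H_1$. The quotient norm of a vector $v$ modulo $H_1(x)$ equals $\|v\|\sin\angle(v,H_1(x))$, so a short computation yields
$$
\|A_1^{\ell_0}\restr F_1(x)\|_{E/H_1} \;\asymp\; \|A_1^{\ell_0}\restr F_1(x)\| \cdot \frac{\sin\angle(F_1(T^{\ell_0}x), H_1(T^{\ell_0}x))}{\sin\angle(F_1(x), H_1(x))},
$$
and an analogous expression for $\m(A_1^{\ell_0}\restr G_1(x))$. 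Taking $x$ with $T^{\ell_0}x = x_0$ puts the small factor $\sin\alpha$ in the numerator, while the denominator involves angles at a generic point that are bounded below away from zero; by choosing $\alpha$ sufficiently small at the outset, the resulting ratio exceeds $1/2$, giving the desired non-dominance.

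The main obstacle is controlling the angles $\angle(F_t,H_t)$ and $\angle(G_t,H_t)$ at points other than $x_0$: the last step needs them bounded below uniformly in $t$. The Lemma~\ref{l.mane} perturbation acts in a region much smaller than the full period (modifications at just two points in the first case of its proof, and along a segment of fixed length $\ell$ in the second), so the angles elsewhere should be essentially unaffected; verifying this locality after the extension to $E$, together with rigorously handling hypothesis~\eqref{e.separation}, is where most of the remaining technical work lies.
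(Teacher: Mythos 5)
Your reduction via Lemma~\ref{l.BDP} and the use of Lemma~\ref{l.mane} to create a small angle $\angle(F,H)$ at a point $x_0$ reproduce Part~1 of the paper's argument (modulo two fixable issues: the paper must apply Lemma~\ref{l.BDP} with the \emph{larger} parameter $\ell_1=\ell(\gamma)$ coming from Lemma~\ref{l.mane}, since ``$F\not<_{\ell_0}H$'' is weaker than ``$F\not<_{\ell_1}H$'' and does not suffice to invoke that lemma; and hypothesis~\eqref{e.separation} is verified not by an ``isospectral preparation'' --- which cannot in general tame the non-normality of $A^n\restr F$ over a long period by a small perturbation --- but by using the standing assumption $F<_{\ell_0}G$ together with an eigenvector in $G$, which is available exactly because otherwise the first alternative of~\eqref{e.alternatives} already holds). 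The genuine gap is in your closing step. First, as written it points the wrong way: for the block \emph{ending} at $x_0$ the small factor $\sin\alpha$ makes the quotient norm of $A_1^{\ell_0}\restr F_1$ \emph{small}, which strengthens rather than breaks the domination $F_1/H_1<_{\ell_0}G_1/H_1$; you would need the block \emph{starting} at $x_0$, where the quotient expansion is $\approx K^{-\ell_0}\sin\angle(F_1,H_1)(T^{\ell_0}x_0)/\sin\alpha$. Second, and more seriously, even in that corrected form the argument fails because nothing forces the angle to recover within $\ell_0$ iterates: on a periodic orbit of period $n\gg\ell_0$ the angle $\angle(F_1,H_1)$ can return from $\alpha$ to its generic size gradually, growing by a factor close to $1$ on every $\ell_0$-block, so that no single block sees a ratio $\ge 1/2$. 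A small angle between $F_1$ and $H_1$ at one point is simply compatible with $F_1/H_1<_{\ell_0}G_1/H_1$; it does not by itself break the quotient domination.

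This is precisely why the paper does not stop after Lemma~\ref{l.mane}: it performs a \emph{second} perturbation, absent from your proposal, which is the heart of the proof. One walks backwards from $x_0$ to a point $x_1$ where $\angle(H,F)$ is small but not too small, and there tilts the $H$-eigenvector $h$ by a small multiple of a $G$-eigenvector $g$ (compensating at $T^{-1}x_1$ so the eigenvalues are unchanged). The domination $F<_{\ell_0}G$ then forces the $G$-component of the iterates of $h+g$ to overtake the $F$-trapped component at some intermediate time, producing a point $y$ where the new $H_1$ is close to $F\oplus G$ yet far from both $F$ and $G$; Lemma~\ref{l.BV} converts this into a small angle $\angle(F/H_1,G/H_1)$ at $y$, below the threshold $\theta_0$ that $\ell_0$-domination would impose, which is what actually yields $F_1/H_1\not<_{\ell_0}G_1/H_1$. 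Note the mechanism is an \emph{angle} obstruction between the quotient bundles, not a norm-ratio estimate over a single block; without this second perturbation and the cone/eigenvector growth argument behind it, the conclusion does not follow from the small angle produced by Lemma~\ref{l.mane} alone.
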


We postpone the proof of the lemma to the next subsection,
and see how it permits us to conclude:

\begin{itemize}
\item In the case $F_1 \not <_{\ell_0} G_1$,
we apply the induction assumption to the cocycle $A_1$ restricted to the bundle $F_1 \oplus G_1$
(which has only real eigenvalues),
obtaining a path of cocycles mixing the $i$-th and $(i+1)$-th exponents,
without changing the others.
Then, using the first procedure explained in \S~\ref{ss.procedures},
we extend these cocycles to the whole bundle $E$.
This does not alter the upper exponent $\lambda_d$
(although the $H_1$ direction is not preserved).
The desired path of cocycles is obtained by concatenation.

\item
The case $F_1/H_1 \not <_{\ell_0} G_1/H_1$ is similar:
we apply the induction assumption to the
quotient cocycle $A_1/H_1$, and then we use the second extension procedure from \S~\ref{ss.procedures}.
\end{itemize}
So we have proved Proposition~\ref{p.two exp} modulo Lemma~\ref{l.main}.
\end{proof}

\subsection{Proof of Lemma {\rm\ref{l.main}}}

Let us begin with an informal outline of the proof.
We assume that $F<G$ and $F/H < G/H$, otherwise there is nothing to show.
By Lemma~\ref{l.BV},
this implies that each time that $\angle(H, F \oplus G)$ is small,
then either $\angle(H,F)$ or $\angle(H,G)$ is small, but not both.
Thus the objective is to perturb the cocycle without changing the eigenvalues 
so that at some point of the orbit, the new space $H$ (i.e., the continuation of $H$) is close to
the new $F \oplus G$ but far from the new $F \cup G$;
this breaks a dominance relation and we are done.
Coming back to the unperturbed cocycle, by Lemma~\ref{l.BDP} we have $F \not < H$
(otherwise we would get $F < G \oplus H$).
Using Lemma~\ref{l.mane} we can perturb the cocycle (but not the eigenvalues) 
so to make $\angle(H,F)$ extremely small at some point $x_0$.
We can still assume that $F<G$ and $F/H < G/H$ because otherwise we are done.
Iterating negatively from $x_0$, these angles must remain small for a long time, but not forever.
So we take a point $x_1$ where $\angle(H,F)$ is small, but not extremely small, 
and remain small for many positive iterates, until we reach $x_0$.
Choose eigenvectors $h \in H(x_1)$, $g\in G(x_1)$ such that $\|g\|$
is much (but not extremely) smaller than $\|h\|$.
We follow the iterates of the vector $\tilde h = h+g$:
during the (long) time the iterates of $h$ remain close to $F$, 
the iterates of $\tilde h$ remain close to $F \oplus G$.
On the other hand, since $F<G$, 
an iterate of $\tilde h$ gets far from $F$ while it's still close to $F \oplus G$
and not yet close to $G$ -- see Figure~\ref{f.proof}.
Sometime later (and before we come back to $x_1$), 
the iterates of $\tilde h$ get again close to $H$:
that happens because the expansion rate of the eigenvector $g$ until the period 
is less than the expansion rate of $h$.
It is then simple to perturb the cocycle without changing $F$, $G$ nor the eigenvalues 
so that the new $H$ follows the route of~$\tilde h$.

\begin{figure}[hbt]
\psfrag{F}[r][r]{{\footnotesize $F$}}
\psfrag{G}[l][l]{{\footnotesize $G$}}
\psfrag{H}[c][c]{{\footnotesize $H$}}
\psfrag{S}[l][l]{{\footnotesize $F \oplus G$}}
\begin{center}
\includegraphics[scale=.4]{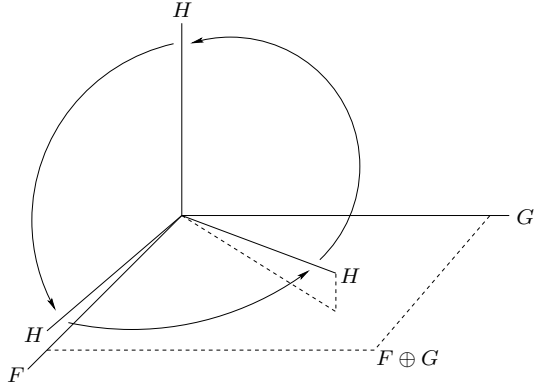}
\end{center}
\caption{\small The proof of Lemma~\ref{l.main}: For the perturbed cocycle the spaces
$F$ and $G$ are always apart. The figure shows the position of the one-dimensional
space $H$ at three different times.
At one of these, we see a small $\angle(F/H, G/H)$.}
\label{f.proof}
\end{figure}

\begin{proof}[of Lemma~{\rm\ref{l.main}}]  
For clarity, we divide the proof into parts.

\medskip 
\noindent \emph{Part 0: Fixing several constants.}	
Let $d$, $K$, $\ell_0$ and $\eps$ be given.
Reducing $\eps$ if necessary  (recall Remark~\ref{r.epsilon}), we assume that any $\eps$-perturbation of a cocycle bounded by $K$ is bounded by $2K$.

Fix numbers $\theta_1 > \theta_0 > 0$ depending only on $K$ and $\ell_0$
such that the following holds:
For any cocycle $A$ bounded by $2K$ that has invariant subbbundles $F$ and $G$
with $F <_{\ell_0} G$, we have $\angle(F,G)>\theta_0$ at each point.
Moreover, 
\begin{equation}\label{e.cone}
x \in X, \ 
u \in F^{\theta_1} (x), \ g \in G(x) 
\ \Rightarrow \  
\frac{\|A^{\ell_0}(x) \cdot u\|/\|u\|}{\|A^{\ell_0}(x) \cdot g\|/\|g\|} < 0.6,
\end{equation}
where $F^{\theta_1} (x)$ indicates the cone of size $\theta_1$ around $F(x)$, that is,
the set of $v \in E(x)$ such that $\angle (v, F(x)) < \theta_1$. 
We will keep using this cone notation in what follows.

In the rest of the proof, $c_1$, $c_2$, \ldots, $c_8$ will indicate certain positive numbers that
we haven't bothered to calculate explicitly, but depend only on $K$ and $\ell_0$.

We choose a positive $\beta$ satisfying 
$$
\beta < \min \left( \theta_1 , \theta_0/2 , \theta_0/c_7 \right) \, ,
$$
and a positive $\delta$ such that 
\begin{equation}\label{e.feiomashonesto}
c_3 \delta / \beta < \min \left( 1/2 , \eps/(8K) \right) \, .
\end{equation}
Fix an integer $m_0 \ge 1$ such that $\delta^{-1} 0.6^{m_0} < 1$.
Then choose $\gamma>0$  with the following property: 
If two vectors form an angle less than $\gamma$,
then their images by any linear map with norm less than $(2K)^{\ell_0 m_0}$
form an angle less than $\beta$.

Let $\ell_1$ be the number given by Lemma~\ref{l.mane}, 
applied with $\gamma$ in the place of $\alpha$, $\eps/2$ in the place of $\eps$, and all smaller $d$.
Take $\ell = L(d, 2K, \ell_1)$,
where the function $L$ is given by Lemma~\ref{l.BDP}.
Increasing $\ell$ if necessary, 
we assume that
\begin{equation}\label{e.ell}
\ell > \ell_0 m_0 \quad  \text{and} \quad
K^{\ell_0} 0.6^{\ell/\ell_0} < 1.
\end{equation}

\medskip 
\noindent \emph{Part 1: Preliminary perturbation.}	
Let $(X,T,E,A)$ be a cocycle as in the statement of the lemma: 
$X$ has cardinality $n \ge \ell$; $E$ has dimension $d$; the cocycle is bounded by $K$;
there is an invariant splitting $F \oplus G \oplus H$
with exponents along $F$ less than those along $G$, which are less the exponent along 
the one-dimensional bundle $H$;
and  $F \not<_{\ell} G \oplus H$.

We can assume that 
$$
F <_{\ell_0} G \quad \text{and} \quad F / H  <_{\ell_0} G / H,
$$
because otherwise there is nothing to show (just take a constant path).
So applying Lemma~\ref{l.BDP} (with $F' = G \oplus H$),
we conclude that 
$$
F \not<_{\ell_1} H \, .
$$

Notice that for any $x\in X$, choosing an eigenvector $g\in G(x)$ of $A^n(x)$
(where $n = \# X$),
we can write
\begin{alignat*}{2}
\m(A^n(x) \restr H) 
&>   \|A^n(x) \cdot g\|/\|g\| &\quad &\text{(since the exponent along $H$ is the biggest)} \\ 
&> K^{-\ell_0} 2^{\lfloor n/\ell_0\rfloor} \|A^n(x) \restr F\| &\quad &\text{(because $F <_{\ell_0} G$)} \\
&> \|A^n(x) \restr F\| &\quad &\text{(by \eqref{e.ell} and $n \ge \ell$).}
\end{alignat*} 
Hence the cocycle $A \restr F \oplus H$ satisfies
requirement~\eqref{e.separation} from Lemma~\ref{l.mane}.
Applying the lemma 
together with the first extension procedure from \S~\ref{ss.procedures},
we obtain a $(\eps/2)$-short path of cocycles, all with the same eigenvalues,
so that at the end the angle between the continuation of $F$ and the continuation of $H$
is smaller than $\gamma$ at some point $x_0$.

Let $B$ indicate the endpoint of this path of cocycles. 
To simplify writing, the invariant bundles will still be indicated by $F$, $G$, $H$.
We can assume $F <_{\ell_0} G$ and $F/H <_{\ell_0} G/H$ with respect to $B$, 
because otherwise there is nothing to prove. 

\medskip 
\noindent \emph{Part 2: Second perturbation.}	
Recall that $H(x_0) \subset F^\gamma(x_0)$.
Let $k_0$ be the least positive integer such that 
$H(T^{-\ell_0 k_0} x_0) \not \subset F^\beta(T^{-\ell_0 k_0} x_0)$.
Notice that $k_0$ indeed exists and satisfies $k_0 \le \lfloor n/\ell_0 \rfloor$:
in the opposite case, using \eqref{e.cone} and \eqref{e.ell} 
we obtain
$$
\m(B^n(x_0) \restr H) 
< K^{\ell_0} 0.6^{\lfloor n/\ell_0 \rfloor} \|B^n(x_0) \restr G\|
< \|B^n(x_0) \restr G\|,
$$
which contradicts the assumption that the Lyapunov exponent along $H$ is bigger than those along $G$.)
Moreover, $k_0 > m_0$ due to the choice of $\gamma$.

Let $x_1 = T^{-\ell_0 (k_0-1)} x_0$.
Then $\angle (H(x_1), F(x_1)) < \beta < \theta_0/2$, 
and since $\angle(F,G)\ge \theta_0$, we have
$\angle (H(x_1) , G(x_1)) > \theta_0/2$.
On the other hand, 
by the minimality of $k_0$ we have
$\angle (H(T^{-\ell_0} x_1) , F(T^{-\ell_0} x_1))$ is at least $\beta$, 
and since the cocycle $B$ is bounded by $2K$,
we obtain
$$
\angle (H(x_1) , F(x_1)) > c_1 \beta.   
$$
Using also that $\angle (F/H, G/H) > \theta_0$,
Lemma~\ref{l.BV} gives
\begin{equation}\label{e.kingsofleon}
\angle(H(x_1), F(x_1) \oplus G(x_1)) > c_0 \cdot (c_1 \beta) \cdot (\theta_0/2) \cdot \theta_0 = c_2 \beta.
\end{equation}

Let $h$ be a unit vector on the direction of $H(x_1)$,
and let $g\in G(x_1)$ be an eigenvector for $B^n(x_1)$ with $\|g\|=\delta$.

Define linear maps $S_t : E(x_1) \to E(x_1)$, where $t\in[0,1]$, by
$$
S_t \restr F(x_1) \oplus G(x_1) = \Id , \qquad
S_t : h \mapsto  h+ tg .
$$
It follows easily from \eqref{e.kingsofleon} that
$$
\|S_t  - \Id\| < \frac{1}{\sin (c_1\beta)}\frac{\|g\|}{\|h\|} < c_3\delta/\beta \quad \text{for all $t\in [0,1]$.}
$$
Define other linear maps $U_t : E(x_1) \to E(x_1)$, where $t\in[0,1]$, by
$$
U_t \restr F(x_1) \oplus G(x_1) = \Id , \qquad
U_t : = B^n(x_1) \cdot h \mapsto  B^n(x_1) \cdot (h+tg).
$$
Since $g$ and $h$ are eigenvectors of $B^n(x_1)$, and the 
eigenvalue associated to $h$ has bigger modulus,
we have $U_t (h) = h + \rho tg$, where $|\rho|<1$.
Thus we can estimate exactly as before:
$$
\| U_t - \Id \| < c_3\delta/\beta < 1/2 \quad \text{for all $t\in [0,1]$.}
$$
In particular (using the formula $(\Id-X)^{-1} = \Id + X + X^2 + \cdots$),
we have  
$$
\|U_t^{-1} - \Id \| \le 
2\| U_t - \Id \| < 2c_3\delta/\beta.
$$
Define a path of cocycles $B_t$, $t\in [0,1]$ starting at $B_0=B$ as follows:
$$
B_t(x_1)        = B(x_1) \circ S_t , \qquad
B_t(T^{-1} x_1) = U_t^{-1} \circ B(T^{-1} x_1), 
$$
and $B_t(x) = B(x)$ for $x \not \in \{ x_1, T^{-1} x_1\}$.
By the estimates above and \eqref{e.feiomashonesto}, 
the path is $(\eps/2)$-short, that is, $\|B_t-B\|<\eps/2$. 
we see that all cocycles $B_t$ have the same spectrum as $B$.
The continuation of the $B$-invariant splitting $F \oplus G \oplus H$ 
is $F \oplus G \oplus H_t$,
where $H_t(x_1)$ is spanned by $h$
and $H_t(T^j x_1)$ is spanned by $B^j(x_1)\cdot (h+tg)$ for $0<j<n$.

\medskip 
\noindent \emph{Part 3: Finding a point that breaks dominance.}	
We will show that there is a point over which 
$\angle(F/H_1, G/H_1)$ is small.

Define $h_j = B^{\ell_0 j}(x_1) \cdot h$ and  $g_j = B^{\ell_0 j}(x_1) \cdot g$.
For $0 \le j \le m_0$,
we have $h_j \in F^\beta(T^{\ell_0 j} x_1)$.
Therefore, by \eqref{e.cone},
$$
\frac{\|h_j \|}{\| g_j\|} < 0.6^j \frac{\|h\|}{\|g\|} = \delta^{-1} 0.6^j \, .
$$
By definition of $m_0$ we have $\delta^{-1} 0.6^{m_0}<1$; 
so let $m\ge 1$ be the least number so that 
$\|h_m\|/\|g_m\|<1$.
By minimality of $m$, we have $\|h_m\|/\|g_m\| \ge K^{-2\ell_0}$.
That is, the norms of the vectors $h_m$ and $g_m$ are far from $0$ and $\infty$.
Moreover, since 
$$
\angle(h_m, g_m) \ge \angle(F,G) - \angle(h_m, F) > \theta_0 - \beta > \theta_0/2, 
$$
the vector $h_m + g_m$ cannot be much smaller than $h_m$ or $g_m$.
Thus we can say that
$$
\|h_m\|, \|g_m\|, \|h_m+g_m\| \text{ are between $c_4^{-1}$ and $c_4$.}
$$

Let $y = T^{\ell_0 m} x_1$; then $h_m + g_m$ spans $H_1(y)$.
We will show that at the point $y$, the space $H_1$ makes an small angle with $F\oplus G$,
but not with $F$ nor $G$;
see Figure~\ref{f.endproof}.
\begin{figure}[hbt]
\psfrag{F}[r][r]{{\footnotesize $F$}}
\psfrag{G}[l][l]{{\footnotesize $G$}}
\psfrag{H}[c][c]{{\footnotesize $H$}}
\psfrag{H1}[c][c]{{\footnotesize $H_1$}}
\psfrag{f}[c][c]{{\footnotesize $f$}}
\psfrag{g}[c][c]{{\footnotesize $g_m$}}
\psfrag{h}[c][c]{{\footnotesize $h_m$}}
\psfrag{w}[c][c]{{\footnotesize $w$}}
\begin{center}
\includegraphics[scale=.55]{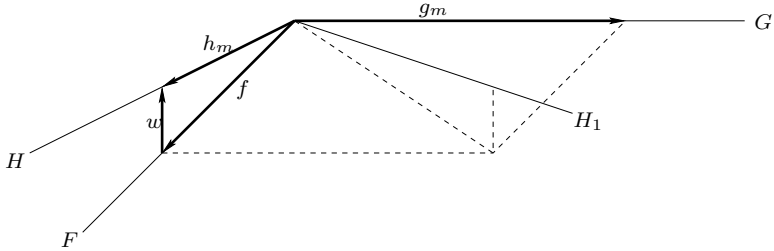}
\end{center}
\caption{\small The spaces at the point $y$.}
\label{f.endproof}
\end{figure}

The \emph{distance} between a vector $u$ and a space $V$ is
$$
d(u, V) = \inf_{v \in V} \|u-v\| = \|u\| \sin \angle(u,V).  
$$

Let $f$ be the vector in $F(y)$ that is closer to $h_m$. 
Since $h_m \in H(y) \subset F^\beta(y)$,
the vector $w = h_m - f$ has norm $\|w\| = d(h_m, F(y)) < c_4 \beta$.
The vectors $h_m + g_m$ and $f + g_m$ have comparable norms,
and their difference is $w$, we conclude that they form an angle less than $c_5\beta$.
In particular, $\angle (H_1, F \oplus G) < c_6 \beta$ at the point $y$.

On the other hand,
$$
d(g_m, F(y)) = \|g_m\| \sin \angle(F,G) > \|g_m\|\sin \theta_0;
$$
so the distance from $h_m + g_m = f + g_m + w$ to $F(y)$ is at least 
$\|g_m\|\sin \theta_0 - \|w\|$.
Thus we obtain that $\angle(H_1(y) , F(y)) > c_6$.
An entirely analogous reasoning gives $\angle(H_1(y) , G(y)) > c_6$.

It follows from Lemma~\ref{l.BV} and the previous estimates that,
at the point $y$,
$$
\angle(F/H_1, G/H_1) < c_0^{-1} \cdot 
\frac{\angle(H_1, F \oplus G)}{\angle(H_1, F) \cdot \angle(H_1, G)} <
c_7 \beta.
$$
Since $c_7\beta < \theta_0$,
we have $F/H_1 \not <_{\ell_0} G/H_1$.
So concatenating the paths from $A$ to $B$ and from $B$ to $B_1$,
we obtain a path of cocycles that falls into the second alternative in  \eqref{e.alternatives}.

This proves Lemma~\ref{l.main} and therefore Proposition~\ref{p.two exp}.
\end{proof}

\section{Mixing Lyapunov Exponents: General Results}\label{s.up}

\subsection{The First Main Result}


\begin{theo}\label{t.main}
For any $d \ge 2$, $K>1$, $\eps>0$, 
there exists $L \in \PT$
such that the following holds:
Let $(X,T,E,A)$ be a cyclic $d$-dimensional cocycle  bounded by $K$ and of period at least $L$
that admits no $L$-dominated splitting.
Let $\sigma \in \cS_d$ be a graph such that $\sigma \gtr \bsigma (A)$
and $\sigma_d = \bsigma_d(A)$.

Then there exists
an $\eps$-short path of cocycles $A_t$, $t\in [0,1]$ starting at $A$ 
such that the path of graphs $\bsigma(A_t)$ is non-decreasing
and $\bsigma(A_1) = \sigma$.
\end{theo}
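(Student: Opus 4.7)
My plan is to reach $\sigma$ from $\bsigma(A)$ by iterating Proposition~\ref{p.two exp}, using partial mixes on adjacent indices, and controlling the total perturbation by geometric decay of step sizes.

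First I would set up a combinatorial scheme: construct a sequence of convex graphs $\sigma^{(0)} = \bsigma(A), \sigma^{(1)}, \sigma^{(2)},\dots$ in $\cS_d$, each obtained from the previous by raising a single coordinate $\sigma_{i_k}$ to $\min\bigl(\sigma_{i_k}, (\sigma^{(k-1)}_{i_k-1}+\sigma^{(k-1)}_{i_k+1})/2\bigr)$, with $i_k$ cycling through $\{1,\dots,d-1\}$. The resulting monotone sequence $\sigma^{(k)} \les \sigma$ converges to $\sigma$; however it typically does not attain $\sigma$ in finitely many steps (already for $\bsigma(A)=(0,0,0,3)$ and $\sigma=(0,1,2,3)$ one has $\sigma^{(k)}\to\sigma$ only in the limit, since elementary adjacent mixes cannot, in finite time, turn one spectrum into any majorized spectrum). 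So a genuinely infinite iteration is required.

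Next, at the cocycle level, I would start from $A^{(0)}=A$ and at step $k$ apply Proposition~\ref{p.two exp} to $A^{(k)}$ at index $i_k$, taking a sub-path that stops exactly when the Lyapunov graph has reached $\sigma^{(k+1)}$. I would allot perturbation budget $\eps_k=\eps/2^{k+1}$ to the $k$-th step. Parametrizing the $k$-th sub-path on $[1-2^{-k}, 1-2^{-k-1}]$ and defining $A_1=\lim_k A^{(k)}$, I get a continuous $\eps$-short path: $\sum_k\eps_k<\eps$ gives shortness, the geometric decay of step sizes yields continuity at $t=1$, continuity of $\bsigma$ on cyclic cocycles of fixed period gives $\bsigma(A_1)=\sigma$, and non-decreasingness is inherited from each sub-path.

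The main obstacle is verifying the hypothesis of Proposition~\ref{p.two exp} at every step: the current cocycle $A^{(k)}$ must have no $\ell_k$-dominated splitting of index $i_k$, where $\ell_k=\ell(d,K',\eps_k)$ grows as $\eps_k\to 0$. I would choose the initial constant $L$ large enough that no $\ell$-dominated splitting of any index can appear, at any step, for any cocycle lying within $\eps$ of $A$. This requires a quantitative persistence statement for the absence of $\ell$-domination under $C^0$-perturbations (with explicit dependence on $\ell$, $K$, and the perturbation size, in the spirit of Remark~\ref{r.epsilon}), applied uniformly across all $k$, together with a schedule of mixes that does not spontaneously create $\ell$-dominated splittings at indices still to be visited. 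Arranging this uniform bookkeeping --- reconciling the fact that $\ell_k\to\infty$ with the requirement that a single $L$ control all steps --- is the principal technical content of the proof, and is where I expect the finer structure of the finest dominated splitting of a perturbed cocycle to have to be invoked.
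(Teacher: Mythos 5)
Your observation that adjacent (Robin Hood) mixes alone cannot reach $\sigma$ exactly in finitely many steps is correct, but the conclusion you draw from it --- that a genuinely infinite iteration is required --- is where the argument breaks, and it breaks exactly at the point you flag as the ``principal technical content''. In your scheme the budget at step $k$ is $\eps_k=\eps/2^{k+1}$, so the non-domination scale you must verify, $\ell_k=\ell(d,K',\eps_k)$ from Proposition~\ref{p.two exp}, tends to infinity. But the hypothesis that $A$ admits no $L$-dominated splitting for a \emph{fixed} $L$ gives no information about $\ell$-dominated splittings with $\ell>L$: absence of domination at scale $L$ does not preclude the (weaker) domination at larger scales, and the stability statement (Lemma~\ref{l.eta}) only passes from ``no $2\ell$-dominated'' to ``no $\ell$-dominated'' under an $\eta(d,K,\ell)$-perturbation, i.e.\ it degrades the scale at every step. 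Hence no single finite $L$ can guarantee, for all $k$, that the $k$-th perturbed cocycle has no $\ell_k$-dominated splitting of index $i_k$; nothing prevents the accumulated perturbations (whose limiting spectrum $\sigma$ may have well-separated exponents) from creating dominated splittings at scales beyond $L$. So the proposed fix ``choose $L$ large enough that no $\ell$-dominated splitting of any index can appear at any step, for any cocycle within $\eps$ of $A$'' is not an achievable requirement, and the infinite scheme cannot be closed.

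The paper avoids the infinite iteration with two ingredients missing from your proposal. First, Lemma~\ref{l.zigzag} shows that finitely many mixes suffice: there is an \emph{a priori} bound $N=N(d,c,\delta)$, depending only on the dimension, a bound on the graphs and the accuracy $\delta$, such that $N$ single-vertex moves bring $\bsigma(A)$ to within $\delta$ of $\sigma$ while staying below it. Second, the last $\delta$ is not achieved by mixing at all: after Proposition~\ref{p.BGV} has made all eigenvalues real (a step your proposal also omits, even though Proposition~\ref{p.two exp} requires real eigenvalues), Lemma~\ref{l.triangular} performs an exact, arbitrarily prescribed adjustment of size $\delta=\delta(d,2K,\eps/2)$ of the spectrum. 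With $N$ fixed in advance one defines $\eps_1,\ell_1,\ldots,\eps_{N+1},\ell_{N+1}$ by a finite recursion interleaving Lemma~\ref{l.eta} with Proposition~\ref{p.two exp}, spends the smallest budget first and the largest last (so the non-domination scale needed decreases along the construction), and sets $L=\ell_{N+1}$. It is this finite bookkeeping, made possible by the quantitative majorization Lemma~\ref{l.zigzag}, that your argument lacks and that cannot be replaced by geometric decay of the budgets.
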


Theorem~\ref{t.periodic} from the introduction follows directly 
using Franks Lemma and basic properties of dominated splittings.

\begin{rema}
A consequence of Theorem~\ref{t.main} is that with a perturbation $A_1$
we can make all Lyapunov exponents equal.
However it may be impossible to make $A_1^n(x)$ into a homothety at a given point $x$ (where $n$ is the period):
see the footnote on page~1308 of \cite{BGV}.
So it is not possible to have an statement similar to Theorem~\ref{t.main}
where instead of ``mixing'' the Lyapunov exponents,
one mixes the log's of the singular values of $A^n(x)$.
\end{rema}

The proof of Theorem~\ref{t.main} goes roughly as follows.
Starting with a cocycle without $L$-dominated splitting, where $L\gg 1$,
we first perturb it to make all eigenvalues real.
Then we want to apply Proposition~\ref{p.two exp} a certain number $N$ of times until
the spectrum gets very close to $\sigma$,
and perturb a last time to get spectrum exactly $\sigma$.
To make all this work, each time we want to apply Proposition~\ref{p.two exp}
to a cocycle, it must be non or weakly dominated.
Thus the perturbations must be sufficiently small so that no strong domination is created.
This is done with a careful choice of the quantifiers, where it is important to have an a~priori 
bound for the number $N$ of times that we will have to apply Proposition~\ref{p.two exp}.

\medskip

Now we give the detailed proof.
The first auxiliary result says how we can get rid of complex eigenvalues:

\begin{prop}[(Getting real eigenvalues)]\label{p.BGV}
Given $d$, $K$ and $\eps$, there exists $m=m(d,K,\eps)$ such that 
if $(X,T,E,A)$ is a cyclic $d$-dimensional cocycle  bounded by $K$ and of period at least $m$ 
then there is an $\eps$-short path of cocycles $A_t$, $t\in [0,1]$ starting at $A$,
all of them with the same Lyapunov spectrum,
such that $A_1$ has only real eigenvalues.
\end{prop}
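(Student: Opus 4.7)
The plan is to reduce the problem to a two-dimensional situation and then construct the desired path of perturbations within each 2D invariant subbundle associated to a complex conjugate eigenvalue pair. First, I would fix a base point $x_0 \in X$ and decompose the fiber $E(x_0)$ into the real generalized eigenspaces of the product matrix $A^n(x_0)$. Each complex conjugate eigenvalue pair $\lambda e^{\pm i\theta}$ of algebraic multiplicity $k$ corresponds to a $2k$-dimensional real-invariant subspace; propagating these subspaces under the cocycle yields invariant subbundles of $E$. By the restriction/extension procedures of \S\ref{ss.procedures}, it suffices to handle one such subbundle at a time. An arbitrarily small preliminary perturbation preserving the moduli of the eigenvalues splits any non-simple complex pair into several simple pairs of the same modulus, so I may reduce to the case of a single 2D invariant subbundle $V$ with $A^n \restr V$ having eigenvalues $\lambda e^{\pm i\theta}$, $\theta \in (0,\pi)$.

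On $V$, the Lyapunov spectrum is $\{\log\lambda,\log\lambda\}$, and the target is to construct a path of cocycles $A_t$ on $V$ with $A_0 = A\restr V$, all having product eigenvalues of modulus $\lambda$ (thereby preserving the spectrum), and ending at $A_1$ with only real eigenvalues. At the level of the product matrix $M := (A\restr V)^n(x_0)$, the natural target path is $M_t = P\, \lambda R_{(1-t)\theta}\, P^{-1}$, where $P \in \GL(2,\RR)$ conjugates $M$ to $\lambda R_\theta$ in a suitable basis. Along this path the eigenvalues of $M_t$ are $\lambda e^{\pm i(1-t)\theta}$, all of modulus $\lambda$, and at $t = 1$ they become real (a double eigenvalue $\lambda$).

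The central obstacle is to realize the path $M_t$ by a continuous path of cocycles $A_t$ whose factors $B_i^{(t)}$ are all $\eps$-close to the original factors $B_i = A(T^i x_0)\restr V$. The naive idea of inserting a small rotation $R_{-t\theta/n}$ at each step fails, because rotations do not commute with the $B_i$'s and the errors propagate noncommutatively through the long product, potentially amplifying exponentially. To overcome this, I would exploit the long period $n$: by a pigeonhole/averaging argument along the orbit, one locates a sub-orbit of length bounded in terms of $K$ and $\eps$ (but independent of $n$) over which the cocycle behaves almost like a conformal map of scaling factor $\lambda$, and then concentrates the perturbation on that window in such a way that each individual factor changes by at most $\eps$ while the product changes by the desired rotation. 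Tuning the length of the window and the size of the individual perturbations so as to match $\eps$ forces the threshold $m = m(d,K,\eps)$ on the period. This is the main technical point, and it is carried out in detail in \cite{BGV}.

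Finally, the path constructed on the 2D subbundle $V$ is lifted to a path on the full bundle $E$ using the extension procedures of \S\ref{ss.procedures}, which preserve the other invariant subbundles and hence their Lyapunov exponents. Iterating this construction over the (finitely many) complex conjugate eigenvalue pairs, and concatenating the resulting paths, produces the full $\eps$-short path $A_t$ with constant Lyapunov spectrum ending at a cocycle with only real eigenvalues.
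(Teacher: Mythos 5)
Your global scheme (pass to $2$-dimensional invariant subbundles coming from complex conjugate pairs, de-rotate there keeping the moduli of the eigenvalues constant, and re-assemble with the restriction/quotient extension procedures of \S\ref{ss.procedures}) is parallel to the paper's induction on the dimension, and the observation that preserving moduli preserves the Lyapunov spectrum along the path is correct. The problem is that the entire difficulty of the proposition sits in the two-dimensional step you do not actually carry out: showing that, once the period exceeds some $m(d,K,\eps)$, the de-rotation of the product matrix can be realized by changing each individual factor by less than $\eps$ (and, for the path version, realizing every intermediate product in this way). Your proposed mechanism --- locate by ``pigeonhole/averaging'' a sub-orbit of length bounded in terms of $K,\eps$ on which the cocycle is ``almost conformal of scaling factor $\lambda$'' and concentrate the whole perturbation there --- is asserted, not proved; there is no argument that such a window exists, nor that a perturbation supported on a bounded window can produce the required change of the trace of the full product (the noncommutative propagation problem you yourself raise reappears exactly there, since outside the window the product may be strongly non-conformal and conjugation distorts whatever you do inside). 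Finally, deferring the point to \cite{BGV} begs the question: the paper states explicitly that the proposition is ``essentially contained in \cite{BGV}, but not in the exact form we need,'' which is precisely why it supplies a proof.

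The mechanism the paper (and, in substance, \cite{BGV}) uses is different from yours and is the ingredient your sketch is missing: Lemma~\ref{l.BC} (Lemma~6.6 of \cite{BC}) says that for a long product of $\SL(2,\R)$ matrices one can compose \emph{every} factor with a rotation of angle less than $\eps$ so that the product acquires real eigenvalues --- the perturbation is spread along the whole orbit, and the large period is used so that rotations of individually small angle accumulate enough total effect; this is where $m(d,K,\eps)$ comes from. The constant-spectrum path is then obtained by scaling all the rotation angles linearly in $t$ and stopping at the first parameter where the eigenvalues become real, so that along the path the eigenvalues stay complex and both exponents are pinned by the (rotation-invariant) determinant. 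If you want to salvage your window-based approach, you would need to prove a quantitative statement of that type yourself; as written, the proposal has a genuine gap at its central step.
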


This proposition is essentially contained in \cite{BGV}, but not in the exact form we need, 
so we give a proof.
The essential fact is the following lemma, where $R_\theta$ denotes the rotation of angle $\theta$ in $\R^2$.

\begin{lemm}[(Lemma 6.6 from \cite{BC})]\label{l.BC}
For every $\eps>0$ there exists $m$ with the following property:
For any finite sequence of matrices $A_1, \ldots, A_n$ in $\SL(2,\R)$ of length $n \ge m$
there exist numbers $\theta_1, \ldots, \theta_n$ in the interval $(-\eps,\eps)$ such that
the matrix $R_{\theta_n}A_n \cdots R_{\theta_1}A_1$ has real eigenvalues.
\end{lemm}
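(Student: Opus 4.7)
My plan exploits the projective action of $\SL(2,\R)$ on $\mathbb{RP}^1 \cong \R/\pi\ZZ$: a matrix $M$ has real eigenvalues iff its projective map has a fixed point, and left multiplication by $R_\theta$ corresponds to the rigid rotation $r_\theta$ by $\theta$. Fix lifts $\tilde A_k \colon \R \to \R$ of the projective actions (each satisfying $\tilde A_k(\phi + \pi) = \tilde A_k(\phi) + \pi$) and recursively define the perturbed orbit $\phi_k^\theta = \tilde A_k(\phi_{k-1}^\theta) + \theta_k$. The goal becomes to find $\theta \in (-\eps, \eps)^n$ and $\phi_0 \in \R$ such that the $\pi$-periodic displacement $D^\theta(\phi_0) := \phi_n^\theta - \phi_0$ lies in $\pi\ZZ$.

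The scheme is a continuity / intermediate-value argument. Since $\theta_n$ appears outermost in the composition, $D^{(0,\ldots,0,\theta_n)} = D^0 + \theta_n$ is a rigid shift of $D^0$, giving $\eps$ of free adjustment; so either $D^0$ already comes within $\eps$ of $\pi\ZZ$ (and we are done), or $D^0$ is confined to some strip $[k\pi + \eps, (k+1)\pi - \eps]$. In the latter case I would construct a continuous path $\theta(t) \in (-\eps, \eps)^n$, $t \in [0,1]$, starting at $\theta(0) = 0$, along which $D^{\theta(t)}$ is translated (monotonically, say) by a net amount strictly larger than $\pi$; combined with the outermost-$\theta_n$ freedom, $D^{\theta(t^*)}$ must then cross $\pi\ZZ$ at some intermediate parameter, producing the required fixed point.

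The hard part is producing this cumulative translation of size $>\pi$, because a single insertion $\theta_k$ is propagated by the subsequent diffeomorphisms $\tilde A_{k+1}, \ldots, \tilde A_n$, whose derivatives are not \emph{a priori} controlled. I would exploit the trace identity
\[
\trace(Q R_{\theta_k} P) = \trace(N_k)\cos\theta_k + \bigl((N_k)_{12} - (N_k)_{21}\bigr)\sin\theta_k, \qquad N_k := PQ,
\]
where $P$, $Q$ are the partial products on either side of position $k$, whose oscillation amplitude equals $\sqrt{\|N_k\|_F^2 + 2}$. Splitting into two regimes: (i) if some cyclic conjugate $N_k$ of the unperturbed product $A_n \cdots A_1$ has large Frobenius norm $\|N_k\|_F \geq 2/\sin\eps$, then a single rotation $\theta_k \in (-\eps, \eps)$ drives $|\trace|$ past $2$, making the matrix hyperbolic; (ii) otherwise all cyclic conjugates have bounded Frobenius norm, which forces the cocycle to be quantitatively close to a composition of rigid rotations $A_k \approx R_{\alpha_k}$, and for such products $R_{\theta_n} R_{\alpha_n} \cdots R_{\theta_1} R_{\alpha_1} = R_{\sum \alpha_i + \sum \theta_i}$ has real eigenvalues whenever $\sum \theta_i$ sweeps an interval of length $\pi$, which is possible as soon as $n\eps \geq \pi$. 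Taking $m = m(\eps)$ large enough to cover both regimes completes the proof.
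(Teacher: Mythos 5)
Your projective set-up (real eigenvalues $\Leftrightarrow$ fixed point on $\RR/\pi\ZZ$, the lifted displacement $D^\theta$, the intermediate-value scheme using the outermost $\theta_n$) is sound, and your regime (i) is correct and nicely quantified: the identity $\trace(N)^2+\big((N)_{12}-(N)_{21}\big)^2=\|N\|_F^2+2$ for $N\in\SL(2,\R)$ does show that if some cyclic conjugate $N_k$ of the full product has $\|N_k\|_F\ge 2/\sin\eps$, then a single rotation $\theta_k\in(-\eps,\eps)$ already makes $|\trace|>2$. (For the record, the paper gives no proof of this lemma -- it only cites Lemma~6.6 of \cite{BC} -- so your argument has to stand on its own.)

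The gap is regime (ii): the negation of regime (i) does not imply that the $A_k$ are close to rotations, nor anything usable of that kind. Take $n$ even and $A_j=\Diag(t,t^{-1})$ for $j$ odd, $A_j=\Diag(t^{-1},t)$ for $j$ even, with $t$ arbitrarily large: every cyclic conjugate $N_k$ of the full product is the identity, so regime (i) never triggers no matter how large $n$ is, yet no $A_j$ is remotely close to a rotation and $R_{\theta_n}A_n\cdots R_{\theta_1}A_1$ is certainly not $R_{\sum\alpha_i+\sum\theta_i}$ (it is typically strongly hyperbolic once some $\theta_j\neq 0$). In this particular example the conclusion of the lemma happens to be easy, but it shows the implication on which your regime (ii) rests is false. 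Structurally, the quantity that governs how much an inserted $\theta_k$ moves the final displacement is the derivative of the projectivized tail composition, namely $1/\|A_n\cdots A_{k+1}\,u\|^2$, i.e.\ the norms of the \emph{tail partial products}; these are not controlled by the cyclic conjugates of the full product, because head and tail can cancel (exactly as in the example). So the difficulty you yourself flagged (``derivatives are not a priori controlled'') is precisely what remains unresolved: your dichotomy is taken with respect to the wrong quantity. A workable dichotomy must involve the intermediate products -- e.g.\ either some segment $A_j\cdots A_i$ has norm larger than a constant $C(\eps)$, in which case one needs a two-rotation, Ma\~n\'e-type argument (a single rotation plus the trace trick can fail there, since the relevant cyclic conjugate may stay small), or all segments are bounded by $C(\eps)$, in which case each inserted $\theta_k$ shifts the displacement by at least $\theta_k/C(\eps)^2$ and your IVT scheme closes once $n\eps/C(\eps)^2>\pi$. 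Note finally that even when each $A_k$ really is close to a rotation, an error of fixed size per factor compounds over $n\to\infty$ steps, so the last step of your regime (ii) would need a quantitative argument in any case.
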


\begin{rema}
In fact, it is possible to take all $\theta_i$ equal in Lemma~\ref{l.BC};
this follows immediately from Lemma~C.2 in \cite{ABD}.
\end{rema}

\begin{proof}[of Proposition~{\rm\ref{p.BGV}}]  
The proof is by induction on the dimension $d$.
For $d=1$ the result is true.
Next consider a $2$-dimension cocycle $(X,T,E,A)$, where $T$ is a cyclic dynamical system
of large period $n$.
We can assume that the linear map $A^n(x)$ has complex eigenvalues and
in particular preserves orientation.
So with appropriate choices of bases over each point, the matrices of the cocycle have positive determinant.
Normalizing the determinant, we obtain matrices $A_1,\ldots,A_n$ in $\SL(2,\R)$.
By Lemma~\ref{l.BC}, $R_{\theta_n}A_n \cdots R_{\theta_1} A_1$ has real eigenvalues
for some choice of small numbers $\theta_i$.
Consider the path of matrices $M_t = R_{\theta_n t}A_n \cdots R_{\theta_1 t} A_1$.
Take the smallest $t_0 \in [0,1]$ such that $M_{t_0}$ has real eigenvalues.
Then by perturbing the cocycle itself by composing with rotations $R_{\theta_i t_0 t}$, $t\in [0,1]$,
we obtain the desired path of cocycles.

Now let $d>2$ be arbitrary and assume the proposition is true for any dimension less than $d$.
Any cocycle over a cyclic dynamical system has a $2$-dimensional invariant subbundle $F$.
Apply the induction assumption to the cocycles $A \restr F$ and $A / F$.
We obtain paths of cocycles on $F$ and $E/F$ with constant Lyapunov spectra, and only real eigenvalues at
the endpoints.
By the extension procedures of \S~\ref{ss.procedures} we obtain the desired path.
\end{proof}

The next result says that once all eigenvalues are real (which is the case when the spectrum is simple, for example),
we can always perturb them a little:

\begin{lemm}[(Arbitrary perturbation of the spectrum)]\label{l.triangular}
Given $d$, $K$ and $\eps$, there exists $\delta=\delta(d,K, \eps)>0$ with the following properties.
Let $(X,T,E,A)$ be a cyclic $d$-dimensional cocycle bounded by $K$ with only real eigenvalues.
Then for any path $\sigma:[0,1] \to \cS_d$ satisfying
$\sigma(0) =\bsigma(A)$ and $|\sigma(t) - \sigma(0)|<\delta$,
Then there exists
an $\eps$-short path of cocycles $A_t$, $t\in [0,1]$ starting at $A$,
such that
$\bsigma(A_t) = \sigma(t)$ for each~$t$.
\end{lemm}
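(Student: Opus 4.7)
The plan is to exploit the real-eigenvalue hypothesis to triangularize the cocycle along the orbit and then perturb the diagonal entries uniformly. Let $n$ denote the period and fix a basepoint $x_0 \in X$. Since $A^n(x_0)$ has only real eigenvalues it admits a full invariant flag $V_1 \subset \cdots \subset V_d = E(x_0)$, and transporting it by the cocycle via $V_i(T^j x_0) := A^j(x_0) \cdot V_i(x_0)$ produces an invariant full flag at each point of $X$. Applying Gram--Schmidt in each fiber I obtain orthonormal frames adapted to this flag, in which the cocycle is represented by upper triangular matrices $R_j$ at $T^j x_0$ with diagonal entries $r_{ii}^{(j)}$ satisfying $\prod_{j=0}^{n-1} r_{ii}^{(j)} = \mu_i$, the $i$-th signed eigenvalue of $A^n(x_0)$ (so $(1/n) \log |\mu_i| = \lambda_i$).

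The perturbation I propose is to compose the cocycle, at every point of the orbit, on the right with $D(t) = \Diag(c_1(t), \ldots, c_d(t))$ written in the adapted frame, where $c_i(t) = \exp(\tilde\lambda_i(t) - \lambda_i)$ and $\tilde\lambda_i(t) = \sigma_i(t) - \sigma_{i-1}(t)$ is the prescribed $i$-th Lyapunov exponent at time $t$. In the adapted frame the new cocycle is represented by $R_j D(t)$, which is still upper triangular, so after $n$ iterations the $i$-th diagonal entry becomes $\mu_i c_i(t)^n$; its log-modulus divided by $n$ equals exactly $\tilde\lambda_i(t)$. Convexity of $\sigma(t)$ enforces the ordering $\tilde\lambda_1(t) \le \cdots \le \tilde\lambda_d(t)$, hence $\bsigma(A_t) = \sigma(t)$ as required, and $\sigma(0) = \bsigma(A)$ gives $A_0 = A$.

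Because the adapted frames are orthonormal, the operator-norm perturbation at each point is bounded by $K \cdot \|D(t) - \Id\| \le K \max_i |c_i(t) - 1| = O(K\delta)$ whenever $|\sigma(t) - \sigma(0)| < \delta$. Hence $\delta$ of order $\eps/K$ makes the path $\eps$-short, continuity in $t$ is immediate from the continuity of $\sigma$, and no lower bound on the period $n$ is needed. The step deserving care, and the main subtlety, is the use of Gram--Schmidt rather than an arbitrary base change that diagonalises $A^n(x_0)$: it is precisely the orthogonality of the change of basis that keeps the perturbation bound independent of how degenerate the invariant flag may be (for instance when several eigenvalues nearly coincide or a Jordan block is nearly non-diagonalizable), which in turn is what allows $\delta$ to be chosen depending only on $d$, $K$ and $\eps$, as the statement demands.
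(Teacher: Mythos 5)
Your proposal is correct and follows essentially the same route as the paper's proof: use the real-eigenvalue hypothesis to get an invariant flag, pass to orthonormal (Gram--Schmidt) adapted frames so the cocycle becomes triangular with the original norm bounds intact, and then compose with diagonal matrices close to the identity to move the Lyapunov exponents as prescribed. Your write-up just makes explicit the choice of diagonal correction factors and the $O(K\delta)$ perturbation estimate that the paper leaves implicit.
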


\begin{proof}
Let $(X,T,E,A)$ be a $d$-dimensional cocycle.
Assuming it has only real eigenvalues, it is possible to find invariant subbundles
$F_1 \subset F_2 \subset \cdots \subset F_{d-1}$ with $\dim F_i = i$ (that is, an invariant flag).
For each $x\in X$, let $\{e_1(x), \ldots, e_d(x)\}$ be an orthonormal basis of $E(x)$
such that $e_i(x) \in F_i(x)$ for each $i$.
With respect to those basis, the cocycle is expressed by triangular matrices.
By multiplying them by appropriate diagonal matrices (close to the identity), 
we can perform any prescribed sufficiently small 
perturbation of the Lyapunov exponents, keeping the eigenvalues real.
\end{proof}

With the next lemma we control the effect of our perturbations on the non-dominance of the cocycle:

\begin{lemm}[(Stability of non-domination)]\label{l.eta}
Given $d$, $K$ and $\ell$, there exists $\eta = \eta(d,K,\ell)>0$ such that
if $A$ is a $d$-dimensional cocycle bounded by $K$ that has no $2\ell$-dominated splitting 
of index~$p$ 
then no $\eta$-perturbation of $A$ has a $\ell$-dominated splitting
of index~$p$.
\end{lemm}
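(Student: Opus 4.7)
The plan is to prove the contrapositive: I will show that for $\eta$ sufficiently small depending on $d$, $K$, $\ell$, if some $\eta$-perturbation $\tilde A$ of $A$ admits an $\ell$-dominated splitting $\tilde E = \tilde F \dplus \tilde G$ of index $p$, then $A$ itself admits a $2\ell$-dominated splitting of index $p$. By Remark~\ref{r.epsilon}, one may assume $\eta$ is small enough that $A$ is bounded by $2K$, so $\|\tilde A^{\ell}(x)-A^{\ell}(x)\|\le C(K,\ell)\,\eta$ uniformly in $x$, and similarly for $A^{-\ell}$.

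My first step is to introduce cone fields adapted to the splitting of $\tilde A$. Choose an angular radius $\theta=\theta(K,\ell)$ small enough (and pairwise disjoint cones); let $\mathcal{C}^{G}(x)$ and $\mathcal{C}^{F}(x)$ be the cones of angle $\theta$ around $\tilde G(x)$ and $\tilde F(x)$, respectively. A standard computation (using the $\ell$-domination ratio $<1/2$ for $\tilde A$) yields strict invariance $\tilde A^{\ell}(x)\cdot\overline{\mathcal{C}^{G}(x)}\subset \Int\mathcal{C}^{G}(T^{\ell}x)$, $\tilde A^{-\ell}(x)\cdot\overline{\mathcal{C}^{F}(x)}\subset \Int\mathcal{C}^{F}(T^{-\ell}x)$, together with the quantitative expansion estimate
$$
\sup_{v\in\mathcal{C}^{F}(x)\setminus\{0\}}\!\!\frac{\|\tilde A^{\ell}(x)v\|}{\|v\|}\;\Bigg/\;\inf_{w\in\mathcal{C}^{G}(x)\setminus\{0\}}\!\!\frac{\|\tilde A^{\ell}(x)w\|}{\|w\|}\;<\;\tfrac{3}{5}.
$$
Because all these inequalities are open and the maps depend continuously on the cocycle on a compact base, there is $\eta=\eta(d,K,\ell)>0$ such that the same cones are strictly invariant under $A^{\pm\ell}$ and the analogous expansion ratio for $A$ stays $<1/\sqrt{2}$.

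In the second step I extract an $A$-invariant splitting from the cone fields by setting
$$
G(x)=\bigcap_{n\ge 0} A^{n\ell}(T^{-n\ell}x)\cdot \mathcal{C}^{G}(T^{-n\ell}x),\qquad
F(x)=\bigcap_{n\ge 0} A^{-n\ell}(T^{n\ell}x)\cdot \mathcal{C}^{F}(T^{n\ell}x).
$$
By the standard nested-cone argument (uniform contraction of the relevant Grassmannian distances under forward/backward iteration), $G(x)$ and $F(x)$ are continuous subbundles of dimensions $d-p$ and $p$; they are $A^{\ell}$-invariant by construction, and an immediate uniqueness argument (two nested cone-invariant subspaces of the same dimension must coincide) shows they are in fact $A$-invariant, so $E=F\dplus G$ is a splitting of index $p$. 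The expansion ratio gives $\|A^{\ell}\restr F(x)\|/\m(A^{\ell}\restr G(x))<1/\sqrt{2}$ pointwise, whence by submultiplicativity
$$
\frac{\|A^{2\ell}\restr F(x)\|}{\m(A^{2\ell}\restr G(x))}\;\le\;\frac{\|A^{\ell}\restr F(T^{\ell}x)\|}{\m(A^{\ell}\restr G(T^{\ell}x))}\cdot\frac{\|A^{\ell}\restr F(x)\|}{\m(A^{\ell}\restr G(x))}\;<\;\tfrac{1}{2},
$$
so $F\dplus G$ is a $2\ell$-dominated splitting of $A$ of index $p$, completing the contrapositive.

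The main obstacle is the quantitative choice of the cone opening $\theta$ and the perturbation size $\eta$: one needs $\theta$ small enough (depending only on $K$ and $\ell$) that the strict cone invariance and the expansion-ratio bound $<3/5$ follow from the domination ratio $<1/2$ of $\tilde A$, and then $\eta$ small enough (depending on $\theta$, hence only on $d,K,\ell$) that all these strict inequalities transfer to $A$ with the ratio degrading only from $3/5$ to $1/\sqrt{2}$. Once this bookkeeping is done, the graph-transform/nested-cone construction of the splitting is routine.
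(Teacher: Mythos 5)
Your proposal is correct, but it does much more work than the paper does: the paper's entire proof of Lemma~\ref{l.eta} is the observation that the statement is the contrapositive of the well-known persistence of dominated splittings under perturbation, with a citation to Appendix~B.1 of \cite{BDV}. You take the same contrapositive, but then supply the persistence argument yourself via the standard cone-field criterion: build uniformly-sized cones around the bundles of the $\ell$-dominated splitting of the perturbation $\tilde A$ (the uniform angle bound $\angle(\tilde F,\tilde G)\ge\theta_0(K,\ell)$ is what makes a uniform cone opening legitimate), observe that strict cone invariance and the expansion-ratio gap are open conditions with margins depending only on $d,K,\ell$, transfer them to $A$ for $\eta$ small, and recover an invariant splitting for $A$ with ratio $<1/\sqrt{2}$ at time $\ell$, hence $<1/2$ at time $2\ell$ by submultiplicativity. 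This buys a self-contained, quantitative proof (and makes explicit why the constant degrades from $\ell$ to $2\ell$), at the cost of redoing material the paper deliberately treats as known.

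One step in your write-up is thinner than it should be: the passage from $A^{\ell}$-invariance of $F,G$ to genuine $A$-invariance. Your parenthetical uniqueness argument via the cones does not directly apply, because $A(x)\cdot G(x)$ has no reason to lie in $\mathcal{C}^{G}(Tx)$ (the cone field is only invariant under time-$\ell$ maps). The standard fix is to push the splitting forward by $A$: since $A^{\ell}(Tx)A(x)=A(T^{\ell}x)A^{\ell}(x)$, the bundles $A(x)F(x)$, $A(x)G(x)$ form another invariant splitting for the iterated system which is again dominated (with constants worsened only by a factor bounded in terms of $K$), and uniqueness of dominated splittings of a given index (the fact quoted from \cite[p.~291]{BDV} in the paper) forces it to coincide with $F(Tx)\dplus G(Tx)$. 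Also, a trivial slip: it is $\tilde A$, not $A$, that Remark~\ref{r.epsilon} bounds by $2K$. With these repairs your argument is complete.
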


\begin{proof}
This is merely the contraposition of the well-known fact that dominated splittings persist under perturbations.
(See e.g.\ Appendix B.1 in \cite{BDV}.)
\end{proof}

In the proof of Theorem~\ref{t.main}
it is necessary to have an \textit{a priori} bound on the number of times
we will have to apply Proposition~\ref{p.two exp}.
We will show that given two convex graphs 
$\sigma' \le \sigma''$ in $\cS_d$ that agree at the endpoints,
there exists a non-decreasing path of graphs inside $\cS_d$ starting at $\sigma'$ and ending close to $\sigma''$; 
this path is the concatenation of a certain number $N$ of paths of graphs,
each of these consisting of moving a single vertex of the graph.
See Figure~\ref{f.accessib}.
\begin{figure}[hbt]
\begin{center}
\psfrag{1}[l][l]{{\tiny $\sigma_1$}}
\psfrag{2}[c][c]{{\tiny $\sigma_2$}}
\includegraphics[scale=.3]{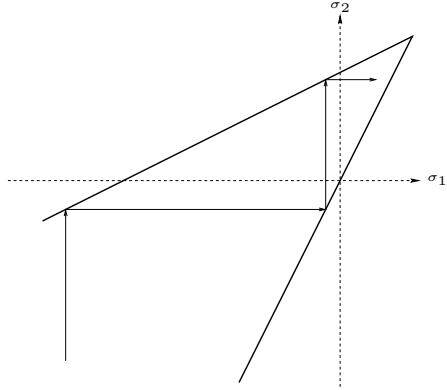}
\end{center}
\caption{\small The region between the two lines is a slice of $\cS_3$ of constant $\sigma_3>0$. A path of spectra as in Lemma~\ref{l.zigzag} is represented, with arrows joining points $\sigma^{(j)}$ and $\sigma^{(j+1)}$.}
\label{f.accessib}
\end{figure}
Moreover the number $N$ can be bounded in terms of the sizes of $\sigma'$, $\sigma''$
and the desired accuracy of the approximation to $\sigma''$.
Precisely speaking, we have the following lemma:

\begin{lemm}[(Path of spectra)]\label{l.zigzag}
Given $d \ge 2$, $c>0$ and $\delta>0$ there exists $N=N(d,c,\delta)$ with the following properties.
Let $\sigma'$, $\sigma'' \in \cS_d$ be such that
$|\sigma'_i| \le c i$ for each $i$,
$\sigma'_d = \sigma''_d$,
and $\sigma' \les \sigma''$.
Then there exists a sequence of points in $\cS_d$ 
$$
\sigma^{(0)} \les \sigma^{(1)} \les \cdots \les \sigma^{(N)}
$$
such that $\sigma^{(0)} = \sigma'$,
$\sigma^{(N)} \les \sigma''$,
$\sigma^{(N)}$ is $\delta$-close to $\sigma''$,
and there are $i_1$, \ldots, $i_N \in \{1, \ldots, d-1\}$ such that 
$$
1 \le j \le N, \  0 \le i \le d, \   i \neq i_j \quad \Rightarrow \quad
\sigma^{(j-1)}_i = \sigma^{(j)}_i \, .
$$
\end{lemm}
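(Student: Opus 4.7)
The plan is to build $(\sigma^{(j)})_{j=0}^{N}$ greedily, by cyclically sweeping through the interior coordinates $i = 1, 2, \ldots, d-1$ and, at each move, raising a single coordinate to the largest value compatible with convexity and with the upper bound $\sigma''$. Explicitly, set $\sigma^{(0)} = \sigma'$, put $i_j := 1 + ((j-1) \bmod (d-1))$, and define
\[
\sigma^{(j)}_{i_j} := \min \Bigl( \tfrac{1}{2}\bigl(\sigma^{(j-1)}_{i_j - 1} + \sigma^{(j-1)}_{i_j + 1}\bigr),\ \sigma''_{i_j} \Bigr), \qquad \sigma^{(j)}_i := \sigma^{(j-1)}_i \text{ for } i \neq i_j.
\]
First I would check that each $\sigma^{(j)} \in \cS_d$: convexity at $i_j$ holds by construction, and at the neighbors $i_j \pm 1$ it can only be strengthened since $\sigma_{i_j}$ only moves upward. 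The sequence is obviously non-decreasing, and bounded above by $\sigma''$ coordinatewise; the boundary values $\sigma_0 = 0$ and $\sigma_d = \sigma'_d = \sigma''_d$ are never touched.

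To control convergence I would track the gap $e^{(j)}_i := \sigma''_i - \sigma^{(j)}_i \ge 0$, with boundary values $e^{(j)}_0 = e^{(j)}_d = 0$ for every $j$. A short computation using $\nabla^2 \sigma''_{i_j} \ge 0$ gives
\[
e^{(j)}_{i_j} \;\le\; \tfrac{1}{2}\bigl( e^{(j)}_{i_j - 1} + e^{(j-1)}_{i_j + 1} \bigr).
\]
Consequently a complete sweep of $d-1$ consecutive moves is dominated by a single step of the classical forward Gauss--Seidel iteration for the one-dimensional discrete Dirichlet Laplacian on $\{0, 1, \ldots, d\}$. This iteration is a linear contraction with spectral radius $\lambda = \lambda(d) := \cos^2(\pi/d) < 1$ depending only on $d$, so there exists $C_d > 0$ with $\max_i e^{(K(d-1))}_i \le C_d \lambda^K \max_i e^{(0)}_i$ after $K$ sweeps.

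To conclude, the hypothesis $|\sigma'_i| \le ci$ combined with $\sigma'_d = \sigma''_d$ and the convexity of $\sigma''$ (which forces $\sigma''_i \le (i/d)\sigma''_d \le ci$, hence $|\sigma''_i| \le ci$) supplies the uniform initial bound $\max_i e^{(0)}_i \le 2cd$. Taking
\[
K := \left\lceil \frac{\log(2 C_d\, cd/\delta)}{\log(1/\lambda(d))} \right\rceil
\qquad \text{and} \qquad
N := (d-1)K,
\]
both depending only on $d$, $c$, $\delta$, gives $\max_i e^{(N)}_i \le \delta$, which is the required closeness of $\sigma^{(N)}$ to $\sigma''$ (in sup norm, and hence in any fixed norm).

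The main obstacle is the contraction estimate for one sweep. It is classical that forward Gauss--Seidel for a symmetric positive-definite tridiagonal system is convergent with the stated spectral radius; should a self-contained proof be preferred, one can unroll the forward recurrence explicitly (each $e^{(j)}_{i_j}$ is a convex combination of previous interior values and the zero boundary data) and verify directly that two consecutive sweeps reduce $\max_i e_i$ by a factor strictly less than $1$ depending only on $d$. The remaining steps --- preservation of convexity, bound on $e^{(0)}$, and final counting --- are elementary.
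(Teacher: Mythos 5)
Your proof is correct, but it takes a genuinely different route from the paper's. The paper argues greedily: at each step it raises the single coordinate where the second difference $\Delta^2\sigma^{(j)}$ is largest, compares the area between $\sigma^{(j)}$ and $\sigma''$ with that maximal second difference via Lemma~\ref{l.nonlinearity}, and concludes that each move shrinks the area by a factor $1-2/d^3$; an induction on the dimension $d$ handles the moments when a coordinate reaches $\sigma''$ exactly. You instead sweep cyclically through the interior coordinates and control the gap $e^{(j)}=\sigma''-\sigma^{(j)}$ (which vanishes at both endpoints) by comparing one sweep with a forward Gauss--Seidel relaxation for the discrete Dirichlet problem; the convexity of $\sigma''$ is precisely what makes each elementary move dominated by the relaxation step, and the hypotheses $|\sigma'_i|\le ci$, $\sigma'_d=\sigma''_d$ give the uniform initial bound on $e^{(0)}$. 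This buys you a proof with no induction on $d$ and no need for the area estimate, at the price of importing a contraction statement from numerical linear algebra; conversely the paper's argument is self-contained but needs the splitting-plus-induction device. On your one flagged obstacle, the direct argument is even easier than you suggest: since every update has nonnegative coefficients, one full sweep gives $e^{\mathrm{new}}_i\le\sum_k c_{ik}e^{\mathrm{old}}_k$ where the row sums obey $s_1=\tfrac12$, $s_i=\tfrac12 s_{i-1}+\tfrac12$ for $i\le d-2$ and $s_{d-1}=\tfrac12 s_{d-2}$, hence all row sums are at most $1-2^{2-d}$, so a single sweep already contracts $\max_i e_i$ by a factor depending only on $d$; this sidesteps both the spectral radius $\cos^2(\pi/d)$ and the (standard but slightly fussy) conversion of a spectral-radius bound into a norm bound with a constant $C_d$. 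Two harmless remarks: some of your $(d-1)K$ moves may be trivial, which the lemma permits since it does not require $\sigma^{(j-1)}\neq\sigma^{(j)}$; and convexity only gives the upper bound $\sigma''_i\le ci$, not $|\sigma''_i|\le ci$, but that upper bound together with $\sigma'_i\ge -ci$ is all you use for $e^{(0)}_i\le 2cd$.
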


\begin{rema}
It is known in majorization theory that
if $\sigma'$ majorizes $\sigma''$ then one can generate $\sigma''$ from $\sigma'$ with a
countable sequence of elementary Robin Hood operations (recall Remark~\ref{r.robinhood}),
see e.g.\ \cite{MOA}, p.~82.
Lemma~\ref{l.zigzag} is a finer version of this fact:
it says that the number of operations $N$ needed to achieve a certain precision $\delta$
has a uniform bound depending on the ``size'' of the graphs (measured by $d$ and $c$).
\end{rema}

\begin{proof}[of Lemma~{\rm\ref{l.zigzag}}]  
Consider any two vectors $\sigma' \les \sigma''$ in $\R^{d+1}$ 
that are convex, that is $\Delta^2 \sigma'$, $\Delta^2 \sigma'' \gtr 0$
in the difference operator notation from \S~\ref{ss.difference operator}.
Also assume that the two graphs
agree at the endpoints, i.e.\ 
$\sigma_0'=\sigma_0''$ (it is convenient not to assume these are zero) and $\sigma_d'=\sigma_d''$.
We will  find a non-decreasing sequence $(\sigma^{(j)})_{j\ge 0}$ 
of convex vectors in $\R^{d+1}$ starting at $\sigma'$ 
and such that any two consecutive elements differ at most one coordinate, 
which is not the zeroth neither the $d$-th.
We will show that the sequence converges exponentially fast to $\sigma''$ 
with some speed that depends only on $d$.
Once this is done, it is trivial to write a formula for $N=N(d,c,\delta)$
that meets the requirements of the lemma.

The case $d=2$ is immediate: we can reach $\sigma''$ exactly with a single step.
Fix $d>2$ and assume that the procedure known for smaller dimensions.

Let $\sigma^{(0)} = \sigma'$.
Let $j\ge 0$ and assume that $\sigma^{(j)}$ was already defined.
Let $A_j$ be the area between the graphs $\sigma^{(j)}$ and $\sigma''$;
this quantity is uniformly comparable with the distance between the two graphs.
Let $D_j = \max_{0 \le i \le d-2} \Delta^2 \sigma^{(j)}_i$.
Take $i_j \in \{1, \ldots,d-1 \}$ such that $\Delta^2 \sigma^{(j)}_{i_j - 1} = D_j$.
Define $\sigma^{(j+1)}_i = \sigma^{(j)}_i$ for $i \neq i_j$
and
$$
\sigma^{(j+1)}_{i_j} = \min \left( \sigma''_{i_j}, \frac{\sigma^{(j)}_{i_j - 1} + \sigma^{(j)}_{i_j + 1}}{2} \right) \, .
$$
If $\sigma^{(j+1)}_{i_j} = \sigma''_{i_j}$ then we break the graph into two parts,
and define the rest of sequence using the induction hypothesis.
In the remaining case, we have $A_{j+1} = A_j - D_j/2$.
It follows from Lemma~\ref{l.nonlinearity} that $A_j \le (d^3/4) D_j$.
Thus $A_{j+1}/A_j \le 1-2/d^3$.
This gives the exponential convergence.
\end{proof}

\begin{proof}[of Theorem~{\rm\ref{t.main}}]  
Let $d$, $K$ and $\eps$ be given.
Reducing $\eps$ if necessary  (recall Remark~\ref{r.epsilon}), we assume that any $\eps$-perturbation of a cocycle bounded by $K$ is bounded by $2K$. 
Let $\eps_0 = \eps/2$  and let 
$\delta = \delta(d,2K,\eps_0)$ be given by Lemma~\ref{l.triangular}.
Let $N = N(d, \log K, \delta)$ be given by Lemma~\ref{l.zigzag}.

We recursively define numbers $\eps_1$, $\ell_1$, \ldots, $\eps_{N+1}$, $\ell_{N+1}$ 
(in this order) by the formulas:
\begin{align*}
\eps_j &=
\begin{cases}
\eps/4                                                   &\quad\text{if $j=1$} \\ 
\min \big( \eps/2^{j+2}, \eta(d, 2K, \ell_{j-1} ) \big) &\quad\text{if $j\ge 2$}
\end{cases} \\
\ell_j &=
\begin{cases}
\ell(d, 2K, \eps_1)                               &\quad\text{if $j=1$} \\
\max \big( 2\ell_{j-1}, \ell(d, 2K, \eps_j) \big) &\quad\text{if $2\le j \le N$} \\
\max \big( 2\ell_N, m(d, 2K, \eps_{N+1}) \big)    &\quad\text{if $j = N+1$}
\end{cases}
\end{align*}
where the functions $\eta$, $\ell$, and $m$ come respectively from 
Lemma~\ref{l.eta}, Proposition~\ref{p.two exp}, and Proposition~\ref{p.BGV}.
Let $L = \ell_{N+1}$.

Now consider any $d$-dimensional cocycle $(X,T,E,A)$ bounded by $K$ and of period at least $L$
that admits no $L$-dominated splitting.
Take $\sigma = (\sigma_0, \ldots, \sigma_d) \in \cS_d$ with $\sigma \gtr \bsigma(A)$
and $\sigma_d = \bsigma_d(A)$.

By Lemma~\ref{l.zigzag} there is 
a sequence $\bsigma(A) = \sigma^{(0)} \les \sigma^{(1)} \les \cdots \les \sigma^{(N)} \les \sigma$
such that $\sigma^{(N)}$ is $\delta$-close to $\sigma$,
and there are $i_0$, \ldots, $i_{N-1} \in \{1, \ldots, d-1\}$ such that 
$$
1 \le j \le N, \  0 \le i \le d, \   i \neq i_j \quad \Rightarrow \quad
\sigma^{(j)}_i = \sigma^{(j+1)}_i \, .
$$

Since the cardinality of $X$ is at least $L = \ell_{N+1} \ge m(d, 2K, \eps_{N+1})$,
we can use Proposition~\ref{p.BGV} to obtain a path of cocycles
$A_t$, $t \in [0,1]$ starting at $A_0 = A$, all $\eps_{N+1}$-close to $A$ and with the same Lyapunov spectrum,
so that $A_1$ has only real eigenvalues.
Since $L \ge 2\ell_N$ and $\eps_{N+1} \le \eta(d,2K,\ell_N)$,
the cocycle $A_1$ has no $\ell_N$-dominated splitting.
Moreover, $\ell_N \ge \ell(d,2K, \eps_N)$.
Therefore using Proposition~\ref{p.two exp} we can find a path of cocycles
$A_t$, $t \in [1,2]$ such that:
\begin{itemize}
\item $\|A_t - A_1\|< \eps_N$ for all $t\in [1,2]$;
\item $\bsigma_i (A_t) = \bsigma_i(A_1)$ if $t\in [1,2]$, $i \neq i_1$;
\item $\bsigma_{i_0}(A_t)$ is non-decreasing for $t$ on $[1,2]$; 
\item $\bsigma_{i_0}(A_2) = \sigma^{(1)}_{i_0}$ (we stop the path given by the proposition when we reach this point).
\end{itemize}
Since $\ell_N \ge 2\ell_{N-1}$ and $\eps_N \le \eta(d,2K,\ell_{N-1})$,
the cocycle $A_2$ has no $\ell_{N-1}$-dominated splitting.

Continuing in this way, we obtain a path of cocycles $A_t$, $t \in [1,N+1]$
such that for all $j \in \{1, \ldots, N\}$:
\begin{itemize}
\item $\|A_t - A_j \|< \eps_{N-j+1}$ for all $t\in [j,j+1]$;
\item $\bsigma_i (A_t) = \bsigma_i(A_j)$ if $t\in [j,j+1]$, $i \neq i_{j-1}$;
\item $\bsigma_{i_{j-1}}(A_t)$ is non-decreasing for $t$ on $[j,j+1]$;
\item $\bsigma_{i_{j-1}}(A_{j+1}) = \sigma^{(j)}_{i_{j-1}}$.
\end{itemize}
In particular, the path of spectra $\bsigma(A_t)$ is monotonic with respect to the partial order $\les$,
the function $\bsigma_d(A_t)$ is constant,
and $\bsigma(A_{N+1}) = \sigma^{(N)}$.

Since $\sigma^{(N)} \les \sigma$ is $\delta$-close to $\sigma$,
using Lemma~\ref{l.triangular} we can find an $\eps_0$-short path of cocycles $A_t$, $t \in [N+1,N+2]$,
such that the path of spectra $\bsigma(A_t)$ is monotonic with respect to the partial order $\les$,
the function $\bsigma_d(A_t)$ is constant,
and $\bsigma(A_{N+2}) = \sigma$.

Concatenating everything, we obtain a path $A_t$, $t\in [0,N+2]$.
We have $\|A_t-A_0 \| \le \sum_0^{N+1} \eps_j \le \eps$.
Reparameterizing to the interval $[0,1]$,
we obtain a path of cocycles with all the desired properties.
\end{proof}

While in this paper we are mainly interested in cocycles over cyclic dynamical systems,
it is natural to consider more general cocycles.
For example, we ask: 
\begin{ques}
Let $T$ be a homeomorphism of a compact space $X$, admitting a fully supported ergodic measure $\mu$. 
Does Theorem~\ref{t.main} hold in this setting (taking Lyapunov graphs with respect to $\mu$)?
\end{ques}
One can ask for similar extensions of other theorems below.

\subsection{Restriction to Subbundles}\label{ss.subbundles}

Let us see some generalizations of Theorem~\ref{t.main}.
First we consider cocycles that have an invariant subbundle with no (strong) dominated splitting:

\begin{theo}\label{t.subbundle} 
For any $d \ge 2$, $K>1$, $\eps>0$, 
there exists $L\in \PT$
such that the following holds:
Let $(X,T,E,A)$ be a cyclic $d$-dimensional cocycle  bounded by $K$ and of period at least $L$.
Let $F\subset E$ be a subbundle of positive dimension $k \le d$
such that the restricted cocycle $A \restr F$ has no $L$-dominated splitting.
Let $\sigma \in \cS_k$ be a graph such that $\sigma \gtr \bsigma (A \restr F)$
and $\sigma_k = \bsigma_k(A \restr F)$.

Then there exists
an $\eps$-short path of cocycles $A_t$, $t\in [0,1]$ starting at $A$,
all of them preserving the bundle $F$,
such that the path of graphs $\bsigma(A_t\restr F)$ is non-decreasing,
$\bsigma(A_1\restr F) = \sigma$,
and $A_t / F = A / F$ for each $t$. 
\end{theo}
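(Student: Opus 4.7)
The plan is to reduce Theorem~\ref{t.subbundle} to Theorem~\ref{t.main} applied to the restricted cocycle $A \restr F$, and then to lift the resulting path from $F$ back to the ambient bundle $E$ by means of the first extension procedure of \S~\ref{ss.procedures}, which by construction preserves both the quotient cocycle $A/F$ and the off-diagonal block.

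Concretely, I would take $L := \max_{1 \le k \le d} L_k \in \PT$, where $L_k$ is the constant produced by Theorem~\ref{t.main} in dimension $k$ for the same $K$ and $\eps$. Set $k := \dim F$. The restriction $A \restr F$ is bounded by $K$ (as noted in \S~\ref{ss.procedures}) and has period at least $L \ge L_k$. Since any $L_k$-dominated splitting of $F$ is automatically $L$-dominated (by the powers-of-$2$ convention), the assumption that $A \restr F$ has no $L$-dominated splitting forces the absence of any $L_k$-dominated splitting as well. With $\sigma \in \cS_k$ satisfying $\sigma \gtr \bsigma(A \restr F)$ and $\sigma_k = \bsigma_k(A \restr F)$, Theorem~\ref{t.main} supplies an $\eps$-short path $B_t$, $t \in [0,1]$, of $k$-dimensional cocycles on $F$ with $B_0 = A \restr F$, $\bsigma(B_t)$ non-decreasing, and $\bsigma(B_1) = \sigma$.

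Writing $A$ in block-triangular form
$$
A = \begin{pmatrix} A \restr F & D \\ 0 & A/F \end{pmatrix}
$$
with respect to the splitting $E = F \oplus F^\perp$, I define the lifted path by
$$
A_t := \begin{pmatrix} B_t & D \\ 0 & A/F \end{pmatrix},
$$
exactly following the first construction of \S~\ref{ss.procedures}. Then $A_t$ depends continuously on $t$, equals $A$ at $t=0$, preserves $F$, and satisfies $A_t \restr F = B_t$ and $A_t / F = A/F$ for every $t$. The difference $A_t - A$ is block-diagonal with only the $F$-block $B_t - A \restr F$ nonzero, so $\|A_t - A\| = \|B_t - A \restr F\| < \eps$, and $\bsigma(A_t \restr F) = \bsigma(B_t)$ inherits the required monotonicity and endpoint.

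Since the substantive content has already been supplied by Theorem~\ref{t.main} and by the extension procedure of \S~\ref{ss.procedures}, I do not anticipate any genuine obstacle. The only mild point to watch is making $L$ uniform in the dimension $k$ of the subbundle, which is handled by maximizing over the finitely many possible values $k \in \{1, \ldots, d\}$.
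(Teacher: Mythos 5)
Your proposal is correct and follows exactly the paper's own argument: apply Theorem~\ref{t.main} to the restricted cocycle $A \restr F$ and lift the resulting path via the first extension procedure of \S~\ref{ss.procedures}, which keeps $A_t/F = A/F$ and gives $\|A_t - A\| = \|B_t - A\restr F\|$ thanks to the orthogonal block form. Your remark about taking $L$ as the maximum of the constants over the possible dimensions $k \le d$ is a reasonable way to make explicit what the paper leaves implicit.
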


\begin{proof}
This follows directly from Theorem~\ref{t.main} applied to restricted cocycles,
together with the first extension procedure from \S~\ref{ss.procedures}.
\end{proof}

\begin{theo}\label{t.finest}
For any $d \ge 2$, $K>1$, $\eps>0$, 
there exists $\ell \in \PT$
such that the following holds:
Let $(X,T,E,A)$ be a cyclic 
$d$-dimensional cocycle bounded by $K$ 
and of period at least $\ell$.
Let $i_1 < \cdots < i_{m-1}$ be the indices of its finest $\ell$-dominated splitting.
Let $\sigma \in \cS_d$ be a graph such that $\sigma \gtr \bsigma (A)$,
$\sigma_{i_j} = \bsigma_{i_j}(A)$ for each $j$, and $\sigma_{d} = \bsigma_{d}(A)$.

Then there exists
an $\eps$-short path of cocycles $A_t$, $t\in [0,1]$ starting at $A$ 
such that the path of graphs $\bsigma(A_t)$ is non-decreasing
and $\bsigma(A_1) = \sigma$.
\end{theo}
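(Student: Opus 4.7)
The plan is to reduce to Theorem~\ref{t.main} applied on each bundle of the finest $\ell$-dominated splitting. Let $L_0 = L(d, K, \eps/d)$ be the constant produced by Theorem~\ref{t.main}; I will choose the constant $\ell$ in Theorem~\ref{t.finest} large enough (depending on $L_0$, $d$, $K$) so that, for any cocycle bounded by $K$ with period at least $\ell$, each bundle $F_j$ of the finest $\ell$-dominated splitting admits no $L_0$-dominated splitting as a cocycle in its own right.

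I verify this structural property by contraposition. If some piece $F_j$ carried an invariant $L_0$-dominated splitting $F_j = H' \dplus H''$, I claim the concatenated splitting
$$ F_1 \dplus \cdots \dplus F_{j-1} \dplus H' \dplus H'' \dplus F_{j+1} \dplus \cdots \dplus F_m $$
of $E$ would itself be $\ell$-dominated for $\ell$ chosen large enough in terms of $L_0$, $d$, $K$, contradicting finest-ness. The only new domination to check is $F_1 \oplus \cdots \oplus F_{j-1} \oplus H' <_\ell H'' \oplus F_{j+1} \oplus \cdots \oplus F_m$. Since the angles between the bundles of an $\ell$-dominated splitting are bounded below in terms of $\ell$, $K$, $d$, the norm of $A^k$ on a direct sum is comparable to the maximum of its norms on the summands. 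Combining the inherited domination $F_1 \oplus \cdots \oplus F_{j-1} <_\ell F_j$ (which by the subspace inequality also contracts against $H''$) with the assumed $H' <_{L_0} H''$, after taking enough iterates one controls the direct sum's norm against the expansion of $H'' \oplus F_{j+1} \oplus \cdots \oplus F_m$. This step, which is the main obstacle, is a quantitative variant of Lemma~\ref{l.BDP}.

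Assuming this structural property, take the cocycle $A$ as in the hypothesis of Theorem~\ref{t.finest} and write $E = F_1 \dplus \cdots \dplus F_m$ for its finest $\ell$-dominated splitting. For each $j$ with $\dim F_j \ge 2$, define the shifted convex graph $\tilde\sigma^{(j)} \in \cS_{i_j - i_{j-1}}$ by $\tilde\sigma^{(j)}_k = \sigma_{i_{j-1}+k} - \sigma_{i_{j-1}}$ for $k = 0, 1, \ldots, i_j - i_{j-1}$. The hypotheses on $\sigma$ give $\tilde\sigma^{(j)} \gtr \bsigma(A \restr F_j)$ together with $\tilde\sigma^{(j)}_{i_j - i_{j-1}} = \bsigma_{i_j - i_{j-1}}(A \restr F_j)$. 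Since $A \restr F_j$ is bounded by $K$, has period at least $\ell \ge L_0$, and carries no $L_0$-dominated splitting, Theorem~\ref{t.main} supplies an $\eps/d$-short non-decreasing path of cocycles on $F_j$ starting at $A \restr F_j$ and ending at a cocycle with Lyapunov graph exactly $\tilde\sigma^{(j)}$ (bundles of dimension one require no perturbation).

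Finally, I assemble the perturbations sequentially into a single path on $E$. In bases adapted to the invariant splitting (which have bounded distortion compared to orthonormal bases thanks to the uniform angle bound on $\ell$-dominated splittings), the cocycle matrix is block diagonal. During phase $j$, I perturb only the $j$-th block along the path from Theorem~\ref{t.main}, keeping all other blocks and the splitting fixed; this changes only the coordinates $\bsigma_{i_{j-1}+1}(A_t), \ldots, \bsigma_{i_j - 1}(A_t)$ of the Lyapunov graph, monotonically, while $\bsigma_{i_{j-1}}(A_t)$ and $\bsigma_{i_j}(A_t)$ are preserved (the latter because Theorem~\ref{t.main} preserves the top endpoint of $\bsigma(A_t \restr F_j)$, and the former because earlier blocks are untouched). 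Concatenating the $m$ phases yields, by the triangle inequality, a single $\eps$-short non-decreasing path from $A$ to a cocycle with Lyapunov graph exactly $\sigma$.
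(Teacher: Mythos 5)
Your reduction hinges on the structural claim that, for $\ell$ chosen large in terms of $L_0=L(d,K,\eps/d)$, no bundle $F_j$ of the finest $\ell$-dominated splitting can carry an $L_0$-dominated splitting, and this claim is exactly where the argument breaks down. To contradict ``finest-ness'' you must exhibit an \emph{$\ell$-dominated} splitting of $E$ that does not respect the $F_i$'s, i.e.\ you must verify the new mixed cut $F_1\oplus\cdots\oplus F_{j-1}\oplus H' \,<_\ell\, H''\oplus F_{j+1}\oplus\cdots\oplus F_m$ at the \emph{same} scale $\ell$. The estimate you sketch does not give this: comparing $\|A^\ell\|$ and $\m(A^\ell)$ on direct sums with the norms/conorms on the summands costs multiplicative angle constants, and the ratios inherited from the original splitting are only $<\tfrac12$ at scale $\ell$ (no matter how large $\ell$ is), so at scale $\ell$ you only get a bound of the form $C(\alpha)\cdot\tfrac12$ with $C(\alpha)>1$. ``Taking enough iterates'' (or invoking a variant of Lemma~\ref{l.BDP}, which has the same feature) only yields domination of the refined splitting at some larger scale $L=L(\ell,K,d)>\ell$, and an $L$-dominated refinement with $L>\ell$ does \emph{not} contradict the maximality defining the finest $\ell$-dominated splitting; moreover the number of iterates needed depends on the angle bound, which itself degrades as $\ell$ grows, so enlarging $\ell$ cannot close the argument. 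This is precisely the subtlety the paper flags in the Remark following the Proposition/Definition of the finest $\ell$-dominated splitting (``the bundles of the finest $\ell$-dominated splitting can admit nontrivial $\ell$-dominated splittings themselves''), which shows that refinements of this kind need not be $\ell$-dominated, so your mechanism cannot be taken for granted and the claim itself is dubious.

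The paper's proof avoids asking for such a uniform two-scale indecomposability of all blocks at once. It works sequentially with a decreasing ladder of scales $\ell_{d-1}>\ell_{d-2}>\cdots$ chosen against a decreasing ladder of perturbation sizes $\eps_j$: it adjusts the exponents in one block via Theorem~\ref{t.subbundle} (Theorem~\ref{t.main} plus the extension procedures of \S~\ref{ss.procedures}, which also sidesteps the change-of-basis distortion you appeal to), then re-extracts the finest $\ell_{j}$-dominated splitting of the \emph{already perturbed} cocycle at the next smaller scale, using Lemma~\ref{l.eta} to guarantee that no new dominated indices appear, and repeats. There is also a secondary circularity in your quantifiers even granting the structural claim: converting a blockwise perturbation in an adapted (non-orthonormal) frame into a global $\eps/d$-perturbation costs a constant depending on the angle bound, hence on $\ell$, while $L_0$ was fixed from $\eps/d$ before $\ell$; using the paper's restricted/quotient extension procedures instead of block-diagonal frames would repair that point, but the main gap is the unproven indecomposability claim.
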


Notice that this theorem extends Proposition~\ref{p.two exp}.

Theorem~\ref{t.periodic2} from the introduction follows directly 
using Franks Lemma and basic properties of dominated splittings.

\begin{proof}[of Theorem~{\rm\ref{t.finest}}]  
Let $d$, $K$ and $\eps$ be given.
Reducing $\eps$ if necessary (recall Remark~\ref{r.epsilon}), we assume that any $\eps$-perturbation of a cocycle bounded by $K$ is bounded by $2K$. 
We recursively define numbers $\eps_1$, $\ell_1$, \ldots, $\eps_{d-1}$, $\ell_{d-1}$ 
(in this order) by the formulas:
\begin{align*}
\eps_j &=
\begin{cases}
\eps/2               &\quad\text{if $j=1$} \\ 
\min \big( \eps/2^{j}, \eta(d, 2K, \ell_{j-1} ) \big) &\quad\text{if $j\ge 2$}
\end{cases} \\
\ell_j &=
\begin{cases}
\ell(d, 2K, \eps_1)                            &\quad\text{if $j=1$} \\
\max \big( 2\ell_{j-1}, L(d, 2K, \eps_j) \big) &\quad\text{if $j \ge 2$.}
\end{cases}
\end{align*}
where the functions $\eta$ and $L$ come respectively from 
Lemma~\ref{l.eta} and Theorem~\ref{t.subbundle}.
Let $\ell = \ell_{d-1}$.

Now consider any $d$-dimensional cocycle $(X,T,E,A)$ bounded by $K$ and of period at least $L$.
Let $E = F_1 \dplus \cdots \dplus F_m$ be its finest $\ell$-dominated splitting,
and $i_1 < i_2 <\cdots<i_{m-1}$ be its indices.
Take a graph $\sigma \in \cS_d$ such that $\sigma \gtr \bsigma (A)$
and $\sigma_{i_j} = \bsigma_{i_j}(A)$ for each $j$.
We can assume that $m>1$, otherwise there is nothing to show.

Since $A \restr F_1$ has no $\ell$-dominated splitting, 
by Theorem~\ref{t.subbundle} 
there is an $\eps_{d-1}$-short path of cocycles $A_t$, $t\in [0,1]$ starting at $A$,
such that
$\bsigma_i(A_2) = \sigma_i$ for $i\le i_1$ and
$\bsigma_i(A_2) = \bsigma_i(A)$ for $i>i_1$.

Now let $E = F_1^{(1)} \dplus \cdots \dplus F_{m_1}^{(1)}$ 
be the finest $\ell_{d-2}$-dominated splitting of $A_1$.
By Lemma~\ref{l.eta}, 
its set of indices
$\{i_1^{(1)} < i_2^{(1)} < \cdots <i_{m_1-1}^{(1)}\}$ 
is contained in $\{i_1 < i_2 <\cdots<i_{m-1}\}$.
We consider two cases: if $i_1^{(1)} = i_1$ then we apply Theorem~\ref{t.subbundle}
to the subbundle $F_2^{(1)}$;
if $i_1^{(1)} > i_1$ then we apply the theorem to $F_1^{(1)}$.
In either case, we find an $\eps_{d-2}$-short path of cocycles $A_t$, $t\in [1,2]$
such that $\bsigma(A_t)$ is non-decreasing,
$\bsigma_i(A_2) = \sigma_i$ for $i\le p$ and
$\bsigma_i(A_2) = \bsigma_i(A)$ for $i> p$,
where $p=i_1^{(2)}$ in the first case and $p=i_1^{(1)}$ in the second.
Thus $p \ge i_2$ Lyapunov exponents are already adjusted.

Continuing in this way, we will adjust all exponents  in a number $k \le m$ of steps.
That is, by concatenating we
obtain a path of cocycles $A_t$, $t\in [0,k]$
such that $\bsigma(A_t)$ is non-decreasing
and $\bsigma_i(A_k)= \sigma_i$ for all $i$.
For each $t$ we have $\|A_t - A\| \le \eps_{d-1} + \cdots + \eps_{d-k}\le \eps$.
Hence a reparameterized path has all the desired properties.
\end{proof}

\subsection{Perturbing the Spectrum with Constraints} \label{ss.constraints}

As we have seen, the path of spectra in the proof of Theorem~\ref{t.main}
is, except for its final part, a zigzag like in Figure~\ref{f.accessib}.
We could ask whether it is possible to prescribe any path of graphs. 
Strictly speaking, the answer is no, because if $A$ has a pair of non-real eigenvalues then
there are two Lyapunov exponents that we cannot immediately separate.
However, we ask if the \emph{trace} of the path of graphs can be prescribed:

\begin{ques}\label{q.control}
In the assumptions of the  Theorem~\ref{t.main}, and
given any non-decreasing path of graphs $\sigma_s \in \cS_d$, $s \in [0,1]$
with $\sigma_0 = \bsigma(A)$,
is it possible to choose the path of cocycles $A_t$
so that $\big\{ \bsigma(A_t); \;  t \in [0,1] \big\} =  \big\{ \sigma_s; \;  s \in [0,1] \big\}$?
\end{ques}

At least some partial control of the path of graphs is possible:
for example, we can choose it with constant index.
This is the content of Theorem~\ref{t.index} below,
which would follow immediately from a positive answer to Question~\ref{q.control}.

\medskip

We say that a graph $\sigma \in \cS_d$ has \emph{index} $p$ 
if $\sigma_i > \sigma_p$ for all $i \in \{ 0, \ldots, d \} \setminus \{p\}$.
In terms of the Lyapunov exponents $\lambda_i = \sigma_i - \sigma_{i-1}$,
this means that $\lambda_p < 0 < \lambda_{p+1}$
(disregard the first inequality if $p=0$ and the second if $p=d$);
in particular, there are no zero Lyapunov exponents.

\begin{theo}\label{t.index} 
The number $L = L(d,K,\eps)$ in Theorem~\ref{t.main}
can be chosen with the following additional property:
if $\bsigma(A)$ and $\sigma$ have the same index then the path of cocycles $A_t$
can be chosen so that all $\bsigma(A_t)$ have the same index.
\end{theo}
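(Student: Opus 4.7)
The plan is to revisit the proof of Theorem~\ref{t.main} and verify that each of its three stages can be carried out without the Lyapunov graph ever leaving the open stratum of $\cS_d$ consisting of graphs of index $p$. The first stage, an application of Proposition~\ref{p.BGV}, preserves the Lyapunov spectrum and hence the index, so nothing needs to be done there. The third stage is an arbitrary small path in $\cS_d$ provided by Lemma~\ref{l.triangular}: I would simply choose it to be the affine segment from the approximation $\sigma^{(N)}$ to the target $\sigma$. For $i\neq p$ the quantity $\sigma_i^{(\tau)} - \sigma_p^{(\tau)}$ along this segment is a convex combination of the strictly positive gaps at the two endpoints, so the segment stays in the index-$p$ stratum.

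Everything therefore reduces to the middle stage, in which Proposition~\ref{p.two exp} is applied $N$ times to realize the zigzag $\sigma^{(0)} = \bsigma(A),\, \sigma^{(1)}, \dots, \sigma^{(N)}$. Each elementary step of Proposition~\ref{p.two exp} raises a single coordinate $\bsigma_{i_j}$ continuously and keeps the others fixed; the corresponding path in $\cS_d$ is an affine segment, which lies in the index-$p$ stratum as soon as its two endpoints do. Thus it suffices to produce a refined version of Lemma~\ref{l.zigzag} in which every $\sigma^{(j)}$ has index $p$. The key observation is that the affine interpolation $\tau \mapsto (1-\tau)\bsigma(A) + \tau \sigma$ already yields a continuous path entirely contained in the index-$p$ stratum (for the same convex-combination reason). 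I would run the greedy algorithm from the proof of Lemma~\ref{l.zigzag}, but whenever the prescribed move would raise $\sigma_p^{(j)}$, truncate the raise so that the new value stays strictly below $\min(\sigma_{p-1}^{(j)},\, \sigma_{p+1}^{(j)})$. Raises of any other vertex automatically preserve the index, as they increase the local gap and leave $\sigma_p^{(j)}$ untouched.

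The main obstacle is to control the number of steps in this truncated zigzag, and in particular to obtain a uniform bound of the form $N = N(d, \log K, \delta)$ as the original proof of Theorem~\ref{t.main} requires (since $N$ must be fixed before $L$ and the $\eps_j$). The difficulty is that when $\sigma_{p\pm 1}^{(j)}$ are very close to $\sigma_p^{(j)}$, the truncation at vertex $p$ leaves little room and the area-halving argument behind the convergence analysis in Lemma~\ref{l.zigzag} no longer applies verbatim. I would deal with this by interleaving: whenever the truncation is binding at vertex $p$, first perform raises at the neighboring vertices $\sigma_{p\pm 1}^{(j)}$, which are always admissible and strictly widen the gap at $p$, thereby unblocking a subsequent raise of $\sigma_p^{(j)}$. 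A modest generalization of the potential argument in the proof of Lemma~\ref{l.zigzag} --- tracking the area between the current graph and the target graph together with the gap at $p$ --- should then yield a geometric rate depending only on $d$, and hence a bound $N$ of the required form. With this refinement of Lemma~\ref{l.zigzag} in hand, the remainder of the proof of Theorem~\ref{t.main} goes through unchanged.
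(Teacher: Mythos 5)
Your reduction is the same as the paper's: the real-eigenvalue step preserves the spectrum, the final adjustment can be taken to be an affine path in Lemma~\ref{l.triangular}, and each application of Proposition~\ref{p.two exp} moves a single coordinate monotonically, so the whole theorem reduces to an index-preserving version of Lemma~\ref{l.zigzag} with a bound $N=N(d,c,\delta)$ of the same form (this is the paper's Lemma~\ref{l.zigzag index}, and $N$ must indeed be fixed before $L$ and the $\eps_j$'s, as you note). The genuine gap is precisely in your proof of that lemma. First, the claim that ``raises at the neighboring vertices $\sigma^{(j)}_{p\pm1}$ are always admissible'' is false: a raise of $\sigma_{p-1}$ is itself constrained by convexity and by what Proposition~\ref{p.two exp} can realize (it cannot exceed $\bigl(\sigma^{(j)}_{p-2}+\sigma^{(j)}_{p}\bigr)/2$) and by the target (it cannot exceed $\sigma''_{p-1}$); when the left half of the graph is locally affine, or $\sigma^{(j)}_{p-1}=\sigma''_{p-1}$, no positive raise of $\sigma_{p-1}$ exists, and unblocking the vertex $p$ requires cascading raises through $p-2$, $p-3$, \dots, i.e.\ re-running a full zigzag on the whole half with $\sigma_p$ frozen --- not just touching the immediate neighbors.

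Second, and more importantly, the ``geometric rate depending only on $d$'' that you assert ``should'' follow from a generalized potential argument is exactly the nontrivial content, and nothing in your sketch delivers it: after a truncated raise at $p$ (truncation to below $\min(\sigma^{(j)}_{p-1},\sigma^{(j)}_{p+1})$ can remove an arbitrarily small fraction of the area that the greedy step of Lemma~\ref{l.zigzag} would have removed), one needs a quantitative relation between the remaining gap $G_j=\sigma''_p-\sigma^{(j)}_p$ and the admissible raise $E_j=\min\bigl(-\Delta\sigma^{(j)}_{p-1},\Delta\sigma^{(j)}_p\bigr)$. The paper obtains this by structuring the argument differently: in each round it applies Lemma~\ref{l.zigzag} to the two halves $(\sigma_0,\dots,\sigma_p)$ and $(\sigma_p,\dots,\sigma_d)$ with $\sigma_p$ fixed, so that afterwards $\Delta^2\sigma^{(j)}_\ell$ is small for every $\ell\neq p-1$; in that nearly affine situation it proves $G_j\le 1.1\,d\,E_j$, so raising $\sigma_p$ by $0.9E_j$ removes a definite fraction (of order $1/d$) of the gap, giving the geometric decay and hence the bound on $N$; and it handles coordinates that reach $\sigma''$ exactly by splitting the graph and inducting on the dimension. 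Without an estimate of the type $G_j\le C(d)\,E_j$ in the flattened situation, your interleaving scheme has no a priori bound on the number of steps, so the proposal is incomplete at its central point.
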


Using this theorem we can repeat the arguments from \S~\ref{ss.subbundles}
and obtain index-preserving versions of Theorems~\ref{t.subbundle} and \ref{t.finest}.

The proof of Theorem~\ref{t.index}
is exactly the same as Theorem~\ref{t.main},
except that we need an index-preserving version of Lemma~\ref{l.zigzag}:

\begin{lemm}\label{l.zigzag index}
The number $N=N(d,c,\delta)$ in Lemma~\ref{l.zigzag}
can be chosen with the following additional property:
If the graphs $\sigma'$, $\sigma'' \in \cS_d$ have the same index $p$
then the graphs $\sigma^{(1)}$, \ldots, $\sigma^{(N)}$ can all be chosen
with index $p$.
\end{lemm}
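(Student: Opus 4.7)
My plan is to run the procedure of Lemma~\ref{l.zigzag} with an additional safety clause at steps where the chosen coordinate equals $p$. Set $\mu_0 = \min\bigl(\min_{i\neq p}(\sigma'_i-\sigma'_p),\ \min_{i\neq p}(\sigma''_i-\sigma''_p)\bigr)>0$, the smaller of the two index-gaps. At a step $j$ with $i_j\neq p$ I use the original update. At a step $j$ with $i_j=p$, I replace the new value of $\sigma^{(j)}_p$ by
\[
\min\!\Bigl(\sigma''_p,\ \tfrac{1}{2}(\sigma^{(j)}_{p-1}+\sigma^{(j)}_{p+1}),\ \min(\sigma^{(j)}_{p-1},\sigma^{(j)}_{p+1})-\tfrac{\mu_0}{3}\Bigr).
\]
The first two arguments guarantee convexity and $\sigma^{(j+1)}\les\sigma''$, exactly as in the proof of Lemma~\ref{l.zigzag}; the third ensures that $\sigma^{(j+1)}_p$ stays at distance at least $\mu_0/3$ below both immediate neighbors.

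For index-$p$ preservation one only needs the two nontrivial inequalities $\sigma^{(j+1)}_p<\sigma^{(j+1)}_{p\pm1}$. The remaining comparisons $\sigma^{(j+1)}_p<\sigma^{(j+1)}_k$ for $|k-p|\geq 2$ hold automatically: any convex graph with $\lambda_p<0<\lambda_{p+1}$ is strictly decreasing on $\{0,\dots,p\}$ and strictly increasing on $\{p,\dots,d\}$, so values at more distant indices dominate the nearer ones. At steps with $i_j\neq p$, one has $\sigma^{(j+1)}_p=\sigma^{(j)}_p$ and $\sigma^{(j+1)}_{p\pm1}\ge\sigma^{(j)}_{p\pm1}$, so the two inequalities persist by induction; at steps with $i_j=p$ they are enforced by the new third argument of the minimum.

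The main obstacle is proving convergence to within $\delta$ of $\sigma''$ in a number of steps $N$ depending only on $d$, $c$, $\delta$ and not on $\mu_0$. The cap is active only while $\min(\sigma^{(j)}_{p-1},\sigma^{(j)}_{p+1})$ lies below $\sigma''_p+\mu_0/3$, i.e.\ while one of those neighbors is still far from its target, and in this regime the greedy step at some index $\neq p$ still makes geometric progress exactly as in the original proof. To bypass any residual dependence on $\mu_0$, I would if necessary replace the greedy rule by a dyadic tracking of the linear path $\sigma_s=(1-s)\sigma'+s\sigma''$, $s\in[0,1]$: each $\sigma_s$ is convex with index $p$ and gap at least $\mu_0$, and consecutive $\sigma_{s_k}$, $\sigma_{s_{k+1}}$ are joined by at most $d-1$ elementary single-vertex moves performed in the fixed outward order $p-1,p+1,p-2,p+2,\dots,1,d-1$, with $\sigma_p$ moved last. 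The delicate point, which I expect to require the most care, is verifying that when $s_{k+1}-s_k$ is sufficiently small the intermediate graphs along this length-$(d-1)$ subpath remain convex and of index $p$, exploiting the convexity of both endpoint graphs and the smallness of each coordinate increment. A binary subdivision with $M=O(\log(cd/\delta))$ dyadic points then produces $N=O(dM)$, depending only on $d$, $c$, $\delta$.
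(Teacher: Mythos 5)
The crux of the lemma is that $N$ depends only on $d$, $c$, $\delta$ — in particular not on the index gap of $\sigma'$, $\sigma''$ — and that is precisely where your argument is incomplete. For the capped greedy scheme you concede the point yourself: when the largest second difference sits at the coordinate $p$ and your cap $\min(\sigma^{(j)}_{p-1},\sigma^{(j)}_{p+1})-\mu_0/3$ blocks most of the move, the original area estimate $A_{j+1}\le A_j(1-2/d^3)$ no longer follows, and nothing in your sketch produces a definite decrease from some other vertex; since $\mu_0$ can be arbitrarily small, no bound involving it is acceptable. The fallback you propose to close this gap is, however, not merely ``delicate'': it is false. Take $d=5$, $p=1$, $\sigma'=(0,-1,1,3,5,7)$ and $\sigma''=(0,-0.5,1.15,3.1,5.05,7)$. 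Both are convex, have index $1$, the same endpoints, and $\sigma'\les\sigma''$; moreover $\Delta^2\sigma'_2=\Delta^2\sigma'_3=\Delta^2\sigma''_2=\Delta^2\sigma''_3=0$, so for every $s$ the graph $\sigma_s=(1-s)\sigma'+s\sigma''$ has vertices $2,3,4$ collinear and $3,4,5$ collinear, while coordinates $3$ and $4$ increase strictly in $s$. Now try to reach $\sigma_{s'}$ from $\sigma_s$ ($s<s'$) by a non-decreasing path changing one coordinate at a time: when coordinate $4$ is raised to its target, convexity at $4$ forces $(\sigma_{s'})_4\le\tfrac12(\sigma_3^{\mathrm{cur}}+7)\le\tfrac12((\sigma_{s'})_3+7)=(\sigma_{s'})_4$, so coordinate $3$ must already be at its target; symmetrically, raising coordinate $3$ to its target forces coordinate $4$ (and $2$) to be at theirs. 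Since both must strictly increase, whichever reaches its target first gives a contradiction — no order, no number of moves, and no fineness of the dyadic subdivision helps, because the obstruction is scale-invariant. (This also explains why Lemma~\ref{l.zigzag} itself only asks to land $\delta$-close to $\sigma''$ rather than on it.)

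So the proposal has a genuine gap at its quantitative core. For comparison, the paper argues by induction on $d$ and never tries to track prescribed intermediate graphs exactly: one freezes the coordinate $p$, applies Lemma~\ref{l.zigzag} separately to the two halves $(\sigma_0,\dots,\sigma_p)$ and $(\sigma_p,\dots,\sigma_d)$ against the maximal convex graph $\les\sigma''$ with the same value at $p$ (only coordinates other than $p$ rise, so the index is preserved for free), splits off and treats by induction any coordinate that has already met $\sigma''$, and otherwise observes that the two halves are then nearly affine, so the gap $G_j=\sigma''_p-\sigma^{(j)}_p$ satisfies $G_j\le 1.1\,d\,E_j$ with $E_j=\min(-\Delta\sigma^{(j)}_{p-1},\Delta\sigma^{(j)}_p)$; raising $\sigma_p$ by $0.9\,E_j$ keeps the index and contracts the gap by a factor bounded away from $1$ in terms of $d$ alone. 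That is what yields a bound $N(d,c,\delta)$ with no dependence on the index gap; your scheme would need an argument of this kind, and the dyadic tracking cannot supply it.
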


\begin{proof}
The $d=2$ situation is trivial.
So assume $d>2$ and assume by induction that the lemma is already known for smaller dimensions.
Take two graphs $\sigma' \les \sigma''$ in $\cS_d$ with the same endpoints,
no zero exponents, and the same index $p$.
We don't need to consider indices $p=0$ and $p=d$ because 
these cases are covered by Lemma~\ref{l.zigzag}.

Given any graph $\sigma \in \R^{d+1}$, we split it into two graphs:
$\sigma^{(L)} = (\sigma_0,\ldots,\sigma_p)$ and
$\sigma^{(R)} = (\sigma_p,\ldots,\sigma_d)$.

Let $\sigma^{(0)}=\sigma'$.
Let $\bar{\sigma}^{(0)}$ be the maximal convex graph $\les \sigma''$
satisfying $\bar{\sigma}^{(0)}_p = \sigma^{(0)}_p$.
The two graphs $\sigma^{(0L)} \les \bar{\sigma}^{(0L)}$ have the same endpoints,
and so do  $\sigma^{(0R)} \les \bar{\sigma}^{(0R)}$.
Applying Lemma~\ref{l.zigzag} to each side, and gluing sides together,
we can find a path of graphs  
$\sigma^{(0)} \les \cdots \les \sigma^{(j)}$ such that 
at most one coordinate changes at a time,
and the graphs $\sigma^{(j)} \les \bar{\sigma}^{(0)}$
are very close. 
Notice the the indices do not change along the path.

Assume that there exists $\ell$ with $p<\ell<d$
such that $\sigma^{(j)}_\ell = \sigma''_\ell$.
Then the subgraphs $(\sigma_\ell^{(j)}, \ldots, \sigma_d^{(j)}) \le (\sigma_\ell'', \ldots, \sigma_d'')$
are very close.
We apply the induction hypotheses to the
the pair of graphs  
$(\sigma_0^{(j)}, \ldots, \sigma_\ell^{(j)}) \le (\sigma_0'', \ldots, \sigma_\ell'')$,
obtaining a zigzag path of graphs of dimension $\ell$ of index $p$. 
So we extend this to a path of graphs of dimension $d$, and we are done.
Analogously, if there exists $\ell$ with $0<\ell<p$
such that $\sigma^{(j)}_\ell = \sigma''_\ell$ then we split the graph at this point;
the left part is already ok, and using the induction hypotheses on the right part, we are done.

Therefore we can assume that $\sigma^{(j)}_\ell < \sigma''_\ell$ for
every $\ell$ with $0<\ell<p$ or $p<\ell<d$.
This implies that both $\sigma^{(jL)}$ and $\sigma^{(jR)}$
are nearly flat; more precisely,
$\Delta^2 \sigma^{(j)}_\ell$ is small for every $\ell \neq p-1$.
(Recall notation from \S~\ref{ss.difference operator}.)

Now consider the \emph{gap} $G_j = \sigma_p'' - \sigma^{(j)}_p$.
Let $E_j = \min (-\Delta \sigma^{(j)}_{p-1}, \Delta \sigma^{(j)}_p)$.
It follows from nearly flatness that $\max(\sigma_0^{(j)},\sigma_d^{(j)}) - \sigma^{(j)}_p$, 
and hence $G_j$,
cannot be much bigger than $E_j$.
More precisely, we have $G_j \le 1.1 d E_j$
provided that $\sigma^{(jL)}$ and $\sigma^{(jR)}$ are sufficiently flat.

Define a graph $\sigma^{(j+1)}$ of index $p$ by taking
$$
\sigma^{(j+1)}_p = 
\min \left( \sigma''_{p}, \sigma^{(j)}_{p} + .9 E_j   \right) 
$$
and all the other coordinates equal to those of $\sigma^{(j)}$.
If $\sigma^{(j+1)}_p = \sigma''_{p}$ then the graphs
$\sigma^{(j+1)}$ and $\sigma''$ are very close and we are done.
Otherwise, the new gap is $G_{j+1} = G_j - .9 E_j$.
Therefore $G_{j+1} / G_j$ is less than some constant less than $1$.
So we restart the procedure;
after a finite number of steps the gap will be small and we are done.
\end{proof}

\begin{proof}[of Theorem~{\rm\ref{t.index}}]  
Repeat the proof of  Theorem~\ref{t.main},
using Lemma~\ref{l.zigzag index} in the place of Lemma~\ref{l.zigzag},
and taking for example an affine path in Lemma~\ref{l.triangular} 
(so that the index is preserved).
\end{proof}

\begin{rema}
More generally, let $\cP$ be a property about Lyapunov graphs of dimension~$d$ 
(as having some prescribed index, for example),
corresponding to a set $C \subset \cG_d$.
Suppose that 
given $\sigma'$, $\sigma'' \in C$, 
the points $\sigma^{(1)}$, \ldots, $\sigma^{(N)}$ in Lemma~\ref{l.zigzag} can all be chosen in $C$,
and moreover the path with those vertices (as in Figure~\ref{f.accessib})
is wholly contained in $C$.
Then a $\cP$-preserving version of Theorem~\ref{t.main} holds:
if $\bsigma(A)$ and $\sigma$ belong to $C$, then the path of cocycles $A_t$ can be chosen so that
$\bsigma(A)$ is contained in $C$ for all $t$.
\end{rema}

\section{Separating Lyapunov Exponents}\label{s.measure}

In this section we give the (stronger) cocycle versions 
of Theorems \ref{t.measure intro} and \ref{t.measure intro 2}.

\subsection{Statements and Sketch of the Proofs}

\begin{theo}\label{t.measure}
Let $(X,T,E,A)$ be a cocycle.
Let $\mu_k$ be a sequence of $T$-invariant probabilities converging to some $\mu$ in the weak-star topology.
Assume that each $\mu_k$ is supported on a periodic orbit, 
whose period tends to infinity with $k$. 
Then there exist a sequence of cocycles $B_k \to A$
such that such that $\bsigma(B_k,\mu_k) = \bsigma(A,\mu)$ for every $k$.
\end{theo}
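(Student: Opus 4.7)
The plan is to construct $B_k$ in three conceptual steps: first handle the case of ergodic $\mu$ by imposing along $\gamma_k$ an invariant splitting that mimics the Oseledets splitting of $\mu$; then reduce the general case to the ergodic one via ergodic decomposition; and finally enforce exact equality $\bsigma(B_k,\mu_k)=\bsigma(A,\mu)$ using the spectrum-control results of Section~\ref{s.up}. Throughout let $\gamma_k$ denote the support of $\mu_k$ and $\pi_k$ its period.

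Assume first that $\mu$ is ergodic, with distinct Lyapunov exponents $\lambda^{(1)}<\cdots<\lambda^{(s)}$ and Oseledets splitting $E=F_1\oplus\cdots\oplus F_s$ of multiplicities $d_1,\ldots,d_s$. For small $\eta>0$, Pesin-block theory provides a compact set $\Lambda_\eta$ with $\mu(\Lambda_\eta)>1-\eta$ on which the splitting admits a continuous extension $\tilde F_1,\ldots,\tilde F_s$ and uniform finite-time growth estimates close to $\lambda^{(j)}$ on $\tilde F_j$, together with an approximate quasi-invariance $\angle(A(x)\tilde F_j(x),\tilde F_j(Tx))<\eta$ whenever $x,Tx\in\Lambda_\eta$. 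The weak-star convergence $\mu_k\to\mu$ combined with the ergodicity of each $\mu_k$ forces, for $k$ large, the orbit $\gamma_k$ to spend a $(1-O(\eta))$-fraction of its time in $\Lambda_\eta$, so that $\gamma_k$ decomposes into long ``good arcs'' contained in $\Lambda_\eta$ and short ``junction arcs'' of total relative length $O(\eta)$.

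Along each good arc, $A$ nearly preserves $\tilde F_j$ and grows at a rate close to $\lambda^{(j)}$; on the junction arcs I perturb $A$ so as to genuinely align the splitting across consecutive good arcs. Since the perturbation lives on an $O(\eta)$-fraction of $\gamma_k$, its $C^0$-size tends to zero as $\eta\to 0$ along a suitable diagonal subsequence in $k$. This yields a cocycle $C_k$ with an invariant splitting $V_1^{(k)}\oplus\cdots\oplus V_s^{(k)}$ along $\gamma_k$ and Lyapunov exponents averaging close to $\lambda^{(j)}$ on each block. To reach exact equality with $\bsigma(A,\mu)$, I apply Proposition~\ref{p.BGV} block-wise to produce real eigenvalues, then invoke Lemma~\ref{l.triangular} together with the block-triangular extension procedures of \S\ref{ss.procedures} to fine-tune each block to exactly the portion of $\bsigma(A,\mu)$ corresponding to $\lambda^{(j)}$ with multiplicity $d_j$; the required adjustments are $O(\eta)$ and therefore fall within the smallness regime of Lemma~\ref{l.triangular}.

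The non-ergodic case reduces to the ergodic one as follows. Given the ergodic decomposition $\mu=\int\nu\,dP(\nu)$, approximate $P$ by a finite convex combination $\sum c_j\nu_j$ of ergodic measures with $c_j>0$. For $k$ large, partition $\gamma_k$ into consecutive sub-arcs of relative lengths $c_j$ whose normalized counting measures weak-star approximate the $\nu_j$ (this is possible by Birkhoff sliding averages along $\gamma_k$, together with $\mu_k\to\mu$). Carry out the ergodic construction on each arc separately, and use a final $C^0$-small perturbation at the arc boundaries to reconnect everything into a single cocycle on $\gamma_k$. The main obstacle throughout is the quantitative control in the ergodic stage: bounding the $C^0$-cost of realigning the extended Oseledets splitting across junctions by $o_\eta(1)$ uniformly in $k$. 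This is handled by exploiting the uniform domination inside $\Lambda_\eta$ to absorb any bounded rotational mismatch over junctions of a controlled length, so that per-site perturbations remain small while the splitting is globally enforced.
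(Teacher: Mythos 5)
There are two genuine gaps, one in the ergodic core of your construction and one in the reduction of the non-ergodic case.

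In the ergodic stage, the step ``along each good arc, $A$ nearly preserves $\tilde F_j$ and grows at a rate close to $\lambda^{(j)}$'' does not survive scrutiny. The points of $\gamma_k$ are only statistically close to $\Lambda_\eta$ (weak-star convergence gives no shadowing), so the block points you compare with do not form a $T$-orbit; the products you would need to control are pseudo-orbit products, and Lyapunov exponents of pseudo-orbit products are not controlled by those of $\mu$ in the absence of domination --- which is exactly the situation the theorem must handle. Moreover an only approximately invariant candidate bundle $V_j^{(k)}$ picks up the \emph{top} exponent over a long good arc (the tiny component in the fastest Oseledets direction eventually dominates), so to get rate $\lambda^{(j)}$ you must enforce exact invariance, and the realignment cost is not small: Pesin blocks are not invariant, the mismatch angle at a junction can be of order one while the junction can have length one, so the required correction cannot be spread out and is not a $C^0$-small perturbation. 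Your appeal to ``uniform domination inside $\Lambda_\eta$'' has nothing to stand on: no domination is assumed anywhere (the interesting case is precisely when there is none), and Pesin blocks provide nonuniform constants, not domination. In the non-ergodic stage, the claim that $\gamma_k$ can be cut into consecutive arcs of relative lengths $c_j$ equidistributing to the ergodic components $\nu_j$ is false in general: take for the base a full shift, $\mu=\tfrac12(\nu_1+\nu_2)$ with $\nu_1,\nu_2$ two distinct Bernoulli measures, and periodic orbits obtained by interleaving many medium-length blocks typical for $\nu_1$ with many medium-length blocks typical for $\nu_2$; then $\mu_k\to\mu$, but every consecutive arc of relative length $\tfrac12$ has empirical measure close to $\mu$, not to a single $\nu_j$. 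Handling non-ergodic $\mu$ is not a routine reduction --- the paper singles it out as one of the points where Theorem~\ref{t.measure intro} is finer than earlier results.

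The paper's proof avoids Oseledets/Pesin data and ergodic decomposition altogether. It works with the subadditive quantities $L_i(A,\mu)=\inf_m\frac1m\int\log\|\wed^i A^m\|\,d\mu$ of \S~\ref{ss.semicontinuity}: upper semicontinuity gives for free that no small perturbation can raise the graph, and the whole work is to prevent ``cancellations'' from lowering it. This is done by composing $A$ with small rotations at the times $0,m,2m,\dots$ along the periodic orbit (Lemmas~\ref{l.rotate}, \ref{l.2 jacs} and \ref{l.convert}) so that, for \emph{every} $i$ simultaneously, the spectral radius of $\wed^i$ of the return map is comparable to the product of the block norms $\|\wed^i A^m\|$; a pigeonhole argument over the $m$ possible starting phases, combined again with semicontinuity, produces one phase that is good for all $i$ at once. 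Only at the very end are Proposition~\ref{p.BGV} and Lemma~\ref{l.triangular} used, as in your last step, to turn ``$\delta$-close'' into exact equality. If you want to salvage your approach, the part that must be replaced is precisely the alignment-of-splittings mechanism; some device playing the role of the rotation trick and of the semicontinuity upper bound appears unavoidable.
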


\begin{rema}
Let us mention a non-perturbative result 
that also concerns the approximation of Lyapunov exponents using periodic orbits:
Theorem~1.4 from \cite{Kal}
asserts that 
if a cocycle is H\"older continuous and the base dynamics 
has certain hyperbolicity properties (satisfied by basic hyperbolic sets or subshifts of finite type, for example), 
then the Lyapunov exponents of every ergodic measure can be approximated by
the Lyapunov exponents at periodic points.
\end{rema}

\begin{proof}[of Theorem~{\rm\ref{t.measure intro}}]  
Given a sequence of diffeomorphisms $f_n: M \to M$ converging to a diffeomorphism $f = f_\infty$,
we define a cocycle $(X,T,E,A)$ as follows:
Let $\bar\NN = \NN \cup \{\infty\}$ be the one-point compactification of $\NN$,
and take
$$
X = \bar \NN \times M, \quad
T(n,x) = (n,f_n(x)), \quad
E = \bar\NN \times TM, \quad
A(n,x) = Df_n(x) \, .
$$
Applying Theorem~\ref{t.measure} to this cocycle and then using Franks Lemma,
it is easy to reach the conclusions of Theorem~\ref{t.measure intro}.
\end{proof}

\begin{theo}\label{t.measure finest}
Let $(X,T,E,A)$ be a cocycle.
Let $\mu_k$ be a sequence of $T$-invariant probabilities converging to some $\mu$ in the weak-star topology.
Assume that each $\mu_k$ is supported on a periodic orbit, 
whose period tends to infinity with $k$. 
Also assume that the support of $\mu_k$ converges in the Hausdorff topology 
to an (invariant compact) set $\Lambda$. 
Let $F_1 \dplus \cdots \dplus F_m$ be the finest dominated splitting for 
the cocycle $A$ restricted to $\Lambda$.
Let $i_j = \dim F_1 \oplus \cdots \oplus F_j$.
Let $\sigma \in \cS_d$ be any convex graph such that
$\sigma \gtr \bsigma(A,\mu)$ and $\sigma_{i_j} = \bsigma_{i_j}(A,\mu)$ for each $j$.
Then there exists $B$ arbitrarily close to $A$ such that $\bsigma(B,\mu_k) = \sigma$
for some~$k$.
\end{theo}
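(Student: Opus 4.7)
The plan is to combine Theorem~\ref{t.measure} (which already lets us match $\bsigma(A,\mu)$ on each periodic orbit) with Theorem~\ref{t.finest} (which lets us raise the Lyapunov graph on a cyclic cocycle subject exactly to the constraints imposed by its finest $\ell$-dominated splitting). The idea is that after a first small perturbation of $A$ produced by Theorem~\ref{t.measure}, the periodic orbit $\gamma_k = \supp \mu_k$ carries only the dominated splittings inherited from $\Lambda$, so the constraints coming from the cyclic finest $\ell$-dominated splitting are a \emph{sub-list} of the constraints $\sigma_{i_j} = \bsigma_{i_j}(A,\mu)$ that $\sigma$ already satisfies by hypothesis.

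First I would fix $\eps > 0$ and pick $\ell \in \PT$ large enough that (i) the finest $\ell$-dominated splitting of $(A,\Lambda)$ is the unqualified finest dominated splitting $F_1 \dplus \cdots \dplus F_m$ with indices $\{i_1,\dots,i_{m-1}\}$, and (ii) Theorem~\ref{t.finest} applies with accuracy $\eps/2$ (for the bound $K$ on $A$, up to the factor from Remark~\ref{r.epsilon}). By Theorem~\ref{t.measure} there is a sequence of cocycles $B_k \to A$ with $\bsigma(B_k,\mu_k) = \bsigma(A,\mu)$; in particular $\bsigma_{i_j}(B_k,\mu_k) = \sigma_{i_j}$ for every $j$. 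For all sufficiently large $k$ one has $\|B_k - A\| < \eps/2$ and $\pi(\gamma_k) \ge \ell$.

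Next I would argue that the finest $\ell$-dominated splitting of the cyclic cocycle $(B_k, \gamma_k)$ has set of indices contained in $\{i_1,\dots,i_{m-1}\}$. By the standard upper semicontinuity of dominated splittings (Lemma~\ref{l.eta} in its contrapositive form), any $\ell$-dominated splitting of $(B_k,\gamma_k)$ of index $i$ which persisted for arbitrarily large $k$ would produce a $(2\ell)$-dominated splitting of $(A,\Lambda)$ of index $i$, using the Hausdorff convergence $\gamma_k \to \Lambda$ and the $C^0$-convergence $B_k \to A$ restricted to a neighborhood of $\Lambda$. Hence by choice of $\ell$ we must have $i \in \{i_1,\dots,i_{m-1}\}$. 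Now $\sigma \gtr \bsigma(A,\mu) = \bsigma(B_k,\mu_k)$, and at each index of the finest $\ell$-dominated splitting of $(B_k,\gamma_k)$ the graph $\sigma$ equals $\bsigma(B_k,\mu_k)$; so the hypotheses of Theorem~\ref{t.finest} are met. Applying it produces a cocycle $B$ with $\|B - B_k\| < \eps/2$ and $\bsigma(B,\mu_k) = \sigma$, hence $\|B - A\| < \eps$, proving the theorem.

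The main obstacle is step (iii), the semicontinuity argument for the finest $\ell$-dominated splitting along the orbits $\gamma_k$. One must be careful because the splitting is declared on $\Lambda$ but applied on $\gamma_k$, where a priori the invariant bundles of the cyclic cocycle $(B_k,\gamma_k)$ could split more finely than the restriction of $F_1 \dplus \cdots \dplus F_m$ to $\gamma_k$. The cleanest way around this is the contrapositive already built into Lemma~\ref{l.eta}: if $(B_k,\gamma_k)$ admitted an $\ell$-dominated splitting of some index $i \notin \{i_j\}$ along a subsequence, then by continuity of $B_k$ and by the compactness of Grassmannians the corresponding subbundles would converge to an $A$-invariant $\ell$-dominated splitting of index $i$ on $\Lambda$, contradicting that $\{i_1,\dots,i_{m-1}\}$ exhausts the indices of dominated splittings of $(A,\Lambda)$ for this $\ell$.
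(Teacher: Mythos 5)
Your proposal is correct and follows essentially the same route as the paper's proof: apply Theorem~\ref{t.measure} to match $\bsigma(A,\mu)$ along $\supp\mu_k$, use a compactness/semicontinuity argument (which the paper organizes as a limit statement for $A$ over $\supp\mu_k\to\Lambda$ combined with Lemma~\ref{l.eta}) to guarantee that for large $k$ the finest $\ell$-dominated splitting of the perturbed cyclic cocycle has indices contained in $\{i_1,\dots,i_{m-1}\}$, and then invoke Theorem~\ref{t.finest}. The only step you leave implicit is that Theorem~\ref{t.finest} perturbs the cocycle only over the finite orbit, so to obtain the continuous cocycle $B$ on all of $X$ one still extends this orbit-wise perturbation via the Tietze-type extension Lemma~\ref{l.franks}, exactly as the paper does.
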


\begin{proof}[of Theorem~{\rm\ref{t.measure intro 2}}]  
One half of the theorem is trivial:
if a graph $\sigma$ is the limit of graphs $\bsigma(g_n, \gamma_n)$ as in the statement,
then by semicontinuity $\sigma$ must be above $\bsigma(f,\mu)$,
and by basic properties of dominated splittings, $\sigma$ must touch $\bsigma(f,\mu)$
at the points corresponding to the indices of the finest dominated splitting.
That is, we necessarily have $\sigma \in \cG(\mu, \Lambda)$.

The nontrivial half of the theorem follows from Theorem~\ref{t.measure finest},
arguing analogously as in the proof of Theorem~\ref{t.measure intro}.
\end{proof}

The proofs of the Theorems~\ref{t.measure} and \ref{t.measure finest} occupy the rest of this section.
Let us first give an informal exposition of the main ideas of the proof of Theorem~\ref{t.measure}.

It is sufficient to show that for every sufficiently large $k$, there is 
a perturbation $\tilde A$ with $\bsigma(\tilde A,\mu_k)$ approximately equal to $\bsigma(A,\mu)$,
because 
it is always easy to adjust the spectrum a little.

First consider the case $d=2$.
Take a large time scale $m$ so that $\int m^{-1} \log \|A^m\| \, d\mu$ is
close to $L_1 = L_1(A,\mu)$.
(Recall the notation from \S~\ref{ss.semicontinuity}.)
Then fix any sufficiently large $k$ so that 
the period $n=n_k$ of the orbit that supports $\mu_k$ is much bigger than $m$,
and the integral above (and thus $L_1$) is approximated
by $\int m^{-1} \log \|A^m\| \, d\mu_k$.
The latter is of course
the average of
$m^{-1} \log \|A^m(T^i y)\|$ where $i$ runs from $0$ to $n-1$,
and $y$ is a point in the periodic orbit.
The enemy here are ``cancelations'' that can make the value 
$n^{-1} \log \| A^n(y)\|$ (and thus the Lyapunov exponent with respect to $\mu_k$)
significantly smaller than $L_1$.
To fix this, for $y$ in the periodic orbit, let $Z_1(y)$ indicate 
the average of
the function $m^{-1} \log \|A^m\|$  over the  points $y$,
$T^m(y)$, $T^{2m}(y)$, \dots, $T^{(\lfloor n/m \rfloor -1) m}(y)$.
We say that $y$ is \emph{good} if $Z_1(y)$ is close to $L_1$.
It's easy to see that a good $y$ exists.
Then we multiply the cocycle matrices at the
points $y$, $T^m(y)$, \dots, $T^{(\lfloor n/m \rfloor -1) m}(y)$ 
by small rotations in order to remove cancellations.
More precisely, we choose a vector that we want to make expanding,
and we choose the rotations so that the iterates of this vector at
times $m$, $2m$, \dots, $(\lfloor n/m \rfloor -1) m$ 
do not fall into any contracting cones. At the period,
another small rotation makes the eigenvalue comparable to the norm of
the iterated vector. Now the top eigenvalue for the periodic orbit is approximately $e^{nL_1}$,
as desired.
Since rotations don't change the determinants, the other
eigenvalue is also ok.\footnote{Similar techniques of \emph{avoiding cancelations} are used 
in \cite{Mane ECL} and \cite{ABC}.}

Before considering the case $d>2$, we remark that 
\emph{most} points in the periodic orbit are good.
To see this, first notice that there is no $y$ in the periodic orbit 
for which $Z_1(y)$ 
is significantly \emph{bigger} than $L_1$, 
because in that case we would perturb the cocycle and realize
this exponent, thus contradicting semicontinuity. Since the
average of the $Z_1(y)$ over $y$ is close to $L_1$, 
we get that $Z_1(y)$ is close to $L_1$ for most~$y$.

In the higher dimensional case, we have to look norms of exterior
powers to see the other exponents. To avoid cancellations, there are
$d-1$ angles that we need to keep away from zero. This is
not hard, but requires some technical lemmas. 
The key part of the argument is to find
a point $y$ that is $i$-good for all intermediate dimensions $i$.
Here a point $y$ is \emph{$i$-good} if 
the average $Z_i(y)$ of the functions
$m^{-1}\log\|\wedge^i A^m\|$ over the points $y$,
$T^m(y)$, \dots, $T^{(\lfloor n/m \rfloor -1) m}(y)$ is close to $L_i(A,\mu)$.
The remark above also applies: most points are $i$-good.
In particular, there is at least one point that is $i$-good for every $i$,
and we are able to conclude the proof of Theorem~\ref{t.measure}.

\subsection{Some Geometric Lemmas}

In this subsection we establish some lemmas
that will be used in the proof of Theorem~\ref{t.measure}.

\begin{lemm}\label{l.2 jacs}
Given $\alpha>0$  and $d \ge 2$  there exists $C_1 = C_1 (\alpha,d) > 1$ with the following properties.
If $M: E \to E'$ is a linear map between euclidean spaces of dimension~$d$ and $E = F \oplus G$ is a splitting
with $\angle (F, G) > \alpha$ then
$$
\jac M \le C_1 \left( \jac M \restr F \right) \left( \jac M \restr G \right).
$$
\end{lemm}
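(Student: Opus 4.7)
The strategy is to interpret $\jac M$ via the exterior algebra setup of \S~\ref{ss.singular} and factor it through the splittings $E = F \oplus G$ and $E' = M(F) \oplus M(G)$; the only loss will come from the non-orthogonality of these two splittings, which is controlled by $\alpha$.

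Concretely, I would pick orthonormal bases $\{f_1,\dots,f_p\}$ of $F$ and $\{g_1,\dots,g_q\}$ of $G$ (with $p+q=d$), set
$$
V := \bigl|f_1\wedge\cdots\wedge f_p \wedge g_1 \wedge\cdots\wedge g_q\bigr|,
$$
and define $V' \in (0,1]$ analogously from orthonormal bases of $M(F)$ and $M(G)$. Computing the norm of $(Mf_1)\wedge\cdots\wedge(Mg_q) \in \wed^d E'$ in two ways---first using that $\wed^d M$ acts on $\wed^d E$ as multiplication by $\pm\,\jac M$, and second by grouping the first $p$ factors (giving a $p$-vector in $\wed^p M(F)$ of norm $\jac(M\restr F)$) and the last $q$ factors (giving a $q$-vector of norm $\jac(M\restr G)$) before wedging them together---yields
$$
\jac M \cdot V \;=\; \jac(M\restr F)\cdot \jac(M\restr G) \cdot V' .
$$
Since $V' \le 1$ by Hadamard's inequality, this already gives $\jac M \le V^{-1}\,\jac(M\restr F)\,\jac(M\restr G)$.

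It remains to bound $V$ from below by a constant depending only on $\alpha$ and $d$. Replacing each $f_i$ by its component in $G^\perp$ does not change the wedge (the $G$-part is annihilated by $g_1\wedge\cdots\wedge g_q$), and an orthonormal basis computation yields
$$
V \;=\; \bigl|P_{G^\perp}(f_1)\wedge\cdots\wedge P_{G^\perp}(f_p)\bigr| \;=\; \jac\bigl(P_{G^\perp}\restr F\bigr),
$$
where $P_{G^\perp}$ denotes orthogonal projection onto $G^\perp$. The singular values of $P_{G^\perp}\restr F$ are the sines of the principal angles $\theta_1 \le \cdots \le \theta_p$ between $F$ and $G$, so the hypothesis $\angle(F,G) > \alpha$ forces $\theta_1 > \alpha$ and hence $V \ge \prod_i \sin\theta_i \ge (\sin\alpha)^d$. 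Setting $C_1 := (\sin\alpha)^{-d}$ then finishes the argument.

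No serious obstacle is anticipated; the only point that needs a small verification is the identification of $V$ with $\prod_i \sin\theta_i$, which is the standard linear algebra fact that the Jacobian of an orthogonal projection between complementary subspaces of equal dimension is the product of the sines of the principal angles.
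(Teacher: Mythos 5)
Your argument is correct and is in substance the paper's own proof: the identity $\jac M\cdot V=\left(\jac M\restr F\right)\left(\jac M\restr G\right)V'$ together with $V'\le 1$ and $V\ge(\sin\alpha)^{d}$ is exactly the paper's Fubini-type volume computation with the orthogonal projections $\pi\colon E\to F^{\perp}$ and $\pi'\colon E'\to (MF)^{\perp}$ (where $\m(\pi\restr G)\ge\sin\alpha$ and $\|\pi'\|\le 1$), and it yields the same constant $C_1=(\sin\alpha)^{-d}$. The only difference is cosmetic: you do the volume-defect bookkeeping via exterior powers and principal angles instead of via volumes of sets.
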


\begin{proof}
Let $M:E \to E'$, $F$, $G$ and $\alpha$ be as in the statement.
Let $\pi: E \to F^\perp$ and $\pi' : E' \to (MF)^\perp$  be the orthogonal projections along $F$ and $MF$ respectively.
Take sets $S_1 \subset F$ and $S_2 \subset G$ of positive volume (in the respective dimensions).
Then
$$
\jac M =
\frac{\vol M(S_1 \oplus S_2)}{\vol S_1 \oplus S_2} =
\frac{\vol M(S_1) \cdot \vol \pi'(M (S_2))}{\vol S_1 \cdot \vol \pi(S_2)} 
$$
Since $\m(\pi) \ge \sin \alpha$ and $\|\pi'\| \le 1$, the lemma holds
with $C_1 = (\sin \alpha)^{-d}$.
\end{proof}

A flag on a vector space $E$ of dimension $d$ 
is a nested sequence of vector subspaces 
$F_1\subset \cdots \subset F_{d-1}$ such that $\dim F_i = i$ for each
$i$.

\begin{lemm}\label{l.rotate} 
Given $\eps>0$ and $d \ge 2$ there exists $\alpha=\alpha(\eps,d)>0$ with the following properties.
For any pair of flags $F_1 \subset \cdots \subset F_{d-1}$ and 
$G_1 \subset \cdots \subset G_{d-1}$ of a euclidean space $E$ of dimension $d$,
there exists an orthogonal map $R : E \to E$ with $\|R - \Id \| < \eps$ 
such that $\angle(RF_i, G_{d-i}) > \alpha$.
\end{lemm}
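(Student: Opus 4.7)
The plan is to reduce the claim to showing, for each fixed pair of flags, the mere \emph{existence} of an orthogonal $R$ in the open $\eps$-ball about $\Id$ with $RF_i \cap G_{d-i} = \{0\}$ for every $i$ (which forces $\angle(RF_i,G_{d-i})>0$), and then to extract a uniform lower bound $\alpha>0$ by compactness over the space of pairs of flags.

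For the pointwise existence step, fix $(F,G)$ and, for each $i \in \{1,\ldots,d-1\}$, consider the ``bad set''
$$B_i \;=\; \{R \in O(E)\;:\; RF_i \cap G_{d-i} \neq \{0\}\}.$$
The map $R \mapsto RF_i$ is a surjective submersion from $O(E)$ onto the Grassmannian $\mathrm{Gr}(i,d)$, and the locus of $i$-planes meeting the fixed $(d-i)$-plane $G_{d-i}$ non-trivially is a proper Schubert subvariety of $\mathrm{Gr}(i,d)$, of positive codimension. Hence each $B_i$ is a closed subset of $O(E)$ with empty interior, and so is the finite union $\bigcup_i B_i$. Therefore the open $\eps$-neighborhood of $\Id$ in $O(E)$ contains some $R$ avoiding every $B_i$; for such an $R$ all the angles $\angle(RF_i,G_{d-i})$ are strictly positive.

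For the uniform bound, let $\mathcal{F}$ denote the (compact) flag manifold of $E$ and $K \subset O(E)$ the closed $\eps/2$-ball about $\Id$, which is also compact. Define
$$\tilde\alpha(F,G) \;=\; \max_{R \in K}\ \min_{1 \le i \le d-1}\ \angle(RF_i,G_{d-i}).$$
The inner expression is jointly continuous in $(R,F,G)$, and a standard argument (the maximum is attained, and any limit of maximizers is a maximizer at the limit) gives that $\tilde\alpha$ is continuous on $\mathcal{F}\times\mathcal{F}$. By the previous paragraph, $\tilde\alpha(F,G)>0$ pointwise, so by compactness $\tilde\alpha$ attains a positive infimum; any $\alpha(\eps,d)$ below this infimum works.

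The only delicate ingredient is the algebro-geometric input that the Schubert variety of $i$-planes meeting a fixed $(d-i)$-plane has positive codimension in $\mathrm{Gr}(i,d)$, but this is classical. If a hands-on substitute is preferred, pick orthonormal bases adapted respectively to $F$ and $G^\perp$ and observe that $RF_i \cap G_{d-i}=\{0\}$ is the non-vanishing of a specific $i\times i$ minor of the matrix of $R$ in these bases; this is a non-trivial real-analytic function on $O(E)$, so $B_i$ has empty interior. Once this is in hand, everything else is standard topology and compactness.
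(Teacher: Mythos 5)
Your proof is correct, but it takes a genuinely different route from the paper's: the paper disposes of this lemma in a single line, by applying Claim~6.4 of \cite{ABC} to the spaces $F_i$ and $G_{d-i}^\perp$, whereas you give a self-contained argument. Your pointwise step (the bad locus $\bigcup_i B_i$ is a finite union of closed, nowhere dense subsets of $O(E)$, being preimages under the open map $R\mapsto RF_i$ of the codimension-one Schubert varieties, so some $R$ near $\Id$ avoids it) and your uniformization step (positivity plus compactness of the flag manifold, via the function $\tilde\alpha(F,G)=\max_{R\in K}\min_i \angle(RF_i,G_{d-i})$) are both sound; note that for the last step lower semicontinuity of $\tilde\alpha$, which is automatic for a supremum of continuous functions, already suffices, so you do not even need the Berge-type continuity argument. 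What each approach buys: the paper's citation is shorter and the underlying claim in \cite{ABC} is established by explicit small rotations, in the same quantitative spirit as the rest of Section~5 here; your argument is elementary and self-contained but purely existential, yielding no explicit value of $\alpha(\eps,d)$ --- which is all the lemma asserts, so nothing is lost. Two small points worth tightening: apply your pointwise step with $\eps/2$ so that the maximizing $R$ indeed ranges in the compact set $K$; and in the ``hands-on substitute'' recall that $O(E)$ has two components, so you should check that the $i\times i$ minor is not identically zero on the component of $\Id$ (easy: a map in $SO(E)$ carries $F_i$ onto $G_{d-i}^\perp$, composing with a reflection that preserves $F_i$ if necessary). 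Neither of these is a genuine gap.
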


\begin{proof}
Apply Claim~6.4 from \cite{ABC} to the spaces $F_i$ and $G_{d-i}^\perp$.
\end{proof}

\begin{lemm}\label{l.convert}
Given $\eps>0$ and $d \ge 2$ there exists $C_2=C_2(\eps,d)>0$ with the following properties.
Let $M: E \to E$ be a linear map on a euclidean space of dimension $d$.
Then there exists  an orthogonal map $R : E \to E$ with $\|R - \Id \| < \eps$ 
such that 
$$
\rad(\wed^i R M) > C_2^{-1} \|\wed^i M\| \quad \text{for each $i$.}
$$
\end{lemm}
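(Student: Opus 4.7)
The plan is to use the singular value decomposition of $M$ to set up a natural flag, apply Lemma~\ref{l.rotate} to secure transversality between this flag and its orthogonal complement, and then extract eigenvalue bounds for each $\wed^i RM$ from a leading-minor calculation combined with Lemma~\ref{l.2 jacs}.

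First, I would fix SVD data: pick orthonormal bases $\{e_j\}$, $\{u_j\}$ with $Me_j = \sigma_j u_j$ and $\sigma_1 \ge \cdots \ge \sigma_d$. Set the ascending flag $F_i = \operatorname{span}(e_1,\ldots,e_i)$, the image flag $H_i = M F_i = \operatorname{span}(u_1,\ldots,u_i)$, and the reversed flag $F_j^{\vee} = \operatorname{span}(e_{d-j+1},\ldots,e_d) = F_{d-j}^{\perp}$. Applying Lemma~\ref{l.rotate} to the pair of flags $\{H_i\}$, $\{F_j^{\vee}\}$ with tolerance $\eps$ produces $\alpha = \alpha(\eps,d) > 0$ and an orthogonal $R$ with $\|R-\Id\|<\eps$ such that
\[
\angle(R H_i,\, F_i^{\perp}) \;=\; \angle(RH_i,\, F_{d-i}^{\vee}) \;>\; \alpha
\qquad \text{for every } i\in\{1,\ldots,d-1\}.
\]
Writing $RM$ in the basis $\{e_j\}$ gives $RM = QD$ with $D = \Diag(\sigma_1,\ldots,\sigma_d)$ and $Q$ orthogonal; the transversality condition is equivalent to $|\det Q_{[i]}| \ge (\sin\alpha)^i$, where $Q_{[i]}$ is the leading principal $i\times i$ submatrix of $Q$.

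Second, I would convert the bound on $Q_{[i]}$ into an estimate on $\rad(\wed^i RM)$. The entry of $\wed^i(QD)$ in position $(I_0,I_0)$, with $I_0 = (1,2,\ldots,i)$, equals $\sigma_1\cdots\sigma_i\cdot\det Q_{[i]}$, so has modulus at least $(\sin\alpha)^i\,\sigma_1\cdots\sigma_i = (\sin\alpha)^i\,\|\wed^i M\|$. The geometric meaning is that the Jacobian of the compression $\pi_{F_i} \circ RM \restr F_i : F_i \to F_i$ is at least $(\sin\alpha)^i\,\sigma_1\cdots\sigma_i$, by Lemma~\ref{l.2 jacs} applied to the splitting $E = F_i \oplus F_i^{\perp}$. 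To pass from this Jacobian lower bound on the (non-invariant) compression to a genuine eigenvalue lower bound, I would argue by induction on $d$: the base case $d=2$ is a direct computation showing that a rotation of size comparable to $\eps$ forces $|\operatorname{trace}(RM)| \gtrsim \eps\,\sigma_1$, giving $\rad(RM) \geq c\eps\,\sigma_1$; the inductive step uses the angle bound and Lemma~\ref{l.2 jacs} to decouple an approximately $RM$-invariant line from a quotient of dimension $d-1$ to which the hypothesis applies, and then multiplies the resulting eigenvalue estimates using the product structure of $\wed^i$.

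The main obstacle is precisely this last conversion: a large diagonal entry of $\wed^i QD$ does \emph{not} in general force a nearby eigenvalue, since the off-diagonal entries of $\wed^i QD$, which are of the form $\sigma_{j_1}\cdots\sigma_{j_i}\cdot\det Q_{I_0,J}$, need not be small relative to the diagonal when successive singular values are close. The way around this is to exploit the product structure of $\wed^i (QD)$ together with the freedom, in the inductive step, to choose the approximately invariant line so that its removal decreases the relevant singular-value products by a controllable amount; with the quantifiers chosen so that $\alpha = \alpha(\eps,d)$ is fixed at the outset, this yields the uniform constant $C_2 = C_2(\eps,d)$, of order roughly $(\sin\alpha)^{-d^2}$.
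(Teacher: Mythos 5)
Your first step reproduces the paper's: the flag $F_i$ spanned by the top $i$ right singular vectors is exactly a flag maximizing $\jac M \restr F_i$, and applying Lemma~\ref{l.rotate} to obtain $R$ with $\angle(RMF_i, F_i^\perp)>\alpha(\eps,d)$ is precisely how the paper begins. You also correctly identify the crux: a large diagonal entry (equivalently, a large Jacobian of the compression $\pi_{F_i}\circ RM\restr F_i$) does not by itself force $\rad(\wed^i RM)$ to be large. But the route you propose around this --- induction on $d$ via an ``approximately $RM$-invariant line'' --- is not carried out and, as sketched, does not work. Already the base case $d=2$ is wrong as stated: the angle bound gives $|(RM)_{11}|\ge \sigma_1\sin\alpha$, but the second diagonal entry can cancel it whenever $\sigma_2$ is comparable to $\sigma_1$, so $|\trace(RM)|$ need not be bounded below by a multiple of $\sigma_1$; in that regime the spectral radius bound must come from the determinant ($\rad(RM)\ge\sqrt{\sigma_1\sigma_2}$), i.e.\ one needs a case distinction you never make. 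The inductive step is worse off: when consecutive singular values are close (the very situation you flag as the obstacle) there is no spectral gap anywhere, hence no approximately invariant line or plane whose ``removal decreases the singular-value products by a controllable amount'', and no mechanism is offered for converting approximate invariance into genuine eigenvalue estimates --- that conversion is exactly the problem to be solved, so the argument is circular at its key point.

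The paper's resolution is different and avoids induction on the dimension altogether (this is its Lemma~\ref{l.eigenvalues}). The angle condition on subspaces is converted, via Lemma~\ref{l.exterior}, into the statement that the top singular direction of $\wed^i RM$ is not rotated to nearly perpendicular; Lemma~\ref{l.cone} (a trace estimate in the exterior power) then yields, for each $i$, the dichotomy: either $\rad(\wed^i RM)\ge \bar C^{-1}\|\wed^i M\|$, or $\sing_{i+1}(RM)\ge \bar C^{-1}\sing_i(RM)$. The indices falling in the second case are handled by a convexity argument: $x_i=\log\rad(\wed^i RM)$ and $y_i=\log\|\wed^i RM\|$ are concave in $i$, agree at $i=0$ and $i=d$ (determinant), and $y$ is nearly affine between consecutive ``good'' indices; Lemma~\ref{l.nonlinearity} then transfers the bound to all intermediate $i$, with a constant of the form $\bar C^{1+d^2/4}$. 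If you want to salvage your write-up, you should replace the inductive step by this dichotomy-plus-interpolation argument (or supply an equally quantitative substitute); the SVD bookkeeping in your first half can stay as is.
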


%

The proof of Lemma~\ref{l.convert} will require a few other lemmas
(which will not be used directly in the proof of Theorem~\ref{t.measure}):

\begin{lemm}\label{l.cone}
For every $d$ and $\beta>0$ there exists $C_3 = C_3(d,\beta)>1$ with the following properties.
Let $M:E \to E$ be a linear on a space of dimension $d$ and 
let $v$ be a unit vector with $\|M v\| = \|M\|$.
Assume that $\angle (Mv, v) < \frac{\pi}{2} - \beta$.
Then 
$$
\max \big( \rad(M), \sing_2(M) \big) \ge C_3^{-1} \| M \| \, .
$$
\end{lemm}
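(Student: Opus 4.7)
My plan is to prove Lemma~\ref{l.cone} via a short trace calculation based on the rank-one approximation of $M$ coming from its top singular direction. Let $v$ be the unit vector of the hypothesis and set $\sing_1 := \sing_1(M) = \|M\|$ and $u := Mv/\sing_1$; the assumption $\|Mv\| = \|M\|$ says exactly that $v$ is a top right singular vector of $M$ and $u$ a corresponding top left singular vector. I introduce the residual
$$
N \;:=\; M - \sing_1\, u v^T.
$$
Then $Nv = Mv - \sing_1 u (v^T v) = 0$, and for $w \perp v$ we have $Nw = Mw$. Since $M$ maps the $(d-1)$-dimensional subspace $v^\perp$ with singular values $\sing_2(M) \ge \cdots \ge \sing_d(M)$, I obtain $\|N\| = \sing_2(M)$.

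Next I use the angle hypothesis to force a lower bound on $|\trace M|$. The inequality $\angle(Mv,v) < \pi/2 - \beta$ gives
$$
|\langle u, v\rangle| \;=\; |\cos \angle(Mv,v)| \;>\; \sin\beta,
$$
so the rank-one matrix $\sing_1 u v^T$ has trace $\sing_1 \langle u, v\rangle$ whose modulus exceeds $\|M\|\sin\beta$. From $\trace(M) = \sing_1 \langle u, v\rangle + \trace(N)$ together with the crude bound $|\trace(N)| \le d\,\|N\| \le d\,\sing_2(M)$ (which is all I need; a slightly sharper bound would follow from Weyl's inequality and $\mathrm{rank}(N)\le d-1$, but the constant is immaterial), I get
$$
|\trace(M)| \;\ge\; \|M\|\sin\beta - d\,\sing_2(M).
$$
On the other hand the trace of $M$ is the sum, with multiplicity, of its complex eigenvalues, so $|\trace(M)| \le d\,\rad(M)$. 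Combining the two estimates,
$$
d\,\rad(M) + d\,\sing_2(M) \;\ge\; \|M\|\sin\beta,
$$
which yields $\max\bigl(\rad(M), \sing_2(M)\bigr) \ge \frac{\sin\beta}{2d}\,\|M\|$, proving the lemma with, say, $C_3 = 2d/\sin\beta$.

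I do not foresee any serious obstacle: the argument rests on the elementary observation that a matrix whose top singular direction is not sent nearly orthogonally to itself is within $\sing_2(M)$ in operator norm of a rank-one matrix whose unique nonzero eigenvalue has modulus $\ge \|M\|\sin\beta$, and comparing traces converts this closeness directly into a lower bound on $\rad(M)$ (modulo the defect $\sing_2(M)$). This avoids the more delicate eigenvalue perturbation inequalities of Elsner or Bauer--Fike, which would give essentially the same qualitative bound but are unnecessary here.
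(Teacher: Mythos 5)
Your proof is correct and follows essentially the same route as the paper's: both isolate the component of $M$ along the top singular direction $v$, observe that the angle hypothesis makes $|\langle Mv,v\rangle| > \|M\|\sin\beta$ while the remainder has trace bounded by a multiple of $\sing_2(M)$, and then convert the resulting lower bound on $|\trace M|$ into a bound on $\rad(M)$ via $|\trace M| \le d\,\rad(M)$. Your rank-one splitting $M = \sing_1 u v^T + N$ and the paper's block decomposition with respect to $(\R v)\oplus v^\perp$ yield the identical trace computation, so the difference is purely cosmetic.
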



\begin{proof}
Take the linear map $M: E \to E$ and the unit vector $v$ with $\|Mv\| = \|v\|$.
Let $\theta = \angle (Mv, v)$, and assume that $\theta < \frac{\pi}{2} - \beta$.
Write the matrix of $M$ with respect to the splitting $(\R v) \oplus v^\perp$:
$$
M = \begin{pmatrix}  \pm \|M\| \cos \theta & * \\ * & N \end{pmatrix}.
$$
Since $\sing_2(M) = \|M \restr v^\perp \|$, the norm of $N$ is less than $\sing_2(M)$.
Hence there exists $\eps_0>0$ depending only on $d$ and $\beta$
such that if $\sing_2(M) \le \eps_0 \|M\|$ then 
$$
| \trace M | \ge \frac{\sin \beta}{2} \|M \| 
\quad \text{and in particular} \quad 
\rad(M) \ge \frac{\sin \beta}{2d} \|M \| \, .
$$ 
So taking $C_3^{-1} = \min \big(\eps_0, (2d)^{-1}\sin \beta \big)$ the conclusions of the lemma are satisfied.
\end{proof}

\begin{lemm}\label{l.exterior}
For every $d$ and $\alpha>0$ there exists $\beta_1 = \beta_1(d,\alpha) > 0$ with the following properties.
Let $E$ be a space of dimension $d$, let $i \in \{1,\ldots, d-1\}$,
and let $\{ v_1, \ldots, v_i \}$ and $\{w_1, \ldots, w_i\}$ be linearly independent subsets of $E$ 
spanning subspaces
$F$ and $G$, respectively.
Consider the 
$i$-vectors $v = v_1 \wedge \cdots \wedge v_i$ and $w = w_1 \wedge \cdots \wedge w_i$ 
Then $\angle(F^\perp, G)>\alpha$ implies  $\angle(v,w) < \frac{\pi}{2}-\beta_1$.
\end{lemm}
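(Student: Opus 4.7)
The plan is to compute $|\cos \angle(v,w)|$ explicitly in terms of the orthogonal projection $\pi_F \colon E \to F$, and then bound it below using that the hypothesis $\angle(F^\perp, G) > \alpha$ forces $\pi_F|_G \colon G \to F$ to have large Jacobian.

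First I would decompose each $w_j = \pi_F(w_j) + \pi_{F^\perp}(w_j)$ and expand the wedge $w = w_1 \wedge \cdots \wedge w_i$ multilinearly. By the orthonormal basis description of exterior products recalled in \S\ref{ss.singular}, any term containing at least one factor in $F^\perp$ lies in a subspace of $\wed^i E$ orthogonal to $\wed^i F$, and in particular orthogonal to $v$. So only the term $\pi_F(w_1) \wedge \cdots \wedge \pi_F(w_i) \in \wed^i F$ survives the pairing with $v$, giving
$$
\langle v, w \rangle = \langle v_1 \wedge \cdots \wedge v_i, \; \pi_F(w_1) \wedge \cdots \wedge \pi_F(w_i) \rangle.
$$
Since $\wed^i F$ is one-dimensional, Cauchy--Schwarz is in fact an equality here, so
$$
|\langle v, w \rangle| = \|v\| \cdot \|\pi_F(w_1) \wedge \cdots \wedge \pi_F(w_i)\|.
$$

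Next I would estimate $\jac(\pi_F|_G)$ from below. Since $\dim G + \dim F^\perp = d$, the hypothesis implies $F^\perp \cap G = \{0\}$, so $\pi_F|_G \colon G \to F$ is a linear bijection between $i$-dimensional euclidean spaces. For any unit vector $u \in G$ one has $\|\pi_F u\| = \sin \angle(u, F^\perp) \ge \sin \alpha$, whence $\m(\pi_F|_G) \ge \sin \alpha$. Since every singular value of $\pi_F|_G$ is at least $\m(\pi_F|_G)$, we obtain $\jac(\pi_F|_G) \ge (\sin \alpha)^i$. Combined with the standard identity $\|\pi_F(w_1) \wedge \cdots \wedge \pi_F(w_i)\| = \jac(\pi_F|_G) \cdot \|w_1 \wedge \cdots \wedge w_i\|$, this yields
$$
|\cos \angle(v,w)| \;=\; \frac{|\langle v,w\rangle|}{\|v\|\,\|w\|} \;\ge\; (\sin \alpha)^i \;\ge\; (\sin \alpha)^d.
$$

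Therefore it suffices to set $\beta_1 = \tfrac{\pi}{2} - \arccos((\sin \alpha)^d) > 0$, a quantity depending only on $\alpha$ and $d$. The argument is essentially an unwinding of definitions, and I do not expect any serious obstacle; the only step requiring a little care is translating the minimum-angle hypothesis $\angle(F^\perp,G) > \alpha$ into the uniform lower bound on the singular values of $\pi_F|_G$, which comes from the elementary identity $\|\pi_F u\| = \sin \angle(u, F^\perp)$ for unit vectors $u$.
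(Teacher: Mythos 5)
Your proof is correct, but it is genuinely different from the one in the paper. The paper's argument is soft: it observes that $\angle(F^\perp,G)$ is a continuous function $f(v,w)$ of the unit decomposable $i$-vectors, that $f(v,w)=0$ exactly when $v\perp w$, and then invokes compactness of the set of unit decomposable $i$-vectors to conclude that $\angle(v,w)$ close to $\tfrac{\pi}{2}$ forces $f(v,w)$ small; this yields a $\beta_1(d,\alpha)$ with no explicit formula. You instead compute directly: expanding $w_j=\pi_F(w_j)+\pi_{F^\perp}(w_j)$ and discarding the terms orthogonal to $\wed^i F$, you get $|\langle v,w\rangle| = \|v\|\,\|\wed^i(\pi_F\restr G)\,w\|$, hence the clean identity $|\cos\angle(v,w)| = \jac(\pi_F\restr G)$, and the hypothesis $\angle(F^\perp,G)>\alpha$ gives $\m(\pi_F\restr G)\ge\sin\alpha$ via $\|\pi_F u\|=\sin\angle(u,F^\perp)$, so $|\cos\angle(v,w)|\ge(\sin\alpha)^i\ge(\sin\alpha)^d$. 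All steps check out (the Cauchy--Schwarz equality is justified because both $i$-vectors lie in the one-dimensional space $\wed^i F$, and the transversality $\dim G+\dim F^\perp=d$ with positive angle makes $\pi_F\restr G$ a bijection), and your reading of $\angle(F^\perp,G)$ as a minimal angle matches the paper's usage. What your route buys is an explicit, quantitative $\beta_1=\tfrac{\pi}{2}-\arccos((\sin\alpha)^d)$ and, in passing, the sharper statement that $|\cos\angle(v,w)|$ equals the product of the cosines of the principal angles between $F$ and $G$; what the paper's route buys is brevity and the absence of any computation, at the cost of a non-constructive constant.
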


\begin{proof}
The non-zero decomposable $i$-vectors $v$ and $w$ uniquely determine
the spaces $F$ and $G$.
Thus the quantity $\angle(F^\perp, G)$ is a function $f(v,w)$.
We can assume that $v$ and $w$ have unit norm, so the domain of $f$ becomes compact.
Notice that $f(v,w)=0$ if and only if $v \perp w$.
(This follows from the description of the inner product on $\wed^i E$ explained before.)
By continuity, if $\angle(v,w)$ is sufficiently close to $\pi/2$ then  $f(v,w)$ is small.
This gives the desired result.
\end{proof}

\begin{lemm}\label{l.eigenvalues} 
For every $\alpha>0$ and $d \ge 2$ there exists $C_4 = C_4 (\alpha,d) > 1$ with the following properties.
Let $M: E \to E$ be a linear map on a euclidean space of dimension $d$.
Let $F_1 \subset \cdots \subset F_{d-1}$ be a flag
such that for each $i$, the quantity $\jac M \restr F_i$ is as big as possible (that is, $\| \wed^i M\|$). 
Assume that  
$$
\angle (M F_i , F_i^\perp) > \alpha \quad \text{for each $i$.}
$$
Then
$$
\rad(\wed^i M) > C_4^{-1} \|\wed^i M\| \quad \text{for each $i$.}
$$
\end{lemm}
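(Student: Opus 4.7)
\emph{Plan.} The plan is to apply Lemma~\ref{l.cone} on each exterior power $\wed^i M$ and then interpolate between the indices where the direct application succeeds. Without loss of generality $M$ is invertible, since otherwise $\|\wed^i M\| = 0$ for $i$ beyond the rank and the conclusion is trivial there.

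First I will exploit the choice of the flag. For any orthonormal basis $\{e_1,\dots,e_i\}$ of $F_i$, the unit $i$-vector $v_{F_i} = e_1\wedge\cdots\wedge e_i$ satisfies $\|\wed^i M \cdot v_{F_i}\| = \jac M\restr F_i = \|\wed^i M\|$, so it is a most-expanded unit vector for $\wed^i M$, and its image is proportional to the unit $i$-vector $v_{MF_i}$ representing $MF_i$. The hypothesis $\angle(F_i^\perp, MF_i) > \alpha$ feeds into Lemma~\ref{l.exterior} to produce $\beta_1 = \beta_1(d,\alpha) > 0$ with $\angle(v_{F_i}, v_{MF_i}) < \pi/2 - \beta_1$. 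Applying Lemma~\ref{l.cone} to $\wed^i M$ with the vector $v_{F_i}$ then yields, for some $C_3 = C_3(d,\alpha)$, the dichotomy $\rad(\wed^i M) \ge C_3^{-1}\|\wed^i M\|$ (Case~1, which is what we want at this index), or $\sing_2(\wed^i M) \ge C_3^{-1}\|\wed^i M\|$ (Case~2). The singular-value formulas of \S\ref{ss.singular} translate Case~2 into $\sigma_{i+1}(M) \ge C_3^{-1}\sigma_i(M)$, a local flatness of the singular spectrum.

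To conclude I pass to logarithms. Let $n_i = \log\|\wed^i M\| = \sum_{j\le i}\log\sigma_j$ and $r_i = \log\rad(\wed^i M) = \sum_{j\le i}\log|\lambda_j|$, with eigenvalues and singular values in decreasing order. Both sequences are concave, vanish at $0$, coincide at $d$ (both equal $\log|\det M|$), and satisfy $r_i \le n_i$ by Weyl's inequality. Case~1 at $i$ is precisely $n_i - r_i \le \log C_3$, while Case~2 at $i$ is the inequality $\Delta^2 n_{i-1} \ge -\log C_3$ in the notation of \S\ref{ss.difference operator}. Declaring $i=0$ and $i=d$ trivially Case~1, list the Case~1 indices as $0 = i_0 < i_1 < \cdots < i_m = d$; on each interval $[i_k, i_{k+1}]$ of length $L \le d$ the interior indices are all Case~2, so $|\Delta^2 n_j| \le \log C_3$ for $i_k \le j \le i_{k+1} - 2$.

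Finally, Lemma~\ref{l.nonlinearity} applied to the convex sequence $-n$ on $[i_k, i_{k+1}]$ shows that $n$ lies above its chord by at most $L^2 \log C_3 / 4 \le d^2 \log C_3 / 4$, while concavity of $r$ combined with Case~1 at the two endpoints forces $r_i$ to lie at least the chord of $n$ minus $\log C_3$ on this interval. Subtracting, $n_i - r_i \le (1 + d^2/4)\log C_3$ for every $i$, so the lemma holds with $C_4 = C_3^{1 + d^2/4}$. The main obstacle is the middle conceptual step: recognizing that the ``bad'' alternative returned by Lemma~\ref{l.cone} is exactly the flatness of $n$ needed to make the chord interpolation succeed; with that observation in hand the remainder is essentially automatic.
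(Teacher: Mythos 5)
Your proof is correct and follows essentially the same route as the paper's: the flag gives a most-expanded $i$-vector for $\wed^i M$, Lemma~\ref{l.exterior} and Lemma~\ref{l.cone} yield the dichotomy between ``$\rad(\wed^i M)$ large'' and ``$\sing_{i+1}(M)\ge C_3^{-1}\sing_i(M)$'', and the chord interpolation via Lemma~\ref{l.nonlinearity} together with concavity of the log spectral-radius sequence closes the gap, giving the same constant $C_4 = C_3^{1+d^2/4}$. The only cosmetic difference is that you abbreviate the constant as $C_3(d,\alpha)$ rather than taking the maximum over the dimensions $\binom{d}{i}$ of the exterior powers, which changes nothing.
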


\begin{proof}
Given $d$, we define
$$
\bar \beta = \min_{0<i<d} \beta_1 \left( {\textstyle \binom{d}{i}} , \alpha \right), \quad
\bar C     = \max_{0<i<d} C_3 \left( {\textstyle \binom{d}{i}} , \bar \beta \right),
$$
where the functions $\beta$ and $C_3$ are given by Lemmas~\ref{l.exterior} and \ref{l.cone}, respectively.

Take $M:E\to E$ with the flag $F_1 \subset \cdots \subset F_{d-1}$ as in the statement,
so $\angle(MF_i, F_i^\perp) > \alpha$.
Let $e_1$, \ldots, $e_d$ be a orthonormal basis of $E$ such that
$\{e_1,\ldots, e_i\}$ spans $F_i$.
Let $v_i$ be the $i$-vector $e_1 \wedge \cdots \wedge e_i$.
By Lemma~\ref{l.exterior}, we have 
$$
\angle\big(v_i, (\wed^i M) v_i\big)< \frac{\pi}{2}-\beta.
$$
By Lemma~\ref{l.cone}, this implies that
$$
\frac{\max \big( \rad(\wed^i M), \sing_2(\wed^i M) \big)}{\| \wed^i M \|} \ge \bar{C}^{-1}  \, .
$$


Now notice that
$$ 
\frac{\sing_2(\wed^ i M)}{\| \wed^i M \|} = 
\frac{\sing_2(\wed^ i M)}{\sing_1(\wed^i M)} =
\frac{\sing_1(M) \cdots \sing_{i-1}(M)\sing_{i+1}(M)}{\sing_1(M) \cdots \sing_{i-1}(M)\sing_{i}(M)} =
\frac{\sing_{i+1}(M)}{\sing_{i}(M)} \, .
$$
Thus
\begin{equation}\label{e.2 types}
\text{for each $i$,} \quad
\rad(\wed^i M)  \ge \bar{C}^{-1} \| \wed^i M \| \quad\text{or}\quad
\sing_{i+1}(M) \ge \bar{C}^{-1} \sing_{i}(M) \, .
\end{equation}
Let's say that $i$ is of the first type if the first alternative holds,
and of second type instead.
The $i$'s of first type are already controlled.
We need a convexity argument to control the $i$'s of the second type.

Define numbers $x_i = \log \rad(\wed^i M)$ and $y_i = \log \| \wed^i M \|$ for $1 \le i \le d$,
and also $x_0 = y_0$.
Then the graphs of the functions $x_i$ and $y_i$ over $\{0,1,\ldots,d \}$,
have the same endpoints (in particular $0$ and $d$ are of first type), 
and $x_i \le y_i$.
Using the difference operator notation from \S~\ref{ss.difference operator},
concavity means that $\Delta^2 x_i$ and $\Delta^2 y_i$ are non-positive.
Notice that $\Delta[\log \sing_i(M)] = y_{i-1}$.
Letting $\gamma = \log \bar C$, \eqref{e.2 types} translates to
$$
\text{for each $i$,} \quad
x_i \ge y_i - \gamma \quad\text{or}\quad \Delta^2 y_{i-1} \ge -\gamma.
$$

Let $0 \le i_0 < i_1 \le d$ be any two consecutive indexes of first type.
Then every $i \in (i_0, i_1)$ is of first type and so 
$\Delta^2 y_{i-1} \ge -\gamma$.
It follows from Lemma~\ref{l.nonlinearity} applied to $-y_i$ that
\begin{equation}\label{e.nonlinearity}
0 \le y_i - \left(\frac{i_1-i}{i_1-i_0} y_{i_0} + \frac{i-i_0}{i_1-i_0} y_{i_1} \right) \le \frac{(i_1-i_0)^2}{4} \gamma. 
\end{equation}
So for all $i \in (i_0,i_1)$, we have
\begin{alignat*}{2}
x_i &\ge \frac{i_1-i}{i_1-i_0} x_{i_0} + \frac{i-i_0}{i_1-i_0} x_{i_1} 
&\quad&\text{(by concavity)}\\ 
    &\ge \frac{i_1-i}{i_1-i_0} (y_{i_0}-\gamma) + \frac{i-i_0}{i_1-i_0} (y_{i_1}-\gamma) 
&\quad&\text{(since $i_0$, $i_1$ are of 1st type)} \\
    &\ge y_i - \left(1 + \frac{(i_1-i_0)^2}{4}\right)\gamma 
&\quad&\text{(by \eqref{e.nonlinearity}).}
\end{alignat*}
Therefore the lemma holds with $C_4 = \bar{C}^{1+d^2/4}$.
\end{proof}

\begin{proof}[of Lemma~{\rm\ref{l.convert}}]  
Given $\eps$ and $d$, let $\alpha=\alpha(\eps, d)$ be given by Lemma~\ref{l.rotate},
and let $C_2 = C_4(\alpha,d)$ be given by Lemma~\ref{l.eigenvalues}.
Now, given a linear map $M$ of a space of dimension $d$, let   
$F_1 \subset \cdots \subset F_{d-1}$ be a flag
such that $\jac M \restr F_i$ is as big as possible.
By Lemma~\ref{l.rotate}, there is an orthogonal map $R$ with $\|R-\Id\|<\eps$ such that
$\angle (R M F_i , F_i^\perp) > \alpha$ for each $i$.
Then, by Lemma~\ref{l.eigenvalues},
\[
\rad(\wed^i R M) >  C_2^{-1} \|\wed^i R M\| =  C_2^{-1} \|\wed^i M\| . 
\]
\end{proof}

\subsection{Proofs of Theorems~\ref{t.measure} and \ref{t.measure finest}}\label{ss.measure proofs}

Let us begin with a simple extension result:

\begin{lemm}\label{l.franks} 
Given $d \ge 2$, $K>1$, $\eps>0$, there exists
$\eps' = \eps'(d,K,\eps)>0$ with the following property.
Let $(X,T,E,A)$ be a cocycle bounded by $K$.
Assume $X_0 \subset X$ is a finite subset,
and for each $x\in X_0$ it is given a linear map $B_0(x): E(x) \to E(Tx)$
with $\|B_0(x)-A(x)\| < \eps'$.
Then there exists a (continuous) linear cocycle $B$ that is $\eps$-close to $A$ such that
$B(x) = B_0(x)$ for each $x\in X_0$.
\end{lemm}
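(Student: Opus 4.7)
The plan is to define $B$ by interpolating between $A$ and the prescribed maps $B_0(x)$ on small pairwise disjoint neighborhoods of the points of $X_0$, cut off by continuous bump functions. Set $\eps' := \min(\eps/2,\, 1/(4K))$; the second bound will keep the interpolated maps invertible.

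Since $X_0$ is finite, pick pairwise disjoint open neighborhoods $V_x \subset \bar V_x \subset W_x$ of each $x \in X_0$, with the $W_x$ also pairwise disjoint and meeting $X_0$ only at $x$. Shrinking $W_x$ if necessary, I may assume that $W_x$ lies in the domain of a continuous \emph{orthonormal} frame for $E$ (obtained by Gram--Schmidt from any local trivialization), and similarly that $T(W_x)$ lies in the domain of an orthonormal frame near $Tx$. These frames define fiberwise isometries $\iota_y : E(y) \to E(x)$ for $y\in W_x$ (with $\iota_x = \Id$) and $\iota'_z : E(z) \to E(Tx)$ for $z$ near $Tx$ (with $\iota'_{Tx} = \Id$). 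In these frames $A$ is represented by $\tilde A(y) := \iota'_{Ty} \circ A(y) \circ \iota_y^{-1}$; by uniform continuity of $A$ on the compact set $X$, after further shrinking each $W_x$ I may ensure $\|\tilde A(y) - A(x)\| < \eps'$ for every $y \in W_x$. Finally, take continuous bump functions $\phi_x : X \to [0,1]$ with $\phi_x(x)=1$ and $\supp \phi_x \subset V_x$ (Urysohn).

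Now define
$$
B(y) := \begin{cases}
(\iota'_{Ty})^{-1} \bigl[\, \phi_x(y) B_0(x) + (1-\phi_x(y))\, \tilde A(y) \,\bigr] \iota_y & \text{if } y \in W_x, \\
A(y) & \text{if } y \notin \bigcup_{x\in X_0} V_x.
\end{cases}
$$
The two clauses agree on $W_x \setminus V_x$ (where $\phi_x=0$), so $B$ is continuous on $X$; and $B(x) = B_0(x)$ by construction, since $\iota_x$ and $\iota'_{Tx}$ are the identity. Because the trivializations are isometries, the fiberwise operator norm equals the norm after transfer to the common target, so for $y \in W_x$,
$$
\|B(y) - A(y)\| = \phi_x(y) \, \|B_0(x) - \tilde A(y)\| \le \|B_0(x) - A(x)\| + \|A(x) - \tilde A(y)\| < 2\eps' \le \eps.
$$
The same estimate yields $\|\tilde B(y) - A(x)\| < \eps' \le 1/(4K) < \m(A(x))$, so each $B(y)$ is invertible and $B$ is a genuine linear cocycle.

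There is no substantial obstacle here: the content is routine extension-by-interpolation. The only mild technical point is the insistence on \emph{orthonormal} local frames, which guarantees that operator-norm estimates in the fibers descend without distortion to the common vector space where the interpolation is performed.
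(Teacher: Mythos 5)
Your proof is correct and is essentially the paper's argument made explicit: the paper's proof simply says ``Use Tietze extension theorem,'' and your Urysohn-bump interpolation in local orthonormal frames is exactly the concrete form of that extension when only finitely many values are prescribed. You also spell out the two points the one-line proof leaves implicit --- trivializing the bundle so that $B_0(x)$ and nearby values of $A$ can be compared and interpolated, and taking $\eps' \le 1/(4K)$ so the interpolated fiber maps remain invertible --- and both are handled correctly.
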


\begin{proof}
Use Tietze extension theorem.
\end{proof}

\begin{proof}[of Theorem~{\rm\ref{t.measure}}]  
Let $(X,T,E,A)$ be a cocycle.
Assume that $\mu_k$ is a sequence of
invariant probability measures converging so some $\mu$,
with each $\mu_k$  supported on a periodic orbit of period $n_k$.
We also assume that $n_k \to \infty$ as $k \to \infty$.

First, let us notice that is sufficient to prove that
\emph{there exists a sequence of cocycles $\tilde A_k$ converging to $A$ 
such that $\bsigma(A_k,\mu_k)$ converges to $\bsigma(A,\mu)$.}
Indeed if $n_k$ is large enough then by Proposition~\ref{p.BGV} (and Lemma~\ref{l.franks})
we can perturb $\tilde A_k$ without changing $\bsigma(\tilde A_k,\mu_k)$ so that its restriction to $\supp \mu_k$
has only real eigenvalues;
then using Lemma~\ref{l.triangular} we find another perturbation $B_k$ so that 
$\bsigma(B_k,\mu_k) = \bsigma(A,\mu)$.

Now, the assertion above is equivalent to the following:
\emph{for every $\delta>0$ and every sufficiently large $k$ (depending on $\delta$),
there exist a $\delta$-perturbation 
$\tilde A$ of $A$ such that the graphs $\bsigma(\tilde A,\mu_k)$ and $\bsigma(A,\mu)$ are $\delta$-close.}
So let us prove this assertion instead.

\medskip

Fix $\delta>0$.
By semicontinuity, there exists a positive $\eps < \delta$ such that
\begin{equation} \label{e.semicont}
L_i(B, \mu_k) < L_i(A,\mu) + \frac{\delta}{4d} \quad 
\text{for all $i$, all $k>\eps^{-1}$, and all $B$ $\eps$-close to $A$.}
\end{equation}
Let $K>1$ be a bound for all cocycles that are $\eps$-close to $A$.
Let $\eps' = \eps'(d,K,\eps)$ be given by Lemma~\ref{l.franks}, and $\eps'' = K^{-1} \eps'$
Let $\alpha = \alpha(\eps'',d)$ 
be given by Lemma~\ref{l.rotate}.
Let $C_1= C_1(\alpha,d)$ and $C_2 = C_2(\eps'',d)$ be given by
Lemmas~\ref{l.2 jacs} and \ref{l.convert}, respectively. 

Now let $\eta>0$ be small; the precise requirements will appear later.
Fix an integer $m$ such that 
$$
m > \eta^{-1}
\quad \text{and} \quad
L_i(A,\mu) \le \int \frac{1}{m} \log \|\wed^i A^m\| \, d\mu < L_i(A,\mu) + \eta \ \forall i.
$$
If $k$ is sufficiently large then
$$
\frac{m}{n_k}< \eta  \quad \text{and} \quad
\left| \int \frac{1}{m} \log \|\wed^i A^m\| \, d\mu_k  - \int \frac{1}{m} \log \|\wed^i A^m\| \, d\mu  \right|< \eta.
$$
Fix any $k>\eps^{-1}$ with the properties above.
Write $n=n_k$ and let $q = \lfloor n/m\rfloor$.
For each $y \in \supp \mu_k$, define
$$
Z_i(y) = \frac{1}{n} \sum_{p=0}^{q-1} \log \|\wed^i A^m (T^{pm} y) \| \, .
$$

We claim that
for each $y \in \supp \mu_k$ there is a $\eps$-perturbation 
$\tilde A_y$ of $A$ such that 
\begin{equation}\label{e.B}
L_i(\tilde A_y,\mu_k) > Z_i(y) - C_3\eta \quad \text{for each $i$,}
\end{equation}
where $C_3>0$ does not depend on $\eta$.

(At the end, the perturbation $\tilde A$ that we are looking for 
will be $\tilde A_y$ for an appropriate choice of $y$, but we cannot say a priori which $y$ works.)

\begin{proof}[of the claim]
Fix $y \in \supp \mu_k$.
For each $p=0,1,\ldots,q-1$, we will define two flags of $E(T^{pm} y)$,
$$
F_1^{(p)} \subset \cdots \subset F_{d-1}^{(p)} \quad \text{and} \quad
G_1^{(p)} \subset \cdots \subset G_{d-1}^{(p)} \, .
$$  
The first family of flags is chosen so that 
$\jac A^m(T^{pm} y) \restr F_i^{(p)}$ is as small as possible,
that is, $\m(\wed^i A^m(T^{pm} y))$.
The second family of flags is defined recursively.
Let $G_i^{(0)} = \big[ F_{d-i}^{(0)} \big]^\perp$.
Once the $(p-1)$-th flag is defined ($p>0$), 
we apply Lemma~\ref{l.rotate}
to choose an orthogonal map $R^{(p)}$ of $E(T^{pm} y)$ with 
$\|R^{(p)} - \Id \| < \eps''$ such that
$$
\text{defining} \quad 
G_i^{(p)} = R^{(p)} A^m(T^{(p-1)m} y) G_i^{(p-1)},
\quad\text{we have}\quad
\angle \big( G_i^{(p)} , F_{d-i}^{(p)} \big) > \alpha.
$$

By Lemma~\ref{l.2 jacs}, the lower bounds on the angles between the flags
imply that, for each $p=0,\ldots,q-1$,
$$
\jac A^m(T^{pm} y) \restr G_i^{(p)} \ge
C_1^{-1} \frac{\jac A^m(T^{pm} y)}{\jac A^m(T^{pm} y) \restr F_{d-i}^{(p)}} =
C_1^{-1} \| \wed^i A^m(T^{pm} y) \| \, .
$$
Consider the linear map $M:E(y) \to E(y)$ given by
$$
M = A^{n-mq}(T^{qm} y) R^{(q)} A^m(T^{(q-1)m} y) \cdots R^{(2)} A^m(T^m y) R^{(1)} A^m(y) \, .
$$
Then
\begin{align*}
\| \wed^i M\| 
\ge \jac M \restr G_i^{(0)} 
&\ge K^{-i(n-mq)} C_1^{-q} \prod_{p=0}^{q-1} \|\wed^i A^m(T^{pm} y)\| \\
&\ge K^{-dm} C_1^{-2n/m} e^{n Z_i(y)} \, .
\end{align*}
By Lemma~\ref{l.convert}, there exists an orthogonal map $R^{(0)}$ of $E(y)$ such that 
$\| R^{(0)} - \Id \| < \eps''$ and 
$\rad(\wed^i R^{(0)} M )  \ge C_2^{-1} \|\wed^i M\|$ for each $i$.
Let $\tilde A_y$ be a continuous cocycle $\eps$-close to $A$
that equals $A$ along the orbit of $y$ except at the points specified below:
$$
\tilde A_y(y) = A(y) R^{(0)}, \quad
\tilde A_y(T^{pm-1} y) = R^{(p)} A^m(T^{pm-1} y) \text{ for } p \in \{1,\ldots,q\}.
$$
Then
$$
L_i(\tilde A_y, \mu_k) = \frac{1}{n}\log \rad \big(\wed^i \tilde A_y^n(y)\big) 
= \frac{1}{n} \log \rad\big( \wed^i M R^{(0)} \big)
\ge Z_i(y) - C_3\eta,
$$
where $C_3 = \log(K^d C_1^2 C_2)$.
That is, $\tilde A_y$ has property~\eqref{e.B}.
\end{proof}

It follows from \eqref{e.B} and \eqref{e.semicont} that
\begin{equation} \label{e.Z upper}
Z_i(y) < L_i(A,\mu) + C_3 \eta + (4d)^{-1} \delta \quad \text{for all $y\in \supp \mu_k$.}
\end{equation}
Fix any $y_0 \in \supp \mu_k$ and denote $y_\ell = T^\ell y_0$ for $0 \le \ell < m$.
Then
\begin{align}
\frac{1}{m}\sum_{\ell=0}^{m-1} Z_i(y_\ell) 
&=   \frac{1}{mn} \sum_{j=0}^{qm-1} \log\|\wed^i A^m (T^j y_0) \| \notag \\
&\ge \int \frac{1}{m}  \log\|\wed^i A^m \|\, d\mu_k - \frac{\log K^{im}}{n} \notag \\
&\ge L_i(A,\mu) - C_4 \eta, \label{e.Z lower}
\end{align}
where $C_4 = 2+d\log K$.
Using \eqref{e.Z upper} and \eqref{e.Z lower} we will show that
that for each $i$,
the number $Z_i(y_\ell)$ is close
to $L_i(A,\mu)$ for ``most'' $\ell \in \{0,\ldots,m-1\}$, and in particular we can find some $\ell$
that works for every $i$.

More precisely, let
$$
\rho_i = \frac{1}{m} \# \big\{ \ell \in \{0,\ldots,m-1\} ; \; Z_i(y_\ell) < L_i(A,\mu)- \delta/2 \big\} \, .
$$
Then it follows from~\eqref{e.Z lower} and \eqref{e.Z upper} that 
\begin{align*}
L_i(A,\mu) 
&\le C_4\eta + \frac{1}{m}\sum_{\ell=0}^{m-1} Z_i(y_\ell) \\ 
&\le C_4\eta +  \rho_i \left(L_i(A,\mu) - \frac{\delta}{2}\right) 
+ (1-\rho_i)  \left(L_i(A,\mu) + C_3\eta + \frac{\delta}{4d}  \right) \\
&\le L_i(A,\mu) + \left( \frac{1}{4d} -  \frac{\rho_i}{2} \right)\delta + (C_3 + C_4)\eta \\
&<   L_i(A,\mu) + \left( \frac{1}{2d} -  \frac{\rho_i}{2} \right)\delta,
\quad\text{provided we choose $\eta$ small enough.}
\end{align*}
So $\rho_i < 1/d$
and in particular $\sum_{i=1}^d \rho_i < 1$.
Thus there is some $\ell \in \{0,\ldots,m-1\}$ such that
$Z_i(y_\ell) \ge L_i(A,\mu) + \delta/2$ for all $i\in \{1,\ldots,d\}$.
Now let $\tilde A = \tilde A_{y_\ell}$;
this is a $\delta$-perturbation of $A$ such that for each $i$ we have
$$
L_i (\tilde A, \mu_k) > Z_i(y_\ell) - C_3 \eta >  L_i(A,\mu) - \delta 
$$
(again because $\eta$ is small).
By \eqref{e.semicont}, $|L_i (\tilde A, \mu_k) - L_i(A,\mu)| < \delta$,
as we wanted.
\end{proof}

\begin{proof}[of Theorem~{\rm\ref{t.measure finest}}]  
Let $(X,T,E,A)$, $\mu$, $\mu_k$, $\Lambda$, $F_i$, $i_j$, and $\sigma$
be as in the statement of the theorem.
Let $n_k$ be the period of the orbit that supports $\mu_k$.
By assumption, $n_k \to \infty$.

Given a small $\eps>0$,
let $K>1$ be a bound for all $\eps$-perturbations of $A$.
Let $\eps' = \eps'(d,K,\eps/2)$ be given by Lemma~\ref{l.franks}.
Let $L = L(d,K,\eps')$ be given by Theorem~\ref{t.finest}.
Let $\eta = \eta(d,K,L)$ be given by Lemma~\ref{l.eta}. 
We can assume $\eta < \eps/2$.

There is $k_0$ such that if $k > k_0$ then $n_k > 2L$ and
the restriction of $A$ to $\supp \mu_k$
has no $2L$-dominated splittings of indices $i_j$.
Using Theorem~\ref{t.measure},
find some $k>k_0$ such that 
such that $\bsigma (\tilde A, \mu_k) = \bsigma (A , \mu)$
for some $\eta$-perturbation $\tilde A$ of $A$.
The restriction of $\tilde A$ to $\supp \mu_k$
has no $L$-dominated splitting of index $i_j$.
By Theorem~\ref{t.finest}, there is an $\eps'$-perturbation
of $\tilde A$ along the orbit that supports $\mu_k$
such that the Lyapunov graph becomes exactly $\sigma$.
Using Lemma~\ref{l.franks} we extend this to a global $\eps/2$-perturbation of $\tilde A$,
which is the desired $\eps$-perturbation of $A$.
\end{proof}

\section{Immediate Applications}\label{s.corol}

Here we show Corollaries~\ref{c.index} and~\ref{c.generic}.

\subsection{Changing the Index of a Periodic Point} \label{ss.index}

\begin{proof}[of Corollary~{\rm\ref{c.index}}]  
Consider a sequence $\gamma_n=\orb(p_n)$  of periodic orbits whose period tends to infinity
and let $\mu_n$ be the corresponding measures.
Assume that $(\mu_n,\gamma_n)$ converges (in the weak-star times Hausdorff topology) to a pair $(\mu,\La)$. 
Let $i_j = \dim(E_1\oplus\cdots\oplus E_j)$ for $j\in\{0,\dots,m\}$, where $E_1\dplus E_2 \dplus \cdots \dplus E_m$ 
is the finest dominated splitting over $\La$. 
Domination implies strict convexity of the graph $\bsigma(\mu)$
at the points $i_j$;
more precisely, for each $j \in \{1 ,\ldots, m-1\}$ we have
$$
\lambda_{i_j}(\mu) < \lambda_{i_j + 1}(\mu), \quad \text{that is,} \quad
\bsigma_{i_j}(\mu) < \frac{\bsigma_{i_j-1}(\mu) + \bsigma_{i_j+1}(\mu)}{2} \, .
$$
Let 
$$
s =  \min_{j \in \{0,\dots,m\}}  \bsigma_{i_j}(\mu) \quad \text{and} \quad
K = \big\{ k \in \{0,\ldots,d\} ; \; \bsigma_k(\mu) \le s \big\}.
$$
By the convexity properties, $K$ is an interval, and there is an unique $j \in \{1,\dots,m\}$ 
such that $K \subset \{i_{j-1}, \dots, i_j\}$.

Now fix $k \in  K$ and define a convex graph $\sigma = (\sigma_0, \ldots, \sigma_d)$ as follows:
$$
\sigma_i = 
\begin{cases}
\bsigma_i(\mu)      
&\quad\text{if $i\le i_{j-1}$ or $i \ge i_j$,}\\
\dfrac{k-i}{k-i_{j-1}} \cdot \bsigma_{i_{j-1}}(\mu) + \dfrac{i-i_{j-1}}{k-i_{j-1}} \cdot \bsigma_{k}(\mu)
&\quad\text{if $i_{j-1} < i \le k$,} \\ 
\dfrac{i_j-i}{i_j-k} \cdot \bsigma_{k}(\mu) + \dfrac{i-k}{i_j-k} \cdot \bsigma_{i_j}(\mu)
&\quad\text{if $k \le  i < i_j$.} 
\end{cases}
$$
Then $\sigma$ is above $\bsigma(\mu)$
and is compatible with the dominated splitting on $\La$.
That is, $\sigma$ belongs to the set $\cG(\mu,\La)$ (defined in \S~\ref{sss.answer}). 
By Theorem~\ref{t.measure intro 2},
if $n$ is sufficiently large 
then there is a perturbation $g$ of $f$ 
preserving the orbit $\gamma_n$ and such that the Lyapunov graph $\bsigma(\gamma_n,g)$ is  equal to~$\sigma$. 
If $\bsigma_k(\mu,f)$ is strictly smaller than $s$,
then the index of $\gamma_n$ for $g$ is precisely $k$, concluding the proof in that case. 
In the remaining case,
some of the Lyapunov exponents of $\gamma_n$ for $g$ are zero.
Then an arbitrarily small perturbation of $g$ along the orbit of $\gamma_n$ 
(using Lemma~\ref{l.triangular} and Proposition~\ref{p.BGV})
allows us to change these vanishing exponents in order to get any prescribed signs, 
thus concluding the proof.  
\end{proof}

\subsection{Lyapunov Spectra of Periodic Orbits for Generic Diffeomorphisms} \label{ss.generic}

Let us rephrase Corollary~\ref{c.generic}.
If $f$ is a diffeomorphism of the compact $d$-dimensional manifold $M$, 
let $\cZ(f)$ indicate the closure in $\cP(M)\times\cK(M)\times \RR^{d+1}$ 
of the set of triples $(\mu_\gamma,\gamma,\bsigma(f,\gamma))$,
where $\gamma$ runs on all hyperbolic periodic orbits of $f$.
(Here we write $\bsigma(f,\gamma) = \bsigma(f,\mu_\gamma)$ for simplicity.) 
Then Corollary~\ref{c.generic} states that for generic $f$ we have
\begin{equation}\label{e.corollary}
\cZ(f) = \bigcup_{(\mu,\Lambda) \in \cX(f)} \{(\mu,\Lambda)\} \times \cG(\mu,\Lambda) \, .
\end{equation}
Now we prove it:

\begin{proof}
The ``$\subset$'' inclusion is the easy one, and works for every $f$:
By semicontinuity of the Lyapunov graph (see \S~\ref{ss.semicontinuity}),
$$
(\mu_{\gamma_n} , \gamma_n, \bsigma(f,\gamma_n) )  \to  (\mu, \Lambda, \sigma)
\quad \Rightarrow \quad \sigma \ge \bsigma(f,\mu).
$$
Moreover, by persistence and continuity of dominated splittings, 
for each $i\in\{1,\dots, d-1\}$ such that there is a dominated splitting 
$T_\Lambda M = E \dplus F$ with $\dim E= i$, we must have $\sigma_i = \bsigma_i(f,\mu)$.
That is, we have $\sigma \in \cG(\mu,\Lambda)$,
proving one inclusion in \eqref{e.corollary}.

\begin{clai}
	The map $f \mapsto \cZ(f)$ is lower semicontinuous. 
\end{clai}
\begin{proof}
The set $\cZ(f)$ may be approached from inside by a finite set of triples $(\mu_\gamma,\gamma,\bsigma(f,\gamma))$,
where $\gamma$'s are hyperbolic periodic orbits.
Each hyperbolic periodic orbit of $f$ persists and varies continuosly 
in a small neighborhood of $f$, and its Lyapunov graph varies continuously on this neighborhood.
Thus the finite set of triples varies continuously on that neighborhood, giving the lower semicontinuity.
\end{proof}

It follows from a well-known result from general topology that
the points of continuity of the map $f\mapsto\cZ(f)$ form 
a residual subset $\cR$ of $\Diff^1(M)$. 
Fix any $f \in \cR$; we now claim that \eqref{e.corollary} holds for $f$. 
Take $(\mu, \Lambda) \in \cX(f)$ and $\sigma \in \cG(\mu,\Lambda)$.
Let us show that $(\mu, \Lambda, \sigma) \in \cZ(f)$.
By definition of $\cX(f)$, there exists a sequence of hyperbolic periodic orbits $\gamma_n$
such that $(\mu_{\gamma_n}, \gamma_n) \to (\mu, \Lambda)$.
By Theorem~\ref{t.measure intro 2},
there is are diffeomorphisms $g_n$ preserving respectively the orbits $\gamma_n$
such that $g_n \to f$ and $\bsigma( g_n, \gamma_n) \to \sigma$.
In addition, we can assume that $\gamma_n$ is hyperbolic with respect to $g_n$.
Since $f\in\cR$,
the sequence of sets $\cZ(g_n)$ converges to $\cZ(f)$ in the Hausdorff topology.
Each element of the sequence  $(\mu_{\gamma_n}, \gamma_n, \bsigma(g_n, \gamma_n))$
belongs to the respective $\cZ(g_n)$ and therefore
the limit of the sequence, which is  $(\mu,\Lambda,\sigma)$, belongs to $\cZ(f)$.
Thus \eqref{e.corollary} is true for any $f\in \cR$,
concluding the proof of Corollary~\ref{c.generic}.
\end{proof}

\section{Consequences to Universal Dynamics}  \label{s.universal}

Here we will give the applications to universal dynamics explained in \S~\ref{sss.intro universal}.

\subsection{Proof of Theorem~{\rm\ref{t.criterion}}}

We will need a number of lemmas. 

\medskip

The lemma below is useful when we want to apply Theorem~\ref{t.measure intro 2} to homoclinic classes.
Recall that $\mu_\gamma$ indicates the unique invariant probability measure supported on a periodic orbit $\gamma$.

\begin{lemm}\label{l.class measure}
Let $H$ be the homoclinic class of a periodic orbit $\gamma$.
Then there is a sequence of periodic orbits $\gamma_n$ 
homoclinically related to $\gamma$ 
such that:
\begin{itemize}
\item the sets $\gamma_n$ converge to $H$ in the Hausdorff topology;
\item the measures $\mu_{\gamma_n}$ converge to the measure $\mu_\gamma$ in the weak-star topology.
\end{itemize}
\end{lemm}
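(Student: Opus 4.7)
The plan is to build each $\gamma_n$ by concatenating one very long segment that shadows the orbit of $\gamma$ with a short tour through a $(1/n)$-dense finite subcollection of periodic orbits in $H$.

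First, I would use the classical equivalent description of the homoclinic class: $H$ coincides with the closure of the set of periodic points homoclinically related to $\gamma$. Fix once and for all a countable set $\{q_1,q_2,\dots\} \subset H$ of such periodic points which is dense in $H$. For each $n$, pick a finite subcollection $q_1,\dots,q_{K(n)}$ whose orbits form a $(1/n)$-dense subset of $H$.

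Next, since ``homoclinically related to $\gamma$'' is a transitive relation (Birkhoff--Smale, via the inclination lemma), the finite union $\orb(\gamma)\cup\orb(q_1)\cup\cdots\cup\orb(q_{K(n)})$ is contained in a single transitive hyperbolic basic set $\Lambda_n$, which is itself contained in $H$ and admits a Markov partition, hence satisfies the shadowing property. Applied in $\Lambda_n$, the shadowing lemma produces, for any $\eps>0$ and any positive integers $m_0,m_1,\dots,m_{K(n)}$, a periodic orbit $\gamma_n \subset \Lambda_n$ (in particular homoclinically related to $\gamma$) that $\eps$-shadows the periodic pseudo-orbit obtained by running $m_0$ times around $\orb(\gamma)$, then $m_j$ times around each $\orb(q_j)$ in turn, inserting transition segments of bounded length through heteroclinic connections.

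Finally, I would tune the parameters: take $\eps_n\to 0$, keep $m_j = 1$ for $j\ge 1$, and let $m_0 = m_0(n)$ tend to $\infty$ fast enough that the time spent near $\orb(\gamma)$ dominates the total length of the excursions (which depends only on $n$, through $K(n)$ and the bounded transition times). Then $\gamma_n$ passes within distance $\eps_n$ of each $q_j$, lives in a small neighborhood of $\Lambda_n\subset H$, and thus converges to $H$ in the Hausdorff topology as $\max(\eps_n,1/n)\to 0$; meanwhile the fraction of the period of $\gamma_n$ spent $\eps_n$-close to $\orb(\gamma)$ tends to $1$, so $\mu_{\gamma_n}\to \mu_\gamma$ in the weak-star topology. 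The main obstacle is precisely the tension between these two convergences---Hausdorff convergence demands longer and longer excursions away from $\gamma$, while weak-star convergence to $\mu_\gamma$ demands that the time near $\gamma$ dominate---but it is resolved by noting that the excursion budget is bounded in terms of $n$ alone, so $m_0(n)$ can be chosen arbitrarily large after the excursions are fixed.
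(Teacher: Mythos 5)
Your proposal is correct and follows essentially the same route as the paper: the paper also reduces to horseshoes inside $H$ containing $\gamma$ (obtained exactly as your $\Lambda_n$, from finite families of homoclinically related periodic orbits that are $1/n$-dense in $H$) and then realizes the desired orbits by symbolic dynamics/Markov partitions, which is the same mechanism as your shadowing of a pseudo-orbit that loops many times around $\gamma$ before a short dense tour. The balance you highlight (bounded excursion budget for fixed $n$, then $m_0(n)$ chosen large) is precisely what makes both convergences compatible in the paper's argument as well.
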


In the notation of \S~\ref{sss.intro generic}, 
the lemma says that $(\mu_\gamma, H) \in \cX(f)$.

\begin{proof}
Using Markov partitions, we see that the lemma holds true
in the case that $H$ is a horseshoe (that is, a locally maximal hyperbolic set).
In the general case, we can take a sequence of horseshoes $H_n$
contained in the homoclinic class $H$ and containing $\gamma$
such that $H_n \to H$ in the Hausdorff topology.
Then the lemma follows immediately from the previous case.
\end{proof}

\begin{lemm}[(Creating zero exponents)]\label{l.vanishing}   
Let $f \in \Diff^1(M)$. 
Let  $\gamma_n$ be a sequence of periodic orbits whose periods tend to infinity. 
Suppose that the invariant probabilities $\mu_{\gamma_n}$
converge in the weak star topology to some $\mu$, 
and that the sets $\gamma_n$ converge in the Hausdorff topology to an $f$-invariant compact set $\La$. 
Let $E_1 \dplus \cdots \dplus E_m$ be the finest dominated splitting over $\La$.
Assume that 
$$
\bsigma_k(\mu) \le 0 \le \bsigma_K(\mu), 
\quad \text{where} \quad
k < K = \dim E_1.
$$
Let also $r_n$ be a sequence of positive numbers. 
Then there is a sequence of diffeomorphisms $g_n$ converging to $f$
such that for each~$n$:
\begin{itemize} 
\item $\gamma_n$ is a periodic orbit of $g_n$, and $g_n$ equals $f$ outside the $r_n$-neighborhood of~$\gamma_n$;
\item under $g_n$, the orbit $\gamma_n$ has exactly $k$ vanishing Lyapunov exponents and $d-k$ positive Lyapunov exponents. 
\end{itemize}
\end{lemm}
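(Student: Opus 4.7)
The plan is to apply Theorem~\ref{t.measure intro 2} with a carefully chosen target graph $\sigma \in \cG(\mu,\La)$ whose first $k$ coordinates vanish and whose remaining $d-k$ Lyapunov exponents are strictly positive. The diffeomorphisms $g_n$ will then be produced by a two-step perturbation: first Theorem~\ref{t.measure intro} adjusts the spectrum on $\gamma_n$ to $\bsigma(f,\mu)$, and second Theorem~\ref{t.periodic2} (in the sequence-of-diffeomorphisms version noted after its statement) pushes the spectrum further to $\sigma$. Both theorems allow the perturbations to be supported in the prescribed $r_n$-neighborhood of $\gamma_n$.

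I will define $\sigma \in \cS_d$ by
\[
\sigma_i = \begin{cases} 0, & 0 \le i \le k, \\ \dfrac{i-k}{K-k}\,\bsigma_K(\mu), & k \le i \le K, \\ \bsigma_i(\mu), & K \le i \le d, \end{cases}
\]
so that the corresponding Lyapunov exponents are $\lambda'_1 = \cdots = \lambda'_k = 0$, $\lambda'_{k+1} = \cdots = \lambda'_K = \bsigma_K(\mu)/(K-k)$, and $\lambda'_i = \lambda_i(\mu)$ for $i>K$. The equalities $\sigma_{i_j} = \bsigma_{i_j}(\mu)$ required to place $\sigma$ in $\cG(\mu,\La)$ are obvious for $j \ge 2$ and hold at $j=1$ because $\sigma_K = \bsigma_K(\mu)$ by construction. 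For the pointwise inequality $\sigma \ge \bsigma(\mu)$, convexity of $\bsigma(\mu)$ together with $\bsigma_0(\mu)=0$ and $\bsigma_k(\mu)\le 0$ gives $\bsigma_i(\mu) \le 0 = \sigma_i$ on $[0,k]$; on $[k,K]$ the graph $\sigma$ is the chord from $(k,0)$ to $(K,\bsigma_K(\mu))$, which dominates the chord from $(k,\bsigma_k(\mu))$ to $(K,\bsigma_K(\mu))$ (because $\bsigma_k(\mu)\le 0$), and the latter dominates $\bsigma(\mu)$ by convexity; on $[K,d]$ equality holds. The only delicate step is the convexity of $\sigma$ itself at $i = K$, i.e.\ $\bsigma_K(\mu)/(K-k) \le \lambda_{K+1}(\mu)$: since the Lyapunov spectrum is non-decreasing, $\bsigma_K(\mu) - \bsigma_k(\mu) = \sum_{j=k+1}^K \lambda_j(\mu) \le (K-k)\lambda_K(\mu)$, and combining this with $\bsigma_k(\mu) \le 0$ yields $\bsigma_K(\mu)/(K-k) \le \lambda_K(\mu) \le \lambda_{K+1}(\mu)$.

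With $\sigma \in \cG(\mu,\La)$ established, applying Theorem~\ref{t.measure intro} followed by Theorem~\ref{t.periodic2} produces $g_n \to f$ with $\bsigma(g_n,\gamma_n) = \sigma$ for every sufficiently large $n$, preserving $\gamma_n$ and supported in the $r_n$-neighborhood. By construction $\sigma$ has exactly $k$ zero entries; its remaining $d-k$ increments are strictly positive because $\bsigma_K(\mu)/(K-k)>0$ (the boundary case $\bsigma_K(\mu)=0$ is incompatible with the conclusion of the lemma, since it would force the $K$ smallest perturbed exponents to sum to zero, ruling out a spectrum with $k<K$ vanishing entries plus strictly positive remainders), while domination gives $\lambda_{K+1}(\mu) > \lambda_K(\mu) \ge \bsigma_K(\mu)/K \ge 0$ and hence $\lambda'_i = \lambda_i(\mu) > 0$ for all $i > K$. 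The main technical point is the convexity check at dimension $K$, which is precisely where the hypothesis $\bsigma_k(\mu) \le 0$ is used.
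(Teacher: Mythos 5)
Your choice of target graph $\sigma$ and the appeal to Theorem~\ref{t.measure intro 2} are exactly the paper's route, and your verification that $\sigma \in \cG(\mu,\La)$ (including the convexity check at $i=K$ using $\bsigma_k(\mu)\le 0$) is correct and in fact more detailed than the paper's. The gap is your treatment of the boundary case $\bsigma_K(\mu)=0$. This case is explicitly allowed by the hypothesis $0 \le \bsigma_K(\mu)$, and your claim that it is ``incompatible with the conclusion of the lemma'' is false: the conclusion does not force $\bsigma_K(g_n,\gamma_n)$ to equal $\bsigma_K(\mu)$; it only requires $g_n \to f$, and a small perturbation can raise the sum of the first $K$ exponents of $\gamma_n$ by a small positive amount (tending to $0$ with the size of the perturbation), so a spectrum with exactly $k$ zeros and $d-k$ positive exponents is perfectly compatible with $\bsigma_K(\mu)=0$. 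Hence nothing rules this case out, and your construction then fails in it: when $\bsigma_K(\mu)=0$ your $\sigma$ has $\sigma_i=0$ for all $i\le K$, so the perturbed orbit has exactly $K>k$ vanishing exponents rather than $k$.

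The paper closes precisely this case with one more step: after realizing $\sigma$ via Theorem~\ref{t.measure intro 2}, it performs a further arbitrarily small perturbation along $\gamma_n$ (Proposition~\ref{p.BGV} to make the eigenvalues real, then Lemma~\ref{l.triangular} to move them freely) turning $K-k$ of the vanishing exponents strictly positive; the exponents of index greater than $K$ remain positive by domination, and the resulting $g_n$ still converge to $f$ and are still supported near $\gamma_n$. Adding this step (the same device appears at the end of the proof of Corollary~\ref{c.index}) would complete your argument; as written, the case $\bsigma_K(\mu)=0$ is unproved.
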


\begin{figure}[hbt]
\begin{center}
\includegraphics[scale=.4]{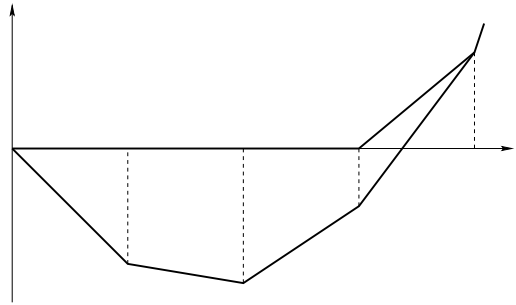}
\end{center}
\caption{\small An example in the situation of Lemma~\ref{l.vanishing} with $K = 4$ and $k = 3$.
The lower graph is $\bsigma(f,\mu)$ and the upper one is $\bsigma(g_n,\gamma_n)$.}
\label{f.universal}
\end{figure}

\begin{proof}
By Franks Lemma, it is sufficient to find out how to perturb the derivatives along the periodic orbits.
Define a graph $\sigma = (\sigma_0, \ldots, \sigma_d)$ by 
$$
\sigma_i = 
\begin{cases}
0                                &\text{if $i\le k$,} \\
\dfrac{i-k}{K-k}\bsigma_K(f,\mu) &\text{if $k \le i \le K$,} \\
\bsigma_i(f,\mu)                 &\text{if $i \ge K$.} 
\end{cases}
$$
Then $\sigma$ belongs to the set $\cG(\mu,\La)$ (defined in \S~\ref{sss.answer}).
Applying Theorem~\ref{t.measure intro 2} we find
a perturbation $g$ of $f$ preserving some $\gamma=\gamma_n$
so that $\bsigma(g,\gamma) = \sigma$.
If $\bsigma_K(f,\mu) > 0$ then we are done:
$\gamma$ has exactly $k$ vanishing Lyapunov exponents and $d-k$ positive Lyapunov exponents under $g$. 
If $\bsigma_K(f,\mu)=0$ then 
$\gamma$ has exactly $K$ vanishing Lyapunov exponents and $d-K$ positive Lyapunov exponents under $g$.
(because, by domination, $\bsigma_{K+1}(f,\mu)>0$).
Then (using Lemma~\ref{l.triangular} and Proposition~\ref{p.BGV})
we make another perturbation to make $K-k$ of these exponents positive.
\end{proof}

\begin{lemm}[(From zero exponents to identity map)]\label{l.identity} 
Let $f$ be a diffeomorphism with a periodic orbit $\gamma$ of period $\pi(\gamma)$.
Assume $\gamma$ has exactly $k$ vanishing Lyapunov exponents.
Then, for any $C^1$-neighborhood $\cU$ of $f$ 
and any neighborhood $V$ of $\gamma$, 
there exists a diffeomorphism $g\in\cU$ that equals $f$ outside $V$,
and there exists a $g$-periodic orbit $\tilde \gamma$ 
of period $\pi(\tilde \gamma) \ge \pi(\gamma)$
contained in $V$,
such that the following properties hold:
\begin{itemize}
\item For any $q \in \tilde \gamma$
there is a subspace $E\subset T_q M$ of dimension $k$ such that 
the restriction of $Dg^{\pi(\tilde \gamma)}$ to $E$ is the identity map;
\item The orbits $\gamma$ and $\tilde \gamma$ have the same Lyapunov spectra,
and thus the same Lyapunov graphs: $\bsigma(f,\gamma) = \bsigma(g, \tilde\gamma)$.
\end{itemize}
\end{lemm}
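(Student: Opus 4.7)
The plan is to combine a Franks' Lemma perturbation that renders the return map finite-order on the zero-exponent subbundle with a local linearization giving access to nearby orbits of proportionally longer period that exhibit the identity on their period map. Let $A = Df^{\pi(\gamma)}(p)$ at some $p\in\gamma$ and split $T_pM = E^0 \oplus E^c$ into the $A$-invariant generalized eigenspaces for eigenvalues of modulus $1$ (of dimension $k$, by hypothesis) and of other moduli. A small perturbation of $A$ fixing $E^c$ and the hyperbolic eigenvalues makes the restriction $A\restr E^0$ semisimple with simple eigenvalues that are roots of unity of some common order $N$; in particular this modified restriction satisfies $(A\restr E^0)^N = \Id$. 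Distributing this matrix-level perturbation across the steps of the cocycle along $\gamma$ and invoking Franks' Lemma produces a diffeomorphism $g_1 \in \cU$ supported in $V$, preserving $\gamma$, with new return map $L = Dg_1^{\pi(\gamma)}(p)$ satisfying $L^N\restr E^0 = \Id$ and having, on $E^c$, the same eigenvalues as $A\restr E^c$.

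Next, linearize $g_1$ near $\gamma$. In local charts around each $p_i = f^i(p)$ write $g_1(p_i+v) = p_{i+1}+L_iv+R_i(v)$ with $R_i(v)=O(\|v\|^2)$, pick a bump function $\rho$ supported in a ball of radius $r$ and equal to $1$ on half of it, and set $g_2 := g_1 - \rho R_i$ chart by chart. The diffeomorphism $g_2$ coincides with the affine map $v\mapsto L_iv$ on a neighborhood of each $p_i$, agrees with $f$ outside $V$, preserves $\gamma$ with unchanged period map $L$, and differs from $g_1$ in $C^1$-norm by $O(r)$, so $g_2 \in \cU$ for $r$ small.

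Finally, pick $q \in E^0$ (viewed in the chart at $p$) small enough that $\{L^j q\}_{0\le j<N}$ all sit in the respective affine regions of the charts --- possible since $L\restr E^0$ has finite order, hence norm-bounded iterates --- and generic enough that $L^j q \ne q$ for $0 < j < N$. Then $\tilde\gamma := \orb_{g_2}(p+q)$ is a $g_2$-periodic orbit of minimal period exactly $N\pi(\gamma) \ge \pi(\gamma)$, contained in $V$, and the subspace $E$ obtained from $E^0$ through the charts satisfies $Dg_2^{N\pi(\gamma)}\restr E = L^N\restr E^0 = \Id$. The eigenvalues of $L^N$ being the $N$-th powers of those of $L$ (which match those of $A$ up to modulus on $E^0$ and exactly on $E^c$), the Lyapunov exponents of $\tilde\gamma$ under $g_2$ --- obtained by dividing by $N\pi(\gamma)$ --- coincide with those of $\gamma$ under $f$; hence $\bsigma(g_2,\tilde\gamma) = \bsigma(f,\gamma)$. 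The main technical obstacle is the first step: producing a finite-order $E^0$-block of $L$ within Franks' Lemma's tolerance requires first making the modulus-$1$ eigenvalues of $A\restr E^0$ simple (so the eigenvector matrix has bounded condition number) and then shifting them onto roots of unity, and coordinating these two operations to keep the overall $C^1$-size small.
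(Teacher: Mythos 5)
Your construction follows essentially the same route as the paper's proof: make the return derivative of finite order on the zero-exponent subspace, linearize $f$ locally along $\gamma$ (the paper linearizes first and then modifies the linear map; the order is immaterial), and take for $\tilde\gamma$ the orbit of a point $q$ of exact period $N$ for the finite-order restriction, which yields the identity on $E$ at the period and leaves the Lyapunov graph unchanged because all eigenvalue moduli are preserved. The Franks-type realization, the bump-function linearization with $C^1$-error $O(r)$, the choice of $q$ outside the proper subspaces $\ker\big((L\restr E^0)^j-\Id\big)$, and the spectrum computation are all correct.

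However, the step you yourself flag as the main obstacle is precisely the nontrivial content of the lemma, and your sketch of it does not work as stated. What is needed is: every real matrix whose eigenvalues all have modulus $1$ admits an arbitrarily small perturbation that is diagonalizable over $\CC$ with all eigenvalues roots of unity (hence of finite order). Your route --- first make the modulus-$1$ eigenvalues simple ``so the eigenvector matrix has bounded condition number'', then shift them onto roots of unity --- fails at the parenthetical claim: if $A\restr E^0$ has a nontrivial Jordan block, any small perturbation splitting its eigenvalues produces an eigenbasis whose condition number blows up as the perturbation size tends to $0$ (and a generic small perturbation moves the eigenvalues off the unit circle altogether, e.g.\ a Jordan block at $1$ typically acquires real eigenvalues close to $1\pm\sqrt{\eps}$). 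So you cannot diagonalize, adjust the eigenvalues there, and conjugate back with controlled norm. The paper's proof devotes its Claim exactly to this point: for a $2\times 2$ Jordan block one uses the determinant-preserving perturbation $\begin{pmatrix} 1-\eps & 1\\ -\eps & 1\end{pmatrix}$, whose eigenvalues are non-real of modulus $1$ and become roots of unity for suitable small $\eps$; the general case follows by induction on the dimension, splitting off a $2$-dimensional invariant subspace of the Jordan block, perturbing there, and extending without altering the remaining eigenvalues via the restriction/quotient procedures of \S~\ref{ss.procedures}. Until you supply an argument of this kind, your first step --- and hence the proof --- is unsupported.
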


\begin{proof} 
Take local coordinates around each point in $\gamma$.
Up to performing an arbitrarily $C^1$ small perturbation of $f$, one may assume 
that it is linear in those coordinates.
Fix some $p\in\gamma$ and
let $E$ denote the subspace, in the coordinates around $p$, 
corresponding to the vanishing Lyapunov exponents. 
So $E$ is a locally $f$-invariant submanifold, 
and the restriction of $f^{\pi(p)}$ to $E$ is a linear map whose Lyapunov exponents are all zero. 
Now consider the following:

\begin{clai}
Let $L: \R^k \to \R^k$ be a linear map all whose eigenvalues lie in the unit circle.
Then there is an arbitrarily small perturbation $\tilde L$ of $L$  
that is diagonalizable over $\C$ and has
all its eigenvalues in the unit circle.
\end{clai}

Assume the Claim for a while and take the perturbation $\tilde L:E\to E$.
Notice that a power $\tilde L^n$ of it is the identity;
fix the least such $n \ge 1$.
By a procedure from \S~\ref{ss.procedures}, 
we can extend $\tilde L$ to a linear map $\hat L$ on the whole space 
whose Lyapunov spectrum is the same as  $Df^{\pi(p)}(p)$.
We can find a locally linear $C^1$-perturbation $g$ of $f$ 
such that the maps $g^{\pi(p)}$ and $\hat L$ coincide in a neighborhood of $p$.
Now we take any point $q$ on $E$ close to $p$ that has period $n$ under $\tilde L$,
and take $\tilde \gamma$ as $g$-orbit of $q$.
This shows the lemma, modulo proving the Claim.

The proof of the Claim is by induction on the dimension $k$.
The case $k=1$ is trivial: there is no need to perturb $L$.
Next consider the case $k=2$.
If $L$ is diagonalizable, but its eigenvalues are not roots of unity,
then we can perturb $L$ to make them so. 
On the other hand, if $L$ is not diagonalizable then 
either $L$ or $-L$ has Jordan form
$$
\begin{pmatrix}
1 & 1 \\ 0 & 1
\end{pmatrix}.
$$
Perturb this to
$$
\begin{pmatrix}
1-\eps & 1 \\ -\eps & 1
\end{pmatrix};
$$
for a suitably chosen small $\eps>0$, the eigenvalues are non-real roots of unity, and we are done.

Now take $k \ge 3$ and assume that the Claim has been proved for every dimension between $1$ and $k-1$.
Let $L: \R^k \to \R^k$ have all its eigenvalues in the unit circle.
First assume that $L$ has an invariant splitting $\R^k = F \oplus G$ into non-zero bundles.
Then we apply the induction hypothesis to 
the restrictions of $L$ to each subspace $F$ and $G$, 
and take $\tilde L$ as the product perturbation. 
In the remaining case where no such invariant splitting exists, 
the real Jordan normal form of $L$ has a single block.
Take the invariant subspace $F$ of dimension $2$
corresponding to the upper left corner of the Jordan matrix.
Then (repeating a previous reasoning)
we can perturb $L \restr F$ to make its eigenvalues non-real of modulus $1$,
and different from the eigenvalues of $L/F$.
By a procedure from \S~\ref{ss.procedures}, we can
extend the perturbation on $F$ to a perturbation on the whole space,
without altering eigenvalues.
The new linear map has an invariant splitting into non-zero bundles,
and we are reduced to the previous case.
This concludes the proof of the Claim and hence of the lemma.
\end{proof}


\begin{lemm}[(From identity to any map)]\label{l.anything} 
Let $f$ be a diffeomorphism with a periodic point $p$ of period $\pi(p)$.
Assume that there is a subspace $E\subset T_p M$ of dimension $k$ such that 
the restriction of $Df^{\pi(p)}$ to $E$ is the identity map.
Then, for any $C^1$-neighborhood $\cU$ of $f$ 
any neighborhood $V$ of the orbit of $p$, 
and any  $\phi \in \DDiff{k}$,
there exists 
a diffeomorphism $g\in\cU$ that equals $f$ outside $V$,
an embedded $k$-disk $D$, 
and $\pi \ge \pi(p)$
such that:
\begin{itemize}
\item $D$, $g(D)$, \ldots, $g^{\pi-1}(D)$ are pairwise disjoint, and $g^{\pi}(D)$ is contained in the (relative) interior of~$D$.
\item $D \cup g(D) \cup \cdots \cup g^{\pi-1}(D) \subset V$.
\item $D$ is normally hyperbolic for $g^{\pi}$.
\item The restriction of $g^{\pi}$ to $D$ is differentiably conjugate to $\phi$.
\end{itemize}
\end{lemm}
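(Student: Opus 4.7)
The plan is to realize $\phi$ as the return map on a normally hyperbolic center disk attached to a long $g$-periodic orbit, obtained by inserting many small plug perturbations of $f$ along the orbit of $p$.

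First I would perform standard $C^1$-small preliminary perturbations supported in a neighborhood of $\orb(p)\subset V$ so that, in local coordinates $(x,y)\in E\times E^{\perp}$ around each point of $\orb(p)$, the map $f^{\pi(p)}$ is the linear map $(x,y)\mapsto (x,By)$, and moreover the matrix $B=Df^{\pi(p)}(p)|_{E^{\perp}}$ is hyperbolic (no eigenvalue of modulus one). Linearization up to $C^1$-small perturbation is standard, and the hyperbolization of $B$ is possible by an arbitrarily $C^1$-small modification of $Df$ at one point of the orbit, since hyperbolic matrices are dense in $\GL(d-k,\RR)$. Fix also a smooth diffeotopy $\{\phi_t\}_{t\in[0,1]}$ in $\DDiff{k}$ with $\phi_0=\id$ and $\phi_1=\phi$, and for $N\in\NN$ set $\psi_j=\phi_{j/N}\circ\phi_{(j-1)/N}^{-1}$, so each $\psi_j$ is $C^1$-close to $\id$ as $N\to\infty$ and $\phi=\psi_N\circ\cdots\circ\psi_1$. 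Set $\pi=N\pi(p)$.

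Next, choose a fixed point $q\in\Int \DD^k$ of $\phi$ (which exists by Brouwer since $\phi(\DD^k)\subset\Int\DD^k$) and adjust $\phi_t$, if necessary, so that the sample points $q_j:=\phi_{j/N}(q)$, $0\le j<N$, are pairwise distinct (generic in the space of diffeotopies for $k\ge 2$, and achievable by a small further adjustment for $k=1$). Via a linear embedding $\DD^k\hookrightarrow E$ near $p$, identify the $q_j$'s with pairwise distinct points in $E$ near $p$, and let $D$ denote the corresponding copy of $\DD^k$ centered at $q=q_0$. For each $j=1,\dots,N$ I modify $f^{\pi(p)}$ in a small neighborhood of $q_{j-1}$ inside the chart around $p$ by composing it with a plug diffeomorphism that, under the identification $\DD^k\simeq D$, realizes $\psi_j$ on the $E$-slice and is the identity outside. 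By the first extension procedure of \S\ref{ss.procedures}, each plug is a $C^1$-perturbation of $f$ of size comparable to $\|\psi_j-\id\|_{C^1}\to 0$, so the total perturbation $g$ lies in $\cU$ and equals $f$ outside $V$. By construction, the $g$-iterates of $q$ are sampled at times $j\pi(p)$ by $q_0\mapsto q_1\mapsto\cdots\mapsto q_{N-1}\mapsto q_0$, so $q$ is $g$-periodic of primitive period $\pi$, and $g^\pi|_D$ corresponds, through the identification $D\simeq\DD^k$, to $\psi_N\circ\cdots\circ\psi_1=\phi$.

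Finally, verification: the iterates $D, g(D), \dots, g^{\pi-1}(D)$ are pairwise disjoint provided $D$ has small enough diameter, since within each of the $N$ rounds they travel through the $\pi(p)$ distinct points of $\orb(p)$, while between rounds they are centered at the pairwise distinct $q_j$'s; by construction $g^\pi(D)=\phi(D)\subset\Int D$ and $g^\pi|_D$ is differentiably conjugate to $\phi$. For normal hyperbolicity, the normal component of $Dg^\pi$ along $D$ is approximately $B^N$, which by hyperbolicity of $B$ and largeness of $N$ strongly dominates the tangential derivative $\|D\phi\|_{C^0}$, so the splitting $E^{\perp}=E^s\oplus E^u$ gives the normally hyperbolic splitting of $D$. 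The principal obstacle is ensuring simultaneously that the $\psi_j$'s are $C^1$-small and that the $q_j$'s remain pairwise separated by more than the diameter of $D$; both are met by taking $N$ large and then $D$ correspondingly small, after which normal hyperbolicity is essentially automatic from the preliminary hyperbolization of $B$ and the large iteration exponent.
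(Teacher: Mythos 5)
Your preliminary reductions (linearizing along the orbit via Franks' lemma, making the normal part $B$ hyperbolic, and then splitting off an invariant complement of $E$) are fine, and decomposing $\phi=\psi_N\circ\cdots\circ\psi_1$ with $\psi_j=\phi_{j/N}\circ\phi_{(j-1)/N}^{-1}$ $C^1$-close to the identity is indeed the right starting point (the paper itself does not reprove this lemma: it invokes Proposition~3.1 of \cite{BD IHES}, whose proof is essentially the tower construction you are aiming at). But the core of your construction has a genuine scale incoherence. You place the itinerary points $q_j=\phi_{j/N}(q)$ \emph{inside} the disk $D$ (they are points of $\DD^k$ transported by the same embedding that defines $D$), and at the end you require ``the $q_j$'s pairwise separated by more than the diameter of $D$'' --- impossible for points of $D$. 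Moreover each $\psi_j$ is a diffeomorphism of the large set $\phi_{(j-1)/N}(\DD^k)$, a macroscopic portion of $\DD^k$, so a plug ``realizing $\psi_j$ under the identification $\DD^k\simeq D$'' cannot be supported in a small neighborhood of the single point $q_{j-1}$: the supports of your $N$ plugs all overlap on essentially the whole slice $D$, so $g^{\pi(p)}$ near $q_{j-1}$ is not $\psi_j$ but an uncontrolled composition of several plugs; both the periodicity of $q$ and the computation $g^\pi|_D\simeq\phi$ collapse. The disjointness requirement also fails outright: with plugs acting on all of $D$, one has $g^{\pi(p)}(D)\approx\phi_{1/N}(D)$, which meets $D$, whereas the lemma demands $D,g(D),\dots,g^{\pi-1}(D)$ pairwise disjoint and $N\ge 2$ forces $\pi(p)<\pi$. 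If instead you shrink the plugs to tiny neighborhoods of the $q_{j-1}$'s, you only control the return map on a tiny disk around the fixed point $q$, where it is $\phi$ restricted near $q$ --- not a map differentiably conjugate to $\phi$ on all of $\DD^k$.

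The missing ideas are the rescaling and the drift. One must take $D$ to be a \emph{rescaled} tiny copy of $\DD^k$, and realize rescaled copies of the $\psi_j$'s: if $\psi_j=\mathrm{id}+u_j$, conjugating by a homothety of ratio $r$ turns the displacement into $r\,u_j(\cdot/r)$, whose $C^0$ norm shrinks like $r$ while the $C^1$ norm is unchanged and small; this is exactly what makes each plug a legitimate $C^1$-small (and $C^0$-small) perturbation supported in a ball of radius comparable to $r$. Then, exploiting that $f^{\pi(p)}$ is the identity on the $E$-slice, one transports this tiny disk through $N$ \emph{pairwise disjoint} regions of the slice inside $V$ (e.g.\ drifting along a small loop in steps of size $O(1/N)$, each step being a $C^1$-small compactly supported translation-like plug), and implements the rescaled $\psi_j$ in the $j$-th region. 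With the disjoint supports, the return map of the tiny disk after $\pi=N\pi(p)$ iterates is, up to the smooth conjugacy given by the rescalings, exactly $\psi_N\circ\cdots\circ\psi_1=\phi$, the images at intermediate times are pairwise disjoint because the disk is much smaller than the spacing of the regions, and normal hyperbolicity follows as you say from $B^N$ versus the bounded tangential derivative. Without the rescaling and the disjoint-support itinerary, the argument as written does not prove the lemma.
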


\begin{proof}
This is basically a reformulation of Proposition~3.1 from \cite{BD IHES}.
\end{proof}

\begin{lemm}[(Stably finest dominated splittings)] \label{l.stably finest}   
Let $\cV$ be a $C^1$-open set of diffeomorphisms $f$ 
having a hyperbolic periodic point $p_f$ varying continuously with $f$.
Let $E_{1,f}\dplus\cdots\dplus E_{m_f,f}$ 
indicate the finest dominated splitting over the chain recurrence class $C(p_f)$,
for each $f \in \cV$.
Then there is an open and dense subset $\cU \subset \cV$
where the functions $f \mapsto m_f$ and  $f\mapsto \dim E_{i,f}$ are locally constant.
\end{lemm}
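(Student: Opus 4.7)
Encode the data as a function $\tau: \cV \to \cC_d$ sending $f$ to the composition $(\dim E_{1,f}, \dots, \dim E_{m_f,f})$ of $d$. Since $\cC_d$ is finite and the set of local-constancy points of $\tau$ is automatically open, the task reduces to showing that $\tau$ is locally constant at a dense set of $f \in \cV$. Equipping $\cC_d$ with the refinement partial order $\succeq$ (so $\tau \succeq \tau'$ iff $\tau$ refines $\tau'$), this amounts to showing density of the points where $\tau$ is both refinement-lower- and refinement-upper-semi-continuous.

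The plan uses two ingredients. The first is the persistence of $\ell$-dominated splittings (\cite[App.~B.1]{BDV}): if $\Lambda$ is a compact invariant set of $f$ carrying an $\ell$-dominated splitting of prescribed dimensions, then for every $g$ sufficiently $C^1$-close to $f$, the maximal $g$-invariant set in a fixed neighborhood of $\Lambda$ carries an $\ell$-dominated splitting with the same dimensions. The second is the generic continuity of $f \mapsto C(p_f)$ in the Hausdorff topology: on a $C^1$-residual set $\cR \subset \cV$ this set-valued map is continuous. This is a classical fact, following from the lower semi-continuity of $f \mapsto H(p_f)$ (persistence of transverse homoclinic intersections) together with the generic coincidence $H(p_f) = C(p_f)$; cf.~\cite{BC}.

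I claim $\tau$ is locally constant at every $f^* \in \cR$, from which the lemma follows. For the direction $\tau(g) \succeq \tau(f^*)$ near $f^*$: let $\ell^*$ be such that the finest dominated splitting of $f^*$ over $C(p_{f^*})$ is $\ell^*$-dominated; persistence yields a neighborhood $W$ of $C(p_{f^*})$ and a $C^1$-neighborhood $\cO$ of $f^*$ through which an $\ell^*$-dominated splitting of dimensions $\tau(f^*)$ persists on the maximal $g$-invariant set in $W$; continuity of $C$ at $f^*$ then lets me shrink $\cO$ so that $C(p_g) \subset W$ for $g \in \cO$, whence $C(p_g)$ inherits the splitting and $\tau(g) \succeq \tau(f^*)$. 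For the reverse inequality, I argue by contradiction: if $g_n \to f^*$ with $\tau(g_n) \succ \tau(f^*)$, pass to a subsequence on which $\tau(g_n) = \tau^{**}$ is constant, so each $g_n$ carries an $\ell_n$-dominated splitting of dimensions $\tau^{**}$ on $C(p_{g_n})$; applying persistence at each $g_n$ and using the Hausdorff closeness $C(p_{g_n}) \to C(p_{f^*})$ coming from continuity of $C$ at $f^*$, I transport this splitting back to $C(p_{f^*})$, producing a dominated splitting of dimensions $\tau^{**} \succ \tau(f^*)$ and contradicting the finest-ness of $\tau(f^*)$.

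The main technical difficulty I foresee is the possibility that $\ell_n \to \infty$, which would shrink the persistence neighborhood around $g_n$ and make it uncertain whether $f^*$ sits inside it. I expect to handle this through a compactness argument in the Grassmann bundle over $M$: using the uniform $C^0$-bound on $Df_g$ on a fixed neighborhood of $C(p_{f^*})$ for $g$ close to $f^*$, the invariant $\tau^{**}$-splittings on the sets $C(p_{g_n})$ admit subsequential limits which, together with a careful bookkeeping of the domination inequalities at times bounded in terms of $K$ and $\tau^{**}$, yield a dominated splitting on $C(p_{f^*})$ with the desired dimensions---again producing the contradiction. Once local constancy at every $f^* \in \cR$ is established, the open set $\cU$ of local-constancy points of $\tau$ contains $\cR$ and is hence open and dense in $\cV$.
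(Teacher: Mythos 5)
Your first step is sound and is in fact the whole of the paper's proof: transporting the finest dominated splitting of $f^*$ over $C(p_{f^*})$ to nearby $g$ via persistence of dominated splittings, using that $C(p_g)$ lies in a small neighborhood of $C(p_{f^*})$. But note that for this you only need \emph{upper} semicontinuity of $f\mapsto C(p_f)$, which holds at \emph{every} $f$ as a consequence of Conley theory; invoking the generic coincidence $H(p_f)=C(p_f)$ and a residual set $\cR$ of continuity points is unnecessary. Thus $\tau$ is refinement-lower-semicontinuous on all of $\cV$ (equivalently, $m_f$ is lower semicontinuous), with no genericity involved.

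The genuine gap is in your second half, and it concerns exactly the difficulty you flag. If $g_n\to f^*$ carry strictly finer splittings on $C(p_{g_n})$, these are $\ell_n$-dominated with no uniform bound on $\ell_n$; a Grassmannian subsequential limit of $\ell_n$-dominated splittings with $\ell_n\to\infty$ need not be dominated, and need not even be a continuous splitting (the angle between the bundles can collapse), so no contradiction with the finest-ness of the splitting of $f^*$ is obtained. ``Bookkeeping at times bounded in terms of $K$'' cannot rescue this: an $\ell_n$-dominated splitting yields no comparison of expansion rates at any time scale bounded independently of $n$. Worse, the statement you are trying to prove---local constancy of $\tau$ at \emph{every} point of the prescribed residual set $\cR$---is stronger than what the lemma asserts and there is no reason it should hold: $\cR$ is residual but may perfectly well meet the closed nowhere dense set where $\tau$ fails to be locally constant, since continuity of $f\mapsto C(p_f)$ places no restriction on how the finest splitting refines under perturbation. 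The fix is to drop the second half entirely: a refinement-lower-semicontinuous map into a finite set is automatically locally constant on an open dense set. Concretely, in any open ball $B\subset\cV$ choose $f_0$ maximizing the (finitely-valued) number of bundles $m_{f_0}$ over $B$; lower semicontinuity forces $m_g=m_{f_0}$ for $g$ near $f_0$, and then the transported splitting, having the maximal number of bundles, must coincide with the finest splitting of $g$, so $\tau(g)=\tau(f_0)$. This gives density of local-constancy points, hence the open dense set $\cU$, with no reverse inequality needed at any prescribed point.
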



\begin{proof}
As a consequence of Conley theorem, 
the map $f \in \cV \mapsto C(p_f)$ is upper semicontinuous.
So, given any dominated splitting on $C(p_f)$,
for every $g$ sufficiently close to $f$ the class $C(p_g)$
has a dominated splitting with the same number of bundles.
Hence the number of bundles $m_f$ in the finest dominated splitting on $C(p_f)$ is 
a lower semicontinuous function of $f$.
So this number is locally constant on a dense open subset $\cU$ of $\cV$.
In this set the dimensions of the bundles are also locally constant.
\end{proof}

The following lemma is a version of Theorem~\ref{t.criterion} for individual periodic orbits:

\begin{lemm}\label{l.criterion} 
Let $\cU$ be a $C^1$-open set of diffeomorphisms $f$ 
having a hyperbolic periodic point $p_f$ of index $k$, varying continuously with $f$,
so that the dimensions of the bundles of the finest dominated splitting
$E_{1}\dplus\cdots\dplus E_{m}$ over the chain recurrence class $C(p_f)$ do not depend on $f\in \cU$.

Assume that:
\begin{equation}\label{e.det}
|\det Df^{\pi(p_f)}\restr E_1(p_f)|>1,  \quad \text{for each $f \in \cU$.}
\end{equation}
Then there is a residual subset $\cR$ of $\cU$ such that every $f\in\cR$ has 
normally expanding $k$-universal dynamics. 
\end{lemm}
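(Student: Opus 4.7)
The plan is to use Theorem~\ref{t.BD} as the engine producing normally expanding $k$-universal dynamics, after perturbing $f$ inside $\cU$ so as to create a periodic orbit whose return map decomposes as the identity on an invariant $k$-dimensional subspace together with $d-k$ eigenvalues of modulus greater than one. Once such perturbations accumulate on a dense subset of $\cU$, Theorem~\ref{t.BD} gives on each ball a locally residual set of normally expanding $k$-universal diffeomorphisms, and a standard Baire argument --- relying on the $G_\delta$ character of normally expanding $k$-universality, obtained as a countable intersection over finite families $\{\psi_j\}$ in a dense countable subset of $\DDiff{k}$ and rational tolerances, of open conditions requiring the existence of a corresponding normally expanding periodic disc on which the return map approximates $\psi_j$ --- delivers the desired residual $\cR \subset \cU$.

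To construct the perturbation, I would first restrict to the residual subset of $\cU$ where Bonatti--Crovisier genericity gives $H(p_f) = C(p_f)$, so that hypothesis~\eqref{e.det} becomes a statement about the finest dominated splitting on the homoclinic class. Writing $K = \dim E_1$, the index assumption gives $\bsigma_k(f,\mu_{p_f}) \le 0$ while \eqref{e.det} gives $\bsigma_K(f,\mu_{p_f}) > 0$, so in particular $k < K$. Lemma~\ref{l.class measure} then produces periodic orbits $\gamma_n$ homoclinically related to $p_f$ with $\gamma_n \to H(p_f)$ in the Hausdorff topology and $\mu_{\gamma_n} \to \mu_{p_f}$ weak-star. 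Applying Lemma~\ref{l.vanishing} yields, for every large $n$, an arbitrarily small perturbation $g_n$ of $f$ (supported in any prescribed neighborhood of $\gamma_n$) under which $\gamma_n$ has exactly $k$ vanishing and $d-k$ positive Lyapunov exponents. Lemma~\ref{l.identity} then provides a further arbitrarily small perturbation, still preserving the Lyapunov spectrum, producing a periodic orbit $\tilde\gamma$ whose return map admits an invariant $k$-dimensional subspace on which it is the identity --- the remaining $d-k$ eigenvalues necessarily having modulus greater than one. If $k \ge 3$, this is exactly the hypothesis of Theorem~\ref{t.BD} and the argument concludes as described above.

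The main obstacle I expect is keeping the dominated-splitting data aligned throughout the chain of perturbations: hypothesis \eqref{e.det} is phrased for the finest dominated splitting on the chain recurrence class of $f$, whereas Lemma~\ref{l.vanishing} reads the dominated splitting on the Hausdorff limit of the approximating orbits. The generic identity $H(p_f)=C(p_f)$, together with the local constancy of the dimensions built into the hypothesis of the lemma (cf.\ Lemma~\ref{l.stably finest}), is the bridge between these two viewpoints. A secondary delicate point is the low-dimensional cases $k=1$ and $k=2$, for which Theorem~\ref{t.BD} does not apply; these would be treated separately, using the very weak generic $1$-universality on $\Diff^1(M)\setminus\overline{\mathrm{Hyp}(M)}$ for $k=1$, and an argument in the spirit of the remark following Theorem~\ref{t.BD} for $k=2$.
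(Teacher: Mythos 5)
Your route is genuinely different from the paper's, but it has a real gap: by making Theorem~\ref{t.BD} the engine, you restrict yourself to $k\ge 3$, whereas Lemma~\ref{l.criterion} carries no restriction on $k$ and is precisely what the paper feeds into Theorem~\ref{t.criterion} for every index $k\in\{1,\ldots,d-1\}$. The fallbacks you sketch for the low indices do not give the stated conclusion: generic $1$-universality on $\Diff^1(M)\setminus\overline{\mathrm{Hyp}(M)}$ is not \emph{normally expanding} $1$-universality (and the remark after Theorem~\ref{t.BD} says the criterion is simply wrong for $k=1$), while for $k=2$ that remark only yields \emph{free} $2$-universality under a density hypothesis and explicitly ties the full criterion to Smale's conjecture, so there is no available ``argument in its spirit'' producing normally expanding $2$-universal dynamics. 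The paper avoids this entirely: after the same preparation you describe (generic $H(p_f)=C(p_f)$, Lemma~\ref{l.class measure}, Lemma~\ref{l.vanishing} to get $k$ vanishing and $d-k$ positive exponents, Lemma~\ref{l.identity} to get the identity on a $k$-dimensional invariant subspace), it does \emph{not} invoke Theorem~\ref{t.BD} (which the paper points out is used nowhere else), but applies Lemma~\ref{l.anything} to realize any prescribed $\phi\in\DDiff{k}$ on a normally expanding periodic disc, valid for every $k$, and then concludes by a Baire argument inside $\cU$.

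A secondary weakness is your Baire step. The claimed $G_\delta$ description of normally expanding $k$-universality by open conditions indexed by finite families $\{\psi_j\}$ does not encode the pairwise disjointness of the orbits of the infinitely many discs required by the definition; discs produced for different finite families may intersect, and this is exactly the obstruction the paper singles out (its ``first attempt'' $\bigcap_n\cV(n,\cO_n)$) and then fixes with the auxiliary open sets $\hat\cO_n$, the localization of the discs in $1/n$-neighborhoods of $p_f$, and the extraction of a subsequence via the distances $\delta_n$. Within your scheme this particular point is repairable without the $G_\delta$ claim: since Theorem~\ref{t.BD} already outputs \emph{locally} residual sets of normally expanding $k$-universal diffeomorphisms accumulating on a dense subset of $\cU$, one can pass from ``residual in a neighborhood of each point of an open dense subset of $\cU$'' to ``residual in $\cU$'' by the Banach category theorem (or a countable subcover argument as in Lemma~\ref{l.cover}). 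But as written the $G_\delta$ justification is not correct, and in any case the missing cases $k=1,2$ remain a genuine gap of the proposal.
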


\begin{proof}
If $n \in \NN$ and $\cO$ is an open nonempty subset of $\DDiff{k}$,
let $\cV(n,\cO)$ indicate the set of $f \in \cU$ 
such that there is an embedded closed $k$-disk $D$ and $\pi \in \NN$
such that:
\begin{itemize}
\item[i.] 
$D$, $f(D)$, \ldots, $f^{\pi-1}(D)$ are pairwise disjoint, and 
$f^{\pi}(D)$ is contained in the (relative) interior of~$D$.
\item[ii.] 
$D$ is normally expanding for $f^{\pi}$.
\item[iii.] 
The restriction of $f^{\pi}$ to $D$ is differentiably conjugate to a map in $\cO$.
\item[iv.]
$D$ is contained in the $1/n$-neighborhood of $p_f$.
\item[v.]
$p_f \not \in \bigcup_{j=0}^{\pi-1} f^j(D)$.
\end{itemize}
We claim that $\cV(n,\cO)$ is open and dense in $\cU$.
Openness is obvious.
To show denseness, take any $f \in \cU$, and successively perturb it as follows:
\begin{enumerate}
\item\label{i.first step}
For every diffeomorphism in a residual subset $\cR_0 \subset \Diff^1(M)$,
the homoclinic classes are chain recurrence classes.
Perturb $f$ so that $f\in \cR_0$ and thus $H(p_f) = C(p_f)$.
Let $\mu$ be the invariant probability measure on the orbit of $p_f$.
By Lemma~\ref{l.class measure}, $(\mu, H(p_f)) \in \cX(f)$.
Since $p_f$ has index $k$, $\bsigma_k(\mu) < 0$,
and by assumption~\eqref{e.det}, $\bsigma_K(\mu) \ge 0$, where $K = \dim E_1$.

\item 
Having the measure $\mu$ at our disposal, 
we use Lemma~\ref{l.vanishing} to perturb $f$ again so that
there is a periodic orbit (obviously different from that of $p_f$)
with $k$ vanishing exponents and $d-k$ positive exponents.
Moreover, we can take this orbit $1/(2n)$-close to $H(p_f)$ in the Hausdorff distance.

\item 
Using Lemma~\ref{l.identity} and then Lemma~\ref{l.anything},
we perturb $f$ again and create the disk $D$ with properties (i)--(v) 
so that $f \in \cV(n,\cO)$. 
\end{enumerate}

Now consider a countable base of (nonempty) open sets $\cO_n$ for $\DDiff{k}$.

A first attempt to conclude the proof would be to define $\cR$ as $\bigcap \cV(n,\cO_n)$.
Then for any diffeomorphism in this set, we would be able to find a countable family of discs
satisfying all the requirements of normally expanding $k$-universal dynamics, 
\emph{except} for the disjointness between the disks.
To fix that problem, we proceed as follows.

\smallskip

Let $\hat{\cO}_n$ be the subset of $\DDiff{k}$ formed by the maps $\phi$
such that for each $i = 1, \ldots, n$ 
there exist a subdisk $D_i \subset \Int \DD_k$ and an integer $\pi_i>0$
with the following properties:
\begin{itemize}
\item $D_i$, $\phi(D_i)$, \ldots, $\phi^{\pi_i-1}(D_i)$ are pairwise disjoint, and 
$\phi^{\pi_i}(D_i) \subset \Int D_i$.
\item The restriction of $\phi^{\pi_i}$ to $D_i$ is differentiably conjugate to a map in $\cO_i$.
\end{itemize}
Obviously, $\hat{\cO}_n$ is nonempty and open.
Define the following residual subset of $\cU$:
$$
\cR = \bigcap_{n \in \NN} \cV(n, \hat{\cO}_n) \, .
$$

Take any $f \in \cR$. 
Let us show that $f$ has normally expanding $k$-universal dynamics.
For each $n$, since $f \in \cV(n, \hat{\cO}_n)$, 
there is a disk $\hat{D}_n$
so that properties (i)-(v) hold with $D=\hat{D}_n$, $\pi$ equal to some $\hat{\pi}_n$, 
and $\cO = \hat{\cO}_n$.
Let $\delta_n$ be the
distance between $p_f$ and the orbit of $\hat{D}_n$,
which is positive by condition~(v).
Define a subsequence $\{n_i\}$ recursively by
taking $n_1 = 1$ and  choosing $n_{i+1} > n_i$ so that
$$
\frac{1}{n_{i+1}} < \min \big( \delta_{n_1} , \delta_{n_2} , \ldots, \delta_{n_i} \big) \, .
$$
This guarantees that the orbit of $\hat{D}_{n_{i+1}}$ is disjoint from the orbits of 
$\hat{D}_{n_{1}}$, \ldots, $\hat{D}_{n_{i}}$.

For each $i$, the restriction of  $f^{\hat{\pi}_n}$ to $\hat{D}_{n_i}$ is differentiably conjugate to 
a map in $\hat{\cO}_{n_i}$.
Since $i \le n_i$, we can find a periodic subdisk $D_i \subset \hat{D}_{n_i}$ whose first return
is differentiably conjugate to an element of $\cO_i$.
Thus the family of disks $\{D_i\}$ has all the properties required 
for normally expanding $k$-universal dynamics,
concluding the proof.
\end{proof}

Let us make a remark that will be useful later (in \S~\ref{ss.dichotomy}):
Assumption~\eqref{e.det} in Lemma~\ref{l.criterion}
can be replaced by the following weaker condition:
\begin{equation}\label{e.residual}
\begin{gathered}
\text{For each $f$ in a residual subset $\cS \subset \cU$, there is a measure $\mu$ such that}\\
\text{$(\mu, H(p_f)) \in \cX(f)$ and $\bsigma_k(\mu) \le 0 \le \bsigma_K(\mu)$, where $K = \dim E_1$.}
\end{gathered}
\end{equation}
Indeed, the only part of the proof that requires modification is
step~\ref{i.first step} in the proof of denseness of $\cV(n,\cO)$:
Here we perturb $f$ so that $f\in \cR_0 \cap \cS$,
and now the measure $\mu$ is given a priori. 

\medskip

We need the following lemma from point-set topology:

\begin{lemm}\label{l.cover}
Let $B$ be a Baire space.
Let $B = \bigcup_n V_n$ be a countable pointwise finite\footnote{A cover of a set is called \emph{pointwise finite}
if each point belongs to only finitely many sets in the covering.}
cover of $B$ by open sets.
Suppose that $R_n$ is a residual subset of $V_n$, for each $n$.
Then $\bigcup_n R_n$ is a residual subset of $B$.
\end{lemm}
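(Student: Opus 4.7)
The plan is to show that the complement $B \setminus \bigcup_n R_n$ is meager in $B$. The cover hypothesis gives immediately the inclusion
$$
B \setminus \bigcup_n R_n \subset \bigcup_n (V_n \setminus R_n),
$$
since any point $x$ on the left lies in some $V_n$ (because the $V_n$ cover $B$) but in no $R_n$. Hence it suffices to show that each $V_n \setminus R_n$ is meager in $B$.

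By hypothesis $V_n \setminus R_n$ is meager in $V_n$, so it is a countable union of sets nowhere dense in $V_n$. The key step is therefore the observation that if $V \subset B$ is open and $F \subset V$ is nowhere dense in $V$, then $F$ is nowhere dense in $B$. This rests on two elementary facts: first, since $V$ is open and $F \subset V$, one has $\overline{F}^B \cap V = \overline{F}^V$; second, the boundary $\partial V = \overline{V} \setminus V$ of an open set has empty interior in $B$ (any $x \in \partial V$ lies in $\overline V$, so every open neighbourhood of $x$ meets $V$ and hence is not contained in $\partial V$). Granted these, if $U \subset B$ is open with $U \subset \overline{F}^B$, then $U \cap V$ is open in $V$ and contained in $\overline{F}^V$, so nowhere denseness forces $U \cap V = \emptyset$; therefore $U \subset \partial V$, and so $U = \emptyset$, showing that $\overline{F}^B$ has empty interior.

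Putting this together, each $V_n \setminus R_n$ is a countable union of sets that are nowhere dense in $B$, hence meager in $B$; the countable union $\bigcup_n(V_n \setminus R_n)$ is then meager in $B$, and so is its subset $B \setminus \bigcup_n R_n$, which is what was to be proved. I note that the pointwise-finiteness of the cover plays no role in this argument: only the covering property is used. No step should present a real obstacle; the only subtlety worth spelling out is the passage from \emph{nowhere dense in an open subspace} to \emph{nowhere dense in the ambient space}, which is precisely the place where the openness of the $V_n$ is essential.
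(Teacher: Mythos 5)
Your proof is correct, but it takes a genuinely different route from the paper. You show that the complement $B\setminus\bigcup_n R_n$ is contained in $\bigcup_n(V_n\setminus R_n)$ and that each $V_n\setminus R_n$ is meager in $B$, the key point being the (correctly justified) passage from ``nowhere dense in the open subspace $V_n$'' to ``nowhere dense in $B$''; this is in effect a countable instance of the Banach category theorem, and as you observe it makes the pointwise finiteness of the cover (and even the Baire hypothesis on $B$) superfluous. The paper argues in the opposite direction: it replaces each $R_n$ by a countable intersection $\bigcap_i U_{n,i}$ of dense open subsets of $V_n$ and uses pointwise finiteness to establish the identity
$$
\bigcup_n R_n = \bigcap_i \bigcup_n \bigcap_{j=1}^i U_{n,j},
$$
where each set $\bigcup_n \bigcap_{j=1}^i U_{n,j}$ is open and dense in $B$ because the $V_n$ cover $B$; the inclusion $\supset$ is exactly where pointwise finiteness enters (a point lies in only finitely many $V_n$, so one index $n$ must recur for infinitely many $i$). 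So the paper's proof is a short explicit construction of a dense $G_\delta$ inside the union, tailored to the stated hypotheses, while yours is slightly longer but more elementary in spirit and proves a stronger statement, dispensing with the pointwise-finiteness assumption altogether.
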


\begin{proof}
Write $R_n = \bigcap_{i\in \NN} U_{n,i}$,
where each $U_{n,i}$ is open and dense in $V_n$.
Pointwise finiteness implies that
$$
\bigcup_n R_n = \bigcap_i \bigcup_n \bigcap_{j=1}^i U_{n,j} \, , 
$$
which is clearly a residual subset of $B$.
\end{proof}

\begin{proof}[of Theorem~{\rm\ref{t.criterion}}]  
Fix $k$ throughout the proof.
If $f$ is a diffeomorphism and $p$ is a hyperbolic periodic point, let us say that 
the pair $(f,p)$ has \emph{property~$X$} if at least one of the following properties hold:
\begin{enumerate}
\item $f$ has normally expanding $k$-universal dynamics.
\item $|\det Df^{\pi(p)}(p) \restr E_1(p)| \le 1$, where $E_1$ is the first bundle on the finest dominated splitting 
on $C(p)$.
\end{enumerate}

For each $n$, consider the set $\cV_n$ of diffeomorphisms such that all periodic points of period $n$
are hyperbolic.
This is open and dense set.

For each $f \in \cV_n$, we can find an open set $\cV_n^f$, an integer $r_f$,
and continuous maps $p_1$, \ldots, $p_{r_f} : \cV_{n}^f \to M$
such that the periodic points of period $n$ of each $g \in \cV_{n}^f$
are precisely $p_1(g)$, \ldots, $p_{r_f}(g)$.
Consider the cover of $\cV_n$ by the sets $\cV_n^f$.
Since $\Diff^1(M)$ 
is paracompact (as every metric space) 
and Lindel\"of,
we can take a countable locally finite subcover,
say $\cV_n = \bigcup_i \cV_{n,i}$.

Apply Lemma~\ref{l.stably finest} to each $\cV_{n,i}$ and each periodic point $p_j$ 
obtaining an open dense subset $\cU_{n,i,j} \subset \cV_{n,i}$ where 
the dimensions of the bundles on the finest dominated splitting 
on the chain recurrence class of $p_j$ are locally constant.
Obviously, $\cU_{n,i,j}$ is the (disjoint) union of a finite family of sets $\cU_{n,i,j,\ell}$,
where in each of these sets the dimensions are constant.

It follows from Lemma~\ref{l.criterion} 
that for every $f$ in a residual subset $\cR_{n,i,j,\ell}$ of $\cU_{n,i,j,\ell}$,
the pair $(f, p_j(f))$ has property $X$.

Now define
$$
\cR = \bigcap_n \bigcup_i \bigcap_j \bigcup_\ell \cR_{n,i,j,\ell} \, .
$$
Using Lemma~\ref{l.cover}, we see that $\cR$ is a residual subset of $\Diff^1(M)$.
If $f \in \cR$ then every periodic point $p$ is hyperbolic and $(f,p)$ has property~$X$.
The theorem follows.
\end{proof}

\subsection{Proof of Theorem~{\rm\ref{t.dichotomy}}}\label{ss.dichotomy}

Let us begin with a lemma:

\begin{lemm}\label{l.stupid}
Let $\cU$ be a $C^1$-open set of diffeomorphisms $f$ 
having a hyperbolic periodic point $p_f$ of index $k$, varying continuously with $f$,
so that the dimensions of the bundles of the finest dominated splitting
$E_{1}\dplus\cdots\dplus E_{m}$ over the chain recurrence class $C(p_f)$ do not depend on $f\in \cU$.

Then there is a residual subset $\cS$ of $\cU$ such that every $f \in \cS$,
has (at least) one of the following properties:
\begin{enumerate}
\item \label{i.stupid a}
There are periodic points $q_n$ homoclinically related to $p_f$ 
such that
$$
\liminf_{n \to \infty} 
\frac{1}{\pi(q_n)} \log \big| \det Df^{\pi(q_n)} \restr E_1(q_n) \big| \ge 0 .
$$

\item \label{i.stupid b}
There is $\alpha>0$ and there is a neighborhood $\cV$ of $f$ contained in $\cU$ 
such that
for every $g \in \cV$ and every periodic point $q$ homoclinically related to $p_g$, 
we have 
$$
\frac{1}{\pi(q)} \log \big| \det Dg^{\pi(q)} \restr E_1(q) \big| \le -\alpha .
$$
\end{enumerate}
\end{lemm}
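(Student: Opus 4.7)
The plan is to analyze the real-valued function
$$
\Phi(f) \;=\; \sup_{q} \frac{1}{\pi(q)} \log \big| \det Df^{\pi(q)} \restr E_1(q) \big|
$$
on $\cU$, where the supremum runs over periodic points $q$ homoclinically related to $p_f$; the two alternatives of the lemma will correspond to the dichotomy $\Phi(f) \ge 0$ versus $\Phi(f) < 0$, read at the residually many points where $\Phi$ is continuous. By the assumptions defining $\cU$, the bundle $E_1$ is a continuous, constant-dimensional subbundle of $TM$ over $C(p_f) \supset H(p_f)$, so $\Phi(f)$ is well-defined; it is also locally bounded above by $(\dim E_1) \cdot \log \|Df\|_\infty$.

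The main step is to establish lower semicontinuity of $\Phi$ on $\cU$. Given $f_0 \in \cU$ and $v < \Phi(f_0)$, I will pick a periodic orbit $q$ homoclinically related to $p_{f_0}$ witnessing $\tfrac{1}{\pi(q)} \log | \det Df_0^{\pi(q)} \restr E_1(q) | > v$ and argue that this inequality persists for $g$ near $f_0$. Robustness of transverse intersections of stable and unstable manifolds ensures that the hyperbolic continuation $q(g)$ remains homoclinically related to $p_g$; meanwhile, the assumption that the bundle dimensions of the finest dominated splitting are locally constant in $\cU$ implies that the dominated splitting extends continuously to nearby diffeomorphisms, so that $E_1(q(g))$ depends continuously on $g$. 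Consequently $\tfrac{1}{\pi(q)} \log | \det Dg^{\pi(q)} \restr E_1(q(g)) |$ is a continuous function of $g$ that remains $> v$ in a neighborhood of $f_0$, yielding $\Phi(g) > v$ and hence lower semicontinuity.

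Any lower semicontinuous, locally-bounded-above function on a Baire space has its continuity points forming a dense $G_\delta$: for each $\varepsilon > 0$ the set $U_\varepsilon = \{f \in \cU : \Phi(g) < \Phi(f) + \varepsilon \text{ for } g \text{ in some neighborhood of } f\}$ is open by definition, and is dense because on any nonempty open $W \subset \cU$ one chooses $f \in W$ with $\Phi(f)$ nearly equal to $\sup_{W} \Phi$, after which any small enough neighborhood of $f$ in $W$ witnesses $f \in U_\varepsilon \cap W$. Thus $\cS := \bigcap_{n} U_{1/n}$ is residual in $\cU$ and every $f \in \cS$ is a continuity point of~$\Phi$. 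If $\Phi(f) \ge 0$, pick periodic points $q_n$ homoclinically related to $p_f$ whose values tend to $\Phi(f)$ from below; the liminf of these values equals $\Phi(f) \ge 0$, so alternative~(1) holds. If $\Phi(f) < 0$, set $\alpha = -\Phi(f)/2 > 0$; continuity of $\Phi$ at $f$ yields a neighborhood $\cV \subset \cU$ on which $\Phi(g) < -\alpha$, and then for every $g \in \cV$ and every periodic $q$ homoclinically related to $p_g$ we have $\tfrac{1}{\pi(q)} \log | \det Dg^{\pi(q)} \restr E_1(q) | \le \Phi(g) < -\alpha$, so alternative~(2) holds.

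The delicate step is the lower semicontinuity argument, and in particular the continuous dependence of $E_1$ along the continuations $q(g)$; this is exactly where the hypothesis that the finest dominated splitting on $C(p_f)$ has locally constant bundle dimensions is indispensable. Without it, the identity of ``the first bundle of the finest dominated splitting'' could jump under arbitrarily small perturbations of $f$, invalidating the comparison of determinants defining $\Phi$ and breaking the persistence argument.
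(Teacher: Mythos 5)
Correct, and essentially the paper's own argument in different packaging: your lower semicontinuity of $\Phi$ (persistence of the continuation $q(g)$, of the homoclinic relation, and of the first bundle $E_1$ along it, using the constant-dimension hypothesis) is exactly what makes the paper's sets $\cA_n=\{\Phi>-1/n\}$ open, and your passage to continuity points is the same Baire dichotomy the paper runs with $\cA_n\cup\Int(\cU\setminus\cA_n)$ open and dense. One cosmetic caveat: your $U_\varepsilon$ is not literally open as defined (for $f'$ near $f$ the value $\Phi(f')$ may drop), but the density argument you give actually produces the open set $\{\Phi>\sup_W\Phi-\varepsilon\}\cap W$ inside $U_\varepsilon$ for every small open $W$, so each $U_\varepsilon$ contains a dense open set and the residuality of $\cS$ stands.
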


\begin{proof}
For each $n \in \NN$, let $\cA_n$ be the set of $f \in \cU$ such that 
there exists a periodic point $q$ homoclinically related to $p_f$
such that 
$$
\frac{1}{\pi(q)} \log \big| \det Df^{\pi(q)} \restr E_1(q) \big| > - \frac{1}{n} \, .
$$
This is evidently an open set.
Let $\cB_n = \Int (\cU \setminus \cA_n)$.
Then $\cA_n \cup \cB_n$ is open and dense in $\cU$.
Taking the intersection over $n$, we obtain a residual subset $\cS$ of $\cU$.

Now take $f \in \cS$. 
If $f \in \cB_n$ for some $n$ then $f$ has property~\ref{i.stupid b} with $\alpha = 1/n$.
If, on the contrary, $f \not\in \bigcup_n \cB_n$ then $f \in \bigcap_n \cA_n$
and so $f$ has property~\ref{i.stupid a}.
\end{proof}

In order to prove Theorem~\ref{t.dichotomy},
we first obtain a version of it for individual periodic orbits:

\begin{lemm}\label{l.dichotomy}
Let $\cU$ be a $C^1$-open set of diffeomorphisms $f$ 
having a hyperbolic periodic point $p_f$ of index $k$, varying continuously with $f$,
so that the dimensions of the bundles of the finest dominated splitting
$E_{1}\dplus\cdots\dplus E_{m}$ over the chain recurrence class $C(p_f)$ do not depend on $f\in \cU$.

Then there is a residual subset $\cR$ of $\cU$ such that every $f \in \cR$
has (at least) one of the following properties:
\begin{enumerate}
\item \label{i.dich a}
$f$ is normally expanding $k$-universal; or:
\item \label{i.dich b}
There is $\alpha > 0$ such that 
for any $q$ homoclinically related with $p$, 
$$
\frac{1}{\pi(q)} \log \big| \det Df^{\pi(q)} \restr E_1(q) \big| < -\alpha .
$$
\end{enumerate}
\end{lemm}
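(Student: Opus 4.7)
The plan is to combine Lemma~\ref{l.stupid} with the variant of Lemma~\ref{l.criterion} in which hypothesis~\eqref{e.det} is replaced by the weaker condition~\eqref{e.residual}, as permitted by the remark following the proof of Lemma~\ref{l.criterion}. Keeping the notation of the proof of Lemma~\ref{l.stupid}, set $\cR_2 = \bigcup_n \cB_n$ (open in $\cU$) and $\cD = \cU \setminus \overline{\cR_2}$ (also open). On $\cR_2$ every diffeomorphism satisfies property~\ref{i.stupid b} of Lemma~\ref{l.stupid}, which is conclusion~\ref{i.dich b} here (after shrinking $\alpha$ slightly to make the inequality strict). Since $\cR_2 \cup \cD$ is open and dense in $\cU$, it will suffice to produce a residual $\cR_1 \subset \cD$ on which $f$ is normally expanding $k$-universal; the set $\cR_1 \cup \cR_2$ will then be residual in $\cU$.

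To obtain $\cR_1$, I will verify that condition~\eqref{e.residual} holds residually on $\cD$ and then invoke the strengthened version of Lemma~\ref{l.criterion}. For each $f$ in the residual subset $\cD \cap \bigcap_n \cA_n$ of $\cD$, property~\ref{i.stupid a} yields periodic orbits $q_n$ homoclinically related to $p_f$ with
$$
\liminf_{n\to\infty} \frac{1}{\pi(q_n)} \log \bigl|\det Df^{\pi(q_n)}\restr E_1(q_n)\bigr| \geq 0.
$$
Setting $K = \dim E_1$, the quantity on the left equals $\bsigma_K(\mu_{q_n})$, because on the chain recurrence class $C(p_f) \supset q_n$ the bundle $E_1$ carries the $K$ smallest Lyapunov exponents by domination. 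Extracting a subsequence along which $\mu_{q_n} \to \mu$ in the weak-star topology, continuity of $\mu \mapsto \int \log|\det Df \restr E_1|\,d\mu$ (the integrand is continuous since $E_1$ is continuous on $C(p_f)$) gives $\bsigma_K(\mu) \geq 0$. On the other hand, each $q_n$ has index $k$, so $\bsigma_k(\mu_{q_n}) < 0$, and lower-semicontinuity of $\bsigma_k$ then gives $\bsigma_k(\mu) \leq 0$.

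It remains to check that $(\mu, H(p_f)) \in \cX(f)$, rather than merely $(\mu,\Lambda)$ for $\Lambda = \lim q_n$, which a priori could be a proper subset of $H(p_f)$. Since $q_n$ is homoclinic to $p_f$, one has $H(q_n) = H(p_f)$, so Lemma~\ref{l.class measure} applied to $q_n$ furnishes periodic orbits $\gamma_{n,j}$ homoclinic to $p_f$ with $\mu_{\gamma_{n,j}} \to \mu_{q_n}$ weakly and $\gamma_{n,j} \to H(p_f)$ in Hausdorff distance as $j \to \infty$. A diagonal extraction yields hyperbolic periodic orbits $\hat\gamma_n$ with $\mu_{\hat\gamma_n} \to \mu$ and $\hat\gamma_n \to H(p_f)$, witnessing $(\mu, H(p_f)) \in \cX(f)$. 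Condition~\eqref{e.residual} is therefore satisfied residually on $\cD$, and the strengthened Lemma~\ref{l.criterion} supplies the required $\cR_1$.

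The hard part is this last step: the limit measure $\mu$ naturally comes paired with the Hausdorff limit of the $q_n$'s, whereas the variant of Lemma~\ref{l.criterion} demands a pairing with the full homoclinic class $H(p_f)$. The diagonal argument based on Lemma~\ref{l.class measure} is the cleanest fix I see, exploiting the fact that each individual periodic orbit in the class can be replaced by one that is simultaneously weakly close to $\mu_{q_n}$ and Hausdorff-dense in $H(p_f)$.
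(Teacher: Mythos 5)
Your proposal is correct and follows essentially the same route as the paper: the same decomposition of $\cU$ into the open set where the volume-contraction property holds robustly and the complementary open set $\cD=\cA^*$, verification of condition~\eqref{e.residual} there via Lemma~\ref{l.class measure} and the semicontinuity/continuity of $\bsigma_k$ and $\bsigma_K$, and then the strengthened Lemma~\ref{l.criterion}. The only (harmless) difference is bookkeeping: the paper replaces each $q_n$ by a Hausdorff-dense approximant $\hat q_n$ \emph{before} passing to the weak-star limit, whereas you take the limit of $\mu_{q_n}$ first and recover the pairing with $H(p_f)$ by a diagonal extraction.
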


\begin{proof}
Take a set $\cU$ as in the statement.
Let $\cA$, resp.\ $\cB$, be the set of $f\in \cU$ 
that have property \ref{i.stupid a}, resp.\ \ref{i.stupid b}, from Lemma~\ref{l.stupid}.
Then $\cB$ is open, and  $\cA \cup \cB$ contains a residual subset $\cS$ of $\cU$.

\begin{clai}
If $f \in \cA$ then there is a measure $\mu$ 
such that
$(\mu, H(p_f)) \in \cX(f)$ and 
$\bsigma_k(\mu) \le 0 \le \bsigma_K (\mu)$, where $K = \dim E_1$.
\end{clai}

\begin{proof}
We know that there are periodic points $q_n$ homoclinically related to $p_f$  
such that $\liminf \bsigma_K (q_n) \ge 0$.
For each $n$, we use Lemma~\ref{l.class measure} and find a periodic point $\hat{q}_n$ 
homoclinically related to $q_n$ and hence to $p_f$
such that $\bsigma_K(\hat{q}_n) > \bsigma_K (q_n) - 1/n$
and the Hausdorff distance between the orbit of $\hat{q}_n$ and $H(p_f)$ is less than $1/n$.
Let $\mu_n$ be the $g$-invariant probability measure supported on the orbit of $\hat{q}_n$,
and let $\mu$ be a accumulation point of this sequence of measures.
Since $\bsigma_K$ is continuous, we have $\bsigma_K(\mu) \ge 0$.
Each  $\hat{q}_n$ has index $k$, and $\bsigma_k$ is lower semicontinuous, 
therefore $\bsigma_k(\mu) \le 0$,
proving the claim.
\end{proof}

Let $\cA^* = \cU \setminus \bar{\cB}$.
Then $\cA$ is residual in the open set $\cA^*$.
Applying Lemma~\ref{l.criterion} (with assumption~\eqref{e.det} replaced by \eqref{e.residual}) to $\cA^*$,
we conclude that there is a residual subset $\cR^*$ of $\cA^*$ 
formed by normally expanding $k$-universal diffeomorphisms.
Therefore $\cR = \cR^* \cup \cB$ is the residual set we were looking for.
\end{proof}

\begin{proof}[of Theorem~{\rm\ref{t.dichotomy}}]  
Fix $k$.
If $f$ is a diffeomorphism and $p$ is a hyperbolic periodic point, let us say that 
the pair $(f,p)$ has \emph{property $Y$} if at least one of properties 
\ref{i.dich a} or \ref{i.dich b} from Lemma~\ref{l.dichotomy} holds.

Then we follow word for word  the proof of Theorem~\ref{t.criterion},
just replacing property~$X$ by property~$Y$, and using Lemma~\ref{l.dichotomy} instead of Lemma~\ref{l.criterion}.
\end{proof}

\subsection{Criterion for \texorpdfstring{$k$}{k}-Universality}    

Let us give a criterion for $k$-universal dynamics somewhat similar to Theorem~\ref{t.criterion}:

\begin{theo}\label{t.non-free}
Let $f$ be a $C^1$ generic diffeomorphism.
Let $p$ be a periodic point, and let 
$E_1\dplus\cdots \dplus E_m$ be the finest dominated splitting on the homoclinic class $H(p)$.
Denote $i_j = \dim(E_1\oplus\cdots\oplus E_j)$ for $j \in \{1,\dots,m\}$, and $i_0=0$. 
Let
\begin{equation}\label{e.k}
k = \# \big\{ i \in \{1,\ldots,d\} ; \; \bsigma_i(p) \le s \big\},  \quad \text{where} \quad
s = \min_{j \in \{0,\dots,m\}}  \bsigma_{i_j}(p) \, .
\end{equation}
Then generic diffeomorphisms in a neighborhood of $f$ have $k$-universal dynamics. 
\end{theo}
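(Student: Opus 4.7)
The proof follows the structure of Theorem~\ref{t.criterion}, with the ad-hoc perturbation Lemma~\ref{l.vanishing} (built around the sign condition on $\det Df \restr E_1$) replaced by a direct application of Theorem~\ref{t.measure intro 2} to an explicitly constructed flattening of the Lyapunov graph. The output is a periodic orbit with $k$ vanishing Lyapunov exponents, which feeds into Lemma~\ref{l.identity} (producing an invariant identity $k$-subspace) and then into Lemma~\ref{l.anything} (producing the normally hyperbolic $k$-disc). Compared to Theorem~\ref{t.criterion}, the only loss is that the normal bundles of these discs are no longer sign-controlled, which is precisely why the conclusion here is bare $k$-universal dynamics rather than freely normally expanding $k$-universal dynamics.

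For the key perturbative step, let $\mu = \mu_p$ be the invariant probability on the orbit of $p$; by Lemma~\ref{l.class measure}, we have $(\mu, H(p)) \in \cX(f)$. By convexity of $\bsigma(\mu)$, the set
$$
I = \big\{ i \in \{0,1,\ldots,d\} \ ;\ \bsigma_i(\mu) \le s \big\}
$$
is a discrete interval, and the definition of $s$ as $\min_j \bsigma_{i_j}(\mu)$ forces every dominated index $i_j$ lying in $I$ to satisfy $\bsigma_{i_j}(\mu) = s$. I would define a target graph $\sigma \in \cS_d$ by setting $\sigma_i := \bsigma_i(\mu)$ for $i \notin I$ and $\sigma_i := s$ for $i \in I$. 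Then $\sigma$ is convex, lies pointwise above $\bsigma(\mu)$, and agrees with $\bsigma(\mu)$ at each dominated index, so $\sigma \in \cG(\mu, H(p))$. By Theorem~\ref{t.measure intro 2}, for diffeomorphisms $g$ arbitrarily $C^1$-close to $f$ there is a periodic orbit $\gamma$ with $\bsigma(g,\gamma) = \sigma$, and the flat segment of $\sigma$ on $I$ yields $k$ consecutive vanishing Lyapunov exponents along $\gamma$. A final small perturbation using Lemma~\ref{l.triangular} and Proposition~\ref{p.BGV} then ensures that the other $d-k$ exponents are nonzero.

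Given the orbit $\gamma$ with exactly $k$ vanishing Lyapunov exponents, the disc construction is a verbatim repetition of the argument in Lemma~\ref{l.criterion}: Lemma~\ref{l.identity} supplies a periodic orbit whose derivative at the period restricts to the identity on a $k$-dimensional invariant subspace, and Lemma~\ref{l.anything} converts this into an embedded normally hyperbolic $k$-disc whose first return is conjugate to any prescribed $\phi \in \DDiff{k}$. The countable family of such discs required by the definition of $k$-universality is then assembled by intersecting the open-dense sets $\cV(n, \hat\cO_n)$, defined as in Lemma~\ref{l.criterion} with $\hat\cO_n$ absorbing the first $n$ members of a fixed countable base of $\DDiff{k}$ in order to force the final countable collection of discs to have pairwise disjoint orbits; the passage to a $C^1$-generic statement across all periodic orbits of all periods is handled by Lemmas~\ref{l.stably finest} and~\ref{l.cover} exactly as in the proof of Theorem~\ref{t.criterion}. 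The main technical point is a careful verification that the target $\sigma$ truly lies in $\cG(\mu, H(p))$ and that flattening on $I$ yields the claimed number of vanishing exponents; both follow cleanly from convexity of $\bsigma(\mu)$ together with the fact that $s$ is attained at at least one dominated index $i_{j_0}$, which automatically lies in $I$ and anchors the flat segment in a way compatible with every equality constraint imposed by the finest dominated splitting.
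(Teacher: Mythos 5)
Your overall route is the one the paper intends (its own proof is only a sketch): flatten the Lyapunov graph of $\mu_p$ at the level $s$, realize the flattened graph on a nearby periodic orbit via Lemma~\ref{l.class measure} and Theorem~\ref{t.measure intro 2}, feed the vanishing exponents into Lemmas~\ref{l.identity} and \ref{l.anything}, and run the genericity scheme of Lemma~\ref{l.criterion}. The membership $\sigma=\max(\bsigma(\mu),s)\in\cG(\mu,H(p))$ is verified correctly. The gap is in the count of vanishing exponents. Writing $I=\{i\in\{0,\dots,d\}:\bsigma_i(\mu)\le s\}=[a,b]$, the flattened graph has zero increments exactly at $i=a+1,\dots,b$, that is, $\#I-1$ of them; the boundary exponent $\lambda_a=s-\bsigma_{a-1}(\mu)$ is strictly negative and does not help. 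Since $s=\min_j\bsigma_{i_j}(\mu)\le\bsigma_0(\mu)=0$, one has $0\in I$ if and only if $s=0$; hence your construction produces $k$ vanishing exponents when $s=0$, but only $k-1$ when $s<0$ (in that case $0\notin I$ and $k=\#I$).

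Moreover this is not repaired by a smarter choice of graph in $\cG(\mu_p,H(p))$: in any convex graph the zero increments form a single plateau; convexity forces the plateau height $h$ to satisfy $h\le\sigma_0=0$, and since the graph must pass through $(i_{j_0},s)$ at a dominated index attaining the minimum, also $h\le s$; the condition $\sigma\gtr\bsigma(\mu)$ then confines the plateau to $I$, so at most $\#I-1$ exponents can vanish. Concretely, for a dissipative saddle on a surface ($\lambda_1<0<\lambda_2$, $\lambda_1+\lambda_2<0$, $H(p)$ without domination), formula \eqref{e.k} gives $k=2$, yet $\sigma_2$ is pinned at $\lambda_1+\lambda_2\neq 0$, so two vanishing exponents are impossible. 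So either restrict to the case $s=0$ --- the situation the count \eqref{e.k} is really calibrated for, and the one occurring in Lemma~\ref{l.vanishing}, where the plateau is anchored at $\bsigma_0=0$ --- or accept that for $s<0$ your argument (like the paper's sketch read literally) only yields $(\#I-1)$-universal dynamics. As written, your assertion that the flat segment ``yields $k$ consecutive vanishing Lyapunov exponents'' is false when $s<0$, and this is precisely the step that needs fixing (or the statement needs the corrected count $\#\{i\in\{0,\dots,d\}:\bsigma_i(p)\le s\}-1$).
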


\begin{proof}[(sketch)]
First notice that if $k$ is given by \eqref{e.k} then
it is possible to perturb $f$ so to create periodic orbits 
with exactly $k$ vanishing Lyapunov exponents.

As in the proof of Theorem~\ref{t.criterion}, it is sufficient to prove a version of the theorem
for individual orbits.
This is done making minor adaptations in the proof of Lemma~\ref{l.criterion}.
\end{proof}

\begin{ques}
Can one find similar criteria for free, but neither normally expanding nor normally contracting, $k$-universality?
\end{ques}

\subsection{Proof of Theorem~{\rm \ref{t.BD}}} \label{ss.BD}

For completeness, we now explain how Theorem~\ref{t.BD} follows from \cite{BD IHES}.
Since we haven't used this theorem, this part is independent from anything else in this paper.

\medskip

The main result of \cite{BD IHES} says that 
if a diffeomorphism $g$ has a homoclinic class $H$ that is robustly without dominated splitting,
and $H$ contains two homoclinically related points, 
one with jacobian bigger than $1$ and the other with jacobian less than $1$,
then $g$ is in the closure of a locally generic set 
formed by diffeomorphisms with universal dynamics.
Examples of such diffeomorphisms $g$ can be constructed 
in any manifold of dimension $k\ge 3$;
in fact they can be constructed in a $k$-disc
and be taken close to the identity.

\begin{proof}[of Theorem~{\rm\ref{t.BD}}]  
Let $p$ be a periodic point for $f$
such that $Df^{\pi(p)}(p)$ is the identity on a 
subspace $E \subset T_p M$ of dimension $k \ge 3$,
and the other eigenvalues have modulus bigger than $1$.
With a $C^1$ perturbation supported on a small neighborhood of the orbit of $p$, 
we can create a normally expanding periodic $k$-disc $D$
such that $g = f^{\pi(p)} \restr D$ is the identity.
With a new perturbation, $g$ satisfies the conditions from\cite{BD IHES} explained above.
It follows that the perturbed $f$ belongs to the closure of a locally generic set 
of diffeomorphisms with normally expanding $k$-universal dynamics.
\end{proof}


\begin{acknowledgements}
We thank Flavio Abdenur, Lorenzo D\'iaz, Nicolas Gourmelon, Rafael Potrie and Jiagang Yang 
for helpful discussions.
We also thank Oliver Jenkinson for telling us about majorization
and the referee for several suggestions that helped to improve the writing.
\end{acknowledgements}


\affiliationone{
	Jairo Bochi\\
	Departamento de Matem\'atica, PUC--Rio\\ 
	Rua Mq.\ S.\ Vicente, 225\\
	22453-900 Rio de Janeiro, RJ\\
	Brazil
	\email{jairo@mat.puc-rio.br}
}
\affiliationtwo{
	Christian Bonatti\\
	IMB, Universit\'e de Bourgogne\\
	B.P. 47 870\\
	21078 Dijon Cedex\\
	France
	\email{bonatti@u-bourgogne.fr}
}

\end{document}